\definecolor{linkred}{rgb}{0.7,0.2,0.2}
\definecolor{linkblue}{rgb}{0,0.2,0.6}
\numberwithin{figure}{section}
\DeclareFontFamily{OMS}{rsfs}{\skewchar\font'60}
\DeclareFontShape{OMS}{rsfs}{m}{n}{<-5>rsfs5 <5-7>rsfs7 <7->rsfs10 }{}
\DeclareSymbolFont{rsfs}{OMS}{rsfs}{m}{n}
\DeclareSymbolFontAlphabet{\scr}{rsfs}
\DeclareSymbolFontAlphabet{\scr}{rsfs}
\DeclareFontFamily{U}{mathx}{\hyphenchar\font45}
\DeclareFontShape{U}{mathx}{m}{n}{
      <5> <6> <7> <8> <9> <10>
      <10.95> <12> <14.4> <17.28> <20.74> <24.88>
      mathx10
      }{}
\DeclareSymbolFont{mathx}{U}{mathx}{m}{n}
\DeclareMathAccent{\wcheck}{0}{mathx}{"71}
\DeclareMathOperator{\const}{const}
\DeclareMathOperator{\Hom}{Hom}
\DeclareMathOperator{\Id}{Id}
\DeclareMathOperator{\Image}{Image}
\DeclareMathOperator{\img}{img}
\DeclareMathOperator{\Pic}{Pic}
\DeclareMathOperator{\rank}{rank}
\DeclareMathOperator{\Ramification}{Ramification}
\DeclareMathOperator{\red}{red}
\DeclareMathOperator{\reg}{reg}
\DeclareMathOperator{\Spec}{Spec}
\DeclareMathOperator{\Sym}{Sym}
\DeclareMathOperator{\supp}{supp}
\newcommand{\sA}{\scr{A}}
\newcommand{\sB}{\scr{B}}
\newcommand{\sC}{\scr{C}}
\newcommand{\sD}{\scr{D}}
\newcommand{\sE}{\scr{E}}
\newcommand{\sF}{\scr{F}}
\newcommand{\sG}{\scr{G}}
\newcommand{\sH}{\scr{H}}
\newcommand{\sHom}{\scr{H}\negthinspace om}
\newcommand{\sI}{\scr{I}}
\newcommand{\sJ}{\scr{J}}
\newcommand{\sL}{\scr{L}}
\newcommand{\sM}{\scr{M}}
\newcommand{\sO}{\scr{O}}
\newcommand{\sQ}{\scr{Q}}
\newcommand{\sS}{\scr{S}}
\newcommand{\sT}{\scr{T}}
\newcommand{\sW}{\scr{W}}
\newcommand{\cC}{\mathcal C}
\newcommand{\bA}{\mathbb{A}}
\newcommand{\bB}{\mathbb{B}}
\newcommand{\bC}{\mathbb{C}}
\newcommand{\bD}{\mathbb{D}}
\newcommand{\bE}{\mathbb{E}}
\newcommand{\bF}{\mathbb{F}}
\newcommand{\bG}{\mathbb{G}}
\newcommand{\bN}{\mathbb{N}}
\newcommand{\bP}{\mathbb{P}}
\newcommand{\bQ}{\mathbb{Q}}
\newcommand{\bR}{\mathbb{R}}
\newcommand{\bX}{\mathbb{X}}
\newcommand{\bZ}{\mathbb{Z}}
\theoremstyle{plain}
\newtheorem{thm}{Theorem}[section]
\newtheorem{defn}[thm]{Definition}
\newtheorem{fact}[thm]{Fact}
\newtheorem{lem}[thm]{Lemma}
\newtheorem{prop}[thm]{Proposition}
\theoremstyle{remark}
\newtheorem{asswlog}[thm]{Assumption w.l.o.g.}
\newtheorem{claim}[thm]{Claim}
\newtheorem{c-n-d}[thm]{Claim and Definition}
\newtheorem{consequence}[thm]{Consequence}
\newtheorem{construction}[thm]{Construction}
\newtheorem{computation}[thm]{Computation}
\newtheorem{explanation}[thm]{Explanation}
\newtheorem{notation}[thm]{Notation}
\newtheorem{obs}[thm]{Observation}
\newtheorem{rem}[thm]{Remark}
\newtheorem*{rem-nonumber}{Remark}
\newtheorem{setting}[thm]{Setting}
\numberwithin{equation}{thm}
\setlist[enumerate]{label=(\thethm.\arabic*), before={\setcounter{enumi}{\value{equation}}}, after={\setcounter{equation}{\value{enumi}}}}
\newcommand{\into}{\hookrightarrow}
\newcommand{\wtilde}{\widetilde}
\newcommand{\what}{\widehat}
\newcommand\CounterStep{\addtocounter{thm}{1}\setcounter{equation}{0}}
\newcommand{\factor}[2]{\left. \raise 2pt\hbox{$#1$} \right/\hskip -2pt\raise -2pt\hbox{$#2$}}
\newcommand{\Preprint}[1]{}
\newcommand{\subversionInfo}{}
\newcommand{\svnid}[1]{}
\newcommand{\approvals}[2][Approval]{}
\renewcommand{\phi}{\varphi}
\author{Stefan Kebekus} %
\address{Stefan Kebebus, Mathematisches Institut, Albert-Ludwigs-Universität
  Freiburg, Ernst-Zermelo-Straße 1, 79104 Freiburg im Breisgau, Germany \&
  Freiburg Institute for Advanced Studies (FRIAS), Freiburg im Breisgau,
  Germany} %
\email{\href{mailto:stefan.kebekus@math.uni-freiburg.de}{stefan.kebekus@math.uni-freiburg.de}} %
\urladdr{\url{https://cplx.vm.uni-freiburg.de}}
\author{Jorge Vitório Pereira} %
\address{Jorge Vitório Pereira, IMPA, Estrada Dona Castorina 110, 22460-320, Rio
  de Janeiro, Brazil} %
\email{\href{mailto:jvp@impa.br}{jvp@impa.br}} %
\urladdr{\url{https://www.impa.br/~jvp}}
\author{Arne Smeets} %
\address{Arne Smeets, Department of Mathematics, KU Leuven, Celestijnenlaan 200B, 3001 Heverlee, Belgium} %
\email{\href{mailto:arne.smeets@kuleuven.be}{arne.smeets@kuleuven.be}}
\thanks{Stefan Kebekus gratefully acknowledges partial support through a
  fellowship of the Freiburg Institute of Advanced Studies (FRIAS).  This
  material is based in part upon work supported by the National Science
  Foundation under Grant No.~DMS-1440140 while Stefan Kebekus was in residence
  at the Mathematical Sciences Research Institute in Berkeley, California,
  during the Spring 2019 semester.  Jorge Vitório Pereira acknowledges support
  of CNPq, Faperj, and the Freiburg Institute for Advanced Studies (FRIAS).  The
  research leading to these results has received funding from the People
  Programme (Marie Curie Actions) of the European Union's Seventh Programme
  (FP7/2007-2013) under REA grant agreement nr.~[609305].  Arne Smeets
  acknowledges support from ERC Consolidator Grant MOTMELSUM (grant agreement
  $\sharp$615722), a postdoctoral fellowship of FWO (Research Foundation --
  Flanders) and a Veni fellowship from NWO (Netherlands Organisation for
  Scientific Research).}
\keywords{Brauer--Manin obstruction, Hasse principle, function field, Mordell conjecture}
\subjclass[2010]{14F22 (11G35)}
\title[Failure of the Brauer--Manin principle for a simply connected fourfold]{Failure of the Brauer--Manin principle for a simply connected fourfold over a global function field, via orbifold Mordell}
\date{\today}
\DeclareMathOperator{\Brauer}{Br}
\DeclareMathOperator{\Div}{Div}
\DeclareMathOperator{\etale}{\acute{e}t}
\DeclareMathOperator{\Etale}{\acute{E}t}
\DeclareMathOperator{\fract}{fract}
\DeclareMathOperator{\Hilb}{Hilb}
\DeclareMathOperator{\lcm}{lcm}
\DeclareMathOperator{\mult}{mult}
\DeclareMathOperator{\num}{num}
\DeclareMathOperator{\ord}{ord}
\DeclareMathOperator{\tame}{tame}
\DeclareMathOperator{\unit}{unit}
\DeclareMathOperator{\wild}{wild}
\theoremstyle{remark}
\newtheorem{o-and-n}[thm]{Observation and Notation}
\newtheorem{reminder}[thm]{Reminder}
\newtheorem{r-and-d}[thm]{Reminder and Notation}
\newtheorem{s-and-d}[thm]{Setting and Notation}
\newtheorem{situation}[thm]{Situation}
\begin{document}

\approvals[Abstract]{Arne & yes \\ Jorge & yes \\ Stefan & yes}
\begin{abstract}
Almost one decade ago, Poonen constructed the first examples of algebraic
varieties over global fields for which Skorobogatov’s étale Brauer–Manin
obstruction does not explain the failure of the Hasse principle.  By now,
several constructions are known, but they all share common geometric features
such as large fundamental groups.

In this paper, we construct simply connected fourfolds over global fields of
positive characteristic for which the Brauer–Manin machinery fails.  Contrary to
earlier work in this direction, our construction does not rely on major
conjectures.  Instead, we establish a new diophantine result of independent
interest: a Mordell-type theorem for Campana’s “geometric orbifolds” over
function fields of positive characteristic.  Along the way, we also construct
the first example of simply connected surface of general type over a global
field with a non-empty, but non-Zariski dense set of rational points.

\end{abstract}

\maketitle
\tableofcontents

%
% Do not edit the following line.  The text is automatically updated by
% subversion.
%
\svnid{$Id: 01-intro.tex 603 2021-11-04 10:09:01Z kebekus $}

\section{Introduction}
\subversionInfo

\subsection{Insufficiency of the Brauer--Manin obstruction}
\approvals{Arne & yes \\ Jorge & yes \\ Stefan & yes}

Let $X$ be a smooth, projective variety over a global field $K$ with adèle ring
$𝔸_K$.  To decide whether the variety $X$ has a $K$-rational point, the
\emph{Brauer--Manin obstruction} and its refinements are often very useful
tools.  A conjecture of Colliot-Thélène asserts that if $X$ is rationally
connected and if $K$ is a number field, then the Brauer--Manin obstruction to
the existence of a rational point is the only one.  In other words: if the
Brauer--Manin set $X(𝔸_K)^{\Brauer}$ is non-empty, then $X(K)$ is non-empty.
This conjecture is now known in many special cases, including many geometrically
rational surfaces and many homogeneous spaces of linear algebraic groups.  On
the other side of the geometric spectrum, we know very little.  For example, we
do not have \emph{any} examples of smooth hypersurfaces of general type in
$ℙ^n_ℚ$, where $n ≥ 4$, for which the Hasse principle fails, even though one
should expect such examples to abound.  An interesting line of recent research
has tackled the problem of constructing varieties for which one can prove that
the Brauer--Manin machinery \emph{fails}.

\subsection{Main result}
\approvals{Arne & --- \\ Jorge & yes \\ Stefan & yes}

This paper addresses the challenge to construct the first example of a simply
connected variety over a global field for which the Brauer--Manin obstruction
does not explain the failure of the Hasse principle.  The following is our main
result.

\begin{thm}\label{maintheorem1}
  For every sufficiently large prime $p$, there exist a global function field
  $K$ of characteristic $p$ and a smooth, projective, geometrically integral
  fourfold $Z$ over $K$ such that $π_1^{\etale}(Z_{\overline{K}}) = 0$,
  $Z(𝔸_K)^{\Brauer} ≠ ∅$ and $Z(K) = ∅$.
\end{thm}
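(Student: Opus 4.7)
The plan is to follow Poonen's template for constructing Brauer--Manin failures: realize $Z$ as a smooth proper fibration $f : Z \to S$ whose generic fiber is a geometrically rational, simply connected Châtelet-type surface, over a base $S$ whose $K$-rational points are non-empty but confined to a proper Zariski-closed subset $T \subset S$. The Châtelet fibers are arranged so that for every $s \in T(K)$ the fiber $Z_s$ has adelic points and satisfies the Brauer--Manin condition but has no $K$-point; any element of $Z(\bA_K)^{\Brauer}$ must therefore lie over such an $s$, and has no rational lift. The novelty over Poonen is the requirement that both $S$ and $Z$ be étale simply connected, which forces $\dim S = 2$ and $S$ to be a carefully chosen simply connected surface of general type with controlled rational points; together with a Châtelet surface as fiber, this produces the fourfold promised in the theorem.

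\textbf{Building the base surface.} The crucial intermediate step is constructing a simply connected surface of general type $S/K$ with $\emptyset \neq S(K)$ not Zariski dense. I would approach this via orbifold covers in the spirit of Campana. Start with a well-chosen orbifold pair $(S_0, \Delta)$ with $K_{S_0} + \Delta$ of log-general type and finite orbifold fundamental group. Let $\phi : S \to S_0$ be a suitable orbifold cover (branched along $\Delta$ with the prescribed multiplicities), chosen so that $\pi_1^{\etale}(S_{\overline K}) = 0$ and $S$ is of general type. The main diophantine input of the paper — the orbifold Mordell theorem over global function fields of positive characteristic — forces the $(S_0, \Delta)$-integral points of $S_0$ to lie in a proper closed subset $T_0 \subset S_0$. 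Since every $K$-rational point of $S$ maps to an $(S_0, \Delta)$-integral point of $S_0$, one has $S(K) \subseteq \phi^{-1}(T_0)$, hence $S(K)$ is not Zariski dense; by choosing the orbifold carefully one can ensure that it is also non-empty.

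\textbf{From the surface to the fourfold.} Over the simply connected base $S$ one then erects a Châtelet surface bundle, following Poonen's explicit recipe: $Z \to S$ is the smooth projective compactification of a conic bundle of shape $y^2 - a z^2 = P$, where $a \in K^*$ and $P$ is a section of a suitable line bundle on $S$ whose zero locus is chosen so that the ``bad'' fibers of $Z \to S$ sit over $\phi^{-1}(T_0)$, and so that local solvability is arranged everywhere. The vanishing $\pi_1^{\etale}(Z_{\overline K}) = 0$ then follows from the homotopy exact sequence for the étale fundamental group of a smooth fibration: Châtelet surfaces are simply connected, $S$ is simply connected, and the vanishing cycles at the degenerate fibers do not obstruct the conclusion. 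The Brauer--Manin computation is the standard one for Châtelet bundles: every Brauer class on $Z$ is essentially pulled back from $S$, and its pairing with a well-chosen adelic point vanishes; meanwhile no rational point exists because the rational locus of $S$ sits precisely over the Châtelet-obstructed fibers.

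\textbf{Main obstacle.} The genuinely hard part is unquestionably the orbifold Mordell theorem: a statement, valid over global function fields of positive characteristic, that orbifold-integral points of a log-general-type orbifold surface are not Zariski dense. In positive characteristic the usual hyperbolicity machinery breaks down — there is no analytic Kobayashi or Vojta input, Frobenius pull-backs can create infinitely many sections ex nihilo, and inseparable base changes must be controlled — so one must develop a Parshin--Arakelov-style height argument adapted to Campana multiplicities, distinguishing tame from wild ramification along $\Delta$ and ruling out Frobenius-twisted sections. Once that is in place, the cover construction producing $S$, the étale-homotopy argument for $Z$, and the Brauer--Manin bookkeeping are comparatively routine; the remaining difficulty is the combinatorial matching between the orbifold $(S_0, \Delta)$, the cover $\phi$, and the degeneration locus of the Châtelet bundle so that every piece of the Poonen mechanism is synchronised.
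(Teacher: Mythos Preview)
Your overall Poonen-style template is right, and so is the identification of orbifold Mordell as the hard diophantine input. But there is a genuine dimensional mismatch in the middle of your plan that would block the argument.

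The orbifold Mordell theorem proved in this paper is a statement about \emph{one-dimensional} $\cC$-pairs: it says that $\cC$-integral points on an orbifold \emph{curve} of general type over a function field (with non-vanishing Kodaira--Spencer class) are finite. Your final paragraph speaks of ``orbifold-integral points of a log-general-type orbifold \emph{surface}'' being non-Zariski-dense; that is essentially an orbifold Bombieri--Lang statement, which is not available in any characteristic and is not what the paper establishes. Correspondingly, your construction of the base surface $S$ as a branched ``orbifold cover'' $\phi: S \to S_0$ of a two-dimensional orbifold $(S_0,\Delta)$ does not match the mechanism that actually works.

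What the paper does instead is build the simply connected surface $Y$ together with a \emph{fibration} $\pi: Y \to \bP^1_K$ (following Stoppino's explicit genus-two construction, reduced mod $p$) whose \emph{orbifold base} $(\bP^1_K,\Delta_\pi)$ --- a curve orbifold --- is of general type. Every $K$-point of $Y$ then maps under $\pi$ to a $\cC$-integral point of this curve orbifold, and the one-dimensional orbifold Mordell theorem forces $\pi(Y(K))$ to be a \emph{finite} subset of $\bP^1(K)$. Simple connectedness of $Y$ is not obtained from any orbifold-$\pi_1$ argument but from an elementary fibration criterion: $\pi$ has no multiple fibres and at least one simply connected fibre. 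The Châtelet bundle is then pulled back from $\bP^1$ (not built intrinsically over $Y$), with the finitely many points of $\pi(Y(K))$ sent by a suitable endomorphism of $\bP^1$ to the locus where the Châtelet surface is the standard counterexample to Hasse; this is what makes the Brauer comparison (your ``every Brauer class is essentially pulled back'') go through cleanly.

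So: replace ``orbifold cover of an orbifold surface'' by ``fibration over $\bP^1$ with orbifold base of general type'', and downgrade the needed Mordell statement from dimension two to dimension one. With that correction your outline aligns with the paper's proof.
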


\begin{rem}
  Theorem~\ref{maintheorem1} is no abstract existence result.  The construction
  carried out in Section~\ref{sec:construction} is quite explicit.
\end{rem}

\begin{rem}
  To the best of our knowledge, Theorem~\ref{maintheorem1} provides the first
  example of a simply connected variety over a global field where Brauer--Manin
  fails.  Over number fields, simply connected examples have been constructed
  \emph{conditionally (assuming the Bombieri--Lang conjectures)} by Sarnak--Wang
  \cite{SW} and Poonen \cite{Poonenbis}, and \emph{conditionally (assuming the
    $abc$ conjecture)} by Smeets \cite[§4]{Smeets}.
\end{rem}

\subsubsection{Earlier results}
\approvals{Arne & yes \\ Jorge & yes \\ Stefan & yes}

The first example of a smooth, projective variety over a number field for which
the Brauer--Manin obstruction does not explain the failure of the Hasse
principle was a bi-elliptic surface constructed by Skorobogatov,
\cite[§2]{Skorobogatov}.  For this surface, however, the failure of the Hasse
principle could be explained by the (finer) étale Brauer--Manin obstruction
\cite[§3]{Skorobogatov}.  Next, Poonen \cite{Poonen0} found the first examples
of varieties for which this finer obstruction fails as well.  His examples are
threefolds fibred over a curve of genus at least one.  Soon after,
Harpaz--Skorobogatov \cite{HS} constructed surfaces with this property,
Colliot-Thélène--Pál--Skorobogatov \cite{CTPS} found examples of quadric
bundles, and Smeets \cite[§3]{Smeets} came up with the first examples with
trivial Albanese variety.

\subsection{A simply connected surface with a non-empty, but non-Zariski dense set of rational points}
\approvals{Arne & --- \\ Jorge & yes \\ Stefan & yes}

The proof of Theorem~\ref{maintheorem1} builds on an idea that goes back to the
work of Poonen \cite{Poonen0}: to construct a simply connected example over a
field $K$, it suffices to construct a simply connected $K$-surface $S$, equipped
with a fibration $f: S → ℙ¹_K$, such that only finitely many (but more than
zero) fibres of $f$ have $K$-rational points.  The following theorem claims the
existence of $S$ abstractly.  The construction given in
Section~\ref{section:fibrations} is however really quite explicit.

\begin{thm}\label{maintheorem2}
  For every sufficiently large prime $p$, there exist a global function field
  $K$ of characteristic $p$ and a smooth, projective, geometrically integral and
  geometrically simply connected $K$-surface $Y$ of general type, equipped with
  a dominant morphism $π: Y → ℙ¹_K$, such that $π \bigl( Y(K) \bigr)$ is finite
  and non-empty.
\end{thm}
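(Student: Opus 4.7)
The plan is to construct $Y$ as the total space of a fibration $\pi \colon Y \to \bP^1_K$ whose ramification profile is recorded by a Campana orbifold divisor $\Delta$ on $\bP^1_K$ making $(\bP^1_K, \Delta)$ of orbifold general type, and then to deduce finiteness of $\pi(Y(K))$ from the orbifold Mordell theorem for function fields of positive characteristic that is the paper's main diophantine input.  (This replaces the Bombieri--Lang or $abc$ hypotheses used in the earlier number-field constructions of Sarnak--Wang, Poonen and Smeets.)  The translation between the two sides is the standard Kummer--Artin--Schreier dictionary: if $y \in Y_t(K)$ lies over $t \in \bP^1(K)$, then local solvability of $\pi$ forces, at every place $v$ of $K$, the valuation $v(t - p_i)$ to be divisible by the ramification index $m_i$ of $\pi$ along the branch point $p_i$, which is exactly the $\Delta$-integrality condition.

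Concretely, I would take $K = \bF_q(C)$ for a suitable curve $C$ and $q$ chosen coprime to the prescribed multiplicities $m_i$, in order to avoid wild ramification, and build $Y$ as a cyclic or iterated cyclic cover of a Hirzebruch surface $\bF_r \to \bP^1_K$ compatible with the vertical projection.  The branch divisor is to be chosen horizontal and with intersection pattern with the fibres realising the multiplicity profile $\Delta = \sum_i (1 - 1/m_i)\{p_i\}$; the canonical bundle formula and adjunction then show $Y$ to be of general type as soon as $\deg(K_{\bP^1} + \Delta)$ and the cover degrees are large enough.  Simple connectedness of $Y_{\overline{K}}$ would be secured by realising $Y$ inside a simply connected smooth ambient projective variety built from the bundle data (e.g.\ as a smooth hypersurface or complete intersection in a weighted projective bundle), and invoking the \'etale analogue of the Grothendieck--Lefschetz hyperplane theorem; this constrains the admissible branch data but leaves enough freedom to tune the multiplicities.

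Once all these data are in place, the orbifold Mordell theorem --- applied after ruling out isotriviality by a generic choice of the defining equations --- produces a finite set of admissible $t \in \bP^1(K)$, so that $\pi(Y(K))$ is finite.  Non-emptiness is the easy half: one tunes the defining equation so that a pre-chosen fibre over some $t_0 \in \bP^1(K)$ carries an explicit $K$-point by construction.  The main obstacle is enforcing $\pi_1^{\etale}(Y_{\overline{K}}) = 0$ simultaneously with all of the above: cyclic covers in positive characteristic routinely introduce torsion into the \'etale fundamental group, and the engineering challenge is to arrange the cover degrees, the branch data and the ambient geometry so that a Lefschetz hyperplane-style argument still kills $\pi_1^{\etale}$, while preserving both ``general type'' and the orbifold-integrality dictionary that feeds the Mordell input.
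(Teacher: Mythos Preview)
Your overall framework is right --- construct a fibration $\pi\colon Y\to\bP^1_K$ whose orbifold base $(\bP^1_K,\Delta_\pi)$ is of general type, translate $K$-points on $Y$ into $\cC$-integral points on the base via Lemma~\ref{lem:orbibase}, and invoke the orbifold Mordell theorem (Theorem~\ref{thm:Mordell}).  The paper does exactly this.  The genuine gap in your proposal is the construction itself, specifically the simultaneous enforcement of simple connectedness and an orbifold base of general type.

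Your plan to build $Y$ as a cyclic cover of a Hirzebruch surface and then kill $\pi_1^{\etale}$ by a Lefschetz hyperplane argument is where things go wrong.  To make $\Delta_\pi$ of general type you need several fibres of $\pi$ whose components all have multiplicity $\geq 2$; but a cyclic cover of a ruled surface branched in a horizontal divisor will typically produce \emph{irreducible} special fibres, so any such fibre with all multiplicities $\geq 2$ is a genuinely multiple fibre, and then $Y_{\overline K}$ cannot be simply connected (the multiple fibre already obstructs it).  The Lefschetz route does not help here: a cyclic cover of $\bF_r$ is not naturally an ample divisor in a simply connected threefold, and you have not indicated how to realise it as one while preserving the vertical fibration structure.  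You correctly identify this tension in your last paragraph, but you do not resolve it.

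The paper's resolution is different and quite specific.  It imports Stoppino's construction of a genus-two fibration $g\colon F\to\bP^1$ over $\bQ$ (then reduces mod a large prime $p$) whose fibres over $0$ and $\infty$ are \emph{reducible}: trees of rational curves in which every component has multiplicity $\geq 2$ but the gcd of the multiplicities is $1$.  Thus the orbifold base picks up weight at $0$ and $\infty$, yet no fibre of $g$ is multiple in the sense of Definition~\ref{definition:multfibr}.  Simple connectedness then follows from the elementary criterion of Lemma~\ref{lem:fundamentalgroups}: a fibration over $\bP^1$ with no multiple fibres and at least one simply connected fibre has simply connected total space.  A further base change and triple cover spread $F$ into a family over $K=\bF_p(t)$ with enough branch points to make the orbifold base of general type and with an explicit cross-ratio check to verify the non-splitting of Sequence~\eqref{eq:log6-4}.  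The existence of a $K$-point is arranged by hand in Stoppino's data.  This ``reducible high-multiplicity fibre with coprime multiplicities'' trick is the missing idea in your proposal.
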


To reach this goal, we adapt a cunning strategy devised by Campana in
\cite{Cam05}.  He constructed simply connected surfaces of general type over
$ℚ$ fibred over $ℙ¹_ℚ$ with a so-called ``orbifold base of general type'', and
argued that his ``orbifold Mordell conjecture'' would imply non-Zariski density
of the set of rational points on the surface.

We make his work unconditional in positive characteristic.  To achieve this, we
build on a construction of Stoppino \cite{Stoppino} which is simpler than the
one used by Campana in \cite[§5]{Cam05}, and more easily transportable to
positive characteristic.  We combine this construction with a new diophantine
ingredient: a version the Mordell conjecture for Campana's ``geometric
orbifolds'' over global function fields.  The orbifolds considered by Campana
are not stacks, but simply pairs consisting of a smooth variety and a certain
type of $ℚ$-divisor.  We call these \emph{$\cC$-pairs}, see
Section~\ref{subsection:orbifoldMordell} and Definition~\ref{def:pair} for more
details.

\begin{rem}
  It is a major open problem to construct such an $X$ over $ℚ$ for which $X(ℚ)$
  is both non-empty and not Zariski dense, see \cite[Rem.~1.4]{Poonen3}.  To the
  best of our knowledge, Theorem~\ref{maintheorem2} yields the very first
  example of a simply connected surface over a global field $K$ with $X(K) ≠ ∅$
  for which one can verify the non-Zariski density of the set rational points
  unconditionally, in the direction of the Bombieri--Lang conjecture.  Our
  methods are, however, restricted to global fields of positive characteristic.
  As will be clear from our proof of Theorem~\ref{maintheorem2}, the very same
  statement holds with $K$ replaced by a function field of characteristic zero,
  what was already known to Campana.  After the present work was made public,
  Carlo Gasbarri explained to us in private communication how to construct
  explicit examples of smooth projective varieties of arbitrary positive
  dimension, defined over a function field of characteristic zero, which are
  simply connected and whose sets of rational points are not Zariski dense.
\end{rem}

\begin{rem}
  In fact, the construction used in Section~\ref{section:fibrations} to prove
  Theorem \ref{maintheorem2} immediately yields a slightly stronger statement:
  for the field $K$ and surface $Y$ constructed in
  Section~\ref{section:fibrations}, we know that the set $π \bigl( Y(L) \bigr)$
  remains finite for any finite, separable field extension $L/K$.
 \end{rem}

\subsection{An orbifold version of the Mordell conjecture} \label{subsection:orbifoldMordell}
\approvals{Arne & yes \\ Jorge & yes \\ Stefan & yes}

Let $k$ be an algebraically closed field, and let $K$ be the function field of a
$k$-curve.  The classical Mordell conjecture over function fields, proven by
Grauert and Manin in characteristic zero and by Samuel in positive
characteristic, states that if $X$ is a smooth, projective curve of genus
$g(X) ≥ 2$ over $K$, then $X$ has finitely many $K$-rational points, provided
that it is \emph{non-isotrivial}.  In the orbifold setting, the curve $X$ can be
of arbitrary genus, but comes equipped with $ℚ$-divisor $D$, with coefficients
of the form $1-\frac{1}{m}$.  We refer the reader to Section~\ref{ssec:cpairs}
for a precise definition of the ``$\cC$-pair'' $(X,D)$.  Given a sufficiently
nice integral model of $(X,D)$, one can consider the set of \emph{$\cC$-integral
  points}.  One should think about these as interpolating between two classical
notions: $K$-rational points on $X$ on the one hand, and integral points on a
model of $(X, ⌈D⌉)$ on the other hand.  Campana proposed a version of Mordell's
conjecture for $\cC$-pairs over global fields of characteristic zero.  We prove
such a statement unconditionally over global fields of positive characteristic,
where the formulation becomes slightly more involved.  Stated in rather loose
terms, our orbifold Mordell-type theorem says the following.

\begin{thm}[Mordell-type theorem for $\cC$-integral points (= Theorem~\ref{thm:Mordell})]\label{maintheorem3}
  Let $K$ be the function field of a curve defined over an algebraically closed
  field $k$.  Let $(X,D)$ be a one--dimensional $\cC$-pair of general type over
  $K$, with non-vanishing Kodaira--Spencer class.  Then the set of
  $\cC$-integral points on any integral model of $(X,D)$ is finite.
\end{thm}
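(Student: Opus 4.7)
The plan is to reduce the statement to the classical Mordell theorem of Samuel for curves of genus $\geq 2$ over function fields of positive characteristic, via a suitable ``orbifold cover'' construction. A $\cC$-integral point $s$ on a model of $(X, D = \sum_i (1 - 1/m_i) D_i)$ is, by definition, a section along which the pullback of each boundary component $D_i$ has multiplicities divisible by $m_i$. This is precisely the condition required for $s$ to lift through a branched cover $\pi: Y \to X$ that is ramified of index $m_i$ over $D_i$, so the first task is to construct such a cover globally and control how many of them appear.

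Concretely, I would first realize $(X,D)$ as a root stack and, after replacing $K$ by a finite separable extension $L/K$ and enlarging the set $S$ of bad places, produce a single finite cover $\pi:Y \to X_L$, defined over $L$, with the property that every $\cC$-integral point of $(X,D)$ lifts to an $L$-point of $Y$. The natural candidate is an abelian cover of exponent dividing $\lcm_i m_i$, obtained by extracting $m_i$-th roots of sections defining the $D_i$. A priori different $\cC$-integral points could force different twists, but standard class field theoretic finiteness for covers of $C \smallsetminus S$ of bounded degree (with $C$ the smooth model of $K$) bounds the number of twists, so finitely many $Y$'s suffice.

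Next, Riemann--Hurwitz applied to $\pi$ yields
$$ 2g(Y)-2 \;=\; \deg(\pi) \cdot \Bigl( 2g(X)-2 + \sum_i (1-1/m_i)\deg(D_i) \Bigr) \;+\; (\text{wild correction}), $$
so the general type hypothesis on $(X,D)$ forces $g(Y) \geq 2$. The non-vanishing of the Kodaira--Spencer class of $(X,D)$ is then used to argue that $Y$, as a curve over $L$, is non-isotrivial: a trivialization of $Y$ would descend through the combinatorial data of $\pi$ to a trivialization of the $\cC$-pair $(X,D)$, contradicting the hypothesis. Once these two properties are in place, Samuel's function-field Mordell theorem applied to $Y/L$ gives that $Y(L)$ is finite, and pushing forward finishes the argument.

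The main obstacle I anticipate is wild ramification: whenever $\mathrm{char}(k) = p$ divides some $m_i$, the cyclic $m_i$-cover is wildly ramified (or must be replaced by an Artin--Schreier--Witt-type extension), and Riemann--Hurwitz acquires a conductor contribution along $D_i$. The encouraging sign is that this contribution only makes $g(Y)$ larger, preserving the general type conclusion; the subtlety is rather to set up the cover $\pi$ so that $\cC$-integrality still translates into liftability in the wild regime, and to make the Kodaira--Spencer argument robust under wild base change so that non-isotriviality of $Y$ is genuinely secured. Controlling this wild/tame dichotomy uniformly is, in my view, where the real work lies.
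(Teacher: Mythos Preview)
Your proposal rests on a misreading of the definition of $\cC$-integral point, and this gap is fatal for the lifting strategy.  A $\cC$-curve for $(X,D)$ with $D = \sum_i (1-\tfrac{1}{m_i})D_i$ is required to satisfy $\gamma^* D_i \ge m_i \cdot \supp \gamma^* D_i$, that is, every local intersection multiplicity with $D_i$ is \emph{at least} $m_i$, not \emph{divisible by} $m_i$ (see Definition~\ref{def:ccurve}).  These are Campana's ``geometric'' multiplicities, not the ``arithmetic'' or ``divisible'' ones.  A section meeting $D_i$ with multiplicity $m_i+1$ is $\cC$-integral but does not lift through a cyclic cover branched to order $m_i$ over $D_i$, nor through the root stack $\sqrt[m_i]{D_i/X}$.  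Since the possible multiplicity profiles are unbounded, no finite collection of covers or twists can absorb all $\cC$-integral points, and the reduction to Samuel collapses.

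Even granting divisibility, the step ``non-vanishing Kodaira--Spencer of $(X,\lceil D\rceil)$ implies non-isotriviality of $Y$'' is not the tautology you suggest: the paper devotes all of Proposition~\ref{prop:soapSplit} (and its multi-step proof via equivariant push-forward and a delicate tame/wild splitting) to showing that the non-splitting of Sequence~\eqref{eq:log6-4} survives passage to the adapted cover.  In positive characteristic, non-isotriviality is strictly weaker than non-vanishing Kodaira--Spencer, and Section~\ref{ssec:necessity mordell} exhibits pairs where the former holds but finiteness fails.

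The paper's route is entirely different: it does not reduce to Samuel at all.  Instead it establishes an effective height bound for $\cC$-integral points directly (Theorem~\ref{thm:height1a}), by lifting points to an adapted cover $\what{X}$, working with the sheaf of adapted differentials there, and running a Grauert--Vojta--Kim style argument on $\bP(\what{\Omega}^1)$.  The crucial technical point (Claim~\ref{claim:6-5}) is that the lift of a $\cC$-integral point to $\what{X}$ is singular in a controlled way, and these singularities exactly compensate for the change in height under the cover.  Rigidity (Theorem~\ref{thm:rigidity}) is then proved separately, and the two together give finiteness via a standard Hilbert-scheme argument.
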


\begin{rem}
  The assumption that $(X,D)$ is \emph{of general type} simply means that the
  degree of $K_X + D$ is positive; this is an analogue of the assumption
  ``$g(X) ≥ 2$'' in the classical Mordell-type theorem.  The analogue of the
  condition on isotriviality is slightly more subtle.  One could declare a
  $\cC$-pair $(X,D)$ to be ``non-isotrivial'' if the associated ``logarithmic''
  pair $(X, ⌈D⌉)$ is non-isotrivial.  Unlike in the classical setting, this does
  however \emph{not} suffice to guarantee finiteness of the set of
  $\cC$-integral points, see Section~\ref{ssec:necessity mordell}.  We impose
  the stronger condition that the \emph{Kodaira--Spencer class} associated to
  $(X, ⌈D⌉)$ does not vanish, and it turns out that this condition does in fact
  suffice to guarantee finiteness of the set of $\cC$-integral points on any
  integral model of the $\cC$-pair $(X,D)$.
\end{rem}

\begin{rem}
  Theorem~\ref{maintheorem3} can be thought as a generalisation of the function
  field versions of the classical theorems of Mordell and Siegel, valid in
  arbitrary characteristic.  We want to stress that the case where $k = ℂ$ has
  already been treated by Campana \cite[§3]{Cam05}; we give an alternative
  treatment, but the real novelties lie in positive characteristic.  Campana
  also conjectured such a statement for number fields, see \cite[§4]{Cam05}.
  This conjecture is wide open, but it is known to follow from the $abc$
  conjecture thanks to observations of Colliot-Thélène and Abramovich (see the
  exposition in \cite[Appendix]{Smeets}).
\end{rem}

\subsection{Height bounds}
\approvals{Arne & yes \\ Jorge & yes \\ Stefan & yes}

Like other proofs of Mordell-type theorems, the proof of
Theorem~\ref{maintheorem3}, relies on \emph{height bounds} and on
\emph{rigidity} results for $\cC$-integral points; these are formulated in
Theorems~\ref{thm:height1a} and \ref{thm:rigidity}, respectively.  Establishing
the relevant height bounds is the main difficulty of this paper.  The proof
combines ideas pioneered by Grauert, Vojta and Kim \cite{MR0222087, MR1096125,
  MR1436743} that often allow one to restrict one's attention to integral points
that are tangent to suitable foliations, with ideas of Campana-Păun \cite{CP15}
who construct foliations on suitable ``adapted'' covers of the original space
$X$.

Two main difficulties arise and need to be overcome.  To begin, we need to
construct and discuss covering spaces and foliations in positive characteristic,
where covers might well be inseparable or wildly ramified, and where the
discussion of foliations becomes substantially more involved when compared with
the characteristic zero case.  Next, the comparison of height functions on $X$
and heights on an adapted cover turns out to be a second major issue.  One of
the main technical insights of this paper, hidden in Claim~\ref{claim:6-5} of
Section~\ref{sec:pf45} is the observation that a lift of a $\cC$-integral point
from $X$ to an adapted cover is often quite singular, and that the singularities
\emph{improve} the height bounds on the cover by exactly as much as is necessary
to establish bounds on the original space $X$.  We hope that this technique
might be of interest for others.

\subsection{Acknowledgements}
\approvals{Arne & yes \\ Jorge & yes \\ Stefan & yes}

We would like to thank numerous colleagues for discussions and invaluable help.
This includes Piotr Achinger, Frédéric Campana, Jean-Louis Colliot-Thélène,
Hélène Esnault, Andrea Fanelli, David Harbater, Annette Huber-Klawitter, Shane
Kelly, Qing Liu, Andrew Obus, Zsolt Patakfalvi, Rachel Pries, Erwan Rousseau,
Takeshi Saito, Angelo Vistoli, Felipe Voloch, Joe Waldron and Liang Xiao.

We would also like to thank the anonymous referees for patience, careful reading
and helpful advice.

%
% Do not edit the following line.  The text is automatically updated by
% subversion.
%
\svnid{$Id: 02-notation.tex 600 2021-08-23 12:06:49Z kebekus $}

\section{Notation and global assumptions}
\subversionInfo

\subsection{Global assumptions}
\label{sec:globalAss}
\approvals{Arne & yes \\ Jorge & yes \\ Stefan & yes}

Throughout, the letter $k$ will always denote an algebraically closed field of
arbitrary characteristic.

\subsection{Varieties and pairs}
\approvals{Arne & --- \\ Jorge & yes \\ Stefan & yes}

We follow notation and conventions of Hartshorne's book \cite{Ha77}.  In
particular, varieties are always assumed to be irreducible.

\begin{defn}[Curves and surfaces]
  Let $k$ be an algebraically closed field.  A $k$-curve is a quasi-projective
  $k$-variety of dimension one that is smooth over $\Spec k$.  Analogously for
  $k$-surfaces.
\end{defn}

\begin{defn}[Pairs]\label{def:pair1}
  Let $k$ be an algebraically closed field.  A \emph{$k$-pair} is a pair
  $(X,D)$, consisting of a normal, quasi-projective $k$-variety $X$ and a Weil
  $ℚ$-divisor $D = δ_1·D_1 + ⋯ + δ_d·D_d$ on $X$, with coefficients $δ_i$ in the
  set $[0,1] ∩ ℚ$.
\end{defn}

\begin{notation}[Round-up, round-down and fractional part]
  In the setting of Definition~\ref{def:pair1}.  We denote the round-up and
  round-down of $D$ as $⌈D⌉ = \sum_{i=1}^{d} ⌈δ_i⌉·D_i$ and
  $⌊D⌋ = \sum_{i=1}^{d} ⌊δ_i⌋·D_i$.  The fractional part of $D$ will be written
  as $\{D\} := D - ⌊D⌋$.
\end{notation}

\begin{notation}[Intersection of boundary components]\label{not:ibc}
  In the setting of Definition~\ref{def:pair1}, if $I ⊆ \{1,… , d\}$ is any
  non-empty subset, consider the scheme-theoretic intersection
  $D_I := ∩_{i∈I} \supp D_i$.  If $I$ is empty, set $D_I := X$.
\end{notation}

The notion of relatively snc divisors has been used in the literature, but its
definition has not been discussed much.  For the reader’s convenience, we
reproduce the definition given in \cite[§~3.1]{MR3084424}.

\begin{defn}[\protect{SNC morphism, relatively snc divisor, \cite[Def.~2.1]{VZ02}}]\label{defn:sncmor}
  Let $k$ be an algebraically closed field, let $(X,D)$ be a $k$-pair and let
  $φ : X → Y$ be a surjective morphism of quasi-projective $k$-varieties.  We
  say that $D$ is \emph{relatively snc}, or that $φ$ is an \emph{snc morphism of
    the $k$-pair $(X, D)$} if for any set $I$ with $D_I ≠ ∅$, all restricted
  morphisms $φ|_{D_I} : D_I → Y$ are smooth of relative dimension
  $\dim X - \dim Y - |I|$.
\end{defn}

\begin{defn}[$k$-SNC]
  Let $k$ be an algebraically closed field.  A $k$-pair $(X, D)$ is called
  \emph{snc} if the morphism to $\Spec k$ is an snc morphism.
\end{defn}

\subsection{$\cC$-pairs}
\label{ssec:cpairs}
\approvals{Arne & yes \\ Jorge & yes \\ Stefan & yes}

The core notion of this paper is that of a ``$\cC$-pair''.  These pairs were
introduced under the name ``orbifoldes géométriques'' by Campana and feature
prominently in his work.  We briefly recall the main definition and refer the
reader to one of the many survey papers, including \cite{Cam04, Abramovich,
  CKT16}, for a more detailed introduction to Campana's ideas.

\begin{defn}[Pairs and $\cC$-pairs]\label{def:pair}
  Let $(X, D)$ be a $k$-pair, as in Definition~\ref{def:pair1}.  The pair
  $(X,D)$ is called a \emph{$\cC$-pair} if the coefficients $δ_i$ are contained
  in the set
  $$
  \bigl\{ {\textstyle 1- \frac{1}{m}} \, \bigl|\, m ∈ ℕ^+ \bigr\} ∪ \{ 1 \}.
  $$
  We follow the convention that $1 - \frac{1}{∞} = 1$ and write
  $δ_i = 1-\frac{1}{m_i}$ with $m_i ∈ ℕ^+ ∪ \{∞\}$.  We refer to the numbers
  $m_i$ as \emph{$\cC$-multiplicities}.
\end{defn}

\begin{defn}[$\cC$-curve]\label{def:ccurve}
  In the setting of Definition~\ref{def:pair}, assume that the $k$-pair $(X, D)$
  is snc.  A $\cC$-curve is a $k$-curve $T$ and a morphism of $k$-varieties
  $γ : T → X$ such that $\Image γ ⊄ \supp D$ and such that the following
  conditions hold for every index $i$.
  \begin{enumerate}
  \item If $m_i = ∞$, then $γ^* D_i = 0$.
  \item If $m_i < ∞$, then $γ^* D_i ≥ m_i·\supp γ^* D_i$.
  \end{enumerate}
\end{defn}

\begin{rem}
  Roughly speaking, $\cC$-curves avoid all integral components of $D$.  At
  points of intersection with one of the remaining $D_i$, the local intersection
  of every branch of $T$ with $D_i$ is at least $m_i$.
\end{rem}

\subsection{Projectivised bundles}
\approvals{Arne & --- \\ Jorge & yes \\ Stefan & yes}

Like earlier papers on the subject, we follow Grauert's approach,
\cite{MR0222087}, and study curves on $X$ by looking at their natural lifts to
the projectivised bundle $ℙ(Ω¹_X)$.  The following setting fixes assumptions and
notation.

\begin{setting}[Projectivized vector bundles]\label{setting:pbdl}
  Let $k$ be an algebraically closed field.  Given a smooth $k$-variety $X$ and
  a locally free sheaf $ℰ$ of $𝒪_X$-modules, consider the
  projectivisation\footnote{We follow Grothendieck's convention, where
    $ℙ_X(ℰ) := \operatorname{Proj} \bigl( ⊕_d \Sym^d ℰ \bigr)$.} $ℙ := ℙ_X(ℰ)$
  together with the natural projection morphism $π: ℙ → X$ and the Euler
  sequence
  \begin{equation}\label{eq:euler0}
    \xymatrix{ %
      0 \ar[r] & Ω¹_{ℙ/X}(1) \ar[r] & π^* ℰ \ar[r]^{τ} & 𝒪_{ℙ}(1)
      \ar[r] & 0.  %
    }
  \end{equation}
\end{setting}

The construction satisfies a number of universal properties, and the
``tautological sheaf morphism'' $τ$ appears prominently in all of them.  For the
reader's convenience, we recall three standard facts.

\begin{fact}[Universal property of projectivized bundles -- Quotients of $ℰ$]\label{f:P1}
  In Setting~\ref{setting:pbdl}, if $φ : Z → X$ is any morphism, then to give a
  morphism $Φ : Z → ℙ$ that makes following diagram commute,
  \begin{equation}\label{eq:bcmnyf0}
    \begin{gathered}
      \xymatrix{
        & ℙ \ar[d]^{π} \\
        Z \ar@/^2mm/[ur]^{Φ} \ar[r]_{φ} & X,
      }
    \end{gathered}
  \end{equation}
  is equivalent to give an invertible quotient of $φ^*ℰ$.  The relation
  between the morphisms $Φ$ and quotients is described as follows.
  \begin{enumerate}
  \item Given morphism $Φ$, then the associated quotient is given by the
    pull-back $Φ^*τ$, which maps $φ^* ℰ = Φ^*π^* ℰ$ to $Φ^* 𝒪_{ℙ}(1)$.

  \item Given quotient $q: φ^*ℰ → 𝒬$, then the associated morphism $Φ$ gives
    an isomorphism of sheaves $𝒬 ≅ Φ^* 𝒪_{ℙ}(1)$ that identifies $q$
    and $Φ^* τ$.  \qed
  \end{enumerate}
\end{fact}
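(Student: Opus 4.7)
The plan is to deduce the equivalence from the classical universal property of projective space, by localising on $X$ so as to reduce to that case. The statement is essentially the content of \cite[II.7.1 and II.7.12]{Ha77}; what requires genuine verification is only the compatibility with the tautological morphism $τ$ and the behaviour of the local data under the gluing of trivialisations when $\sE$ is not globally free.

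First, I would observe that the forward map is well defined: pulling back the surjection in \eqref{eq:euler0} along any $X$-morphism $Φ$ produces a surjection $φ^* \sE = Φ^* π^* \sE \twoheadrightarrow Φ^* \sO_{\bP}(1)$ onto an invertible quotient, and this construction is manifestly functorial in $Φ$.  To construct the inverse, I would cover $X$ by affine opens $U_α$ on which $\sE$ is trivial, so that $π^{-1}(U_α) ≅ \bP^{r-1}_{U_α}$ and the restriction of $τ$ becomes the standard tautological quotient $\sO^{⊕ r} \twoheadrightarrow \sO(1)$. On each such patch, an invertible quotient $q : φ^* \sE \twoheadrightarrow \sQ$ together with the chosen trivialisation yields the invertible sheaf $\sQ|_{φ^{-1}(U_α)}$ equipped with $r$ globally generating sections $s_1, …, s_r$; the classical universal property of $\bP^{r-1}$ then produces a unique $U_α$-morphism $Φ_α : φ^{-1}(U_α) → \bP^{r-1}_{U_α}$ for which the pullback of the tautological quotient recovers $q|_{φ^{-1}(U_α)}$ under the canonical identification.

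The main technical point is the gluing.  On overlaps $U_α ∩ U_β$, the transition matrix of $\sE$ relates the generating sections $(s_i^α)$ to $(s_i^β)$ via precisely the same $GL_r$-cocycle that governs the change of trivialisation of $π^{-1}(U_α ∩ U_β)$, so the local morphisms $Φ_α$ and $Φ_β$ agree on the overlap and glue to a well-defined global morphism $Φ : Z → \bP$ lifting $φ$.  A final patchwise check shows that the two assignments are mutually inverse and that the resulting isomorphism $\sQ ≅ Φ^* \sO_{\bP}(1)$ intertwines $q$ with $Φ^* τ$, as claimed.
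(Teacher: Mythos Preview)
Your argument is correct and is the standard derivation from \cite[II, Prop.~7.12]{Ha77}. Note, however, that the paper does not actually prove this statement: it is presented as a ``Fact'' with the \qed\ symbol placed directly at the end of the enunciation, indicating that the authors regard it as well known and omit any argument. Your write-up therefore supplies more detail than the paper itself; there is nothing to compare it against beyond observing that you have correctly identified the relevant reference and filled in the gluing verification that Hartshorne's global formulation leaves implicit.
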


\begin{fact}[Relatively ample hypersurfaces in $ℙ$ -- Subsheaves of $\Sym ℰ$]\label{f:P2}
  In Setting~\ref{setting:pbdl}, to give a relatively ample Cartier divisor
  $H ∈ \Div(ℙ)$ of relative degree $M ∈ ℕ^+$, it is equivalent to give an
  invertible subsheaf $ℒ ⊆ \Sym^M ℰ$.  The relation between subsheaves and
  divisors is described as follows.
  \begin{enumerate}
  \item Given a divisor $H ∈ \Div(ℙ)$ of relative degree $M ∈ ℕ^+$, observe that
    there exists an invertible $ℒ ∈ \Pic(X)$ such that
    $π^*ℒ ≅ 𝒪_{ℙ}(M)⊗ \sI_H ⊆ 𝒪_{ℙ}(M)$.  Push-down to $X$, in order
    to obtain the inclusion $ℒ ⊆ π_* 𝒪_{ℙ}(M) = \Sym^M ℰ$.

  \item Given an invertible $ℒ ⊆ \Sym^M ℰ$, then the following composed morphism of
    invertibles on $ℙ$ is non-trivial:
    $$
    \xymatrix{ %
      π^* ℒ \ar[r] & π^* \Sym^M ℰ \ar[r]^(.6){\Sym^M τ} & 𝒪_{ℙ}(M), %
    }
    $$
    The associated vanishing locus is a Cartier divisor $H ⊆ ℙ$ of relative
    degree $M$.  \qed
  \end{enumerate}
\end{fact}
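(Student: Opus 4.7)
The plan is to verify that the two recipes displayed in items (1) and (2) give well-defined objects of the stated type, and that they are mutually inverse. Throughout, the key identification I would use is $π_* 𝒪_ℙ(m) = \Sym^m ℰ$ for $m ≥ 0$, which encodes the universal property of $ℙ_X(ℰ)$ as the parameter space of one-dimensional quotients of $ℰ$ and is the same isomorphism underlying Fact~\ref{f:P1}. I would record this at the outset.

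For item (1), starting from a relatively ample $H ∈ \Div(ℙ)$ of relative degree $M$, I would consider the line bundle $𝒪_ℙ(M) ⊗ 𝒪_ℙ(-H)$ and observe that its restriction to every fibre $π^{-1}(x) ≅ ℙ^{r-1}$ has degree zero and is hence trivial. A seesaw/cohomology-and-base-change argument then produces a line bundle $ℒ ∈ \Pic X$ together with an isomorphism $π^* ℒ ≅ 𝒪_ℙ(M) ⊗ \sI_H$. Pushing the tautological inclusion $π^* ℒ ↪ 𝒪_ℙ(M)$ forward along $π$ and invoking the projection formula yields an inclusion $ℒ ↪ \Sym^M ℰ$, which is injective because both sheaves are torsion-free.

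For item (2), starting from $ℒ ⊆ \Sym^M ℰ$, I would pull back to $ℙ$ and compose with $\Sym^M τ$, producing a morphism of invertible sheaves $π^* ℒ → 𝒪_ℙ(M)$. This morphism is non-zero: applying $π_*$ returns the original inclusion $ℒ ↪ \Sym^M ℰ$ via the $(π^*, π_*)$-adjunction combined with the identity $π_* 𝒪_ℙ(M) = \Sym^M ℰ$. Its vanishing scheme is then a Cartier divisor $H ⊆ ℙ$ with $𝒪_ℙ(H) ≅ 𝒪_ℙ(M) ⊗ π^* ℒ^{-1}$, and the restriction of $𝒪_ℙ(H)$ to any fibre is $𝒪_{ℙ^{r-1}}(M)$, which is ample; hence $H$ is relatively ample of relative degree $M$.

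Finally, the two assignments are mutually inverse because in each case they record the same datum, namely the line subbundle $π^* ℒ = 𝒪_ℙ(M) ⊗ \sI_H ⊆ 𝒪_ℙ(M)$, presented in two equivalent guises. The only non-formal ingredient is the seesaw/base-change step used in item (1); this is entirely classical and I do not expect any genuine obstacle. The one subtlety worth flagging is the choice of convention for $ℙ(ℰ)$, which dictates whether $π_* 𝒪_ℙ(m) = \Sym^m ℰ$ or its dual; Fact~\ref{f:P1} fixes the convention so that the former holds, and the whole argument is consistent with that choice.
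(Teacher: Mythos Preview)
The paper does not give a proof of this statement at all: it is recorded as a ``Fact'' with a \qed, and the only content is the description of the two constructions in items (1) and (2). Your proposal correctly supplies the routine verifications (seesaw/base-change for the existence of $ℒ$, projection formula and adjunction for the push-forward and non-triviality, and the check that the two recipes are mutual inverses), all of which are the standard arguments one would expect and which the paper takes for granted.
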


The constructions described in Fact~\ref{f:P1} and \ref{f:P2} are of course
related.

\begin{fact}[Relation between Facts~\ref{f:P1} and \ref{f:P2}]\label{f:P3}
  In Setting~\ref{setting:pbdl}, assume we are given a diagram as in
  \eqref{eq:bcmnyf0}, with associated quotient $q: φ^*ℰ → 𝒬$, as well as a
  relatively ample Cartier divisor $H ∈ \Div(ℙ)$ of relative degree $M ∈ ℕ^+$,
  with associated subsheaf $ℒ ⊆ \Sym^M ℰ$.  Abusing notation slightly, the
  symbol $H$ will also be used to denote the associated complete intersection
  subscheme of $ℙ$.  Then, the morphism $Φ$ factors via $H ⊆ ℙ$ if and only if
  the following composed sheaf morphism vanishes identically:
  $$
  φ^* ℒ → φ^* \Sym^M ℰ = Φ^* π^* \Sym^M ℰ → Φ^* 𝒪_{ℙ}(M) ≅ 𝒬^{⊗
    M}.  \eqno\qed
  $$
\end{fact}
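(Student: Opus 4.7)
The statement is essentially a functoriality unpacking, so my plan is to trace the relevant sheaf morphisms through the pullback by $\Phi$ and observe that the map in question is precisely the pullback of the defining section of $H$.

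First, I would invoke Fact~\ref{f:P2} to describe $H$ concretely as a vanishing locus. By construction, the subsheaf $ℒ ⊆ \Sym^M ℰ$ corresponds to the composition
\[
s : π^*ℒ \longrightarrow π^*\Sym^M ℰ \xrightarrow{\ \Sym^M τ\ } 𝒪_{ℙ}(M),
\]
and $H$ is the Cartier divisor on $ℙ$ cut out by $s$, i.e.\ $H$ is the scheme of zeros of the section $s ∈ \Hom_{𝒪_ℙ}(π^*ℒ, 𝒪_ℙ(M)) ≅ H^0\bigl(ℙ, π^*ℒ^{-1}⊗𝒪_ℙ(M)\bigr)$.

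Next, the condition ``$Φ$ factors through $H$'' is tautologically equivalent to the vanishing of the pullback section $Φ^* s$ as a morphism $Φ^* π^* ℒ → Φ^* 𝒪_ℙ(M)$, since a morphism from a scheme $Z$ to $ℙ$ lands in the zero scheme of a global section if and only if the pullback of that section is identically zero. So the whole claim reduces to identifying $Φ^* s$ with the composition displayed in the statement.

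This identification is immediate from commuting pullbacks with $\Sym^M$ and using the compatibilities of Fact~\ref{f:P1}: since $π ◦ Φ = φ$, one has $Φ^* π^* ℒ = φ^* ℒ$ and $Φ^* π^* \Sym^M ℰ = φ^* \Sym^M ℰ$; moreover, Fact~\ref{f:P1}(2) identifies the pullback $Φ^* τ$ with the quotient $q: φ^* ℰ → 𝒬$, whence $Φ^* \Sym^M τ = \Sym^M q$, and under the canonical isomorphism $Φ^* 𝒪_ℙ(M) ≅ 𝒬^{⊗ M}$ this map becomes precisely the composition $φ^* \Sym^M ℰ → 𝒬^{⊗ M}$ appearing in the statement. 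Combining these identifications, $Φ^* s$ is equal to the composed morphism
\[
φ^* ℒ \longrightarrow φ^* \Sym^M ℰ \longrightarrow 𝒬^{⊗ M},
\]
so its vanishing is equivalent to $Φ$ factoring through $H$.

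There is no genuine obstacle here; the only mild subtlety is to be careful that ``factors through $H$'' means factoring through the closed subscheme defined by $\sI_H$ (not merely set-theoretically landing in $\supp H$), which is exactly the content of the vanishing of the pulled-back section $Φ^* s$. This confirms the scheme-theoretic version of the claim.
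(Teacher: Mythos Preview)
Your argument is correct and is exactly the natural unpacking of the definitions. Note that the paper does not actually give a proof of this fact: it is stated with a terminal \qed\ symbol as one of three ``standard facts'' about projectivised bundles, so there is nothing to compare against.
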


\part{The Mordell conjecture for integral points on orbifolds}
\label{part:orbimordell}

%
% Do not edit the following line.  The text is automatically updated by
% subversion.
%
\svnid{$Id: 03-bounds-setup.tex 599 2021-08-23 12:06:26Z kebekus $}

\section{Main results}
\label{sec:height}
\subversionInfo
\approvals{Arne & yes \\ Jorge & yes \\ Stefan & yes}

This present section formulates the main results of Part~\ref{part:orbimordell}
of this paper: orbifold analogues of the classical theorems on boundedness and
rigidity for algebraic points over function fields, as well as an orbifold
version of the Mordell conjecture.  Proofs are given in
Sections~\ref{sec:kimsquare}--\ref{sec:orbimordell} below.

\subsection{Setup}
\label{subsec:setup}
\approvals{Arne & --- \\ Jorge & yes \\ Stefan & yes}

To begin, we specify the setup that is used throughout
Part~\ref{part:orbimordell} in some detail and fix notation.  The central
setting is that of a surface pair $(X,D)$ fibred over a curve $S$, where the
fibration is assumed to be an snc morphism, away from a finite set $Δ$ of
exceptional points in $S$.

\begin{setting}[Surface pair fibred over a curve]\label{setting:CKim}
  Let $k$ be an algebraically closed field of characteristic $p ≥ 0$.  Let
  $φ : X → S$ be a surjective morphism of smooth, projective $k$-varieties,
  where $X$ is of dimension two and $S$ is of dimension one.  Assume that $φ$
  has connected fibres.  Let $D = \sum_{i=1}^d δ_i · D_i$ be a $ℚ$-divisor on
  $X$ such that $(X,D)$ is an snc $\cC$-pair, and let $m_i ∈ ℕ ∪ \{∞\}$ denote
  the $\cC$-multiplicities of $D$.  Finally, let $S° ⊆ S$ be a dense open set
  such that the restriction $φ° := φ|_{X°}$ is an snc morphism for the pair
  $(X°,D°)$, where $X° := f^{-1}(S°)$ and $D° := D ∩ X°$.  We denote the generic
  point of $S$ by $η$ and write $X_η$ for the generic fibre.  Write $Δ := S∖ S°$
  and view $Δ$ as a reduced subscheme of $S$.
\end{setting}

\begin{notation}
  Throughout the text, we consider index sets
  \begin{align*}
    \log & := \{ i \::\: m_i = ∞\} & \fract & := \{ i \::\: m_i < ∞\} \\
    \wild & := \{ i ∈ \fract \::\: p ≠ 0 \text{ and } p|m_i\} & \tame & := \fract ∖ \wild
  \end{align*}
  and the associated reduced sub-divisors of $\supp D$,
  \begin{align*}
    D_{\log} & := ⌊D⌋, & D_{\fract} & := ⌈ D ⌉ - ⌊D⌋, \\
    D_{\wild} & := \sum_{i ∈ \wild} D_i, & D_{\tame} & := \sum_{i ∈ \tame} D_i.
  \end{align*}
  In addition, we write $\{D\} := D - ⌊D⌋$ for the fractional part of
  $D$.
\end{notation}

\begin{rem}[SNC morphism]
  The assumption that $φ°$ is an snc morphism implies that the induced morphism
  $D° → S°$ is étale.  In particular, every component of $D°$ is smooth over
  $\Spec k$.  The assumption also implies that there exists an exact sequence of
  logarithmic differentials, as follows:
  \begin{equation}\label{eq:log6-4}
    \xymatrix{ %
      0 \ar[r] & (φ°)^* Ω¹_{S°} \ar[r]^(.45){dφ°} & Ω¹_{X°} \bigl( \log ⌈ D°⌉ \bigr) \ar[r] & ω_{X°/S°} \bigl(\log ⌈ D° ⌉\bigr) \ar[r] & 0.
    }
  \end{equation}
\end{rem}

As pointed out in the introduction, we are mainly concerned with
\emph{$\cC$-integral points}.  These are parameterised curves $T → X$ that are
not contained in $\supp D$, that dominate $S$, and that intersect the components
$D_i$ of $D$ locally with $\cC$-multiplicity $m_i$ or more, at least away from
the exceptional set $Δ ⊂ S$.  Figure~\vref{fig:xcc} illustrates the somewhat
technical definition.
\begin{figure}
  \begin{tikzpicture}
    % Orbifold curve T
    \draw plot [smooth] coordinates {(2.5-7.0, 2.0) (0.8-7.0, 0.8) (-0.8-7.0,1) (-1.3-7.0,-0.0) (-0.8-7.0,-1) (0.8-7.0, -0.8) (2.5-7.0, -2.0)};
    \draw (2.2-7.0, 2.0) node[left] {$T$};

    % Arrow
    \draw [->] (-4.5, 0.0) -- node[above]{$γ$} (-2.5, 0);

    % Surface X
    \fill[fill=gray!20!white] (-2.0, 1.5) node[above] {$X$} rectangle (2.5, -1.5);
    \draw[dotted] (-2.0, -1.5) rectangle (2.5, 1.5);

    % Smooth Fibres
    \draw[color=gray] plot [smooth, tension=1] coordinates {(-1.5, 2.0) (-1.3,0.0) (-1.5,-2.0)};
    \draw[color=gray] plot [smooth, tension=1] coordinates {(-0.5, 2.0) (-0.2,0.0) (-0.5,-2.0)};
    \draw[color=gray] plot [smooth, tension=1] coordinates {( 0.5, 2.0) ( 0.8,0.0) ( 0.5,-2.0)};

    % Singular Fibre
    \draw[color=gray] (1.5, 2) to [out=-90, in=110] (2,-0.5);
    \draw[color=gray] (1.5, -2) to [out=90, in=-110] (2,0.5);

    % Boundary divisor
    \draw[dashed] plot [smooth, tension=1] coordinates {(-2.5, -1.0) (-1.3,0.0) (0.8, 0.79) (3, 1.0)} node[right] {$D_1$};
    \draw[dashed] plot [smooth, tension=1] coordinates {(-2.5, 1.0) (-1.3,0.0) (0.8,-0.79) (3,-1.0)} node[right] {$D_2$};

    % Orbifold curve
    \draw plot [smooth] coordinates {(2.5, 2.0) (0.8, 0.8) (-0.8,1) (-1.3,-0.0) (-0.8,-1) (0.8, -0.8) (2.5, -2.0)} node[right] {$Σ_T$};

    % Arrow
    \draw [->] (0, -2) -- node[right]{$φ$} (0, -3.5);

    % Curve S
    \draw (-2, -4.0) node[above]{$S$} -- (2.5, -4.0);
    \filldraw (-1.5, -4.0) circle(0.05) node[below]{$Δ_1$};
    \filldraw ( 1.5, -4.0) circle(0.05) node[below]{$Δ_2$};
  \end{tikzpicture}

  \parbox{\textwidth}{\footnotesize The figure shows a $\cC$-integral point in
    Setting~\ref{setting:CKim}, with a boundary divisor of the form
    $D = \frac{1}{2}·(D_1+D_2)$.  The set $Δ ⊂ S$ contains two points where $φ$
    fails to be an snc morphism.  In applications, the set $Δ$ might also
    contain points where $φ$ \emph{is} an snc morphism, but where $γ$ fails to
    be a $\cC$ curve.}

  \caption{$\cC$-integral points}
  \label{fig:xcc}
\end{figure}

\begin{defn}[$\cC$-integral points]\label{def:orbicurve}
  In Setting~\ref{setting:CKim}, a \emph{$\cC$-integral point} is a morphism
  $γ : T → X$, where $T$ is a smooth, projective $k$-curve satisfying the
  following conditions.
  \begin{enumerate}
  \item\label{il:3-3-1} The curve $T$ dominates $S$.  In particular,
    $T° := γ^{-1}(X°)$ is not empty.
  \item\label{il:3-3-2} The restriction $γ° : T° → X°$ is a $\cC$-curve for the
    pair $(X°, D°)$.
  \item\label{il:3-3-3} The induced morphism $γ : T → \Image(γ)$ is birational.
  \end{enumerate}
  If the divisor $D$ is empty, we refer to $\cC$-integral points simply as
  \emph{algebraic points}.
\end{defn}

\begin{rem}[Discussion of $\cC$-integral points]
  Item~\ref{il:3-3-2} implies in particular that $Σ_T := \Image(γ)$, which is a
  one-dimensional $k$-subvariety of $X$, is not contained in $\supp D$.
  Item~\ref{il:3-3-3} implies that the pull-back map of differentials,
  $dγ : γ^* Ω¹_X → ω_T$ is not the zero map.  The following diagrams summarise
  the objects and morphisms of Definition~\ref{def:orbicurve}.
  $$
  \begin{gathered}
    \xymatrix{ %
      T \ar@/_2mm/[rd] \ar[r]^(.3){γ} & Σ_T ⊂ X \hphantom{Σ_T ⊂ } \ar[d]^{φ} \\
      & S }
  \end{gathered}
  \quad\text{and}\qquad
  \begin{gathered}
    \xymatrix{ %
      T° \ar[rrr]^{γ°,\text{ $\cC$-curve for }(X°, D°)} \ar@/_2mm/[drrr] &&& X° \ar[d]^{φ°\text{ snc for }(X°, D°)} \\
      &&& S°.  %
    }
  \end{gathered}
  $$
\end{rem}

As in the classical setting, the two main invariants associated with
$\cC$-integral points are its \emph{height} and its \emph{discriminant}.  The
definitions below are identical to those found in the literature.  They need no
adjustment to work in the orbifold case.

\begin{defn}[Height and discriminant]\label{def:height}
  In Setting~\ref{setting:CKim}, let $B$ be any $ℚ$-divisor on $X$.  Given a
  $\cC$-integral point $γ : T → X$ as in Definition~\ref{def:orbicurve}, define
  the \emph{height} $h_B(γ)$ and the \emph{discriminant} $δ(γ)$ as
  $$
  h_B(γ) := \frac{\deg γ^* B}{[T : S]} \quad\text{and}\quad δ(γ) := \frac{\deg ω_T}{[T : S]}.
  $$
  If $ℬ$ is an invertible sheaf on $X$, we define $h_{ℬ}(γ)$ in the obvious
  fashion.  We simply write $h(γ)$ for the somewhat lengthy symbol
  $h_{K_{X/S}+D}(γ)$.  In order to avoid awkward case-by-case definition, we
  define $h_{ℬ}(γ) = 0$ for all $γ$ if $ℬ$ is the zero-sheaf.
\end{defn}

\subsection{Geometric height inequalities for $\cC$-integral points}
\approvals{Arne & yes \\ Jorge & yes \\ Stefan & yes}
\label{ssec:height}

The following geometric height inequality for $\cC$-integral points is the main
result of Part~\ref{part:orbimordell} in this paper.

\begin{thm}[Geometric height inequalities for $\cC$-integral points]\label{thm:height1a}
  In Setting~\ref{setting:CKim}, assume the following.
  \begin{enumerate}
  \item\label{il:3-6-1} The degree $d := \deg_{X_η} (K_X+D)$ is strictly
    positive.

  \item\label{il:3-6-2} Sequence~\eqref{eq:log6-4} does not split when
    restricted to the generic fibre $X_η$.
  \end{enumerate}
  Then, a height inequality of the following form holds for all $\cC$-integral
  points $γ$:
  $$
  h(γ) ≤ \const^+·\,δ(γ) + O\left( \textstyle{\sqrt{h(γ)}} \right).
  $$
\end{thm}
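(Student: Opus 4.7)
The plan is to adapt the Grauert--Vojta--Kim strategy to the orbifold setting. For ordinary algebraic points, the classical argument lifts $\gamma\colon T \to X$ via its tangent map to the projectivised logarithmic cotangent bundle $\mathbb P \bigl( \Omega^1_X(\log \lceil D \rceil) \bigr)$; the non-splitting of~\eqref{eq:log6-4} on the generic fibre can be recast, via Fact~\ref{f:P2}, as the existence of a relatively ample Cartier divisor $H$ of controlled relative degree in this bundle, and Fact~\ref{f:P3} then implies that the lift of $\gamma$ factors through $H$. Standard intersection-theoretic manipulations on the bundle yield a height bound of the form $h_{K_{X/S}+\lceil D \rceil}(\gamma) \leq \const^+ \cdot \delta(\gamma) + O\bigl(\sqrt{h(\gamma)}\bigr)$. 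This is nearly, but not quite, what is needed: the left-hand side controls the logarithmic canonical $K_{X/S}+\lceil D \rceil$, whereas the theorem asks for the orbifold canonical $K_{X/S}+D$.

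To bridge this gap, I would follow the Campana--P\u{a}un philosophy and work on an adapted cover $\psi\colon Y \to X$, a finite morphism branched with order exactly $m_i$ along each $D_i$ with $m_i<\infty$. In characteristic zero such covers exist globally via Kawamata's construction; in positive characteristic the tame components can be treated similarly, but wild components $i \in \wild$ force the use of local Artin--Schreier-type covers patched into a global, possibly inseparable tower. The pulled-back sequence remains non-split on the generic fibre of $Y\to S$, so the argument of the previous paragraph, carried out on $Y$, produces a height inequality for lifts $\hat\gamma\colon \hat T \to Y$ of $\cC$-integral points. Since $\psi^{*}(K_{X/S}+D)$ agrees with $K_{Y/S}$ up to ramification terms, plus logarithmic corrections along the components of $D_{\log}\cup D_{\wild}$, the inequality on $Y$ is morally the inequality sought on $X$.

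The main obstacle, and the technical heart of the argument, is the comparison of heights and discriminants for $\hat T \to Y$ versus $T \to X$. The lift $\hat\gamma$ is typically \emph{singular}, ramified at preimages of the boundary, because the $\cC$-condition $\gamma^{*} D_i \geq m_i \cdot \supp \gamma^{*} D_i$ is calibrated precisely so that, after pulling back through the $m_i$-fold branching of $\psi$ along $D_i$, integral contact orders survive, but any excess $\cC$-contact is transferred into ramification of $\hat\gamma$. Riemann--Hurwitz then relates $\delta(\hat\gamma)$ to $\delta(\gamma)$, and---this is the key technical insight flagged in the introduction---the singular locus of $\hat\gamma$ improves the bound obtained on $Y$ by exactly the amount required to descend to the desired estimate on $X$. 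In positive characteristic one must additionally contend with potentially inseparable pullbacks of differentials, and with the foliation-theoretic arguments of Campana--P\u{a}un needing Frobenius-twisted refinements; controlling these terms uniformly, so that they fit into the $O\bigl(\sqrt{h(\gamma)}\bigr)$ error, is what turns the sketch into a proof.
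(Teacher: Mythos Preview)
Your overall strategy matches the paper's: pass to an adapted cover $c\colon \hat X \to X$ (tame cyclic part composed with an Artin--Schreier part), run a Kim-type argument there using the sheaf of adapted differentials, and compare back to $X$. The paper packages the Kim step as Theorem~\ref{thm:kimsquare}, valid for any rank-two subsheaf $\mathscr A \subseteq \Omega^1_{\hat X}(\log \hat D_{\log})$ containing $\hat\varphi^*\Omega^1_S$, and then specialises $\mathscr A$ to the adapted differentials on the cover.

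Two points where your sketch departs from what actually works. First, the comparison in your third paragraph is \emph{not} Riemann--Hurwitz relating $\delta(\hat\gamma)$ to $\delta(\gamma)$: a naive Kim bound on $\hat X$ controls $h$ by $\delta(\hat\gamma)$, and Riemann--Hurwitz then adds a ramification term of size comparable to $h(\gamma)$ itself, which no after-the-fact ``singularity improvement'' cancels. The paper's mechanism (Claim~\ref{claim:6-5}) is a direct containment of subsheaves of $\omega_{\hat T}$: over $S^{\circ\circ}$ one has $\Image d\hat\gamma_{\mathscr A} \subseteq \Image(d\alpha\colon \alpha^*\omega_T \to \omega_{\hat T})$, where $\alpha\colon \hat T\to T$. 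This is precisely where the $\cC$-condition $\ord_{p_T}\gamma^*D_i \ge m_i$ is spent, matched against the branching order $m_i$ of $c$; since $\deg_{\hat T}\Image(d\alpha)=[\hat T{:}T]\deg_T\omega_T$, division by $[\hat T{:}S]$ yields $\delta(\gamma)$ with no detour through $\delta(\hat\gamma)$. Second, the adapted-cover construction (Proposition~\ref{prop:B-1}) genuinely needs $p^2\nmid m_i$ for every finite $m_i$; Artin--Schreier handles a single factor of $p$ but not more. The paper therefore first proves Theorem~\ref{thm:height1b} under this hypothesis, and deduces Theorem~\ref{thm:height1a} separately (Section~\ref{sec:pf45b}) by replacing $D$ with a smaller $D'\le D$ whose multiplicities avoid $p^2$ while keeping $\deg_{X_\eta}(K_X+D')>0$ and $\supp D'=\supp D$. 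Your sketch omits this reduction, so as written it is incomplete when some $m_i$ is divisible by $p^2$.
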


Theorem~\ref{thm:height1a} is in fact a simple corollary of the following more
precise result, which generalises works of Vojta, Kim and others.  A proof is
given in Sections~\ref{sec:kimsquare}--\ref{sec:pf45} below.

\begin{thm}[Geometric height inequalities for $\cC$-integral points]\label{thm:height1b}
  In the setting of Theorem~\ref{thm:height1a}, assume that the characteristic
  $p$ is zero, or that none of the finite $\cC$-multiplicities is a multiple of
  $p²$.  Write $d' := d·\lcm\, \{ m_i \,|\, m_i \ne ∞ \}$.  Then, given any
  number $ε ∈ ℚ^+$, a height inequality of the following form holds for all
  $\cC$-integral points $γ$:
  $$
  h(γ) ≤ \max\{d', \, 2+ε\}·δ(γ) + O\left( \textstyle{\sqrt{h(γ)}} \right).
  $$
\end{thm}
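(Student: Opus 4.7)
The plan is to follow the Grauert–Vojta–Kim strategy for height bounds on curves tangent to foliations, adapted to the orbifold setting via Campana–Păun's construction of foliations on adapted covers. I would proceed in three main stages.

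\textbf{First,} build a relatively ample divisor on the projective bundle $\pi: \mathbb{P} := \mathbb{P}_X\bigl(Ω^1_X(\log\lceil D\rceil)\bigr) \to X$ from the Kodaira--Spencer data. Hypothesis~\ref{il:3-6-2} asserts non-splitting of \eqref{eq:log6-4} over $X_\eta$; tensoring with symmetric powers and taking the section coming from the extension class yields, via Fact~\ref{f:P2}, an invertible subsheaf $\mathcal{L} \subseteq \Sym^M Ω^1_X(\log\lceil D\rceil)$ and hence a relatively ample Cartier divisor $H \subset \mathbb{P}$ of relative degree $M$. In the $\cC$-setting the natural choice is $M = \lcm\{m_i : m_i \neq \infty\}$, which is exactly what produces the factor $d' = d \cdot \lcm\{m_i\}$ appearing in the statement: the symmetric power is chosen so that $H$ records the tangency conditions built into Definition~\ref{def:ccurve}.

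\textbf{Second,} carry out the Vojta--Kim dichotomy for the canonical lift. Given a $\cC$-integral point $γ: T \to X$, the non-zero derivative $dγ$ produces via Fact~\ref{f:P1} a canonical lift $\tilde γ: T \to \mathbb{P}$. If $\tilde γ(T) \not\subset H$, then computing $\tilde γ^*𝒪_{\mathbb{P}}(1)$ against the relatively ample class associated to $H$ and using the Euler sequence \eqref{eq:euler0} produces, by a Bogomolov-type intersection-theoretic calculation, a height inequality of the form $h(γ) \leq d' \cdot \delta(γ) + O(\sqrt{h(γ)})$; the square-root error collects boundary contributions coming from points in $\Delta \subset S$ where $\varphi$ fails to be snc. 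If on the contrary $\tilde γ(T) \subset H$, then by Fact~\ref{f:P3} the curve $γ$ is tangent to an algebraic foliation on $X$ induced by $\mathcal{L}$.

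\textbf{Third,} handle the tangent case by passing to an adapted cyclic cover $f: Y \to X$ ramified to order $m_i$ along each $D_i$ with $m_i < \infty$, following Campana--Păun. The foliation on $X$ pulls back to a foliation on $Y$, and the $\cC$-integral point lifts to $\tilde γ_Y: \tilde T \to Y$, where $\tilde T \to T$ is a suitable cover forced by the ramification. The classical Grauert-type argument for curves tangent to foliations then delivers a height bound on $Y$ with coefficient essentially $2+\varepsilon$ times the discriminant on $\tilde T$. Pushing this inequality back down to $X$ is the point where the coefficient $\max\{d', 2+\varepsilon\}$ arises: the transverse case contributed $d'$, the tangent case contributes $2+\varepsilon$.

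\textbf{The main obstacle} is the descent from $Y$ to $X$ in the tangent case. A naive comparison is off by a ramification factor of $f$, and the key technical insight — encoded in the forthcoming Claim~\ref{claim:6-5} of Section~\ref{sec:pf45} — is that the lift $\tilde T \to Y$ is necessarily \emph{singular} along $f^{-1}(D_{\fract})$, precisely because $γ$ meets each $D_i$ with multiplicity $\geq m_i$. The contribution of these singularities to both height and discriminant on $Y$ scales by exactly the ramification factor of $f$, so the two inflations cancel when one pushes the bound from $Y$ back to $X$. Making this cancellation rigorous requires a careful local calculation of $dγ$ near $D$, and in positive characteristic one must control inseparability and wild ramification of $f$; this is why no $\cC$-multiplicity is allowed to be divisible by $p^2$, which ensures that $f$ is tame enough for the local analysis to go through.
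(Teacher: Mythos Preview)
Your architecture is inverted relative to the paper's, and the roles of the two coefficients $d'$ and $2+\varepsilon$ are swapped.

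The paper does \emph{not} perform the Vojta--Kim dichotomy on $X$ and then pass to an adapted cover only in the tangent case. It passes to the strongly adapted cover $c:\what{X}\to X$ \emph{first} (Proposition~\ref{prop:B-1}), constructs on $\what{X}$ the sheaf of adapted differentials $\mathcal{A}\subseteq \Omega^1_{\what{X}}(\log \what{D}_{\log})$ (Section~\ref{sec:adapted}), and then runs the entire generalised Kim argument (Theorem~\ref{thm:kimsquare}) on $(\what{X},\what{D}_{\log})$ with this $\mathcal{A}$. In that argument, the \emph{non-degenerate} case (lift $\not\subset H$) contributes the coefficient $2+\varepsilon$ (Observation~\ref{obs:4-16}), while the \emph{tangent-to-foliation} case contributes $\deg_{\what{X}_\eta}\mathcal{B}=[\what{X}:X]\cdot d=d'$ (Proposition~\ref{prop:hbfol}, Consequence~\ref{cons:59y}). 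You have these reversed.

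Your Stage~1 construction --- taking $M=\operatorname{lcm}\{m_i\}$ and asserting that this symmetric power ``records the tangency conditions'' of Definition~\ref{def:ccurve} --- does not work. A symmetric power exponent on $X$ does not see local intersection multiplicities with $D$; converting $\cC$-integrality into something visible to differential-geometric machinery is precisely the job of the adapted cover and the sheaf of adapted differentials. The factor $d'$ enters because on $\what{X}$ the invertible sheaf $\mathcal{B}$ has fibrewise degree $d'$, not because of a choice of $M$.

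Your Stage~3 claim that the ``classical Grauert-type argument'' for curves tangent to a foliation yields $2+\varepsilon$ is false in positive characteristic. Jouanolou's theorem, which would bound tangent curves by $O(1)$, fails when $\operatorname{char}k>0$ (Section~\ref{ssec:notksq}); the bound one actually obtains for tangent curves is $d\cdot(\deg\operatorname{Image}d\gamma_{\mathcal{A}})/[T:S]+O(\sqrt{h})$ with $d=\deg_{\what{X}_\eta}\mathcal{B}=d'$ (Proposition~\ref{prop:hbfol}). This is exactly why $\max\{d',2+\varepsilon\}$ cannot be replaced by $2+\varepsilon$ in characteristic $p$.

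You correctly locate the key technical point in Claim~\ref{claim:6-5}, but its role is different from what you describe: it bounds $\deg_{\what{T}}\operatorname{Image}\,d\what{\gamma}_{\mathcal{A}}$ on the cover in terms of $\delta(\gamma)$ on the base, uniformly for \emph{all} lifted $\cC$-integral points, not only those in a ``tangent'' subcase.
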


\subsubsection{Explanation of Assumption~\ref*{il:3-6-2}}
\label{ssec:explanation}
\approvals{Arne & yes \\ Jorge & yes \\ Stefan & yes}

In case where $\operatorname{char}(k) \ne 0$ and $D \ne 0$,
Assumption~\ref{il:3-6-2} can be interpreted in terms of the field of definition
for the affine curve $C_η := (X∖ \supp D)_η$ over $\Spec k(η)$, as follows.  The
affine curve $C_η$ is defined over $k(η)^{\operatorname{char}(k)}$ if and only
if the exact sequence \eqref{eq:log6-4} splits, see \cite[Lem.~1]{MR1106753}.
In particular, if $X = S ⨯ ℙ¹$ and $\supp D$ is the union of sections, then the
exact sequence \eqref{eq:log6-4} splits if and only if the cross ratios of any
$4$ sections in $D$ lie in $k(η)^{\operatorname{char}(k)}$, see
\cite[Sect.~7]{MR1815399}.

\subsubsection{Necessity of Assumption~\ref*{il:3-6-2}}
\label{ssec:necessity height}
\approvals{Arne & --- \\ Jorge & yes \\ Stefan & yes}

In case where $\operatorname{char}(k) \ne 0$, set $X := S ⨯ ℙ¹$ and let
$\supp D$ be a union of graphs of inseparable morphisms $S → ℙ¹$, taken with
$\cC$-multiplicities that are less than the characteristic.
Sequence~\eqref{eq:log6-4} will then split on $X_η$.  In this case, no matter
whether $\deg_{X_η} (K_X+D)$ is positive or not, the graph of every inseparable
morphism $S → ℙ¹$ is either contained in $\supp D$, or is a $\cC$-integral
point.  Therefore, without Assumption~\ref*{il:3-6-2}, the height of a
$\cC$-integral point cannot be bounded in terms of the discriminant.  At the
same time, we do not claim that our assumptions are optimal.  It is an
interesting problem to determine optimal assumptions in order to guarantee that
a height inequality of the form presented in Theorem~\ref{thm:height1a} holds
true.

\subsubsection{Improved height bounds in characteristic zero}
\approvals{Arne & --- \\ Jorge & yes \\ Stefan & yes}
\label{ssec:impbhcz}
\CounterStep

This paper is mainly concerned with height bounds for $\cC$-integral points over
fields of finite characteristic.  Still, we would like to remark that if
$\operatorname{char}(k) = 0$, then the height bound of
Theorem~\ref{thm:height1b} can easily be improved to
\begin{equation}\label{eq:dfgbns}
  h(γ) ≤ (2+ε)·δ(γ) + O\left( \textstyle{\sqrt{h(γ)}} \right).
\end{equation}
To keep the paper readable, we chose to not discuss the characteristic at every
single step of the proof.  Instead, we refer the reader to
Sections~\ref{ssec:notksq} and \ref{sssec:dfg} where the improvements in case
$\operatorname{char}(k) = 0$ are briefly explained.

\subsubsection{Sharpness of Theorem~\ref*{thm:height1b} and relation to earlier results in case $\operatorname{char}(k) = 0$}
\approvals{Arne & yes \\ Jorge & yes \\ Stefan & yes}

In the classical setting where $D=0$ and $\operatorname{char}(k) = 0$, the
height bound \eqref{eq:dfgbns} is due to Vojta, \cite[Thm.~0.2]{MR1096125}, and
is known to be nonoptimal.  Independent works by McQuillan and Yamanoi that are
specific to characteristic zero, \cite{MR3095099, MR2096455}, allow to replace
the constant $2+ε$ by $1+ε$.  This was previously conjectured by Vojta, see
\cite[Conj.~0.1]{MR1096125} and the survey \cite{MR2605324}.  If desired, the
results of McQuillan and Yamanoi may be applied to further improve
Theorem~\ref{thm:height1b} in case where $\operatorname{char}(k) = 0$.

\subsubsection{Sharpness of Theorem~\ref*{thm:height1b} and relation to earlier results in case $\operatorname{char}(k) \ne 0$}
\approvals{Arne & yes \\ Jorge & yes \\ Stefan & yes}
\label{sssec:B}

The situation in positive characteristic is less well understood.  In case where
$D=0$ and $\operatorname{char}(k) \ne 0$, the height bound of
Theorem~\ref{thm:height1b} is due to Kim, \cite[Thm.~1]{MR1436743}.  Voloch has
shown by way of example that one cannot hope for a better constant than
$\max\{d, \, 2+ε\}$ without further assumptions, \cite[p.~45--46]{MR1436743}.
Nevertheless, more stringent constraints on the behaviour of the Kodaira-Spencer
maps do yield better bounds, see \cite[Thm.~2]{MR1436743} and
\cite[Claim~2.2]{MR1815399}.  In the orbifold setting and in positive
characteristic, it is presently unclear to us if stronger assumptions on the
Kodaira-Spencer of the pair $(X°, ⌈D°⌉)$ might lead to a better constant.  We
will return to the subject in Section~\ref{ssec:doBetter}.

It is conceivable that the assumption ``none of the $\cC$-multiplicities $m_i$
is a multiple of $p²$'' is not necessary and that a more general statement can
be proven if one is willing to replace the Artin-Schreier covers that we use in
Section~\ref{sec:cover} by more complicated Artin-Schreier-Witt covers,
cf.~Remark~\vref{rem:5-4}.  Since this paper is rather long already and since
Theorem~\ref{thm:height1b} suffices for our applications, we have chosen to
avoid a detailed analysis of the algebra and combinatorics involved and to leave
this problem until later.

\subsection{Rigidity theorem for $\cC$-integral points}
\approvals{Arne & --- \\ Jorge & yes \\ Stefan & yes}

As a second step towards our Mordell-type theorem for $\cC$-integral points, we show
that $\cC$-integral points do not deform.  The following theorem makes this
statement precise.

\begin{thm}[Rigidity theorem for $\cC$-integral points]\label{thm:rigidity}
  In Setting~\ref{setting:CKim}, assume the following.
  \begin{enumerate}
  \item\label{il:3-6-1a} The degree $d := \deg_{X_η} (K_X+D)$ is strictly
    positive.

  \item\label{il:3-6-2a} Sequence~\eqref{eq:log6-4} does not split when
    restricted to the generic fibre $X_η$.
  \end{enumerate}
  If \, $T$ is any smooth, projective $k$-curve over $S$, if $H$ is any
  $k$-variety and if
  \[
    γ : T ⨯ H → X
  \]
  is any family of $S$-morphisms whose individual members
  $(γ_h : T → X)_{h ∈ H(k)}$ are $\cC$-integral points, then $γ$ is constant.
\end{thm}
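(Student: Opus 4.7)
The plan is to argue by contradiction, combining the height bound of Theorem~\ref{thm:height1a} with a case analysis on the induced family of image curves. Suppose $\gamma$ is non-constant. Since constancy of $\gamma$ can be checked curve-by-curve, by choosing $h_1 \ne h_2$ in $H(k)$ with $\gamma_{h_1} \ne \gamma_{h_2}$ and restricting to the normalisation of an irreducible curve in $H$ joining these points, we may assume that $H$ is a smooth, irreducible, quasi-projective $k$-curve. Because $T$ and the map $T \to S$ are fixed across the family, $\delta(\gamma_h) = (2g(T)-2)/[T:S]$ is independent of $h$; Theorem~\ref{thm:height1a} therefore yields a uniform bound $h(\gamma_h) \le C$. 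Hence the image curves $\Sigma_h := \gamma_h(T) \subset X$ have bounded degree against any fixed ample divisor on $X$ and form a bounded family of effective $1$-cycles.

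The family $\{\Sigma_h\}$ is either constant or not. If all $\Sigma_h$ coincide as a single curve $\Sigma \subset X$, then by Definition~\ref{def:orbicurve}\ref{il:3-3-3} each $\gamma_h : T \to \Sigma$ is birational, so distinct members must differ by elements of the finite group $\Aut_\Sigma(T)$ of $S$-automorphisms of $T$ over $\Sigma$; a morphism from the irreducible $H$ to a finite set is constant, contradicting the choice of $h_1, h_2$. Otherwise, the $\Sigma_h$ sweep out a two-dimensional subvariety of $X$, which by irreducibility equals $X$, and $\gamma : T \times H \to X$ is a dominant, generically finite morphism of smooth surfaces. This ``dominant case'' is the main obstacle.

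To rule out the dominant case, the plan is an infinitesimal deformation analysis. I would take the derivative of $\gamma$ in the $H$-direction: because $\varphi \circ \gamma$ factors through $\pi_T$, this produces a non-zero morphism on $T \times H$ from the pullback of the tangent sheaf of $H$ to the pullback of the relative tangent sheaf of $X/S$. Restricting to a general slice $T \times \{h_0\}$ gives a non-zero infinitesimal $S$-deformation $\sigma_{h_0}$ of $\gamma_{h_0}$, which one may view as a non-zero global section on $T$ of the pullback of the relative tangent sheaf, twisted by a trivial line bundle. A local computation in coordinates (at a point where $\gamma_{h_0}$ meets $D_i$ with multiplicity $\ge m_i$) shows that preservation of $\cC$-integrality along the family forces $\sigma_{h_0}$ to vanish to order $\ge m_i - 1$ at every point of $\gamma_{h_0}^{-1}(D_i) \cap T°$. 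This large forced vanishing, combined with positivity from Assumption~\ref{il:3-6-1a} via a Hurwitz-type degree computation on $T$, should show the twisted line bundle carrying $\sigma_{h_0}$ has negative degree, contradicting the existence of a non-zero section. Assumption~\ref{il:3-6-2a} enters to rule out the inseparable pathology of Section~\ref{ssec:necessity height}, which in positive characteristic could otherwise permit the constructed deformation to live along an inseparable horizontal direction.
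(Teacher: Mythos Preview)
Your reduction to a one-parameter family and the observation that $\delta(\gamma_h)$ is constant are fine, and the case where all image curves $\Sigma_h$ coincide is disposed of correctly. The problem is the dominant case, where your infinitesimal degree argument does not close.

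Concretely: you obtain a non-zero section $\sigma_{h_0} \in H^0\bigl(T,\gamma_{h_0}^*\,T_{X/S}\bigr)$ vanishing to order $\ge m_i-1$ at each point of $\gamma_{h_0}^{-1}(D_i)\cap T°$. To reach a contradiction you need $\deg_T\gamma_{h_0}^*\,T_{X/S} < \deg E$, where $E$ is this vanishing divisor. But Assumption~\ref{il:3-6-1a} concerns the degree of $K_X+D$ on the \emph{fibre} $X_\eta$, not on the section $\gamma_{h_0}(T)$; the degree $\deg_T\gamma_{h_0}^*K_{X/S}$ is unrelated to $\deg_{X_\eta}(K_X+D)$ and can have either sign. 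Moreover you have no lower bound on $\deg E$: nothing prevents all intersection of $\gamma_{h_0}(T)$ with $\supp D$ from occurring over $S\setminus S°$, in which case $E=0$. So the ``Hurwitz-type'' inequality you invoke is not there. Your use of Assumption~\ref{il:3-6-2a} is also off: that assumption is a non-splitting statement for the log Kodaira--Spencer sequence of $\varphi:X\to S$, and has nothing to do with separability of $\gamma$ in the $H$-direction.

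The paper's argument is structurally different and splits by characteristic. In characteristic zero, dominance of $\gamma$ is used to build an explicit splitting of the pullback $\gamma^*$ of Sequence~\eqref{eq:log6-4} over $S°$, via the projection $\Omega^1_{T\times H}\to\pi_1^*\Omega^1_T$; a delicate local computation (Claim~\ref{claim:6-3}) shows this splitting respects the log poles, and then a semistability argument on $X_\eta$ descends it to a splitting of \eqref{eq:log6-4} itself, contradicting \ref{il:3-6-2a} directly. In positive characteristic, the family $T\times H\to X$ is composed with iterated Frobenius $F_n:H_n\to H$ and sliced by general ample curves $C_n\subset T\times H_n$; one checks that the resulting $\gamma_n:C_n\to X$ are $\cC$-integral points with constant discriminant but heights growing like $p^n$, contradicting Theorem~\ref{thm:height1a}. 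In neither case does a bare infinitesimal-section degree count suffice.
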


\begin{rem}
  The assumption that $T$ is a $k$-curve over $S$ means that $T$ comes equipped
  with a surjective morphism $ζ : T → S$.  The assumption that $γ$ is a family
  of $S$-morphisms means that $φ◦γ_h = ζ$, for every $h ∈ H(k)$.  These
  assumptions are essential for the rigidity as, otherwise, finiteness would be
  impossible for curves with infinite automorphism group.  The conclusion ``$γ$
  is constant'' asserts that $γ_{h_1} = γ_{h_2}$, for all $h_1$, $h_2 ∈ H(k)$.
\end{rem}

\subsection{The Mordell conjecture for $\cC$-integral points}
\approvals{Arne & yes \\ Jorge & yes \\ Stefan & yes}

As a fairly immediate consequence of height bounds and of rigidity, we obtain
the following Mordell-type theorem, asserting the finiteness of $\cC$-integral
points.

\begin{thm}[Mordell-type theorem for $\cC$-integral points]\label{thm:Mordell}
  In Setting~\ref{setting:CKim}, assume the following.
  \begin{enumerate}
  \item\label{il:3-6-1b} The degree $d := \deg_{X_η} (K_X+D)$ is strictly
    positive.

  \item\label{il:3-6-2b} Sequence~\eqref{eq:log6-4} does not split when
    restricted to the generic fibre $X_η$.
  \end{enumerate}
  If $T$ is any smooth, projective $k$-curve over $S$, then the number of
  $\cC$-integral points $γ : T → X$ over $S$ is finite.
\end{thm}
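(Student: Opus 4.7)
The plan is to combine the height bound of Theorem~\ref{thm:height1a} with the rigidity Theorem~\ref{thm:rigidity}, following the classical template for Mordell-type statements over function fields. For a fixed smooth, projective $k$-curve $T$ equipped with its structural surjection $\zeta : T \to S$, the discriminant $\delta(\gamma) = (\deg \omega_T)/[T:S]$ depends only on $T$, not on the $\cC$-integral $S$-morphism $\gamma : T \to X$. Theorem~\ref{thm:height1a} therefore yields a uniform upper bound $h(\gamma) \leq C_0$ for every such $\gamma$.

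Next I would upgrade this height bound into confinement of $\gamma$ in a finite-type subscheme of the $S$-morphism scheme $\Hom_S(T, X)$. Condition~\ref{il:3-3-3} of Definition~\ref{def:orbicurve} makes $\gamma : T \to \Sigma_T := \Image(\gamma)$ birational, so $\deg \gamma^* L = \Sigma_T \cdot L$ for every divisor class $L$ on $X$. The height bound controls $\Sigma_T \cdot (K_{X/S}+D)$; moreover $\Sigma_T \cdot F = [T:S]$ is fixed (with $F$ a general fibre of $\varphi$), and since $\Sigma_T$ is horizontal, its intersection with any vertical prime divisor is a non-negative integer that is at most $[T:S]$. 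Because $(K_{X/S}+D)|_{X_\eta}$ is ample on the generic fibre of $\varphi$, these data pin down $\Sigma_T \cdot A$ uniformly for any fixed ample class $A$ on $X$---either by a direct Hodge-index argument on $N^1(X)_{\mathbb{Q}}$ or, more cleanly, by pulling back to the auxiliary surface $\tilde{X} := T \times_S X \to T$ and arguing with sections of $\tilde{X}/T$. Hence all $\cC$-integral $S$-morphisms lie in a quasi-projective, and therefore Noetherian, subscheme $H^{\mathrm{bd}} \subset \Hom_S(T, X)$.

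To conclude, I would invoke rigidity. The locus $C \subset H^{\mathrm{bd}}$ of $\cC$-integral points is constructible: the multiplicity conditions of Definition~\ref{def:ccurve} and the birationality requirement of Definition~\ref{def:orbicurve} are each either open or closed conditions on the universal family $T \times H^{\mathrm{bd}} \to X$. If $C$ were infinite, Noetherianity would force one of its locally closed strata to be positive-dimensional, hence to contain an irreducible subvariety $W$ of positive dimension each of whose closed points corresponds to a $\cC$-integral $S$-morphism. Theorem~\ref{thm:rigidity}, applied to the universal family $T \times W \to X$, would then force this family to be constant, which is impossible: distinct closed points of $\Hom_S(T, X)$ parametrise distinct morphisms $T \to X$. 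Hence $C$ is finite, as asserted.

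The main obstacle is the second step, namely turning a bound on $h(\gamma)$ into confinement of $\gamma$ in a quasi-projective piece of the $S$-morphism scheme. Because $K_{X/S}+D$ is only fibrewise positive---not ample on $X$---this requires a little numerical geometry of the fibred surface, and working relatively over $T$ with $\tilde{X} \to T$ seems to keep the bookkeeping minimal.
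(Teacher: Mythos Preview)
Your proposal is correct and follows the same strategy as the paper: bounded height confines the $\cC$-integral points to finitely many components of $\Hom_S(T,X)$, the $\cC$-integral locus is locally closed there, and an infinite such locus would yield a positive-dimensional family contradicting rigidity. The paper dispatches your ``main obstacle'' in one line (relative ampleness of $K_{X/S}+D$ together with $\Hom_S(T,X)\subset\Hilb_{T\times_S X/S}$), but invests more than you do in the constructibility step, giving an explicit argument via $\Hilb^{b}_{T/k}$ to show that the $\cC$-integral locus is locally closed; your assertion that the defining conditions are ``either open or closed'' is right in spirit but benefits from that extra care, particularly for components with $m_i=\infty$.
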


Theorem~\ref{thm:Mordell} will be shown in Section~\ref{sec:orbimordell},
starting from Page~\pageref{sec:orbimordell} below.

\subsubsection{Necessity of Assumption ~\ref*{il:3-6-2b}}
\label{ssec:necessity mordell}
\approvals{Arne & yes \\ Jorge & yes \\ Stefan & yes}

The discussion in Section~\ref{ssec:necessity height} shows that without
Assumption~\ref*{il:3-6-2b}, finiteness of $\cC$-integral points does not hold
in general.

\subsubsection{Earlier results}
\approvals{Arne & yes \\ Jorge & yes \\ Stefan & yes}

In the classical situation, where $D = 0$, Theorem~\ref{thm:Mordell} was proven
in characteristic zero by Grauert and Manin independently, \cite{MR0157971,
  MR0222087}.  See also \cite{MR1096426}, which translates Manin's proof to
modern language and fixes a gap in it.  In positive characteristic, but still in
the classical situation, the Theorem is due to Samuel \cite{MR0204430,
  MR0222088}.  An alternative approach in characteristic zero was laid down by
Parshin in \cite{Parshin68}, and was later generalised to arbitrary
characteristic by Szpiro \cite[Sect.~8, Cor.~1]{Szpiro81}.  In positive
characteristic, there is yet another approach, due to Voloch \cite{MR1106753}.
For orbifolds in characteristic zero, Theorem \ref{thm:Mordell} is due to
Campana \cite[Thm.~3.8]{Cam05}.  To the best of our knowledge,
Theorem~\ref{thm:Mordell} is new for orbifolds in positive characteristic.

%
% Do not edit the following line.  The text is automatically updated by
% subversion.
%
\svnid{$Id: 04-bounds-kim.tex 599 2021-08-23 12:06:26Z kebekus $}

\section{Geometric height bounds in a generalised logarithmic setting}
\subversionInfo
\label{sec:kimsquare}
\approvals{Arne & yes \\ Jorge & yes \\ Stefan & yes}

Theorem~\ref{thm:kimsquare} below is the technical core of
Part~\ref{part:orbimordell} of this paper.  It generalises the height bounds
found by Kim, \cite{MR1436743} and will be used in Section~\ref{sec:pf45} to
prove the height inequalities that were stated in Section~\ref{ssec:height}
above.  More specifically, we establish height bounds for $\cC$-integral points
on $(X,D)$ by applying the following Theorem~\ref{thm:kimsquare} to a suitable
adapted cover of $X$ and taking the sheaf of ``adapted differentials'' for $𝒜$.
For clarity of exposition, we specify our precise setting first.

\begin{setting}\label{set:4-1}
  In Setting~\ref{setting:CKim}, assume that the divisor $D$ is reduced.  In
  other words, assume that all coefficients $δ_i$ are equal to $1$.  Also,
  assume that there exists a sequence of inclusions $φ^*Ω¹_S ⊊ 𝒜 ⊆ Ω¹_X(\log D)$
  where $𝒜$ is locally free of rank two and where $\factor{𝒜}{φ^*Ω¹_S}$ is
  invertible over $X°$.  Setting $ℬ := \bigl( \factor{𝒜}{φ^*Ω¹_S} \bigr)^{**}$,
  we obtain a complex of sheaves on $X$,
  $$
  \xymatrix{
    φ^*Ω¹_S \ar@{^(->}[r]^(.55){dφ} & 𝒜 \ar[r] & ℬ,
    }
  $$
  and an exact sequence of locally free sheaves on $X°$,
  \begin{equation}\label{eq:x1}
    \xymatrix{%
      0 \ar[r] & (φ°)^*Ω¹_{S°} \ar[r]^(.55){dφ°} & 𝒜|_{X°} \ar[r] & ℬ|_{X°} \ar[r] & 0.
    }
  \end{equation}
  We follow the notation introduced in Definition~\ref{def:orbicurve} concerning
  $\cC$-integral points $γ : T → X$ and write $dγ_𝒜$ for the composed morphism,
  \begin{equation}\label{eq:dgA}
    γ^*𝒜 → γ^*Ω¹_X(\log D) \xrightarrow{dγ} ω_T\bigl(\log (γ^*D)_{\red}\bigr).
  \end{equation}
\end{setting}

\begin{thm}[Height bounds for subsheaves of $ω_{X/S}$]\label{thm:kimsquare}
  In Setting~\ref{set:4-1}, assume that the degree $d := \deg_{X_η} ℬ$ is
  positive and that Sequence~\eqref{eq:x1} does not split when restricted to
  $X_η$.  Then, given any number $ε ∈ ℚ^+$, a height inequality of the following
  form will hold for all $\cC$-integral points $γ : T → X$,
  \begin{equation}\label{eq:hI2}
    h_ℬ (γ) ≤ \max \{ d, 2+ε \}·\frac{\deg_T \Image dγ_𝒜}{[T:S]}
    + O \left( \textstyle{\sqrt{h_ℬ (γ)}}\right).
  \end{equation}
\end{thm}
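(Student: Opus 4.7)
The plan is to extend the Grauert–Vojta–Kim intersection-theoretic method to the sub-sheaf $\sA \subseteq \Omega^1_X(\log D)$, with the help of Facts~\ref{f:P1} and \ref{f:P2}. Write $\pi: \bP := \bP_X(\sA) \to X$. For any $\cC$-integral point $\gamma: T \to X$, the morphism $d\gamma_\sA$ has invertible image
$$
\gamma^*\sA \twoheadrightarrow \Image(d\gamma_\sA) \subseteq \omega_T\bigl(\log(\gamma^*D)_{\red}\bigr),
$$
and this invertible quotient of $\gamma^*\sA$ determines, via Fact~\ref{f:P1}, a canonical $X$-morphism $\Phi : T \to \bP$ with $\Phi^*\sO_\bP(1) \cong \Image(d\gamma_\sA)$. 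Thus $\deg_T \Image(d\gamma_\sA) = \deg_T \Phi^*\sO_\bP(1)$, and the bound on $h_\sB(\gamma)$ will come from an intersection inequality on $\bP$.

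The second step uses Fact~\ref{f:P1} in the opposite direction: the quotient $\sA \to \sB$ on $X°$ (extended across $\Delta$ via the double-dual construction for $\sB$) gives a distinguished section $\sigma: X \to \bP$ whose image $\sigma(X)$ lies in $|\sO_\bP(1) \otimes \pi^*(\varphi^*\Omega^1_S)^{-1}|$ and satisfies $\sigma^*\sO_\bP(1) \cong \sB$. A direct computation using the dominance of $\varphi\circ\gamma$ shows that $\Phi$ cannot generically factor through $\sigma(X)$: such a factorisation would force the composition $\gamma^*\varphi^*\Omega^1_S \hookrightarrow \gamma^*\sA \to \omega_T(\log)$ to vanish, contradicting $d(\varphi\gamma)\neq 0$. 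The non-splitting hypothesis on Sequence~\eqref{eq:x1} at $X_\eta$ enters at a deeper level, ruling out alternative sections of $\bP|_{X_\eta}$ that would arise from a decomposition $\sA|_{X_\eta} \cong \varphi^*\Omega^1_S|_{X_\eta} \oplus \sB|_{X_\eta}$ and preventing $\Phi$ from being absorbed by such a foliation on $T_\eta$.

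From here the proof splits into two pieces matching the two terms in $\max\{d, 2+\varepsilon\}$. For the universal bound by $2+\varepsilon$, intersecting $\Phi(T)$ with $\sigma(X)$ (and with a small fixed multiple of the tautological class) yields $\deg_T\Phi^*\sigma(X) \geq 0$, which, after unfolding the class of $\sigma(X)$ and absorbing the excess contributions over $\Delta$ and the ramification of $\Phi$ into $O(\sqrt{h_\sB(\gamma)})$, is the desired inequality. For the bound by $d$ when $d\geq 2$, apply Fact~\ref{f:P2} to invertible subsheaves $\sL \subseteq \Sym^M \sA$ of large degree, produced by Riemann–Roch on $X_\eta$: the filtration of $\Sym^M\sA|_{X_\eta}$ whose graded pieces are $(\varphi^*\Omega^1_S)^{\otimes a} \otimes \sB^{\otimes M-a}$ has total degree growing like $M\cdot d$, and the non-splitting hypothesis guarantees that the relatively ample divisor $H \subset \bP$ of relative degree $M$ corresponding to $\sL$ is not factored through by $\Phi$. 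The inequality $\deg_T\Phi^*H \geq 0$, after normalisation by $M$ and letting $M \to \infty$, converts into the bound with constant $d$.

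The main obstacle I anticipate is the bookkeeping of the error term $O\bigl(\sqrt{h_\sB(\gamma)}\bigr)$. Clean intersection numbers are only available on the open part $X°$, whereas contributions from fibres over $\Delta$ and from singular or ramification points of $\Phi(T)$ need to be bounded first linearly in $h_\sB(\gamma)$ and then improved to $O(\sqrt{h_\sB(\gamma)})$ by the classical concavity trick — tight enough to retain the precise constants $\max\{d, 2+\varepsilon\}$. A further technical nuisance is that $\sA$ is only a sub-sheaf of $\Omega^1_X(\log D)$, so one cannot invoke Kim's inequalities off the shelf; one has to recompute the key divisor classes on $\bP(\sA)$ with respect to the restricted sequence \eqref{eq:x1} and verify that the sections of $\Sym^M\sA \otimes \sL^{-1}$ produced by Riemann–Roch survive the passage from $X_\eta$ to a global geometric statement. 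These are precisely the issues that Section~\ref{sec:pf45} will have to confront.
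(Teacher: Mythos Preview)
Your setup is on track: the projectivisation $\bP = \bP_X(\sA)$, the lift $\Phi$ (the paper calls it $\Gamma$) of a $\cC$-integral point via the invertible quotient $\Image(d\gamma_\sA)$, and the use of Riemann--Roch on $X_\eta$ to produce invertible subsheaves of $\Sym^M\sA$ and hence divisors $H\subset\bP$ --- all of this matches the paper's Steps~1--2 in Section~\ref{sec:kimsquarepf}. The resulting dichotomy between curves $\gamma$ for which $\Phi(T)\not\subset\supp H$ (``nondegenerate'') and those for which $\Phi(T)\subset\supp H$ (``degenerate'') is exactly what drives the proof.

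The gap is your treatment of the degenerate case. You assert that ``the non-splitting hypothesis guarantees that the relatively ample divisor $H\subset\bP$ \dots\ is not factored through by $\Phi$.'' This is false, and the paper makes no such claim. Nothing in the non-splitting of Sequence~\eqref{eq:x1} prevents a given lift $\Phi(T)$ from landing inside a component $Y$ of $\supp H$; indeed, the whole point of Steps~3--5 of the paper's proof is to handle precisely those $\gamma$ that \emph{are} degenerate. For such $\gamma$ one resolves $Y$, takes a Stein factorisation over $S$, and constructs a saturated invertible subsheaf $\what{\sF}\subset\beta^*\sA$ (a ``foliation'') to which the lift $\what{\Gamma}$ is tangent (Claim~\ref{claim:iop}); when $\pi|_Y$ is inseparable this uses the nontrivial kernel of $\Omega^1_{\what{Y}}\to\alpha_*\Omega^1_{\wtilde{Y}}$. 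One then shows (Claim~\ref{claim:gfh1x}) that either a numerical inequality on the quotient $\what{𝒢}$ yields the $(2+\varepsilon)$ bound via N\'eron, or else $\sA|_{X_\eta}$ is slope-unstable, producing a global foliation $\sF\subset\sA$ on $X$. Curves not tangent to $\sF$ get the constant $2$; curves tangent to $\sF$ are handled by the separate Proposition~\ref{prop:hbfol}, where the non-splitting hypothesis is finally used --- to force any invertible quotient of $\sA|_{X_\eta}$ to have positive degree --- and N\'eron's theorem gives the constant $d$. This foliation mechanism is what produces the $\max\{d,2+\varepsilon\}$ rather than $2+\varepsilon$ alone, and it is entirely absent from your outline.

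Two smaller points. First, your $(2+\varepsilon)$ argument via ``intersecting $\Phi(T)$ with $\sigma(X)$'' is not what the paper does and, as sketched, only yields $\deg_T\Image(d\gamma_\sA)\ge\deg_T\gamma^*\varphi^*\Omega^1_S$, which is the wrong inequality; the paper instead extracts the $(2+\varepsilon)$ bound directly from the nondegenerate case (Observation~\ref{obs:4-16}) and from Claim~\ref{claim:gfh1x} in the degenerate case. Second, the $O(\sqrt{h_\sB(\gamma)})$ term does not come from any concavity trick over $\Delta$; it is the standard error in N\'eron's comparison of heights with respect to numerically proportional line bundles, applied several times.
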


\begin{notation}
  To avoid overly verbose notation, we write $ω_T(\log γ^*D)$ instead of the
  more correct $ω_T\bigl(\log (γ^*D)_{\red}\bigr)$ throughout the rest of the
  present Section~\ref{sec:kimsquare}.
\end{notation}

\begin{rem}[Relation to Theorem~\ref{thm:height1b}]
  No matter what sheaf $𝒜$ is chosen, the assumption that $γ°$ is a $\cC$-curve,
  as spelled out in \ref{il:3-3-2}, implies that $\supp γ^*D$ lies over
  $Δ = S ∖ S°$.  In particular, $\deg_T (γ^*D)_{\red} ≤ [T:S]·\#Δ$ and
  $$
  \frac{\deg_T \Image dγ_𝒜}{[T:S]} ≤ \frac{\deg_T ω_T(\log γ^*D)}{[T:S]} ≤ δ(γ) +
  \#Δ
  $$
  Theorem~\ref{thm:kimsquare} therefore implies Theorem~\ref{thm:height1b} in
  the special case where $D$ is reduced.
\end{rem}

\begin{rem}[Relation to results in the literature]
  In the special case where $D = 0$ and $𝒜 = Ω¹_X$, Theorem~\ref{thm:kimsquare}
  replaces the number $\deg_T ω_T$ which appears in Kim's work \cite[Thms.~1 and
  2]{MR1436743} by $\deg_T \Image(dγ_𝒜)$, which gets smaller in comparison to
  $\deg_T ω_T$ the more singular the curve $Σ_T$ is.  This improvement will be
  of critical importance for our applications.  In fact, the whole proof of
  Theorem~\ref{thm:height1b} hinges on this observation.
\end{rem}

The proof of Theorem~\ref{thm:kimsquare} follows ideas of \cite{MR1436743,
  MR1757883} and extends them to our generalised setting.  We present a detailed
and self-contained proof in the rest of Section~\ref{sec:kimsquare}.

\subsection{Preparation for the proof: elementary fact}
\label{sec:1a}
\approvals{Arne & --- \\ Jorge & yes \\ Stefan & yes}

The proof of Theorem~\ref{thm:kimsquare} considers the following situation more
than once.

\begin{situation}\label{sit:1a}
  Let $k$ be an algebraically closed field.  Let $ψ : Z → B$ be a surjective
  morphism with connected fibres between smooth, projective $k$-varieties, where $Z$
  is of dimension two, and $B$ of dimension one.  We will also consider a chain
  $0 ⊊ ℱ ⊊ 𝒜$ of coherent sheaves on $Z$, where $ℱ$ is invertible, where $𝒜$
  is locally free of rank two, and where the quotient by $𝒬 := \factor{𝒜}{ℱ}$ is
  torsion free.
\end{situation}

\begin{reminder}[Saturated subsheaf]\label{remi:sat}
  The assumption that $𝒬$ is torsion free is often formulated by saying that
  \emph{$ℱ$ is a saturated subsheaf of $𝒜$}.  We will use this language in the
  sequel and refer to \cite[Sect.~1.1]{MR2665168} for a more detailed discussion
  of saturation.
\end{reminder}

\begin{lem}\label{lem:1a}
  In Situation~\ref{sit:1a}, there exists an invertible sheaf $𝒢 ⊆ 𝒬$ such that
  the support of the quotient $\factor{𝒬}{𝒢}$ does not dominate $B$.  If $Z_η$
  denotes the generic $ψ$-fibre, this implies in particular that
  $𝒢|_{Z_η} ≅ 𝒬|_{Z_η}$.
\end{lem}
\begin{proof}
  The quotient $𝒬$ is torsion free and of rank one, but need not be locally
  free.  However, since $Z$ is regular, its reflexive hull $𝒬^{**}$ will be
  invertible, \cite[Cor.~1.4 or Prop.~1.9]{MR597077}.  Also, recall from
  \cite[Ex.~1.1.16]{MR2665168}, that there exists a dimension-zero (and hence
  finite) subscheme $A ⊊ Z$ such that the natural inclusion $𝒬 ↪ 𝒬^{**}$
  identifies $𝒬$ with $𝒥_A ⊗ 𝒬^{**}$.  Consider the image $C := ψ(A)$, equipped
  with its natural structure as a reduced subscheme of $B$.  If $N ≫ 0$, then
  $𝒥_{ψ^{-1}(C)}^{⊗ N} ⊊ 𝒥_A$, and we obtain the following chain of inclusions
  which finishes the proof:
  $$
  𝒢 := 𝒥_{ψ^{-1}(C)}^{⊗ N} ⊗ 𝒬^{**} ⊊ 𝒬 ⊆ 𝒬^{**}.  \eqno \qedhere
  $$
\end{proof}

Lemma~\ref{lem:1a} will be used in the following form.

\begin{obs}\label{obs:l1.2}
  In Situation~\ref{sit:1a}, let $𝒢 ⊆ 𝒬$ be one of the subsheaves given by
  Lemma~\ref{lem:1a}.  If $γ : T → Z$ is a morphism from a smooth, projective
  $k$-curve $T$ such that the induced map $T → B$ is dominant, if $ℒ ∈ \Pic(T)$
  and if $ρ : γ^*𝒜 → ℒ$ is any non-trivial morphism such that the composition
  $γ^*ℱ → γ^*𝒜 → ℒ$ vanishes, then there exists a non-trivial morphism
  $γ^*𝒢 → \Image(ρ)$, and we obtain a diagram as follows.
  $$
  \xymatrix@C=1.2cm{
    &&& γ^*ℱ \ar[d] \ar@/^3mm/[drr]^{\qquad\txt{\scriptsize composition vanishes\\\scriptsize by assumption}} \\
    &&& γ^*𝒜 \ar[rr]^{ρ} \ar@{->>}[d] && \Image(ρ) ⊆ ℒ \\
    γ^*𝒢 \ar[rrr]^{\text{generically an isomorphism}}_{\text{since $ψ◦ γ$ is dominant}} &&& γ^*𝒬 \ar@{..>}@/_3mm/[urr]_{\quad\text{map exists}} }
  $$
\end{obs}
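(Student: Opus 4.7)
The plan is to factor $\rho$ through $\gamma^*\sQ$, restrict to the subsheaf $\gamma^*\sG$, and verify non-triviality by means of a torsion argument.

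First, I would use right-exactness of the pullback functor: the exact sequence $\sF \hookrightarrow \sA \twoheadrightarrow \sQ$ on $Z$ pulls back to an exact sequence $\gamma^*\sF \to \gamma^*\sA \to \gamma^*\sQ \to 0$ on $T$. The vanishing hypothesis on the composition $\gamma^*\sF \to \gamma^*\sA \xrightarrow{\rho} \sL$ then forces $\rho$ to factor through a morphism $\bar{\rho} \colon \gamma^*\sQ \to \sL$, and the image of $\bar{\rho}$ coincides with $\Image(\rho)$. Pulling back the inclusion $\sG \subseteq \sQ$ produces a morphism $\gamma^*\sG \to \gamma^*\sQ$, and composing with $\bar{\rho}$ yields a morphism $\gamma^*\sG \to \Image(\rho)$, which is the desired candidate.

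It then remains to check that this candidate is non-trivial. By the construction carried out in the proof of Lemma~\ref{lem:1}, the cokernel $\sQ/\sG$ is supported on a finite union of fibres of $\psi$. Since $\psi \circ \gamma$ is dominant, the preimage under $\gamma$ of this support consists of only finitely many points of $T$, so the pullback $\gamma^*(\sQ/\sG)$ is a torsion sheaf on $T$. If the composition $\gamma^*\sG \to \sL$ were identically zero, then $\bar{\rho}$ would factor through $\gamma^*(\sQ/\sG)$; but any morphism from a torsion sheaf to the line bundle $\sL$ is necessarily zero, contradicting the non-triviality of $\rho$.

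The statement carries no serious obstacle. The only mildly delicate point is that the quotient $\sQ$ need not be locally free (it is only torsion-free of rank one), so a priori $\gamma^*\sQ$ may carry torsion; the argument above sidesteps this issue by exploiting that $\sQ/\sG$ is supported on fibres of $\psi$, which is exactly the property that was engineered into the definition of $\sG$ in Lemma~\ref{lem:1}.
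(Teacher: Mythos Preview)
Your proof is correct and follows exactly the approach the paper intends: the paper's ``proof'' is nothing more than the annotated diagram, and your write-up simply spells out the steps encoded there---factoring $\rho$ through $\gamma^*\sQ$ by right-exactness, composing with $\gamma^*\sG \to \gamma^*\sQ$, and deducing non-triviality from the fact that $\sQ/\sG$ is supported over finitely many points of $B$ (which the diagram records as ``generically an isomorphism since $\psi\circ\gamma$ is dominant''). Your torsion-cokernel formulation of the last step is equivalent to the paper's ``generically an isomorphism'' remark.
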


\subsection{Preparation for the proof: height bounds for curves tangent to a foliation}
\label{ssec:x1}
\approvals{Arne & --- \\ Jorge & yes \\ Stefan & yes}

To prepare for the proof of Theorem~\ref{thm:kimsquare}, we will first prove a
height bound for $\cC$-integral points $γ$ tangent to a given foliation, or
which satisfy a ``Pfaffian equation'' in the language of \cite{MR1436743}.
While such bounds are easy to obtain in characteristic zero, where tangent
curves always form a bounded family, the proof below is somewhat more involved.
The following observation will be used.

\begin{obs}\label{obs:g3xa}
  In the setting of Theorem~\ref{thm:kimsquare}, assume that $k$ is
  algebraically closed.  If $γ : T → X$ is a $\cC$-integral point, then $dγ_𝒜$
  \emph{can} vanish identically, but only if the image curve $γ(T)$ is contained
  in the support of $Ω¹_X(\log D)/𝒜$.  To see why, recall from
  Item~\ref{il:3-3-3} of Definition~\ref{def:orbicurve} that $γ$ maps $T$
  birationally onto its image.  The differential
  \[
    γ^*Ω¹_X(\log D) \xrightarrow{dγ} ω_T\bigl(\log (γ^*D)_{\red}\bigr)
  \]
  will therefore \emph{not} vanish.  The composed map $dγ_𝒜$ of \eqref{eq:dgA}
  will then likewise \emph{not} vanish if $γ(T)$ intersects the open set of $X$
  where the sheaves $𝒜$ and $Ω¹_X(\log D)$ agree.  If $dγ_𝒜$ does vanish, then
  the image $γ(T)$ must therefore be contained in the closed set where $𝒜$ and
  $Ω¹_X(\log D)$ differ; this is however exactly the support of the quotient
  $Ω¹_X(\log D)/𝒜$.

  It follows that there are at most finitely many $\cC$-integral points $γ$
  where $dγ_𝒜 = 0$, up to reparametrisation of the morphism $γ : T → X$.  So,
  there exists a number $\const_1 ∈ ℝ$ such that $h_ℬ(γ) ≤ \const_1$ for all
  $\cC$-integral points $γ$ with $dγ_𝒜 = 0$.  \qed
\end{obs}

\begin{notation}[Foliation]
  Let us adopt the setting of Theorem~\ref{thm:kimsquare}.  By an
  \emph{$𝒜$-foliation} on $X$, we mean a saturated, invertible subsheaf $ℱ ⊆ 𝒜$.
  A $\cC$-integral point $γ$ is said to be \emph{tangent to the $𝒜$-foliation
    $ℱ$} if the following composed map $dγ_ℱ$ vanishes identically:
  $$
  γ^*ℱ → γ^*𝒜 \xrightarrow{dγ_𝒜} ω_T(\log γ^*D).
  $$
\end{notation}

\begin{prop}[\protect{Height bound for curves tangent to foliation, compare \cite[p.~53]{MR1436743}}]\label{prop:hbfol}
  If, in the setting of Theorem~\ref{thm:kimsquare}, $ℱ ⊆ 𝒜$ is an $𝒜$-foliation
  on $X$, then a height inequality of the following form will hold for all
  $\cC$-integral points $γ : T → X$ that are tangent to $ℱ$:
  $$
  h_ℬ (γ) ≤ d·\frac{\deg_T \Image dγ_𝒜}{[T:S]} + O \left( \textstyle{\sqrt{h_ℬ
        (γ)}}\right).
  $$
\end{prop}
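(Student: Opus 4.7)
The approach follows the classical strategy of Grauert and Vojta, made explicit in positive characteristic by Kim \cite[p.~53]{MR1436743}: tangency to a foliation forces a factorisation of $d\gamma_\mathcal{A}$, which translates into a height inequality. I would organise the proof in four steps.

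\textbf{Step 1 (degenerate case).} The points $\gamma$ with $d\gamma_\mathcal{A} \equiv 0$ are absorbed into the error: by Observation~\ref{obs:g3xa}, such $\gamma$ come from finitely many image curves in $X$, so their heights are uniformly bounded. In what follows I may therefore assume $d\gamma_\mathcal{A}$ is not identically zero.

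\textbf{Step 2 (produce an invertible quotient).} Tangency to $\mathcal{F}$ means the composition $\gamma^*\mathcal{F} \hookrightarrow \gamma^*\mathcal{A} \xrightarrow{d\gamma_\mathcal{A}} \omega_T(\log \gamma^*D)$ vanishes, so $d\gamma_\mathcal{A}$ factors through $\gamma^*(\mathcal{A}/\mathcal{F})$. Apply Lemma~\ref{lem:1} to the saturated inclusion $\mathcal{F} \subsetneq \mathcal{A}$ over $\varphi: X \to S$ to produce an invertible subsheaf $\mathcal{G} \subseteq \mathcal{A}/\mathcal{F}$ whose cokernel is supported on finitely many fibres of $\varphi$. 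Observation~\ref{obs:l1.2} then provides a non-trivial morphism $\gamma^*\mathcal{G} \to \Image(d\gamma_\mathcal{A})$, and comparing degrees on $T$ yields
\[
h_\mathcal{G}(\gamma) \;\le\; \frac{\deg_T \Image(d\gamma_\mathcal{A})}{[T:S]}.
\]

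\textbf{Step 3 (compare $\mathcal{G}$ and $\mathcal{B}$).} The Euler-type sequence \eqref{eq:x1} gives $\det \mathcal{A} \cong \varphi^*\Omega^1_S \otimes \mathcal{B}$ on $X°$; combined with the general identity $\det \mathcal{A} \cong \mathcal{F} \otimes (\mathcal{A}/\mathcal{F})^{**}$ for a saturated line subsheaf of a rank-two bundle on a smooth surface, this gives $(\mathcal{A}/\mathcal{F})^{**} \cong \mathcal{F}^{-1} \otimes \varphi^*\Omega^1_S \otimes \mathcal{B}$ on $X°$. Since $\mathcal{G}$ agrees with $(\mathcal{A}/\mathcal{F})^{**}$ up to fibre-supported and finite-subscheme corrections, the translation to heights reads
\[
h_\mathcal{B}(\gamma) \;=\; h_\mathcal{F}(\gamma) + h_\mathcal{G}(\gamma) + O(1).
\]

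\textbf{Step 4 (bound $h_\mathcal{F}(\gamma)$; main obstacle).} It remains to bound $h_\mathcal{F}(\gamma)$ by $(d-1) \cdot \deg_T \Image(d\gamma_\mathcal{A})/[T:S]$ up to the error term, which combined with Steps 2 and 3 produces the claimed factor $d$. This is the delicate part. I would lift $\gamma$ to $\tilde\gamma : T \to \mathbb{P}(\mathcal{A})$ via Fact~\ref{f:P1}; the tangency condition forces $\tilde\gamma$ to factor through the section $\sigma_\mathcal{F}(X) \subset \mathbb{P}(\mathcal{A})$ corresponding to the quotient $\mathcal{A} \twoheadrightarrow \mathcal{A}/\mathcal{F}$. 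An intersection-theoretic comparison with the section $\sigma_\mathcal{B}(X)$ associated to $\mathcal{A} \twoheadrightarrow \mathcal{B}$ — or, equivalently, the construction via Fact~\ref{f:P2} of a relatively ample divisor on $\mathbb{P}(\mathcal{A})$ coming from an invertible subsheaf of $\Sym^d \mathcal{A}$ built out of $\varphi^*\Omega^1_S$ and $\mathcal{F}$ — is what produces the factor $d$. The error $O(\sqrt{h_\mathcal{B}(\gamma)})$ absorbs the logarithmic corrections $\omega_T(\log \gamma^*D) - \omega_T$, the discrepancies coming from reflexivisation of $\mathcal{A}/\mathcal{F}$, and the fibre-supported contributions accumulated in Steps 2 and 3. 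The sharpness of the constant $d$, witnessed by the examples of Voloch recalled in \cite[p.~45--46]{MR1436743}, shows that no cleaner argument can be hoped for in positive characteristic.
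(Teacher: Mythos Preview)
Your Steps~1 and 2 match the paper.  But Step~3 sends you down a harder path, and Step~4 is not a proof: you never invoke the two ingredients that actually drive the result, namely the non-splitting of~\eqref{eq:x1} on $X_\eta$ and Néron's height-comparison theorem.

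The paper's argument after your Step~2 is essentially one line.  Because \eqref{eq:x1} presents $\mathcal{A}|_{X_\eta}$ as a non-split extension of the ample $\mathcal{B}|_{X_\eta}$ by $\mathcal{O}_{X_\eta}$, every invertible quotient of $\mathcal{A}|_{X_\eta}$ has strictly positive degree; in particular $\deg_{X_\eta}\mathcal{G} \ge 1$.  Néron's theorem then compares the two height functions directly:
\[
h_{\mathcal{B}}(\gamma) \;\le\; (\deg_{X_\eta}\mathcal{G})\cdot h_{\mathcal{B}}(\gamma) + O(1) \;\overset{\text{Néron}}{\le}\; (\deg_{X_\eta}\mathcal{B})\cdot h_{\mathcal{G}}(\gamma) + O\bigl(\sqrt{h_{\mathcal{B}}(\gamma)}\bigr) \;=\; d\cdot h_{\mathcal{G}}(\gamma) + O\bigl(\sqrt{h_{\mathcal{B}}(\gamma)}\bigr),
\]
and your Step~2 bound $h_{\mathcal{G}}(\gamma) \le \deg_T\Image(d\gamma_{\mathcal{A}})/[T:S]$ finishes.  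The $O(\sqrt{h_{\mathcal{B}}})$ term is the Néron error; the logarithmic, reflexivisation, and fibre-supported corrections you list are all $O(1)$ and not the source of the square-root.

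Your Step~4, by contrast, asks for a bound on $h_{\mathcal{F}}(\gamma)$ for $\gamma$ tangent to $\mathcal{F}$.  But tangency is precisely the vanishing of $\gamma^*\mathcal{F} \to \omega_T(\log\gamma^*D)$, so differential data give you no grip on $\deg_T\gamma^*\mathcal{F}$.  The projectivisation you sketch does not help: the lift $\widetilde{\gamma}$ landing in the section $\sigma_{\mathcal{F}}$ says only that $\widetilde{\gamma}^*\mathcal{O}_{\mathbb{P}}(1) \cong \Image(d\gamma_{\mathcal{A}})$, which is your Step~2 again, not a bound on $h_{\mathcal{F}}$.  And without the non-splitting hypothesis entering somewhere, the constant $d$ cannot appear --- if \eqref{eq:x1} split on $X_\eta$ one could take $\mathcal{F} = \varphi^*\Omega^1_S$ with $\deg_{X_\eta}\mathcal{G} = d$, but for a general foliation $\deg_{X_\eta}\mathcal{G}$ could be any positive integer, and it is Néron that converts this into the uniform factor $d = \deg_{X_\eta}\mathcal{B}$.
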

\begin{proof}
  The generic fibre $X_η$ is a smooth $k(η)$-curve, the sheaf $𝒜|_{X_η}$ is a
  vector bundle on that curve, and Sequence~\eqref{eq:x1} presents $𝒜|_{X_η}$ as
  an extension of the trivial line bundle $φ^*Ω¹_{S°}|_{X_η} ≅ 𝒪_{X_η}$ and the
  ample line bundle $ℬ|_{X_η}$.  This extension is not split by assumption.
  Therefore, any invertible quotient of $𝒜|_{X_η}$ will be of positive degree.

  To apply this observation, consider the cokernel $𝒬 := \factor{𝒜}{ℱ}$.  Let
  $𝒢 ⊆ 𝒬$ be the invertible subsheaf given by Lemma~\ref{lem:1a}, so that
  $𝒢|_{X_η} = 𝒬|_{X_η}$.  The sheaf $𝒢|_{X_η}$ is then an invertible quotient of
  $𝒜|_{X_η}$, so that $\deg_{X_η} 𝒢 > 0$.  Néron's theorem,
  \cite[Thm.~2.11]{MR1002324}, now yields the following inequality for all $γ$:
  \begin{equation}\label{eq:keyinequality}
    h_ℬ(γ) ≤ \underbrace{\bigl(\deg_{X_η} 𝒢\bigr)}_{> 0}\,·\,h_ℬ(γ) + O(1)
    \overset{\text{Néron}}{≤} \underbrace{\bigl(\deg_{X_η} ℬ\bigr)}_{=d}\,·\,h_{𝒢}(γ) + O
    \left( \textstyle{\sqrt{h_ℬ(γ)}} \right).
  \end{equation}
  To end the proof, Observation~\ref{obs:g3xa} implies that it suffices to
  establish the inequality $\deg_T γ^*𝒢 ≤ \deg_T \Image(dγ_𝒜)$ for all $γ$ whose
  associated morphism $dγ_𝒜$ does not vanish identically.  This, however,
  follows because Observation~\ref{obs:l1.2} yields the existence of a
  non-trivial morphism $γ^*𝒢 → \Image(dγ_𝒜) ⊆ ω_T(\log γ^*D)$.
\end{proof}

\subsection{Proof of Theorem~\ref*{thm:kimsquare}}
\label{sec:kimsquarepf}
\approvals{Arne & yes \\ Jorge & yes \\ Stefan & yes}

In this paragraph we adopt the setting of Theorem~\ref{thm:kimsquare} and the
notation introduced in Section~\ref{ssec:x1} above, and we assume that a number
$ε ∈ ℚ^+$ is given.  Let $n_ε$ be the smallest positive integer such that
$n_ε·ε$ is integral.  For the reader's convenience, we subdivided the proof of
Theorem~\ref{thm:kimsquare} into a number of relatively independent steps.

\subsection*{Step 1, the projectivisation of $𝒜$}
\approvals{Arne & --- \\ Jorge & yes \\ Stefan & yes}

Continuing along the ideas of Grauert, Vojta and Kim, \cite{MR0222087,
  MR1096125, MR1436743}, we consider the projectivisation $ℙ := ℙ_X(𝒜)$ together
with the natural projection morphism $π: ℙ → X$ and the standard ``Euler
sequence'' \CounterStep
\begin{equation}\label{eq:eulerx}
  0 → Ω¹_{ℙ/X}(1) → π^*𝒜 \xrightarrow{τ} 𝒪_{ℙ}(1) → 0.
\end{equation}
If $γ : T → X$ is a $\cC$-integral point, the image of the morphism $dγ_𝒜$ is
torsion free, hence either zero or invertible.  If non-zero, we have seen in
Fact~\ref{f:P1} that it defines a lifting to $ℙ$, which will always be denoted
by $Γ$,
$$
\xymatrix@C=1.5cm{
  & ℙ \ar[d]^{π} \\
  T \ar@/^2mm/[ru]^{Γ} \ar[r]_{γ} & X
}
$$
The lifting has the property that $Γ^*τ ≅ dγ_𝒜$, where $τ$ is the morphism that
appears in the Euler sequence \eqref{eq:eulerx} above.  In particular, the
following holds.
\begin{enumerate}
\item\label{il:vcbm.y} We have $Γ^*𝒪_ℙ(1) ≅ \Image(dγ_𝒜) ⊆ ω_T(\log γ^*D)$.

\item\label{il:vcbm.x} The composition
  \[
    Γ^*Ω¹_{ℙ/X}(1) → Γ^*π^*𝒜 = γ^*𝒜 \xrightarrow{dγ_𝒜} ω_T(\log γ^*D)
  \]
  vanishes.
\end{enumerate}

\subsection*{Step 2, symmetric differentials, degenerate and nondegenerate points}
\approvals{Arne & --- \\ Jorge & yes \\ Stefan & yes}

Next, we adapt some arguments from \cite[Sect.~3]{MR1436743} to our setting.

\begin{claim}\label{claim:A}
  If $m$ is a sufficiently large positive integer, then there exists an ample
  invertible sheaf $ℋ_m ∈ \Pic(S)$ and a non-trivial morphism
  $$
  σ : φ^*(ℋ_m^\vee) ⊗ ℬ^{⊗ mn_ε} → \Sym^{(2+ε)·mn_ε} 𝒜.
  $$
\end{claim}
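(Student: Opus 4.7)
The plan is to produce $\sigma$ as a non-zero global section of the sheaf
$$ \sG_m := \Sym^M \sA \otimes \sB^{\otimes (-N)} \otimes \varphi^* \sH_m $$
on $X$, where I abbreviate $N := m n_\varepsilon$ and $M := (2+\varepsilon) m n_\varepsilon$; the latter is a genuine non-negative integer by the minimality of $n_\varepsilon$, since $M = 2 m n_\varepsilon + m (n_\varepsilon \varepsilon)$ and $n_\varepsilon \varepsilon \in \bN$. Any non-zero section of $\sG_m$ corresponds by adjunction to a morphism $\varphi^* \sH_m^\vee \otimes \sB^{\otimes N} \to \Sym^M \sA$, which is exactly what the claim demands.

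First I will pass to the generic fibre $X_\eta$, a smooth projective curve over $k(\eta)$. Since $X_\eta \subseteq X°$, restricting Sequence~\eqref{eq:x1} and trivialising the one-dimensional $k(\eta)$-space $(\Omega^1_S)_\eta$ yields a short exact sequence
$$ 0 \longrightarrow \sO_{X_\eta} \longrightarrow E \longrightarrow L \longrightarrow 0, $$
where $E := \sA|_{X_\eta}$ has rank two, $L := \sB|_{X_\eta}$, and $\deg L = d > 0$ by hypothesis. In particular $\det E \cong L$, so a standard Chern-class computation gives $\deg \Sym^M E = \binom{M+1}{2} d$, whence
$$ \deg \bigl( \Sym^M E \otimes L^{\otimes(-N)} \bigr) = (M+1) (M/2 - N) d = (M+1) \cdot \tfrac{d \varepsilon N}{2}. $$
Letting $g$ denote the genus of $X_\eta$, Riemann--Roch on $X_\eta$ gives
$$ h^0 \bigl( X_\eta, \Sym^M E \otimes L^{\otimes(-N)} \bigr) \geq \chi = (M+1) \bigl[ \tfrac{d \varepsilon N}{2} - (g-1) \bigr], $$
which is strictly positive once $m$ (and hence $N$) is sufficiently large.

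Finally I will push down to the projective curve $S$. The stalk at $\eta$ of $\varphi_*(\Sym^M \sA \otimes \sB^{\otimes(-N)})$ is, by flat base change along $\eta \to S$, naturally isomorphic to $H^0(X_\eta, \Sym^M E \otimes L^{\otimes(-N)})$, which is non-zero by the previous step; in particular the pushforward itself is a non-zero coherent sheaf on $S$. An ample line bundle $\sH_m \in \Pic(S)$ of sufficiently high degree therefore satisfies
$$ H^0 \bigl( S, \, \varphi_*(\Sym^M \sA \otimes \sB^{\otimes(-N)}) \otimes \sH_m \bigr) \neq 0, $$
and the projection formula identifies this space with $H^0(X, \sG_m)$; any non-zero element is the desired $\sigma$. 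The only real content of the argument is the degree computation on $X_\eta$: the hypothesis $d > 0$, combined with $M = (2+\varepsilon)N$ and the resulting slope $\varepsilon N/2 > 0$, is exactly what is needed to beat Riemann--Roch and extract sections on the generic fibre, after which they descend to the total space for free. I do not anticipate a genuine obstacle beyond this count; notice in particular that the non-splitting hypothesis on~\eqref{eq:x1} is not used here.
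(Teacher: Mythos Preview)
Your proof is correct and follows essentially the same route as the paper: compute via Riemann--Roch that $\Sym^M\sA\otimes\sB^{\otimes(-N)}$ has positive Euler characteristic on a fibre once $m\gg 0$, deduce that $\varphi_*\sHom_m$ is non-zero, and then twist by an ample line bundle on $S$. The only cosmetic difference is that you work on the generic fibre $X_\eta$ and invoke flat base change, whereas the paper works on a general closed fibre $X_s$; your version is arguably slightly cleaner since the identification of $H^0(X_\eta,\cdot)$ with the generic stalk of the push-forward is immediate.
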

\begin{proof}[Proof of Claim~\ref{claim:A}]
  Writing $N := (2+ε)·mn_ε$ for brevity and
  \[
    \sHom_m := \sHom \bigl( ℬ^{⊗ mn_ε},\: \Sym^N 𝒜 \bigr),
  \]
  it suffices to show that the push-forward sheaf $φ_*\sHom_m$ is not the zero
  sheaf, for sufficiently large $m$.  Choose a closed point $s ∈ S°$ with fibre
  $X_s$.  Standard computations show that
  \begin{small}
    \begin{align*}
      \rank \sHom_m & = (2+ε)·mn_ε+1 \\
      \deg \sHom_m|_{X_s} & = \deg \Bigl( (ℬ^{⊗ mn_ε})^\vee ⊗ \Sym^N 𝒜 \Bigr)\Bigr|_{X_s} & \text{Defn.~of }\sHom_m \\
                    & = \deg \Bigl( (ℬ^{⊗ mn_ε})^\vee ⊗ \Sym^N (φ^* Ω¹_S ⊕ ℬ) \Bigr)\Bigr|_{X_s} & \text{Sequence~\eqref{eq:x1}}\\
                    & = \deg \Bigl( (ℬ^{⊗ mn_ε})^\vee ⊗ \bigoplus_{i=0}^N ℬ^{⊗ i} \Bigr)\Bigr|_{X_s} & \deg (φ^* Ω¹_S)\bigr|_{X_s} = 0\\
                    & = \sum_{i=0}^N (i-mn_ε)·d \\
                    & = \const^+·\,m² + \text{(lower order terms in $m$)} \\
      \intertext{By Riemann-Roch, the holomorphic Euler characteristic of $\sHom_m|_{X_s}$ grows quadratically in $m$:}
      χ(\sHom_m|_{X_s}) & = \deg \sHom_m|_{X_s} + \bigl(1-g(X_s) \bigr)·\rank \sHom_m|_{X_s}\\
                    & = \const^+·\,m² + \text{(lower order terms in $m$)}.
    \end{align*}
  \end{small}
  It follows that the vector bundles $\sHom_m|_{X_s}$ have sections when $m$ is
  sufficiently large, and hence that $φ_*\sHom_m$ is indeed non-zero.
  \qedhere~(Claim~\ref{claim:A})
\end{proof}

We fix a positive integer number $m$, an ample line bundle $ℋ_m ∈ \Pic(S)$ and a
non-trivial morphism $σ : φ^*(ℋ_m^\vee) ⊗ ℬ^{⊗ mn_ε} → \Sym^{(2+ε)·mn_ε} 𝒜$, and
maintain this choice for the remainder of the present proof.  For brevity of
notation, write
$$
M := mn_ε \quad \text{and} \quad ℒ := φ^*(ℋ_m^\vee) ⊗ ℬ^{⊗ M}.
$$
The sheaf morphism $σ$ identifies $ℒ$ with a subsheaf of $\Sym^{(2+ε)M} 𝒜$.  As
we have seen in Fact~\ref{f:P2}, this defines a divisor $H ∈ \Div(ℙ)$.

\begin{notation}[Degenerate points]
  Let $γ$ be a $\cC$-integral point with $dγ_𝒜 \ne 0$.  Following Kim, we call
  $γ$ \emph{degenerate with respect to $σ$} if $Γ$ factors via $\supp H$.  If
  $Y ⊆ \supp H$ is an irreducible component, we say that $γ$ is \emph{degenerate
    with respect to $Y$} if $Γ$ factors via $Y$.
\end{notation}

Let $γ: T → X$ be a $\cC$-integral point with $dγ_𝒜 \ne 0$.  Fact~\ref{f:P3}
asserts that $γ$ is degenerate with respect to $σ$ if and only if the composed
map
$$
γ^*ℒ \xrightarrow{γ^*σ} γ^*\Sym^{(2+ε)M} 𝒜 \xrightarrow{\Sym^{(2+ε)M} dγ_𝒜}
ω_T(\log γ^*D)^{⊗(2+ε)M}
$$
vanishes identically.  Nondegenerate points therefore satisfy the following
height bound, which allows us to concentrate on degenerate points for the
remainder of the proof.

\begin{obs}\label{obs:4-16}
  If a $\cC$-integral point $γ: T → X$ is nondegenerate with respect to $σ$,
  then
  $$
  h_ℬ(γ) ≤ (2+ε)·\frac{\deg \Image dγ_𝒜}{[T:S]} + \frac{(2+ε)M+1}{M}·\deg_S ℋ_m .
  \eqno\qed \quad\text{(Observation~\ref{obs:4-16})}
  $$
\end{obs}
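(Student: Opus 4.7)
The plan is to extract from the non-degeneracy hypothesis a non-trivial morphism of invertible sheaves on $T$, and then read off the height bound by comparing degrees on both sides.

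By Fact~\ref{f:P3}, the assumption that $\gamma$ is non-degenerate with respect to $\sigma$ amounts to the assertion that the composition
\begin{equation*}
  \gamma^*\sL \xrightarrow{\gamma^*\sigma} \gamma^*\Sym^{(2+\varepsilon)M}\sA \xrightarrow{\Sym^{(2+\varepsilon)M} d\gamma_\sA} \omega_T(\log \gamma^*D)^{\otimes (2+\varepsilon)M}
\end{equation*}
does not vanish identically. The first thing I would do is factor this composition through a single line bundle on $T$. Since $T$ is smooth and $\omega_T(\log\gamma^*D)$ is torsion-free of rank one, the image $\Image(d\gamma_\sA)$ of the map $d\gamma_\sA\colon \gamma^*\sA \to \omega_T(\log\gamma^*D)$ is an invertible subsheaf of the target, and symmetric powers commute with the epi-mono factorisation of $d\gamma_\sA$ on $T$. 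Hence $\Sym^{(2+\varepsilon)M} d\gamma_\sA$ factors as
\begin{equation*}
  \gamma^*\Sym^{(2+\varepsilon)M}\sA \twoheadrightarrow \Image(d\gamma_\sA)^{\otimes (2+\varepsilon)M} \hookrightarrow \omega_T(\log\gamma^*D)^{\otimes (2+\varepsilon)M}.
\end{equation*}
Composing with $\gamma^*\sigma$, the non-vanishing of the full composition yields a non-trivial morphism of invertible sheaves on $T$,
\begin{equation*}
  \gamma^*\sL \longrightarrow \Image(d\gamma_\sA)^{\otimes (2+\varepsilon)M}.
\end{equation*}

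The next step is the standard degree comparison: a non-trivial map between line bundles on the smooth projective curve $T$ forces the degree inequality $\deg_T \gamma^*\sL \le (2+\varepsilon)M \cdot \deg_T \Image(d\gamma_\sA)$. Expanding $\sL = \varphi^*\sH_m^\vee \otimes \sB^{\otimes M}$ using the projection formula, and noting that $\varphi\circ\gamma$ has degree $[T:S]$, one computes $\deg_T \gamma^*\sL = M[T:S]\,h_\sB(\gamma) - [T:S]\deg_S \sH_m$. Dividing through by $M[T:S]$ and solving for $h_\sB(\gamma)$ yields
\begin{equation*}
  h_\sB(\gamma) \le (2+\varepsilon)\cdot\frac{\deg_T \Image(d\gamma_\sA)}{[T:S]} + \frac{1}{M}\deg_S \sH_m,
\end{equation*}
from which the claimed bound (with the slightly larger coefficient $\frac{(2+\varepsilon)M+1}{M}\deg_S\sH_m$) follows at once, as $\sH_m$ is ample.

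I do not anticipate any real obstacle. The only points deserving a moment of care are the verification that $\Image(d\gamma_\sA)$ is invertible, which is routine on the smooth curve $T$, and the observation that taking $\Sym^{(2+\varepsilon)M}$ commutes with the epi-mono factorisation; both are standard.
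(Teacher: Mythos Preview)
Your argument is correct and is exactly the one the paper has in mind: the observation is stated with a bare $\qed$, the expected justification being precisely the degree comparison you carry out after invoking Fact~\ref{f:P3} (equivalently, using \ref{il:vcbm.y} to identify $\Gamma^*\sO_{\bP}(1)$ with $\Image d\gamma_{\sA}$ and noting $\deg_T \Gamma^*H \ge 0$). Your factorisation through $\Image(d\gamma_{\sA})^{\otimes(2+\varepsilon)M}$ is the right way to get the bound in terms of $\deg_T \Image d\gamma_{\sA}$, and you even obtain the slightly sharper additive constant $\frac{1}{M}\deg_S\sH_m$, from which the stated one follows since $\sH_m$ is ample.
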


\subsection*{Step 3, height bounds for degenerate points -- setup and simplifications}
\approvals{Arne & yes \\ Jorge & yes \\ Stefan & yes}

Since the divisor $H ∈ \Div(ℙ)$ has only finitely many components, it suffices
to prove our height bounds for algebraic points that are degenerate with respect
to any one given irreducible component $Y ⊆ \supp H$.  We fix one component $Y$
for the remainder of the proof.  If $π(Y)$ is a proper subset of $X$, then there
are (up to reparametrisation) at most finitely $\cC$-integral points that are
degenerate with respect to $Y$, and there is nothing to prove.  We will
therefore assume the following.

\begin{asswlog}
  The component $Y$ dominates $X$.
\end{asswlog}

To set the stage for the next steps in the proof, we use resolution of
singularities in dimension two, \cite{MR0491722}, in order to find $k$-smooth
varieties $\wtilde{Y}$, $\what{Y}$, $\what{S}$ and a diagram,
$$
\xymatrix{ %
  &&&& &&&& Y \ar[d]^{π|_Y} \\
  \wtilde{Y} \ar@/^3mm/[urrrrrrrr]^{ρ\text{, desingularisation}} \ar[rrrr]_{\txt{\scriptsize $α$, generically finite, \\ \scriptsize isomorphism or purely inseparable}} &&&& \what{Y} \ar[rrrr]_{β\text{, generically finite, separable}} \ar[d]^{\what{φ}\text{, connected fibres}} &&&& X \ar[d]^{φ\text{, connected fibres}} \\
  &&&& \what{S} \ar[rrrr]_{\text{finite}} &&&& S,
}
$$
where $\supp β^*D ⊊ \what{Y}$ and $\supp α^*β^*D ⊊ \wtilde{Y}$ are strict normal
crossings divisors.

\begin{notation}
  Write $\what{η}$ for the generic point of $\what{S}$ and $\what{Y}_{\what{η}}$
  for the generic fibre.
\end{notation}

\begin{o-and-n}
  If $γ$ is degenerate with respect to $Y$ and if $Γ(T)$ is \emph{not} contained
  in the set of fundamental points of the birational map $ρ^{-1}$, then the
  morphism $Γ$ factorises via $\wtilde{Y}$.  In other words, there exist
  morphisms $\wtilde{Γ}$, $\what{Γ}$ that make the following diagram commute:
  $$
  \xymatrix@C=1.2cm{ %
    T \ar[d]^{\wtilde{Γ}} \ar@{=}[r] & T \ar[d]^{\what{Γ}} \ar@{=}[r] & T \ar[d]^{γ} \ar@{=}[r] & T \ar[d]^{Γ} \\
    \wtilde{Y} \ar[r]^{α} \ar@/_3mm/[rrr]_{ρ} & \what{Y} \ar[r]^{β} & X & Y.  \ar[l]_{π} }
  $$
  We say these points \emph{are liftable to $\wtilde{Y}$}, or \emph{lift to
    $\wtilde{Y}$}.
\end{o-and-n}

The fundamental points of $ρ^{-1}$ form a proper subset of $Y$ that does not
dominate $X$.  In particular, there are (up to reparametrisation) at most
finitely many $\cC$-integral points $γ$ that are degenerate with respect to $Y$,
but do not lift to $\wtilde{Y}$.  This yields the following claim, which allows
us to concentrate on liftable curves for the remainder of the proof.

\begin{claim}\label{claim:g3x}
  An inequality of the form $h_ℬ(γ) ≤ O(1)$ holds for all $\cC$-integral points
  $γ$ that are degenerate with respect to $Y$ but do not lift to $\wtilde{Y}$.
  \qed~\quad(Claim~\ref{claim:g3x})
\end{claim}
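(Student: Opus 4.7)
The plan is to reduce the claim to a simple finiteness statement for the image curves $\Sigma_T = \gamma(T) \subset X$. The assertion in the paragraph immediately preceding the claim — that the fundamental locus of $\rho^{-1}$ is a proper subset of $Y$ not dominating $X$ — is the key input; everything else should follow by tracking how a $\cC$-integral point $\gamma$ degenerate with respect to $Y$ but not liftable to $\wtilde Y$ is constrained geometrically in $X$.

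First I would name the obstruction. Let $F \subset Y$ denote the fundamental locus of $\rho^{-1}$, and set $Z := (\pi|_Y)(F) \subset X$. By hypothesis $F$ is proper in $Y$ and $\pi|_Y(F)$ does not dominate $X$, so $Z$ is a proper closed subset of $X$ of dimension at most one. If $\gamma$ is degenerate with respect to $Y$ but does not lift to $\wtilde Y$, then by definition $\Gamma(T) \subseteq F$, and hence $\Sigma_T = \pi(\Gamma(T)) \subseteq Z$.

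Next I would use that a $\cC$-integral point, by \ref{il:3-3-1}, has image dominating $S$: so $\Sigma_T$ is a curve in $X$ whose image in $S$ is dense. A one-dimensional reduced closed subscheme $Z \subset X$ contains only finitely many irreducible curves, and only finitely many of those dominate $S$. Hence only finitely many image curves $\Sigma_T$ can occur. Finally, by \ref{il:3-3-3} the morphism $\gamma \colon T \to \Sigma_T$ is birational, so $T$ is forced to be the normalisation of $\Sigma_T$ and $\gamma$ is uniquely determined by $\Sigma_T$ up to automorphism of $T$; in particular the height $h_\mathcal{B}(\gamma) = \deg \gamma^\ast \mathcal{B} / [T : S]$ is a function of $\Sigma_T$ alone. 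Taking $\const_1$ to be the maximum of these finitely many height values, the desired bound $h_\mathcal{B}(\gamma) \le O(1)$ follows.

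I do not anticipate a genuine obstacle here: the hard work is hidden in the passage that precedes the claim, where $Y$ is resolved and the fundamental locus is identified as not dominating $X$. The claim itself is essentially a bookkeeping statement, arguing that once the lift $\Gamma$ is forced into a proper subvariety of $Y$ that projects into a one-dimensional subset of $X$, the finiteness of curves in that subset that dominate $S$ makes the height automatically bounded. The only care required is to distinguish the image curve $\Sigma_T$ from the parametrisation $\gamma$ and to invoke \ref{il:3-3-3} to pin down $\gamma$ from $\Sigma_T$ up to reparametrisation — which is precisely the sense in which the claim is stated.
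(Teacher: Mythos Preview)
Your proposal is correct and follows essentially the same approach as the paper. The paper dispatches this claim in the sentence immediately preceding it (note the \qed\ attached to the claim statement itself): the fundamental locus of $\rho^{-1}$ does not dominate $X$, hence up to reparametrisation only finitely many $\cC$-integral points can fail to lift, and your proof simply unpacks this one-line justification into its constituent steps.
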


\subsection*{Step 4, height bounds for degenerate points -- generalised foliation}
\approvals{Arne & yes \\ Jorge & yes \\ Stefan & yes}

We will see in this step that the surface $\what{Y}$ carries a generalised
foliation, to which almost all liftings $\what{Γ}$ are tangent.  The arguments
are similar in spirit to those of Section~\ref{ssec:x1} above.  The existence of
the foliation comes out of the following claim.

\begin{claim}\label{claim:iop}
  There exists a saturated, invertible sheaf $\what{ℱ} ⊊ β^*𝒜$ such that the
  composed map
  \begin{equation}\label{eq:vcmb}
    \what{Γ}^*\what{ℱ} → \what{Γ}^*β^*𝒜 = γ^*𝒜 \xrightarrow{dγ_𝒜} ω_T(\log γ^*D)
  \end{equation}
  vanishes, up to reparametrisation for almost all $Y$-degenerate $\cC$-integral
  points $γ$ that are liftable to $\wtilde{Y}$.
\end{claim}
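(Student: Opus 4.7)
My plan is to construct $\what{\mathcal{F}}$ by first exhibiting a naturally defined line subbundle on the resolution $\wtilde{Y}$ and then descending along the purely inseparable cover $\alpha$. To build the subbundle on $\wtilde{Y}$, pull back the Euler sequence \eqref{eq:eulerx} along $\wtilde{Y} \xrightarrow{\rho} Y \hookrightarrow \mathbb{P}$. Using $\pi|_Y \circ \rho = \beta\circ\alpha$ and $\rho^*\pi^*\mathcal{A} \cong \alpha^*\beta^*\mathcal{A}$, this yields a morphism $\alpha^*\beta^*\mathcal{A} \to \rho^*\mathcal{O}_\mathbb{P}(1)$ whose generic kernel has rank one, and I define $\wtilde{\mathcal{F}} \subset \alpha^*\beta^*\mathcal{A}$ to be its saturation. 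For any $Y$-degenerate $\mathcal{C}$-integral point $\gamma$ that lifts to $\wtilde{\Gamma}: T \to \wtilde{Y}$, the identity $\Gamma = \rho \circ \wtilde{\Gamma}$ together with $\Gamma^*\tau = d\gamma_\mathcal{A}$ (items (\ref{il:vcbm.y})--(\ref{il:vcbm.x})) forces the composition $\wtilde{\Gamma}^*\wtilde{\mathcal{F}} \hookrightarrow \gamma^*\mathcal{A} \xrightarrow{d\gamma_\mathcal{A}} \omega_T(\log \gamma^*D)$ to vanish on a dense open subset of $T$, and hence everywhere by torsion-freeness of the target.

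Next I descend $\wtilde{\mathcal{F}}$ to a saturated invertible subsheaf $\what{\mathcal{F}} \subsetneq \beta^*\mathcal{A}$ on $\what{Y}$. When $\alpha$ is an isomorphism — automatic in characteristic zero, and in positive characteristic whenever $\pi|_Y$ is generically separable — I simply set $\what{\mathcal{F}} := (\alpha^{-1})^*\wtilde{\mathcal{F}}$. Otherwise $\alpha$ is purely inseparable of some degree $p^m \geq p$, and there exists a morphism $\phi: \what{Y} \to \wtilde{Y}$ satisfying $\phi\circ\alpha = F^m_{\wtilde{Y}}$ and $\alpha\circ\phi = F^m_{\what{Y}}$. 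The section $\wtilde{Y} \to \mathbb{P}_{\wtilde{Y}}(\alpha^*\beta^*\mathcal{A}^\vee)$ dual to $\wtilde{\mathcal{F}}$ composes with the canonical projection $\mathbb{P}_{\wtilde{Y}}(\alpha^*\beta^*\mathcal{A}^\vee) = \mathbb{P}_{\what{Y}}(\beta^*\mathcal{A}^\vee) \times_{\what{Y}} \wtilde{Y} \to \mathbb{P}_{\what{Y}}(\beta^*\mathcal{A}^\vee)$ to a morphism whose image is a prime divisor $C$ with $C \to \what{Y}$ purely inseparable of $p$-power degree $p^j$, $0 \leq j \leq m$. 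Extracting $p^j$-th roots of fibre coordinates — equivalently, interpreting $C$ after a Frobenius twist — presents $C$, away from a divisor in $\what{Y}$ not dominating $X$, as a section of $\mathbb{P}_{\what{Y}}(\beta^*\mathcal{A}^\vee) \to \what{Y}$, and the line subbundle of $\beta^*\mathcal{A}$ cut out by this section (after saturation) is $\what{\mathcal{F}}$. By construction, $\alpha^*\what{\mathcal{F}}$ coincides with $\wtilde{\mathcal{F}}$ on a dense open subset of $\wtilde{Y}$.

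The required vanishing of $\what{\Gamma}^*\what{\mathcal{F}} \to \omega_T(\log \gamma^*D)$ now follows at once: on a dense open subset of $T$ it agrees with the vanishing composition of the first paragraph, and torsion-freeness of $\omega_T(\log \gamma^*D)$ promotes this to identical vanishing. The reparametrisation classes excluded by the ``almost all'' clause are exactly those $\gamma$ whose image meets the exceptional divisor in $\what{Y}$ where $\alpha^*\what{\mathcal{F}}$ and $\wtilde{\mathcal{F}}$ disagree; since that divisor does not dominate $X$, the argument preceding Claim~\ref{claim:g3x} bounds the number of offending reparametrisation classes by a finite constant.

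The technical heart of the argument is the descent in positive characteristic. A line subbundle on a purely inseparable cover does not descend automatically — the naive Frobenius pullback $\phi^*\wtilde{\mathcal{F}}$ lives inside $(F^m_{\what{Y}})^*\beta^*\mathcal{A}$ rather than $\beta^*\mathcal{A}$ — and recovering a genuine subsheaf of $\beta^*\mathcal{A}$ rests on the geometric input that a purely inseparable multisection of a $\mathbb{P}^1$-bundle over $\what{Y}$ is, after extracting $p$-th roots of coordinates, a section. This is the point where the hypothesis that $\alpha$ is purely inseparable (rather than merely generically finite) genuinely enters the proof.
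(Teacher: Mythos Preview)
Your construction in the separable case (when $\alpha$ is an isomorphism) matches the paper's: both take the saturation of the image of $\rho^*\Omega^1_{\mathbb{P}/X}(1)$ inside $\beta^*\mathcal{A}$, and the vanishing follows from Item~\ref{il:vcbm.x}.

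The inseparable case, however, has a genuine gap. Your ``descent'' does not produce a subsheaf of $\beta^*\mathcal{A}$. Work over the generic point $\eta$ of $\what{Y}$ and trivialise $\beta^*\mathcal{A}$ there; the purely inseparable multisection $C$ is then a closed point $[a:1]\in\mathbb{P}^1_{k(\eta)}$ with $a^{p^j}=c\in k(\eta)$. Raising fibre coordinates to the $p^j$-th power yields the $k(\eta)$-point $[c:1]$, but under a change of trivialisation by $\bigl(\begin{smallmatrix}\alpha&\beta\\\gamma&\delta\end{smallmatrix}\bigr)\in\mathrm{GL}_2(k(\eta))$ this point transforms by the \emph{Frobenius-twisted} matrix $\bigl(\begin{smallmatrix}\alpha^{p^j}&\beta^{p^j}\\\gamma^{p^j}&\delta^{p^j}\end{smallmatrix}\bigr)$, not the original one. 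Hence your construction defines a section of $\mathbb{P}_{\what{Y}}\bigl((F^j_{\what{Y}})^*\beta^*\mathcal{A}^\vee\bigr)$ and a line subbundle of $(F^j_{\what{Y}})^*\beta^*\mathcal{A}$, not of $\beta^*\mathcal{A}$ itself. In general $\wtilde{\mathcal{F}}$ simply does not descend along $\alpha$: already for $\what{Y}=\Spec k[x]$, $\wtilde{Y}=\Spec k[x^{1/p}]$, $\mathcal{A}=\mathcal{O}^2$, the subbundle spanned by $(x^{1/p},1)$ has no descent.

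The paper sidesteps this obstruction entirely. In the inseparable case it does \emph{not} attempt to descend $\wtilde{\mathcal{F}}$; instead it uses the basic fact that for a purely inseparable morphism $\alpha$ the pull-back of differentials $d\alpha:\Omega^1_{\what{Y}}\to\alpha_*\Omega^1_{\wtilde{Y}}$ has non-trivial kernel. Viewing $\beta^*\mathcal{A}\subset\Omega^1_{\what{Y}}(\log(\beta^*D)_{\red})$ via the separable map $\beta$, one simply picks $\what{\mathcal{F}}'$ inside $\ker(d\alpha)\cap\beta^*\mathcal{A}$ and saturates. The required vanishing is then immediate: since $\what{\Gamma}=\alpha\circ\wtilde{\Gamma}$, the map $d\what{\Gamma}$ factors through $\wtilde{\Gamma}^*(d\alpha)$, which kills $\what{\mathcal{F}}'$ by construction.
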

\begin{proof}[Proof of Claim~\ref{claim:iop}]
  The construction depends on whether the restricted morphism $π|_Y : Y → X$ is
  separable or not.

  \subsubsection*{Construction in the case where $π|_Y$ is separable}

  If $π|_Y$ is separable and $α$ is therefore an isomorphism, the discussion of
  the Euler sequence (Item~\vref{il:vcbm.x}) allows one to choose $\what{ℱ}$ as
  the saturation of $\what{ℱ}'$ inside $β^*𝒜$, where
  $$
  \what{ℱ}' := \Image \bigl( (α^{-1})^*ρ^* \bigl( Ω¹_{ℙ/X}(1)|_Y \bigr) →
  (α^{-1})^* ρ^*π^*𝒜 \bigr) ⊆ β^*𝒜.
  $$
  As a reflexive sheaf of rank one on a regular, two-dimensional scheme,
  $\what{ℱ}$ will then be invertible, as required.

  \subsubsection*{Construction in the case where $π|_Y$ is inseparable}

  In this case, we follow \cite{MR1757883} and recall that the natural morphism
  $Ω¹_{\what{Y}} → α_*Ω¹_{\wtilde{Y}}$ has a non-trivial kernel: it suffices to
  check this at the generic points of $\what{Y}$ and $\wtilde{Y}$, where this is
  a well-known property of Kähler differentials for inseparable field
  extensions, cf.\ \cite[Sect.~6.1, Exerc.~1.6]{MR1917232}.  The same statement
  therefore holds for the associated morphism of logarithmic differentials.  To
  be more precise, we use separability of $β$ to consider the inclusions
  $$
  \xymatrix{ %
    β^*𝒜 \ar@{^(->}[r] & β^* Ω¹_X( \log D ) \ar@{^(->}[r]^(.45){dβ} & Ω¹_{\what{Y}}
    \bigl(\log (β^* D)_{\red} \bigr), %
  }
  $$
  and use these to view $β^*𝒜$ as a subsheaf of
  $Ω¹_{\what{Y}} \bigl( \log (β^*D)_{\red} \bigr)$.  Then, we choose an invertible
  subsheaf
  $$
  \what{ℱ}' ⊆ \ker \Bigl( Ω¹_{\what{Y}} \bigl(\log (β^*D)_{\red} \bigr) → α_*
  Ω¹_{\wtilde{Y}} \bigl( \log (α^*β^*D)_{\red} \bigr) \Bigr) ∩ β^*𝒜
  $$
  and let $\what{ℱ}$ again be the saturation of $\what{ℱ}'$ inside $β^*𝒜$.
  Since $\what{Γ}$ factors via $\wtilde{Γ}$, it is clear that the composed map
  $\what{Γ}^*\what{ℱ}' → ω_T(\log γ^*D)$ vanishes.  It is then clear that the
  composed map \eqref{eq:vcmb} vanishes, except perhaps if $\what{Γ}(T)$ is
  contained in the support of $\factor{\what{ℱ}}{\what{ℱ}'}$.  There are,
  however, up to reparametrisation only finitely many $\cC$-integral points with
  that property.  \qedhere~(Claim~\ref{claim:iop})
\end{proof}

For the remainder of the proof, we choose one saturated, invertible sheaf
$\what{ℱ} ⊊ β^*𝒜$ as given by Claim~\ref{claim:iop}.  Consider the quotient
$\what{𝒬} := \factor{β^*𝒜}{\what{ℱ}}$, apply Lemma~\ref{lem:1a} to the morphism
$\what{φ}$ and let $\what{𝒢} ⊆ \what{𝒬}$ be one of the invertible sheaves given
by that lemma.  Following \cite[Sect.~4]{MR1436743}, we will now prove the
desired height bound under the assumption that a certain numerical inequality
holds.

\begin{claim}\label{claim:gfh1x}
  If
  $(2+ε)·\deg_{\what{Y}_{\what{η}}} \what{𝒢} ≥ \deg_{\what{Y}_{\what{η}}} β^*ℬ$,
  then a height inequality of the form~\eqref{eq:hI2} holds for all
  $\cC$-integral points $γ$ that are degenerate with respect to $Y$.
\end{claim}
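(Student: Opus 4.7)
The plan is to imitate the argument of Proposition~\ref{prop:hbfol}, but on the surface $\what{Y}$ rather than on $X$, using $\what{\cF}$ in place of the foliation $\cF$ and transporting the resulting bound down to $X$ via $β$. By Claim~\ref{claim:g3x} we may restrict to $Y$-degenerate $\cC$-integral points $γ$ that lift to $\wtilde{Y}$, and by Observation~\ref{obs:g3xa} we may further discard the finitely many exceptional $γ$ with $dγ_\cA = 0$ as well as the finitely many points excluded in Claim~\ref{claim:iop}, since each of these classes is automatically height-bounded. For every remaining $γ$ we have the lift $\what{Γ}: T → \what{Y}$ and, by Claim~\ref{claim:iop}, the composed sheaf map $\what{Γ}^*\what{\cF} → ω_T(\log γ^*D)$ vanishes.

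First I would produce an invertible sheaf capturing the quotient: apply Lemma~\ref{lem:1} and Observation~\ref{obs:l1.1} to $\what{\varphi}: \what{Y} → \what{S}$, $β^*\cA$, and $\what{\cF}$, obtaining the invertible subsheaf $\what{\cG} ⊆ \what{\cQ} := \factor{β^*\cA}{\what{\cF}}$ with $\what{\cG}|_{\what{Y}_{\what{η}}} = \what{\cQ}|_{\what{Y}_{\what{η}}}$. The vanishing above together with Observation~\ref{obs:l1.2} then yields a non-trivial sheaf morphism $\what{Γ}^*\what{\cG} → \Image(dγ_\cA) ⊆ ω_T(\log γ^*D)$, whence the degree inequality
\begin{equation*}
  \deg_T \what{Γ}^*\what{\cG} \;\le\; \deg_T \Image(dγ_\cA).
\end{equation*}

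Next I would pass to heights via Néron's theorem \cite[Thm.~2.11]{MR1002324} on $\what{Y}$, exactly as in the proof of Proposition~\ref{prop:hbfol}. The hypothesis of the claim, combined with $\deg_{\what{Y}_{\what{η}}} β^*\cB > 0$ (which follows from $d>0$ and the fact that $β$ is generically finite), guarantees $\deg_{\what{Y}_{\what{η}}} \what{\cG} > 0$, so Néron applies and gives
\begin{equation*}
  \bigl(\deg_{\what{Y}_{\what{η}}} \what{\cG}\bigr)\,h_{β^*\cB}(\what{Γ}) \;\le\; \bigl(\deg_{\what{Y}_{\what{η}}} β^*\cB\bigr)\,h_{\what{\cG}}(\what{Γ}) + O\bigl(\textstyle\sqrt{h_{β^*\cB}(\what{Γ})}\bigr).
\end{equation*}
Dividing by $\deg_{\what{Y}_{\what{η}}} \what{\cG}$ and using the numerical hypothesis $(2+ε)\deg_{\what{Y}_{\what{η}}} \what{\cG} \ge \deg_{\what{Y}_{\what{η}}} β^*\cB$ bounds the ratio by $2+ε$.

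Finally I would translate back from $\what{Γ}$ to $γ$. Since $γ = β ◦ \what{Γ}$ and $[T:S] = [T:\what{S}]\cdot[\what{S}:S]$, one has $h_{β^*\cB}(\what{Γ}) = [\what{S}:S]\cdot h_\cB(γ)$, and the degree bound above yields $h_{\what{\cG}}(\what{Γ}) \le [\what{S}:S]\cdot\frac{\deg_T \Image dγ_\cA}{[T:S]}$. Substituting these identities into the Néron inequality and cancelling the common factor $[\what{S}:S]$ produces
\begin{equation*}
  h_\cB(γ) \;\le\; (2+ε)\cdot\frac{\deg_T \Image dγ_\cA}{[T:S]} + O\bigl(\textstyle\sqrt{h_\cB(γ)}\bigr),
\end{equation*}
which is stronger than \eqref{eq:hI2}. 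The main obstacle I anticipate is bookkeeping: making sure that Néron's theorem genuinely applies to $\what{\cG}$ (rather than to its reflexive hull used in the construction in Lemma~\ref{lem:1}), that the error term remains $O(\sqrt{h_\cB(γ)})$ after the pull-back to $\what{Y}$ and the rescaling by $[\what{S}:S]$, and that the finitely many degenerate points discarded above really contribute only to $O(1)$.
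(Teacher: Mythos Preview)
Your proposal is correct and follows essentially the same route as the paper's proof: restrict to liftable points via Claim~\ref{claim:g3x}, apply Néron's theorem on $\what{Y}$ to compare $h_{β^*ℬ}(\what{Γ})$ and $h_{\what{𝒢}}(\what{Γ})$, use the vanishing from Claim~\ref{claim:iop} together with Observation~\ref{obs:l1.2} to bound $\deg_T \what{Γ}^*\what{𝒢}$ by $\deg_T \Image dγ_𝒜$, and then convert back to $X$ via the factor $[\what{S}:S]$. Your bookkeeping concerns are all harmless: $[\what{S}:S]$ is a fixed constant, so the error term survives the rescaling, and the finitely many discarded points contribute only $O(1)$ exactly as you say.
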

\begin{proof}[Proof of Claim~\ref{claim:gfh1x}]
  In view of Claim~\ref{claim:g3x}, we are interested in $\cC$-integral points
  $γ$ that are degenerate with respect to $Y$ and liftable to $\wtilde{Y}$.  For
  all such $γ$, Néron's theorem yields the inequalities
  \begin{align*}
    h_ℬ(γ) = \frac{1}{[\what{S}:S]}·h_{β^*ℬ}(\what{Γ}) %
    & \overset{\text{Néron}}{≤} \frac{1}{[\what{S}:S]}·\frac{\deg_{\what{Y}_{\what{η}}} β^*ℬ}{\deg_{\what{Y}_{\what{η}}} \what{𝒢}}·h_{\what{𝒢}}(\what{Γ}) + O\Bigl( \textstyle{\sqrt{h_{β^*ℬ}(\what{Γ})}} \Bigr) \\
    & \overset{\hphantom{\text{Néron}}}{≤} \frac{1}{[\what{S}:S]}· (2+ε)·h_{\what{𝒢}}(\what{Γ}) + O\bigl( \textstyle{\sqrt{h_ℬ(γ)}} \bigr).
      \intertext{But then, Claim~\ref{claim:iop} asserts that the composed map \eqref{eq:vcmb}
      vanishes, up to reparametrisation for all but finitely many $\cC$-integral points $γ$.  This in turn implies by
      Observation~\ref{obs:l1.2} that these $γ$ satisfy}
      \frac{1}{[\what{S}:S]}·h_{\what{𝒢}}(\what{Γ}) & \overset{\hphantom{\text{Néron}}}{≤} \frac{1}{[\what{S}:S]}·\frac{\deg_T \Image dγ_𝒜}{[T:\what{S}]} = \frac{\deg_T \Image dγ_𝒜}{[T:S]}.
  \end{align*}
  Hence, the claim.  \qedhere~(Claim~\ref{claim:gfh1x})
\end{proof}

\subsection*{Step 5, height bounds for degenerate points -- end of proof}
\approvals{Arne & yes \\ Jorge & yes \\ Stefan & yes}

Claim~\ref{claim:gfh1x} finishes the proof of Theorem~\ref{thm:kimsquare} in one
special case.  We will therefore assume for the remainder of the proof that the
assumption of Claim~\ref{claim:gfh1x} does \emph{not} hold.

\begin{asswlog}\label{asswlog:r3}
  We have
  $(2+ε)·\deg_{\what{Y}_{\what{η}}} \what{𝒢} < \deg_{\what{Y}_{\what{η}}} β^*ℬ$.
\end{asswlog}

A short computation using Assumption~\ref{asswlog:r3} shows that
$β^*𝒜|_{\wtilde{Y}_{η'}}$ is slope-unstable.  The fact that $β$ is separable
immediately implies that the morphism $\wtilde{Y}_{\what{η}} → X_η$ is
separable, too.  In particular, \cite[Prop.~3.2]{Miyaoka87} implies that the
sheaf $𝒜|_{X_η}$ is slope-unstable.  In other words, there exists a saturated
(hence reflexive, hence invertible) subsheaf $ℱ ⊆ 𝒜$ such that
\begin{equation}\label{eq:vnh}
  \deg_{X_η} ℱ > \frac{1}{2}·\deg_{X_η} 𝒜 = \frac{1}{2}·\deg_{X_η} ℬ.
\end{equation}
Recalling from Proposition~\ref{prop:hbfol} that a height bound has already been
established for all points $γ$ tangent to the $𝒜$-foliation $ℱ$, it remains to
consider those points $γ$ for which the associated morphism
$$
γ^*ℱ → γ^*𝒜 \xrightarrow{dγ_𝒜} ω_T(\log γ^*D)
$$
does not vanish identically, so that $[T:S]·h_{ℱ} (γ) ≤ \deg_T \Image dγ_𝒜$.  But
then again Néron's theorem and Inequality~\eqref{eq:vnh} give a height
inequality of the form
$$
h_ℬ(•) \overset{\text{Néron}}{≤} 2·h_{ℱ}(•) + O \left( \textstyle{\sqrt{h_ℬ(•)}}
\right).
$$
This finishes the proof of Theorem~\ref{thm:kimsquare}.  \qed

\subsection{Improved height bounds in characteristic zero}
\approvals{Arne & yes \\ Jorge & yes \\ Stefan & yes}
\label{ssec:notksq}

If $\operatorname{char}(k) = 0$, then Jouanolou's theorem implies that the
family of curves on $X$ with general member irreducible and with every member
tangent to a given foliation is always bounded, \cite{Jouanolou, Ghys00}.  As a
result, we may replace the complicated height estimate of
Proposition~\ref{prop:hbfol} by the much stronger estimate $h_ℬ (γ) ≤ O (1)$.
With this improvement, the proof of Theorem~\ref{thm:kimsquare}, which uses
Proposition~\ref{prop:hbfol} only in its last step, goes through without many
changes, and allows us to replace Inequality~\eqref{eq:hI2} of
Theorem~\ref{thm:kimsquare} by the stronger bound
\begin{equation}\label{eq:ccjk}
  h_ℬ (γ) ≤ (2+ε)·\frac{\deg_T \Image dγ_𝒜}{[T:S]} + O \left( \textstyle{\sqrt{h_ℬ
        (γ)}}\right).
\end{equation}

%
% Do not edit the following line.  The text is automatically updated by
% subversion.
%
\svnid{$Id: 05-coverings.tex 599 2021-08-23 12:06:26Z kebekus $}

\section{Covering constructions}
\label{sec:cover}
\subversionInfo
\approvals{Arne & --- \\ Jorge & yes \\ Stefan & yes}

Following ideas that originate from the work of Campana \cite{MR2831280} and
Campana-Păun, \cite{CP15}, we show Theorem~\ref{thm:height1b} by passing to a
\emph{strongly adapted} cover.  For varieties over $ℂ$, adapted covers are
discussed in Campana's work, but also in the survey article \cite{CKT16} or
\cite[Sect.~2.4]{MR2976311}.  In the present setup, where the characteristic
might be positive, more care needs to be taken.  The following claim summarises
the relevant properties.

\begin{prop}[Existence of a strongly adapted cover]\label{prop:B-1}
  In Setting~\ref{setting:CKim}, assume that the characteristic
  $p := \operatorname{char}(k)$ is either zero, or else that none of the
  $\cC$-multiplicities $m_i$ is a multiple of $p²$.  Then, there exists a
  smooth, projective $k$-variety $\what{X}$, a generically finite and separable
  surjection $c: \what{X} → X$ and a dense open set $S^{◦◦} ⊆ S°$ with preimages
  $\wcheck{X}^{◦◦} ⊆ \wcheck{X}$ and $\what{X}^{◦◦} ⊆ \what{X}$ forming a
  commutative diagram as follows,
  $$
  \xymatrix@R=1.5cm{ %
    \what{X} \ar[rrr]_{\what{c}\text{, genl.\ finite, separable}} \ar@/^5mm/[rrrrrr]^{c} \ar@/^10mm/[rrrrrrrrr]^{\what{φ}\text{, conn.\ fibres}} &&& \wcheck{X} \ar[rrr]_{\wcheck{c}\text{, genl.\ finite, separable}} &&& X \ar[rrr]_{φ\text{, conn.\ fibres}} &&& S \\
    \what{X}^{◦◦} \ar@{^(->}[u] \ar[rrr]^{\txt{\scriptsize $\what{c}^{\:◦◦}$, finite\\\scriptsize Galois with group $\what{G}$}} \ar@/_5mm/[rrrrrr]_{c^{◦◦}} \ar@/_10mm/[rrrrrrrrr]_{\what{φ}^{◦◦}\text{, conn.\ fibres}} &&& \wcheck{X}^{◦◦} \ar[rrr]^{\txt{\scriptsize $\wcheck{c}^{\:◦◦}$, finite\\\scriptsize Galois with group $\wcheck{G}$}} \ar@{^(->}[u] &&& X^{◦◦} \ar[rrr]^{φ^{◦◦}\text{, conn.\ fibres}} \ar@{^(->}[u] &&& S^{◦◦}, \ar@{^(->}[u]
  }
  $$
  such that the following holds.
  \begin{enumerate}
  \item\label{il:6a-1-1} We have
    $[\what{X}^{◦◦}:X^{◦◦}] = \lcm\, \{ m_i \,|\, m_i \ne ∞ \}$.

  \item\label{il:6a-1-1a} If $p = 0$, then $\what{c}$ is the identity morphism.
    If $p > 0$, then $\deg \what{c} ∈ \{1, p\}$, and $\deg \wcheck{c}$ is
    coprime to $p$.

  \item\label{il:6a-1-2} The morphism $\what{φ}^{\:◦◦}$ is snc for the pair
    $\bigl(\what{X}^{◦◦}, (c^{◦◦})^* D \bigr)$.

  \item\label{il:6a-1-2a} If $\what{R}^{◦◦} ⊂ \what{X}^{◦◦}$ is an irreducible
    component of the ramification divisor for $c^{◦◦}$, then $\what{R}^{◦◦}$ is
    either contained in $\supp (c^{◦◦})^* D$, or disjoint from that support.

  \item\label{il:6a-1-3} If $i$ is any index for which $m_i < ∞$, then all
    irreducible components $Γ$ of $(c^{◦◦})^* D_i$ have multiplicity
    $\mult_Γ (c^{◦◦})^* D_i = m_i$.

  \item\label{il:6a-1-4} If $i$ is any index with $m_i = ∞$, then all
    irreducible components $Γ$ of $(c^{◦◦})^* D_i$ have multiplicity
    $\mult_Γ (c^{◦◦})^* D_i = 1$.
  \end{enumerate}
\end{prop}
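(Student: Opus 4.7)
The plan is to build $c$ as a composition $c = \wcheck{c} \circ \what{c}$, with $\wcheck{c} \colon \wcheck{X} \to X$ a tame Kummer-type cover handling the prime-to-$p$ parts of the finite $\cC$-multi\-plicities, and $\what{c} \colon \what{X} \to \wcheck{X}$ a single cyclic Artin--Schreier cover of degree $p$ handling the wild part (or the identity if $p = 0$ or $\wild = \emptyset$). The hypothesis that no finite $m_i$ is divisible by $p^2$ is precisely what allows the wild step to be a single Artin--Schreier cover, rather than an iterated Artin--Schreier--Witt tower.

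Write each finite $m_i$ as $m_i = p^{e_i} q_i$ with $e_i \in \{0,1\}$ and $\gcd(q_i, p) = 1$, and set $Q := \lcm \{ q_i : m_i \ne \infty \}$, which is coprime to $p$. For $\wcheck{c}$, I would employ a Kawamata-style cyclic construction: fix a sufficiently ample line bundle $\mathcal{H}$ on $X$ and, using Bertini on the linear system $|\mathcal{H}^{\otimes Q} \otimes \mathcal{O}_X(-\sum_{i \in \fract} (Q/q_i) D_i)|$, choose a general reduced smooth divisor $R$ meeting $\supp D$ transversally. Taking $s$ to be a section cutting out $\sum (Q/q_i) D_i + R$, form the normalised cyclic cover $y^Q = s$ and resolve the (isolated) singularities, which occur only over intersections $D_i \cap D_j$ with $i \ne j$, by Lipman's theorem for surfaces. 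The local model $y^Q = u \cdot t_i^{Q/q_i}$ at a general point of $D_i$ yields ramification index $Q/\gcd(Q, Q/q_i) = q_i$ along the preimage of $D_i$, so $\wcheck{c}^* D_i = q_i \cdot (\wcheck{c}^{-1} D_i)_{\red}$, as desired.

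If $\wild = \emptyset$, take $\what{c}$ to be the identity. Otherwise, construct $\what{c}$ as an Artin--Schreier cover $y^p - y = f'$, where $f' \in k(\wcheck{X})$ is chosen to have pole divisor equal to $\sum_{i \in \wild} (\wcheck{c}^{-1} D_i)_{\red} + R'$ with each pole simple, and with $R'$ chosen generically via Bertini on a sufficiently positive twist so that it shares no component with $\wcheck{c}^{-1}(\supp D)$; Serre vanishing supplies the requisite sections. At each simple pole the cover is wildly ramified with index exactly $p$, and a Jacobian-criterion computation (for instance via the substitution $u = 1/y$, yielding the equation $u^p + t u^{p-1} - t = 0$ at a simple pole) confirms smoothness of $\what{X}$ along the entire ramification locus.

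The main technical points to verify are items \ref{il:6a-1-2} and \ref{il:6a-1-2a}. For these I would shrink $S^{\circ\circ} \subset S^\circ$ so as to exclude the $\varphi$-images of: the exceptional loci introduced while resolving $\wcheck{X}$, the non-transverse intersections $R \cap \supp D$ and $R' \cap \wcheck{c}^{-1}(\supp D)$, and the finite set of points where the Jacobian criterion fails for the Artin--Schreier equation. On the resulting open set, transversality of $R$ and $R'$ together with the snc-ness of $\varphi^\circ$ gives the snc property \ref{il:6a-1-2}, while each ramification component of $c^{\circ\circ}$ is either the reduced preimage of some $D_i$ with $i \in \fract$ (hence contained in $\supp (c^* D)_{\red}$) or lies over $R$ or $R'$ (hence disjoint from $\supp (c^* D)_{\red}$), which gives \ref{il:6a-1-2a}. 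The remaining items follow at once from the construction: the total degree is $Q$ if $\wild = \emptyset$ and $p \cdot Q$ otherwise, matching $\lcm \{ m_i : m_i \ne \infty \}$ and yielding \ref{il:6a-1-1} and \ref{il:6a-1-1a}; multiplicities combine multiplicatively as $q_i \cdot p$ when $i \in \wild$ and as $q_i$ otherwise, giving $m_i$ in all cases (item \ref{il:6a-1-3}); and components $D_i$ with $m_i = \infty$ belong to neither branch locus, so are pulled back reduced, giving \ref{il:6a-1-4}.
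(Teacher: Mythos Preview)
Your approach is essentially the paper's: factor the cover as a tame Kummer step $\wcheck{c}$ handling the prime-to-$p$ parts, followed by a single Artin--Schreier step $\what{c}$ of degree $p$ for the wild parts, exploiting the hypothesis $p^2 \nmid m_i$ to avoid Artin--Schreier--Witt towers. The one execution difference worth noting is that the paper builds the Artin--Schreier cover on the \emph{generic fibre} $X_\eta$, a curve over the imperfect field $k(S)$, where Riemann--Roch supplies a rational function with poles of prime-to-$p$ order exactly along the wild $D_i$'s and nowhere else, and then invokes a dedicated smoothness result for such covers (their Proposition on Artin--Schreier covers over imperfect base fields) before spreading out; you instead work directly on the surface $\wcheck{X}$, which forces you to introduce an auxiliary pole divisor $R'$ and to check surface smoothness by hand via the Jacobian criterion. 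Both routes are valid; the generic-fibre approach is slightly cleaner (no $R'$ needed, smoothness handled by a black box), while yours is more self-contained.
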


\begin{rem}[Smoothness]
  Item~\ref{il:6a-1-2} implies that $\what{X}^{◦◦}$ is smooth over $\Spec k$,
  that the components of $(c^{◦◦})^* D$ are smooth, and that no two of them
  intersect.
\end{rem}

\begin{rem}[Strongly adapted covers]
  In the language of earlier papers, Items~\ref{il:6a-1-3} and \ref{il:6a-1-4}
  are summarised by saying that the morphism $c^{◦◦}$ is \emph{strongly adapted}
  with respect to $(X^{◦◦}, D|_{X^{◦◦}})$, see for instance
  \cite[Sect.~2.6]{CKT16}.
\end{rem}

\begin{rem}[Optimality and generalisations]\label{rem:5-4}
  It is conceivable that the assumption ``none of the $\cC$-multiplicities $m_i$
  is a multiple of $p²$'' of Proposition~\ref{prop:B-1} is not necessary and
  that a more general statement can be proven if one is willing to replace the
  Artin-Schreier covers (that we discuss below) by more complicated
  Artin-Schreier-Witt covers.
\end{rem}

\subsection{Notation}
\label{ssec:covNot}
\approvals{Arne & yes \\ Jorge & yes \\ Stefan & yes}

The setup of Proposition~\ref{prop:B-1} will be maintained throughout
Sections~\ref{sec:adapted} and Section~\ref{sec:pf45}, and the following
notation will be used.

\begin{notation}[Strict transforms, restriction of divisors to open sets]\label{not:sT}
  Setting as in Proposition~\ref{prop:B-1}.  If $H ∈ \Div(X)$ is any reduced
  divisor, let $\what{H} ∈ \Div(\what{X})$ be the largest divisor in
  $(c^* H)_{\red}$ with the property that every component $\what{H}'$ of
  $\what{H}$ is generically finite over the image component of $H$.  By minor
  abuse of notation, we refer to $\what{H}$ as the \emph{strict transform}.
  Ditto for $\wcheck{H}$.  Restrictions of divisors $A ∈ ℚ\Div(X)$ to the open
  set $X^{◦◦}$ will as always be denoted by $A^{◦◦}$, ditto for divisors on
  $\wcheck{X}$ and $\what{X}$.  Since no confusion is likely to arise, we abuse
  Notation~\ref{not:sT} slightly, by writing $\what{D}^{◦◦}_{\fract}$ instead of
  the more correct, but somewhat clumsy $\what{D_{\fract}}^{◦◦}$.  Ditto for
  other divisors, and ditto for divisors on $\wcheck{X}^{◦◦}$.
\end{notation}

\subsection{Artin--Schreier covers}
\label{sec:asw}
\approvals{Arne & ---\\ Jorge & yes \\ Stefan & yes}

The proof of Proposition~\ref{prop:B-1} uses Artin--Schreier covers which we
briefly recall here.

\begin{construction}[Artin-Schreier cover]\label{cons:ASCover}
  Let $K$ be a field of positive characteristic $p$, not necessarily perfect or
  algebraically closed.  Let $C$ be a proper, smooth curve over $K$ and let
  $f ∈ K(C)$ be a non-constant rational function on $C$ which is regular on
  $U ⊊ C$ and has poles along $D = C ∖ U$.  Assume that $f$ is not of the form
  $g^p - g$ for some $g ∈ K(C)$.  Consider the following curve over $U$:
  $$
  A_U = \bigl\{ (c, y) ∈ U ⨯ 𝔸¹_K \:\Bigl|\: y^p - y = f\bigr\}.
  $$
  The classical theory of Artin-Schreier coverings (see \cite[Chapt.~VI,
  Thm.~6.4]{MR1878556} or \cite[Sect.~4.10]{MR3887555}) says that $A_U$ is
  irreducible, and the projection $A_U → U$ has degree $p$, is Galois and étale.
  To wit, identifying $𝔽_p$ with the prime field of $K$, the group $(𝔽_p, +)$
  acts on $A_U$ by
  $$
  \underline{n} : (c, y) ↦ (c, y+n).
  $$

  Then $A_U → U$ extends to a cyclic degree-$p$ cover $A → C$, where $A$ is the
  normal, proper $K$-curve constructed as follows: take the Zariski closure of
  $A_U$ in $C ⨯ 𝔸¹_K$, and normalise.
\end{construction}

\begin{prop}\label{prop:harbater}
  In the setting of Construction~\ref{cons:ASCover}, assume that the following
  holds.
  \begin{enumerate}
  \item All reduced, irreducible components of $D$ are étale over $\Spec K$.

  \item The pole orders of $f$ along the components of $D$ are prime to $p$.
  \end{enumerate}
  Then, the following will hold.
  \begin{enumerate}
  \item\label{il:1-2-1} The curve $A$ is smooth over $K$.

  \item\label{il:1-2-2} The morphism $A → C$ is étale over $U$ and totally
    (wildly) ramified over $D$.
  \end{enumerate}
\end{prop}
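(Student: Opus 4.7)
\medskip

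The plan is to verify both conclusions by a local analysis at points of $D$, using that $A \to C$ is already shown to be étale over $U$ by the classical theory invoked in Construction~\ref{cons:ASCover}. Let $P$ be a closed point of (a component of) $D$. By the étaleness hypothesis on the components of $D$, the residue field $K' := \kappa(P)$ is a finite separable extension of $K$, and the completion of $\mathcal{O}_{C,P}$ is isomorphic to $K'[[t]]$ for a local uniformiser $t$. Writing $L := K'((t))$ with its discrete valuation $v = v_P$, we have $v(f) = -n$ where $n \geq 1$ is the pole order of $f$ at $P$, and by assumption $\gcd(n,p) = 1$. The normalisation of $K'[[t]]$ inside the Artin--Schreier extension $M := L[y]/(y^p - y - f)$ describes $A$ over $P$, after base change to the completion.

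The key computation is a standard valuation argument. Let $v'$ denote an extension of $v$ to $M$ with ramification index $e$, so $v'(t) = e$ and $v'(f) = -en$. Since $v(f) < 0$, any root $y$ must satisfy $v'(y) < 0$, and therefore
\[
v'(y^p - y) = \min\{pv'(y),\, v'(y)\} = p\,v'(y).
\]
Matching this with $v'(f) = -en$ yields $p\,v'(y) = -en$; since $\gcd(n,p) = 1$, we must have $p \mid e$, and combined with $e \mid [M:L] = p$ (which in particular shows irreducibility of $y^p - y - f$ over $L$) this forces $e = p$ and $v'(y) = -n$. Hence the extension $L \subset M$ is totally ramified of degree $p$, and the integral closure $R'$ of $K'[[t]]$ in $M$ is a complete discrete valuation ring with residue field equal to $K'$ (no residue extension is possible). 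Because $\gcd(n,p) = 1$, Bézout produces integers $a,b$ with $pb - na = 1$, and $y^a t^b$ (or an appropriate monomial in $y, t, y^{-1}, t^{-1}$ inside $R'$) serves as a uniformiser of $R'$.

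This local description yields both assertions at once. For item~\ref{il:1-2-1}, $R'$ is a DVR with residue field $K'$ that is finite separable over $K$; thus $R'$ is smooth over $K$ at its closed point, and since smoothness over $K$ is an open condition and $A$ is already known to be étale over $U$, smoothness holds on all of $A$. For item~\ref{il:1-2-2}, the étaleness over $U$ is already built into Construction~\ref{cons:ASCover}, while the local computation above shows that at every geometric point of $D$ the fibre consists of a single point with ramification index $p = \deg(A/C)$, so $A \to C$ is totally ramified there; the ramification is wild since $p$ equals the characteristic.

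The main subtlety is ensuring that we can work with $K'[[t]]$ even when $K$ is imperfect, and this is precisely why the hypothesis that the components of $D$ are étale over $\Spec K$ is imposed: it guarantees that $K'/K$ is separable, so that $R'$ being regular (with residue field separable over $K$) upgrades to $R'$ being smooth over $K$, rather than merely geometrically regular. The hypothesis that $n$ is prime to $p$ is essential in the valuation computation to force $e = p$; if $p \mid n$ then one would need a change of variables of the form $y \mapsto y + g$ (an Artin--Schreier reduction) to lower the pole order of $f$ modulo $p$-th powers, which is precisely why the statement of Proposition~\ref{prop:B-1} restricts the $\cC$-multiplicities modulo $p^2$ and why a generalisation to higher $p$-divisibility would require Artin--Schreier--Witt covers, as flagged in Remark~\ref{rem:5-4}.
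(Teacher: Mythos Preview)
Your proof is correct and follows essentially the same line as the paper's: both reduce to a single closed point of $D$, use the valuation identity $p\cdot v'(y) = e\cdot v(f)$ together with $\gcd(v(f),p)=1$ to force $e=p$, and then deduce total wild ramification and smoothness from separability of the residue field $K'/K$.

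The only substantive difference is in how you justify smoothness. You pass to the completion, identify $R'$ as a DVR with residue field $K'$, and invoke (implicitly) the criterion that a regular point with separable residue field on a finite-type $K$-scheme is smooth. The paper instead stays with the uncompleted local ring $\mathcal{O}_{A,a}$ and checks geometric regularity directly: citing Stacks Tag~038S to reduce to purely inseparable base change $L/K$, then Tags~0EXQ and~09ER to see that $\mathcal{O}_{A,a}\otimes_K L$ remains a DVR. Your route is more concise but leaves a small gap in presentation---you should make explicit that ``$R'$ smooth over $K$'' really means $\mathcal{O}_{A,a}$ (whose completion is $R'$) is regular with separable residue field, hence smooth by the standard criterion for finite-type $K$-schemes. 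The paper's route is more self-contained about why imperfection of $K$ does not cause trouble, which is the point of the remark preceding the proof.
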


\begin{rem}
  This result is well-known if $K$ is perfect, but caution is advised when $K$
  is imperfect (and normality does not necessarily imply geometric normality).
\end{rem}

\begin{proof}[Proof of Proposition~\ref{prop:harbater}]
  Since the problem at hand is local on $C$, we may assume that $D$ consists of
  a unique closed point $c$, with residue field $κ(c)$ separable over $K$.

  Let $a$ be a point of $A$ lying over $c$ and view $y$ as a rational function
  on $A$.  Then, $y$ has a pole at $a$.  Writing $\mult_c$ and $\mult_a$ for the
  valuations induced by $c$ and $a$ on $K(C)$ and $K(A)$, respectively, we find
  that
  $$
  p \mult_a(y) = \mult_a(y^p) = \mult_a(y^p - y) = \mult_a(f) = e_{a/c} \mult_c(f)
  $$
  where $e_{a/c}$ denotes the ramification index of the extension
  $𝒪_{C,c} ⊆ 𝒪_{A,a}$.  Since $\mult_c(f)$ is prime to $p$ by assumption, we
  have $p \mid e_{a/c}$.  Since
  $$
  e_{a/c} ≤ \deg(A → C) = p,
  $$
  we obtain that $e_{a/c} = p$.  It follows that $a$ is the only point of $A$
  lying over $c$ and that $𝒪_{C,c} ⊆ 𝒪_{A,a}$ is a totally ramified extension of
  discrete valuation rings.  This proves \ref{il:1-2-2}.

  For \ref{il:1-2-1}, it remains to prove that $A$ is smooth over $K$ at the
  closed point $a$.  Indeed, $A_U → U$ is étale and $U$ is smooth over $K$, so
  the only remaining point is $a$.  Recall from
  \cite[\href{https://stacks.math.columbia.edu/tag/038S}{Tag 038S}, Lem.~33.12.2
  and 33.12.6]{stacks-project} that it is sufficient to check that if $L/K$ is a
  purely inseparable field extension, then $A_L$ is regular at $a_L$.

  Since the extension $κ(c)/K$ is separable, we know that $𝒪_{C,c} ⊗_K L$ is
  again a discrete valuation ring and the extension $𝒪_{C,c} ⊆ 𝒪_{C,c} ⊗_K L$ is
  weakly unramified, in the sense of
  \cite[\href{https://stacks.math.columbia.edu/tag/0EXQ}{Tag 0EXQ},
  Def.~15.104.1]{stacks-project}.  It then follows from
  \cite[\href{https://stacks.math.columbia.edu/tag/09ER}{Tag
    09ER}]{stacks-project} that
  $$
  𝒪_{A,a} ⊗_K L = 𝒪_{A,a} ⊗_{𝒪_{C,c}} (𝒪_{C,c} ⊗_K L)
  $$
  is again a discrete valuation ring, which is what we wanted to prove.
\end{proof}

\subsection{Taking roots out of a section}
\approvals{Arne & --- \\ Jorge & yes \\ Stefan & yes}

For the reader's convenience, we will briefly recall (in our special setting)
the construction of cyclic covers by ``taking roots out of a section''.  We
refer the reader to \cite[Sect.~3.5]{EV92} for a thorough, more general
discussion and to \cite[Lem.~2.3]{Viehweg95} for a summary.

\begin{construction}[Taking the $N^\text{th}$ root out of a divisor]\label{cons:tnrooB}
  Let $k$ be an algebraically closed field of arbitrary characteristic $p$.  Let
  $X$ be a smooth $k$-variety, equipped with an effective divisor
  $B = \sum a_i· B_i$ with snc support.  Assume that at least one of the numbers
  $a_i$ equals one.  Further, let $N ∈ ℕ$ be a number; if $p>0$ assume that $N$
  is prime to $p$.  Finally, let $ℒ ∈ \Pic(X)$ be any invertible sheaf whose
  $N^\text{th}$ tensor power is $ℒ^{⊗ N} ≅ 𝒪_X(B)$.

  Since $B$ is effective, there exists a section
  $σ ∈ H⁰\bigl( X,\, 𝒪_X(B)\bigr)$ whose zero-divisor is exactly $B$.  The dual
  of $σ$ gives a morphism $σ^* : (ℒ^*)^{⊗ N} → 𝒪_X$ and equips the direct sum
  \[
    𝒜 := \bigoplus_{i=0}^{N-1} \left(ℒ^{⊗ i}\right)^*
  \]
  with the structure of a sheaf of $𝒪_X$-algebras.  We consider the associated
  space $\Spec 𝒜$ and write $\overline{X}$ for its normalisation.  We call the
  natural morphism $γ : \overline{X} → X$ the \emph{covering obtained by taking
    the $N^\text{th}$ root out of $B$.}
\end{construction}

\begin{rem}[Alternate description]
  In more geometric terms, the space $\Spec 𝒜$ of Construction~\ref{cons:tnrooB}
  can also be described as follows.  The total spaces of the relevant sheaves,
  $|ℒ|$ and $|ℒ^{⊗ N}|$ are closely related: the group of $N^\text{th}$ unit
  roots acts on $|ℒ|$ by homotheties in fibre direction, and the space
  $|ℒ^{⊗ N}|$ is the quotient of $|ℒ|$ under this action.  If $Σ ⊊ |ℒ^{⊗ N}|$
  denotes the graph of the section $σ$, then $\Spec 𝒜$ is exactly the preimage
  of $Σ$ under the quotient map.
\end{rem}

We summarise the main properties of Construction~\ref{cons:tnrooB} in brief.

\begin{prop}[Properties of the covering construction]\label{prop:sx}
  Assume the setting of Construction~\ref{cons:tnrooB}.  Then, the following
  holds.
  \begin{enumerate}
  \item\label{il:s1a} The space $\overline{X}$ is irreducible.

  \item\label{il:s2a} The morphism $γ$ is finite, separable of degree $N$,
    Galois with cyclic Galois group, and étale away from $\supp B$.

  \item\label{il:s3a} If $i$ is any index and if $Γ ⊂ \supp γ^* B_i$ is any
    prime divisor, then $Γ$ appears in $γ^* B_i$ with multiplicity
    \[
      \mult_Γ γ^* B_i = \frac{N}{\gcd(N,a_i)}.
    \]

  \item\label{il:s4a} If $Y ⊂ X$ is any smooth curve that intersects the snc
    divisor $B$ transversely, and if $Y$ intersects at least one component of
    $B$ that has multiplicity one, then $\overline{Y} := γ^{-1}(Y)$ is normal
    and $γ|_{\overline{Y}} : \overline{Y} → Y$ is the covering obtained by
    taking the $N^\text{th}$ root out of $B|_Y$.
  \end{enumerate}
\end{prop}
\begin{proof}
  Items~\ref{il:s1a} is \cite[Lem.~2.3.d]{Viehweg95}, using the assumption that
  at least one of the numbers $a_i$ equals one.  Items \ref{il:s2a} and
  \ref{il:s3a} are \cite[Lem.~2.3.c and e]{Viehweg95}.  For Item~\ref{il:s4a},
  recall from \cite[Claim~3.8 and 3.12]{EV92} that the algebra structure of
  $𝒜$ extends to an algebra structure on the larger sheaf
  \[
    ℬ := \bigoplus_{i=0}^{N-1} \left(ℒ^{⊗ i} ⊗ 𝒪_X \Bigl( \left⌊
        \textstyle{\frac{-i}{N}}· B \right⌋ \Bigr) \right)^*,
  \]
  and that $\Spec ℬ$ is normal, hence isomorphic to $\overline{X}$.  The
  assumption that $Y$ intersects $B$ transversely guarantees that round-down
  commutes with restriction,
  \[
    ℬ|_Y = \bigoplus_{i=0}^{N-1} \left(ℒ|_Y^{⊗ i} ⊗ 𝒪_X \Bigl(
      \left⌊ \textstyle{\frac{-i}{N}}· B|_Y \right⌋ \Bigr)
    \right)^*,
  \]
  so that $γ^{-1}(Y) = \Spec ℬ|_Y$ is, again by \cite[Claim~3.8 and
  3.12]{EV92}, itself normal and in fact the covering obtained by taking the
  $N^\text{th}$ root out of $B|_Y$.
\end{proof}

\subsection{Proof of Proposition~\ref*{prop:B-1}}
\approvals{Arne & yes \\ Jorge & yes \\ Stefan & yes}

Assuming Setting~\ref{setting:CKim}, we will construct the diagram of
Proposition~\ref{prop:B-1} step-by-step and then prove that the construction has
all the desired properties.  Recall from Setting~\ref{setting:CKim} that
$D = \sum_{i=1}^d \frac{m_i-1}{m_i} · D_i$, where $m_i ∈ ℕ ∪ \{∞\}$ denote the
$\cC$-multiplicities of $D$.  Write
\[
  \fract := \{ i \::\: m_i < ∞\} \quad\text{and}\quad n := \lcm\, \{ m_i \,|\, i
  ∈ \fract \}.
\]

\subsection*{Step 1: Construction of $\wcheck{X}$}
\approvals{Arne & --- \\ Jorge & yes \\ Stefan & yes}

For any index $i ∈ \fract$, set
\[
  m'_i :=
  \begin{cases}
    m_i & \text{if $p = 0$,} \\
    \text{prime-to-}p\text{-part of }m_i & \text{otherwise}.
  \end{cases}
\]
Write $N := \lcm\, \{ m'_i \,|\, i ∈ \fract \}$.  Choose a very ample line
bundle $ℒ ∈ \Pic(X)$ such that
\[
  ℋ := ℒ^{⊗ N} ⊗ 𝒪_X\left( -\sum_{i ∈ \fract} \frac{N}{m'_i}·D_i \right) ∈
  \Pic(X)
\]
is still very ample, and choose a general element of the linear system,
$H ∈ |ℋ|$.  By Bertini's theorem, $H$ is an ample prime divisor that is not
contained in $\supp D$, the morphism $\supp H → S$ is separable and the divisor
$H + D$ has snc support.  By construction, there exists an isomorphism of line
bundle, $ℒ^{⊗ N} ≅ 𝒪_X(B)$, where
\[
  B := H + \sum_{i ∈ \fract} \frac{N}{m'_i}·D_i ∈ \Div(X).
\]
Finally, let $c : \overline{X} → X$ be the covering obtained by taking the
$N^\text{th}$ root out of $B$, as summarised in Construction~\vref{cons:tnrooB}.
Let $\wcheck{X}$ be a desingularisation of $\overline{X}$ that is isomorphic
over $\overline{X}_{\reg}$.  Desingularisation of this kind exist, because
$\overline{X}$ has dimension two, cf.~\cite[Thm.~1.2]{MR4167468} for a
convenient reference.  Let $\wcheck{c} : \wcheck{X} → X$ and
$\wcheck{φ} : \wcheck{X} → S$ be the composed morphisms.  We summarise the main
properties of our construction.

\begin{obs}\label{obs:y1z}
  The degree of $\wcheck{c}$ is given as
  \[
    [\wcheck{X} : X ] = N =
    \begin{cases}
      n & \text{if $p = 0$ or none of the finite $m_i$ are multiples of $p$,} \\
      n/p & \text{otherwise}.
    \end{cases}
  \]
\end{obs}

\begin{claim}\label{claim:y1a}
  There exists a dense open subset of $S$ over which $\wcheck{φ}$ is smooth.
\end{claim}
\begin{proof}[Proof of Claim~\ref{claim:y1a}]
  Let $s ∈ S$ be a general, closed point and consider the scheme-theoretic
  fibres $X_s$, $\overline{X}_s$ and $\wcheck{X}_s$.  We need to show that
  $\wcheck{X}_s$ is smooth.  But since the resolution morphism
  $\wcheck{X} → \overline{X}$ is an isomorphism away from a finite subset of
  $\overline{X}$, we find that the fibres $\overline{X}_s$ and $\wcheck{X}_s$
  agree, and it suffices to show that $\overline{X}_s$ is smooth.  But then, we
  have seen in Item~\ref{il:s4a} of Proposition~\ref{prop:sx} that the covering
  $c|_{\overline{X}_s} : \overline{X}_s → X_s$ is the covering obtained by
  taking the $N^\text{th}$ root out of $B|_{X_s}$.  In particular,
  $\overline{X}_s$ is an irreducible and normal $k$-curve, hence smooth over
  $\Spec k$ because $k$ is perfect.  \qedhere~(Claim~\ref{claim:y1a})
\end{proof}

\begin{claim}\label{claim:y1b}
  There exists is a dense open subset of $S$ over which $\wcheck{φ}$ is an snc
  morphism for the pair $\bigl(\wcheck{X}, \wcheck{D}+\wcheck{H} \bigr)$, where
  $\wcheck{D}$ and $\wcheck{H}$ are the strict transforms introduced in
  Notation~\ref{not:sT}.
\end{claim}
\begin{proof}[Proof of Claim~\ref{claim:y1b}]
  If $k$ is of characteristic zero, this is clear by generic smoothness.  If the
  characteristic of $k$ is finite, we already know that $\wcheck{φ}$ is
  generically smooth.  The covering degree $N = \deg \overline{c}$ is coprime to
  the characteristic by assumption.  It follows that the restriction of
  $\overline{c}$ to any component of $\overline{c}^* (D+H)$ is separable, and
  hence that the restriction of $\wcheck{c}$ to any component of
  $\wcheck{D} + \wcheck{H}$ is separable.  \qedhere~(Claim~\ref{claim:y1b})
\end{proof}

\subsection*{Step 2: Construction of $\what{X}$}
\approvals{Arne & --- \\ Jorge & yes \\ Stefan & yes}

Let $\wcheck{D}'$ denote the reduced divisor on $\wcheck{X}$, given as the
strict transforms of those divisors whose $\cC$-multiplicities $m_i$ are finite
multiples of $p$,
\[
  \wcheck{D}' := \sum_{i ∈ \fract} (1 - δ_{m_i,m'_i})·\wcheck{D}_i,
  \quad\text{where}\quad
  δ_{m_i,m'_i} := \left\{
    \begin{matrix}
      1 & \text{if } m_i = m'_i \\
      0 & \text{if } m_i ≠ m'_i.
    \end{matrix}
  \right.
\]
If $\wcheck{D}' = 0$, set $\what{X} := \wcheck{X}$, and let
$\what{c} : \what{X} → \wcheck{X}$ be the identity morphism.  Otherwise, write
$K := k(S)$ for the function field of $S$.  We have seen in
Claim~\ref{claim:y1a} that the generic fibre $\wcheck{X}_η$ is then a
$K$-smooth, projective $K$-curve.  Choose a rational function
$f ∈ K(\wcheck{X}_η)$ that has poles along $\wcheck{D}'_{η}$ of order prime to
$p$, and no other poles elsewhere.  The existence of such a function is a
straightforward consequence of the Riemann--Roch theorem.
Construction~\ref{cons:ASCover} will then give a cover
$\what{X}_η → \wcheck{X}_η$.  Choosing a suitable model, we obtain a smooth
variety $\what{X}$ and a generically finite morphism
$\what{c} : \what{X} → \wcheck{X}$ of degree $p$ as in
Proposition~\ref{prop:B-1}.

As before, let $\what{φ} : \what{X} → S$ and $c : \what{X} → X$ be the composed
morphism.  We summarise the main properties of our construction.

\begin{obs}\label{obs:z1z}
  By choice of $\wcheck{D}'$, the degree of $\what{c}$ is given as
  \[
    [\what{X} : \wcheck{X}] =
    \begin{cases}
      1 & \text{if $p = 0$ or none of the finite $m_i$ are multiples of $p$,} \\
      p & \text{otherwise}.
    \end{cases}
  \]
\end{obs}

\begin{obs}\label{obs:z1b}
  It follows from Claim~\ref{claim:y1b} and from Proposition~\ref{prop:harbater}
  that there exists a dense open subset of $S$ over which $\what{φ}$ is an snc
  morphism for the pair $\bigl(\what{X}, \what{D}+\what{H} \bigr)$.
\end{obs}

To end Step~2, choose one dense open subset $S^{◦◦} ⊆ S$ over which $\wcheck{φ}$
is an snc morphism and maintain this choice throughout.  We follow the notation
from the diagram of Proposition~\ref{prop:B-1} to denote the preimages of
$S^{◦◦}$ and to denote the restrictions of the relevant morphisms.

\subsection*{Step 3: End of proof}
\approvals{Arne & --- \\ Jorge & yes \\ Stefan & yes}

We need to check that our construction satisfies all properties spelled out in
Proposition~\ref{prop:B-1}.  We go through the list.

\begin{description}
\item[The morphisms $\what{c}^{\:◦◦}$ and $\wcheck{c}^{\:◦◦}$ are Galois] It is
  clear from construction that $\what{c}^{\:◦◦}$ and $\overline{c}^{\:◦◦}$ are
  Galois.  But $\overline{c}^{\:◦◦}$ and $\wcheck{c}^{\:◦◦}$ agree over the open
  subset $S^{◦◦}$ where $\wcheck{φ}$ is smooth and where the resolution morphism
  is therefore an isomorphism.

\item[Properties~\ref{il:6a-1-1} and \ref{il:6a-1-1a}] Follows from
  Observations~\ref{obs:y1z} and \ref{obs:z1z}.

\item[Property~\ref{il:6a-1-2}] Follows by choice of $S^{◦◦}$ and
  Observation~\ref{obs:z1b}.

\item[Property~\ref{il:6a-1-2a}] Recall from Item~\ref{il:s2a} of
  Proposition~\ref{prop:sx} and from Item~\ref{il:1-2-2} of
  Proposition~\ref{prop:harbater} that the ramification loci relate to the
  strict transforms of $D$ and $H$ as follows,
  \begin{align*}
    \supp \Ramification \wcheck{c}^{\:◦◦} & ⊆ \wcheck{B}^{◦◦} ⊆ \wcheck{D}^{◦◦} ∪ \wcheck{H}^{◦◦} \\
    \supp \Ramification \what{c}^{\:◦◦} & = \supp (\what{c}^{\:◦◦})^* \wcheck{D}' ⊆ \what{D}^{◦◦} \\
    \intertext{and therefore}
    \supp \Ramification c^{◦◦} & ⊆ \what{D}^{◦◦} ∪ \what{H}^{◦◦}
  \end{align*}
  Property~\ref{il:6a-1-2a} thus follows from the choice of $S^{◦◦}$ and
  Observation~\ref{obs:z1b}.

\item[Property~\ref{il:6a-1-3}] If $i$ is any index for which $m_i < ∞$, it
  follows from Item~\ref{il:s3a} of Proposition~\ref{prop:sx} that every
  irreducible component $\wcheck{Γ}$ of $\wcheck{D}_i$ has multiplicity equal to
  \begin{align*}
    \mult_{\wcheck{Γ}} \wcheck{c}^{\:*} D_i & = \frac{N}{\gcd\left(N,\factor{N}{m'_i}\right)} = \frac{N}{\factor{N}{m'_i}} = m'_i.  \\
    \intertext{Next, it follows from Item~\ref{il:1-2-2} of Proposition~\ref{prop:harbater}
    that every irreducible component $\what{Γ}$ of $\what{D}_i$ has multiplicity
    equal to}
    \mult_{\what{Γ}} \what{c}^{\:*} \wcheck{D}_i & =
    \begin{cases}
      1 & \text{if } m_i = m'_i, \\
      p & \text{if $m_i ≠ m'_i$ (and hence $m_i = m'_i·p$).}
    \end{cases}
  \end{align*}
  Property~\ref{il:6a-1-3} then follows.

\item[Property~\ref{il:6a-1-4}] If $i$ is any index for which $m_i = ∞$, it
  follows from Item~\ref{il:s2a} of Proposition~\ref{prop:sx} that every that
  $\wcheck{c}$ is étale over the generic point of $D_i$.  Likewise, if
  $\wcheck{Γ}$ is any irreducible component of the strict transform
  $\wcheck{D}_i$, then $\wcheck{Γ}$ is not contained in $\wcheck{D}'$, and
  Item~\ref{il:1-2-2} of Proposition~\ref{prop:harbater} shows that $\what{c}$
  is étale over the generic point of $\wcheck{Γ}$.  Property~\ref{il:6a-1-4}
  follows.
\end{description}
In summary, we checked that Properties~\ref{il:6a-1-1}--\ref{il:6a-1-4} all
hold.  This finishes the proof of Proposition~\ref{prop:B-1}.  \qed

%
% Do not edit the following line.  The text is automatically updated by
% subversion.
%
\svnid{$Id: 06-adaptedDiffs.tex 599 2021-08-23 12:06:26Z kebekus $}

\section{The sheaf of adapted differentials}
\label{sec:adapted}
\subversionInfo

\subsection{Construction}
\label{ssec:adapted1}
\approvals{Arne & --- \\ Jorge & yes \\ Stefan & yes}

We maintain the setting and assumptions of Proposition~\ref{prop:B-1} in this
section.  Following ideas of Campana, we will consider a sheaf on
$\what{X}^{◦◦}$ called \emph{sheaf of adapted differentials}, and written as
$\what{Ω}¹_{X^{◦◦}}(\log \what{D}^{◦◦}_{\log})$.  In characteristic zero, this sheaf
is introduced and discussed at great length in \cite[Sect.~3]{CKT16}, though the
definition given there looks a little different from ours.  On a technical
level, the sheaf of adapted differentials is defined as follows.

\begin{defn}\label{defn:soad}
  Maintaining the setting of Proposition~\ref{prop:B-1}, and using
  Notation~\vref{not:sT} for strict transforms, we define the \emph{sheaf of
    adapted differentials} as
  \begin{equation}\label{eq:dad}
    \what{Ω}¹_{X^{◦◦}}(\log \what{D}^{◦◦}_{\log}) := \Bigl( 𝒥_{\what{D}^{◦◦}_{\fract}} ⊗ (c^{◦◦})^* Ω¹_X\bigl(\log ⌈D⌉ \bigr) \Bigr) \:+\:
    (\what{φ}^{◦◦})^* Ω¹_{\what{S}^{◦◦}},
  \end{equation}
  where the sum is the sum of subsheaves in
  $(c^{◦◦})^* Ω¹_X\bigl(\log ⌈D⌉ \bigr)$.  Its dual is called \emph{sheaf of
    adapted tangents} and will be denoted by
  $\what{𝒯}_{X^{◦◦}}(-\log \what{D}^{◦◦}_{\log})$.
\end{defn}

To justify the notation $\what{Ω}¹_{X^{◦◦}}(\log \what{D}^{◦◦}_{\log})$ in
Definition~\ref{defn:soad}, observe that sections in
$(c^{◦◦})^* Ω¹_X\bigl(\log ⌈D⌉ \bigr)$ are differential forms with logarithmic
poles along the support of $(c^{◦◦})^* ⌈D⌉$.  Taking the tensor product with the
ideal sheaf $𝒥_{\what{D}^{◦◦}_{\fract}}$ however cancels some of these poles, so
that sections in the tensor product are differential forms, with logarithmic
poles along the support of $(c^{◦◦})^* D_{\log}$ only.  This observation will
become important in Proposition~\vref{prop:soad1}.

\subsection{Main properties}
\label{ssec:adapted2}
\approvals{Arne & yes \\ Jorge & yes \\ Stefan & yes}

The following three propositions summarise the main properties of
$\what{Ω}¹_{X^{◦◦}}(\log \what{D}^{◦◦}_{\log})$ that will be relevant in the
sequel.  While Proposition~\ref{prop:soad1} and \ref{prop:soad2} are rather
elementary, the proof of Proposition~\ref{prop:soapSplit} requires some effort
and is not nearly as straightforward as one might wish.

\begin{prop}[Containment in $Ω¹_{\what{X}^{◦◦}}\bigl(\log D^{◦◦}_{\log} \bigr)$]\label{prop:soad1}
  Maintaining the setting of Proposition~\ref{prop:B-1} and the notation
  introduced in this section, $\what{Ω}¹_{X^{◦◦}}(\log \what{D}^{◦◦}_{\log})$ is
  a subsheaf of $Ω¹_{\what{X}^{◦◦}}\bigl(\log \what{D}^{◦◦}_{\log} \bigr)$.
  More precisely, there exists a commutative diagram of injective sheaf
  morphisms,
  $$
  \xymatrix{ %
    \what{Ω}¹_{X^{◦◦}}(\log \what{D}^{◦◦}_{\log}) \ar[r]^(.45){ι^{◦◦}_1} \ar@/_5mm/[rrd] & (c^{◦◦})^* Ω¹_X\bigl(\log ⌈D⌉ \bigr) \ar[r]^{ι^{◦◦}_2} & Ω¹_{\what{X}^{◦◦}} \bigl(\log (c^*D)_{\red} \bigr) \\
    & & Ω¹_{\what{X}^{◦◦}}\bigl(\log \what{D}^{◦◦}_{\log} \bigr), \ar[u]_{ι^{◦◦}_3} %
  }
  $$
  where $ι^{◦◦}_1$ and $ι^{◦◦}_3$ are the obvious inclusions, and where
  $ι^{◦◦}_2$ is the standard pull-back map for logarithmic differential forms.
\end{prop}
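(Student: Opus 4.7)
The plan is to verify that each of the three morphisms is well-defined and injective, and then to check the claimed factorisation through $\iota^{◦◦}_3$ by a local computation at the fractional components.

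For $\iota^{◦◦}_1$, the first summand $𝒥_{\what{D}^{◦◦}_{\fract}} ⊗ (c^{◦◦})^* Ω¹_X(\log ⌈D⌉)$ embeds tautologically into $(c^{◦◦})^* Ω¹_X(\log ⌈D⌉)$ via the ideal sheaf. For the second summand, after shrinking $S^{◦◦}$ if necessary we may assume that $\what{S}^{◦◦} → S^{◦◦}$ is étale, so that $(\what{φ}^{◦◦})^* Ω¹_{\what{S}^{◦◦}}$ is canonically identified with $(c^{◦◦})^* (φ^{◦◦})^* Ω¹_{S^{◦◦}}$; this in turn sits inside $(c^{◦◦})^* Ω¹_X \subseteq (c^{◦◦})^* Ω¹_X(\log ⌈D⌉)$ via the pullback of $dφ^{◦◦}$, which is injective since $φ$ is smooth over $S^{◦◦}$. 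The morphism $\iota^{◦◦}_2$ is the standard pullback of logarithmic differentials, available because $c^{◦◦}$ sends $\supp D$ set-theoretically into $\supp (c^*D)_{\red}$; injectivity follows from $c^{◦◦}$ being separable (Proposition~\ref{prop:B-1}), hence generically étale. Finally, $\iota^{◦◦}_3$ is the obvious inclusion corresponding to $\what{D}^{◦◦}_{\log} ⊆ (c^* D)_{\red}|_{\what{X}^{◦◦}}$, and commutativity of the diagram holds by construction since both paths amount to the canonical pullback.

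The crux is checking that the composition $\iota^{◦◦}_2 ∘ \iota^{◦◦}_1$ factors through $\iota^{◦◦}_3$, i.e., produces forms with at worst simple log poles along $\what{D}^{◦◦}_{\log}$ and no poles along $\what{D}^{◦◦}_{\fract}$. This is a purely local calculation. Near a component $\what{D}^{◦◦}_i$ with $i ∈ \fract$, choose local coordinates so that $z$ is an equation for $D_i$ on $X^{◦◦}$ and $\what{z}$ is an equation for the chosen component of $(c^{◦◦})^{-1}(D_i)$; by Item~\ref{il:6a-1-3} we have $c^* z = u \what{z}^{m_i}$ for some local unit $u$, and therefore
\begin{equation*}
  c^*\!\left( \frac{dz}{z} \right) \;=\; m_i \frac{d\what{z}}{\what{z}} + \frac{du}{u},
\end{equation*}
a form with a simple log pole along $\what{D}^{◦◦}_i$. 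Multiplication by $\what{z}$, a local generator of $𝒥_{\what{D}^{◦◦}_{\fract}}$, clears this pole, so tensoring by $𝒥_{\what{D}^{◦◦}_{\fract}}$ cancels precisely the unwanted log poles along $\what{D}^{◦◦}_{\fract}$. Item~\ref{il:6a-1-4} then ensures that logarithmic generators along components of $D_{\log}$ pull back to forms with simple log poles along $\what{D}^{◦◦}_{\log}$, and the second summand $(\what{φ}^{◦◦})^* Ω¹_{\what{S}^{◦◦}}$ is manifestly contained in $Ω¹_{\what{X}^{◦◦}}$, hence trivially in $Ω¹_{\what{X}^{◦◦}}(\log \what{D}^{◦◦}_{\log})$.

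The main obstacle is not conceptual but one of bookkeeping: one must keep track of ramification components of $c^{◦◦}$ that are disjoint from $\supp(c^*D)_{\red}$ and check that they do not introduce spurious poles. Here Item~\ref{il:6a-1-2a}, which guarantees a clean separation between such ramification components and the boundary, is what makes the local model above globally valid. Intersection points of two boundary components are handled by the snc hypothesis of Item~\ref{il:6a-1-2}, which permits the computation to be performed one component at a time in suitable coordinates.
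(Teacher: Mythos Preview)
Your proof is correct and follows the same approach as the paper: verify that each of the two summands in the definition of $\what{\Omega}^1_{X^{\circ\circ}}(\log \what{D}^{\circ\circ}_{\log})$ lands, under $\iota^{\circ\circ}_2 \circ \iota^{\circ\circ}_1$, inside the image of $\iota^{\circ\circ}_3$. The paper's proof is a single sentence to this effect; you have supplied the local computation that justifies it, which is fine. One minor point: the appearance of $\what{S}^{\circ\circ}$ (and your step of shrinking to make $\what{S}^{\circ\circ} \to S^{\circ\circ}$ \'etale) is an artefact of a typo in the paper's Definition~\ref{defn:soad}; the second summand should read $(\what{\varphi}^{\circ\circ})^* \Omega^1_{S^{\circ\circ}}$, and since $\what{\varphi}^{\circ\circ} = \varphi^{\circ\circ} \circ c^{\circ\circ}$ this already sits in $(c^{\circ\circ})^* \Omega^1_X$ without any shrinking.
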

\begin{proof}
  The restriction of $ι^{◦◦}_2◦ι^{◦◦}_1$ to each of the two summands in
  \eqref{eq:dad} factorises via $ι^{◦◦}_3$.
\end{proof}

\begin{prop}[Quotient by $(\what{φ}^{◦◦})^* Ω¹_S$]\label{prop:soad2}
  Maintaining the setting of Proposition~\ref{prop:B-1} and the notation
  introduced in this section, the sheaf
  $\what{Ω}¹_{X^{◦◦}}(\log \what{D}^{◦◦}_{\log})$ contains
  $(\what{φ}^{◦◦})^* Ω¹_S$, and the quotient is isomorphic to
  $𝒥_{\what{D}^{◦◦}_{\fract}} ⊗ (c^{◦◦})^* ω_{X/S}(\log ⌈D⌉)$.  In particular,
  $\what{Ω}¹_{X^{◦◦}}(\log \what{D}^{◦◦}_{\log})$ is an extension of two locally
  frees, hence locally free.
\end{prop}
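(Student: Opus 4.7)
The plan is to identify $\what{Ω}¹_{X^{◦◦}}(\log \what{D}^{◦◦}_{\log})$ as a sum of two subsheaves $A+B$ of $V := (c^{◦◦})^* Ω¹_X\bigl(\log ⌈D⌉ \bigr)$ and compute the quotient of their sum by $B$ via the second isomorphism theorem. Here
\[
  A := 𝒥_{\what{D}^{◦◦}_{\fract}} ⊗ V,\qquad B := (\what{φ}^{◦◦})^* Ω¹_S,
\]
each sitting inside $V$: $A$ is the image of multiplication by the invertible ideal $𝒥_{\what{D}^{◦◦}_{\fract}}$ (note that $\what{D}^{◦◦}_{\fract}$ is Cartier on the smooth variety $\what{X}^{◦◦}$), while the embedding $B \hookrightarrow V$ arises by pulling back sequence \eqref{eq:log6-4} on $X^{◦◦}$ via $c^{◦◦}$. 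Since all three terms in \eqref{eq:log6-4} are locally free on a smooth variety, that sequence is locally split, so its pullback remains short exact, yielding
\[
  0 \to B \to V \to (c^{◦◦})^* ω_{X/S}(\log ⌈D⌉) \to 0
\]
as a locally split short exact sequence on $\what{X}^{◦◦}$.

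First I would observe that $B$ is contained in $\what{Ω}¹_{X^{◦◦}}(\log \what{D}^{◦◦}_{\log}) = A + B$ by construction, giving the asserted containment. Next I would apply $(A+B)/B \cong A/(A \cap B)$. The key point is to identify $A \cap B$ locally: pick an open where the pulled-back sequence splits, writing $V = B \oplus C$ with $C$ a local lift of $V/B$. Then $A = 𝒥 \cdot V = 𝒥 \cdot B \oplus 𝒥 \cdot C$, and intersecting with $B$ yields $A \cap B = 𝒥 \cdot B$. These local identifications glue to a globally defined equality of subsheaves of $V$ because the ambient $V$ and $B$ are globally defined.

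From this, it follows that
\[
  \what{Ω}¹_{X^{◦◦}}(\log \what{D}^{◦◦}_{\log})\,/\,B
  \cong (𝒥 \cdot V)/(𝒥 \cdot B)
  \cong 𝒥_{\what{D}^{◦◦}_{\fract}} ⊗ (V/B)
  \cong 𝒥_{\what{D}^{◦◦}_{\fract}} ⊗ (c^{◦◦})^* ω_{X/S}(\log ⌈D⌉),
\]
using that the invertible sheaf $𝒥_{\what{D}^{◦◦}_{\fract}}$ is flat, so that tensoring commutes with the quotient by $B$. This establishes the claimed extension
\[
  0 \to (\what{φ}^{◦◦})^* Ω¹_S \to \what{Ω}¹_{X^{◦◦}}(\log \what{D}^{◦◦}_{\log}) \to 𝒥_{\what{D}^{◦◦}_{\fract}} ⊗ (c^{◦◦})^* ω_{X/S}(\log ⌈D⌉) \to 0.
\]
Both outer terms are locally free (the pullback of a locally free sheaf is locally free, and the quotient is an invertible tensored with an invertible), and hence the middle term is locally free as well: on any open where both outer terms are free, the extension class lies in $\Ext^1$ of a free by a free, which vanishes locally, so the sequence splits locally and the middle sheaf is free of the expected rank.

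The main technical issue, and really the only subtle point, is the identification $A \cap B = 𝒥 \cdot B$ inside $V$: this uses both that $\what{D}^{◦◦}_{\fract}$ is Cartier (so $𝒥$ is invertible and its multiplication into the locally free $V$ is injective) and that the pulled-back logarithmic relative cotangent sequence is locally split on $\what{X}^{◦◦}$. Both ingredients rely squarely on the snc property of $\what{φ}^{◦◦}$ furnished by Item~\ref{il:6a-1-2} of Proposition~\ref{prop:B-1}.
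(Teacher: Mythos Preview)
Your argument is correct and is precisely the computation the paper has in mind when it writes ``Immediate from \eqref{eq:dad}'': you unpack the sum $A+B$ inside $V=(c^{◦◦})^*Ω¹_X(\log⌈D⌉)$, use the second isomorphism theorem, and identify $A\cap B=𝒥·B$ via a local splitting. One small attribution point: the exactness and local freeness of the pulled-back sequence come from the snc property of $φ°$ on $X°⊇X^{◦◦}$ (Setting~\ref{setting:CKim}), not from Item~\ref{il:6a-1-2}, and the invertibility of $𝒥_{\what{D}^{◦◦}_{\fract}}$ already follows from the smoothness of $\what{X}$ asserted in Proposition~\ref{prop:B-1}; Item~\ref{il:6a-1-2} is not actually needed here.
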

\begin{proof}
  Immediate from \eqref{eq:dad}.
\end{proof}

\begin{notation}[Sequence of relative adapted differentials]
  We refer to the quotient sequence
  \begin{equation}\label{eq:sorad}
    0 %
    → (\what{φ}^{◦◦})^* Ω¹_S %
    → \what{Ω}¹_{X^{◦◦}}(\log \what{D}^{◦◦}_{\log}) %
    → 𝒥_{\what{D}^{◦◦}_{\fract}} ⊗ (c^{◦◦})^* ω_{X/S}(\log ⌈D⌉) %
    → 0
  \end{equation}
  as the \emph{sequence of relative adapted differentials}.
\end{notation}

\begin{rem}\label{rem:soad}
  Items~\ref{il:6a-1-3} and \ref{il:6a-1-4} of Proposition~\ref{prop:B-1}
  immediately imply that the restriction of the quotient to the generic fibre of
  $\what{φ}$ is an invertible sheaf of degree
  $$
  \deg_{\what{X}_η} \Bigl( 𝒥_{\what{D}^{◦◦}_{\fract}} ⊗ (c^{◦◦})^* ω_{X/S}(\log ⌈D⌉) \Bigr)%
  = [\what{X}^{◦◦}:X^{◦◦}]·\deg_{X_η} \bigl(K_X+D\bigr).
  $$
\end{rem}

In order to apply the construction of adapted differentials to the problem of
finding geometric height inequalities for $\cC$-integral points, we need to
relate the splitting behaviour of Sequence~\eqref{eq:log6-4} to that of
\eqref{eq:sorad}.  The following proposition compares the two.

\begin{prop}[Splitting of the sequence of relative adapted differentials]\label{prop:soapSplit}
  Maintaining the setting of Proposition~\ref{prop:B-1} and the notation
  introduced in this section, assume that
  \begin{enumerate}
  \item\label{il:i1} the degree $d := \deg_{X_η} (K_X+D)$ is strictly positive,
    and

  \item\label{il:i2} the sequence of relative adapted differentials,
    Sequence~\eqref{eq:sorad}, splits when restricted to the generic fibre
    $\what{X}_η$.
  \end{enumerate}
  Then, Sequence~\eqref{eq:log6-4} splits when restricted to the generic fibre
  $X_η$.
\end{prop}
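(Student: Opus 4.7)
The plan is as follows. First I would record an elementary but key compatibility: the adapted Sequence~\eqref{eq:sorad} on $\hat X_\eta$ is a sub-extension of the pullback of Sequence~\eqref{eq:log6-4} via $c : \hat X_\eta \to X_\eta$. Indeed, both sequences have the same leftmost term $F \cong \mathcal{O}_{\hat X_\eta}$, and the middle and quotient terms of the adapted sequence embed into those of the pullback sequence, the quotient embedding being $\mathcal{J}_{\hat D^{\circ\circ}_{\fract}|_{\hat X_\eta}} \otimes c^*L \hookrightarrow c^*L$, where $L := \omega_{X/S}(\log \lceil D\rceil)|_{X_\eta}$. In particular, the extension class $\xi_A$ of the adapted sequence is the image of the pullback class $c^*\xi$ under the induced restriction map on $\mathrm{Ext}^1$.

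Second, I would verify uniqueness of splittings by a degree computation. Exploiting the ramification formulas of Proposition~\ref{prop:B-1}, items~\ref{il:6a-1-3} and~\ref{il:6a-1-4}, exactly as in Remark~\ref{rem:soad}, yields $\deg_{\hat X_\eta} B_0 = n \cdot d$, where $n = [\hat X^{\circ\circ} : X^{\circ\circ}]$ and $d > 0$ by Assumption~\ref{il:i1}. Consequently $B_0^{-1}$ has negative degree on $\hat X_\eta$, so $\mathrm{Hom}(B_0, \mathcal{O}_{\hat X_\eta}) = 0$, and the splitting $\sigma_A$ of the adapted sequence (whose existence is Assumption~\ref{il:i2}) is \emph{unique}. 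Equally, $\mathrm{Hom}(L, \mathcal{O}_{X_\eta}) = 0$ because $\deg L \geq d > 0$, so any splitting of Sequence~\eqref{eq:log6-4} downstairs, should one exist, would also be unique.

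Third, I would build the desired splitting of Sequence~\eqref{eq:log6-4} on $X_\eta$ out of $\sigma_A$. Uniqueness forces $\sigma_A$ to be equivariant under the natural Galois actions on $\hat X^{\circ\circ}$. Descending through the tame, generically étale Galois piece $\wcheck c$ of $c$ is then routine via averaging, since $\deg \wcheck c$ is coprime to $p$. What remains is the Artin--Schreier piece $\hat c$ in positive characteristic, of degree $p$, where Galois averaging is unavailable but where the adapted sheaf $\hat\Omega^1$ has been designed precisely so that the twist by $\mathcal{J}_{\hat D^{\circ\circ}_{\fract}}$ compensates for the wild ramification. A local computation around the components of $\hat D_{\fract}$ should then show that the unique invariant splitting is in fact the pullback of a splitting downstairs.

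The principal obstacle is precisely this last descent across the Artin--Schreier cover $\hat c$: standard trace-averaging fails because $\deg \hat c = p$ coincides with the residue characteristic. The critical input to overcome it is the very definition of the sheaf of adapted differentials — the twist by $\mathcal{J}_{\hat D^{\circ\circ}_{\fract}}$ is tailored so that the adapted extension class sees only the part of $\xi$ that can plausibly originate downstairs, and the uniqueness of splittings from the second step, combined with an explicit local analysis of the Artin--Schreier cover in Artin--Schreier normal form, should force the descent to go through.
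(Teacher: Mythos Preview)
Your first two steps are right and match the paper: the adapted sequence is indeed a sub-extension of $c^*$ of Sequence~\eqref{eq:log6-4} (this is the paper's Claim~\ref{claim:A15}), and uniqueness of the splitting follows from the degree count (Observation~\ref{obs:1}), which then forces $\what G$-equivariance. But your third step has a genuine gap, and also a confusion about the tower. The cover factors as $\what X \xrightarrow{\what c} \wcheck X \xrightarrow{\wcheck c} X$, so the wild Artin--Schreier piece $\what c$ must be descended \emph{first}, not last; averaging over $\wcheck G$ only becomes available once you are already on $\wcheck X$. More seriously, you offer no actual mechanism for the wild descent beyond ``a local computation should force it through.'' The sub-extension observation alone is not enough: knowing that the image of $c^*\xi$ in $\operatorname{Ext}^1(B_0,𝒪)$ vanishes does not give $c^*\xi=0$, because the restriction map $\operatorname{Ext}^1(c^*L,𝒪)\to\operatorname{Ext}^1(B_0,𝒪)$ has a kernel supported on $\what D_{\fract}$.

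The paper supplies two concrete tools you are missing. First, it \emph{dualises} the entire three-row diagram (Claim~\ref{claim:A15a}), turning the ideal-sheaf twist $𝒥_{\what D^{◦◦}_{\fract}}$ into a positive twist $𝒪(\what D^{◦◦}_{\fract})$. Second, it invokes the elementary but crucial Lemma~\ref{lem:x2}: for a finite Galois cover $λ\colon A\to B$ and a $G$-invariant effective divisor $Δ_A$ with $0\le Δ_A$ and $\supp(λ^*Δ_B-Δ_A)=\supp λ^*Δ_B$, the natural map $ℰ\to λ_*(𝒪_A(Δ_A)\otimes λ^*ℰ)^G$ is an isomorphism. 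Applied to $\what c^{◦◦}$ with $Δ_A=\what D^{◦◦}_{\wild}$, this computes the $\what G$-invariant push-forward of every sheaf in the dualised diagram and carries the $\what G$-equivariant splitting $\what s^*$ down to $\wcheck X^{◦◦}$ (Claim~\ref{claim:A16}). No explicit Artin--Schreier local analysis is used; the argument is uniform. Only afterwards does one average over $\wcheck G$ (Claim~\ref{claim:A18}) and push forward again, with a second application of Lemma~\ref{lem:x2} absorbing the remaining tame twist (Claim~\ref{claim:A20}).
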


Proposition~\ref{prop:soapSplit} is shown in the subsequent
Sections~\ref{ssec:popsoapSplitprep} and \ref{ssec:popsoapSplit}.

\subsection{Preparation for the proof of Proposition~\ref*{prop:soapSplit}}
\label{ssec:popsoapSplitprep}
\approvals{Arne & yes \\ Jorge & yes \\ Stefan & yes}

We aim to relate Sequences~\eqref{eq:sorad} and \eqref{eq:log6-4} via
equivariant push-forward.  We refer the reader to \cite[Sect.~5.1]{MR0102537}
for an overview of elementary facts concerning $G$-sheaves and their
$G$-invariant push forwards; see also \cite[App.~A]{GKKP11} and
references therein.  The following elementary lemma turns out to be key.

\begin{lem}\label{lem:x2}
  Let $k$ be an algebraically closed field, and let $λ : A → B$ be a finite
  Galois morphism between smooth $k$-varieties, with Galois group $G$.  Let
  $Δ_B$ be a reduced divisor on $B$ and consider a $G$-invariant divisor $Δ_A$
  on $A$ with the following properties.
  \begin{enumerate}
  \item The divisors $Δ_A$ and $λ^* Δ_B - Δ_A$ are effective, so
    $0 ≤ Δ_A ≤ λ^* Δ_B$.
  \item Everywhere along the support of $λ^* Δ_B$, the divisor $Δ_A$ is strictly
    smaller than $λ^* Δ_B$.  In other words,
    $\supp (λ^*Δ_B-Δ_A) = \supp λ^* Δ_B$.
  \end{enumerate}
  Equip $𝒪_A(Δ_A)$ with the obvious structure of a $G$-subsheaf of
  $λ^* 𝒪_B(Δ_B)$.  If $ℰ$ is any locally free sheaf of $𝒪_B$-modules, then the
  canonical morphism
  $$
  ℰ ↪ λ_* \bigl( 𝒪_A(Δ_A) ⊗ λ^* ℰ \bigr)^G
  $$
  is an isomorphism.
\end{lem}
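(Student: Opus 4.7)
\emph{Plan.} The plan is to reduce to the case $\mathcal{E} = \mathcal{O}_B$ via the projection formula, and then to verify the remaining statement by computing orders of vanishing along each prime divisor of $B$.

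First, since $\mathcal{E}$ is locally free of finite rank, the projection formula provides a natural $G$-equivariant isomorphism
$$
\lambda_*\bigl(\mathcal{O}_A(\Delta_A) \otimes \lambda^* \mathcal{E}\bigr) \cong \bigl(\lambda_* \mathcal{O}_A(\Delta_A)\bigr) \otimes \mathcal{E},
$$
where $G$ acts trivially on $\mathcal{E}$. Taking $G$-invariants, and using that tensoring with the locally free sheaf $\mathcal{E}$ is exact, the canonical morphism in the lemma is identified with $\mathcal{O}_B \otimes \mathcal{E} \to (\lambda_* \mathcal{O}_A(\Delta_A))^G \otimes \mathcal{E}$. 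Hence it is enough to prove that the obvious inclusion $\mathcal{O}_B \hookrightarrow (\lambda_* \mathcal{O}_A(\Delta_A))^G$ is an isomorphism.

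Since $\lambda$ is finite Galois between integral, normal varieties, the invariant subfield $k(A)^G$ coincides with $k(B)$. Consequently, any local $G$-invariant section $f$ of $\mathcal{O}_A(\Delta_A)$ over $\lambda^{-1}(U)$ descends to a rational function $g \in k(B)$ with $f = \lambda^* g$, and by normality of $B$ it suffices to show that $g$ has no pole along any prime divisor $D \subset U$.

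The heart of the argument, and the step I expect to be the main obstacle, is the following local estimate. Suppose toward contradiction that $\ord_D(g) = -n$ with $n \geq 1$, and write $\lambda^* D = \sum_i e_i A_i$, where the $A_i$ are the prime components of $\lambda^{-1}(D)$ with ramification indices $e_i$. Then $\ord_{A_i}(\lambda^* g) = -n e_i$, so membership of $f = \lambda^* g$ in $\mathcal{O}_A(\Delta_A)$ forces $\ord_{A_i}(\Delta_A) \geq n e_i$ for every $i$. If $D \not\subseteq \supp \Delta_B$, then $0 \leq \Delta_A \leq \lambda^* \Delta_B$ gives $\ord_{A_i}(\Delta_A) = 0 < n e_i$, a contradiction. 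Otherwise $D \subseteq \supp \Delta_B$ and, since $\Delta_B$ is reduced, $\ord_{A_i}(\lambda^* \Delta_B) = e_i$; the hypothesis $\supp(\lambda^* \Delta_B - \Delta_A) = \supp \lambda^* \Delta_B$ then forces $\ord_{A_i}(\Delta_A) \leq e_i - 1 < n e_i$, again a contradiction. This rules out poles of $g$ and completes the proof; the only real subtlety is the careful split into these two cases, which is precisely where both hypotheses on $\Delta_A$ versus $\lambda^* \Delta_B$ are used.
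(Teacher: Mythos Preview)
Your proof is correct and follows essentially the same approach as the paper: reduce to $\mathcal{E}=\mathcal{O}_B$, interpret a $G$-invariant section of $\mathcal{O}_A(\Delta_A)$ as a rational function on $B$, and rule out poles by comparing orders along components of $\lambda^*\Delta_B$ using the strict inequality $\Delta_A < \lambda^*\Delta_B$. The only cosmetic difference is that the paper uses the sandwich $\lambda^*\mathcal{O}_B \subseteq \mathcal{O}_A(\Delta_A) \subseteq \lambda^*\mathcal{O}_B(\Delta_B)$ and Remark~\ref{rem:xz} to first bound the pole order by one before eliminating it, whereas you argue directly by contradiction with an arbitrary pole order $n\geq 1$.
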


\begin{rem}\label{rem:xz}
  The morphism $λ$ of Lemma~\ref{lem:x2} is Galois, which is to say that $B$ is
  the quotient variety for the $G$-action on $A$.  In particular, regular
  functions on $G$-invariant open subsets of $A$ come from $B$ if and only if
  they are $G$-invariant.  The natural morphism $𝒪_B → λ_*(𝒪_A)^G$ is therefore
  an isomorphism, and then so are the natural morphisms $ℱ → λ_*(λ^* ℱ)^G$, for all
  locally free sheaves $ℱ$ on $B$.
\end{rem}

\begin{proof}[Proof of Lemma~\ref{lem:x2}]
  The problem is local on $B$ and respects direct sums.  We may therefore assume
  without loss of generality that $ℰ = 𝒪_B$.  We have inclusions of $G$-sheaves
  on $X$:
  $$
  λ^* 𝒪_B ⊆ 𝒪_A(Δ_A) ⊆ λ^* 𝒪_B(Δ_B).
  $$
  Using the fact that equivariant push-forward is left-exact, \cite[p.~197f]{MR0102537},
  we obtain a commutative diagram as follows:
  $$
  \xymatrix{ %
    𝒪_B \ar[d]_{\substack{\text{natl.~morphism, }n_1\\≅\text{ by Rem.~\ref{rem:xz}}}} \ar@{^(->}[rr]^{\text{inclusion}} && 𝒪_B(Δ_B) \ar[d]^{\substack{\text{natl.~morphism, }n_2\\≅\text{ by Rem.~\ref{rem:xz}}}}\\
    λ_*(λ^* 𝒪_B)^G \ar@{^(->}[r]_{α} & λ_*(𝒪_A(Δ_A))^G \ar@{^(->}[r]_{β} & λ_*(λ^* 𝒪_B(Δ_B))^G.
  }
  $$
  With the identifications indicated by the vertical arrows, a section in
  $λ_*(𝒪_A(Δ_A))^G$ is seen as a rational function $f$ on $B$, satisfying the
  following properties.
  \begin{enumerate}
  \item The function $f$ has at most simple poles along $Δ_B$, and no poles
    elsewhere.

  \item Along any component $δ ⊂ \supp λ^*Δ_B$, the pole order of the pull-back
    $λ^*f$ is required to satisfy
    $$
    \operatorname{poleOrder}_δ λ^*f ≤ \mult_δ Δ_A < \mult_δ λ^* Δ_B.
    $$
  \end{enumerate}
  Such a function is necessarily regular, which shows that the composition
  $α◦ n_1$ is an isomorphism, as required.
\end{proof}

\subsection{Proof of Proposition~\ref*{prop:soapSplit}}
\label{ssec:popsoapSplit}
\approvals{Arne & yes \\ Jorge & yes \\ Stefan & yes}

We assume that the sequence of relative adapted differentials,
Sequence~\eqref{eq:sorad}, splits when restricted to the generic fibre
$\what{X}_η$.  Equivalently, there exists an open subset $S^{◦◦◦} ⊆ S^{◦◦}$ such
that \eqref{eq:sorad} splits over the preimage of $S^{◦◦◦}$.  To keep the text
readable, we assume that $S^{◦◦◦} = S^{◦◦}$, so that there exists a morphism
$$
s : 𝒥_{\what{D}^{◦◦}_{\fract}} ⊗ (c^{◦◦})^* ω_{X/S}(\log ⌈D⌉) →
\what{Ω}¹_{X^{◦◦}}(\log \what{D}^{◦◦}_{\log})
$$
that splits Sequence~\eqref{eq:sorad}.

\begin{obs}\label{obs:1}
  Remark~\ref{rem:soad} and Assumption~\ref{il:i1} guarantee that $\Image(s)$
  equals the maximal destabilising subsheaf of
  $\what{Ω}¹_{X^{◦◦}}(\log \what{D}^{◦◦}_{\log})$ on $\what{X}_η$.  It follows
  that the splitting morphism $s$ is unique.
\end{obs}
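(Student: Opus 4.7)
The plan is to restrict Sequence~\eqref{eq:sorad} to the generic fibre $\what{X}_η$ of $\what{φ}$ and then argue via a slope computation on that one-dimensional scheme. First, since $Ω¹_S$ is an invertible sheaf on $S$, its pull-back $(\what{φ}^{◦◦})^* Ω¹_S$ restricts to a trivial invertible sheaf on $\what{X}_η$, hence of degree zero. On the other hand, Remark~\ref{rem:soad} together with Assumption~\ref{il:i1} asserts that the restriction of the quotient $𝒥_{\what{D}^{◦◦}_{\fract}} ⊗ (c^{◦◦})^* ω_{X/S}(\log ⌈D⌉)$ to $\what{X}_η$ is an invertible sheaf of positive degree
$$
d_η := [\what{X}^{◦◦}:X^{◦◦}]·d > 0.
$$

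Write $V := \what{Ω}¹_{X^{◦◦}}(\log \what{D}^{◦◦}_{\log})|_{\what{X}_η}$ and $Q$ for the restriction of the quotient to $\what{X}_η$. The restriction of Sequence~\eqref{eq:sorad} to $\what{X}_η$ then presents $V$ as an extension of $Q$ by the trivial invertible sheaf. Consequently $V$ is a rank-two vector bundle of slope $μ(V) = d_η/2$. The restriction $s|_{\what{X}_η}$ splits this extension, so $\Image(s)|_{\what{X}_η}$ is an invertible subsheaf of $V$ isomorphic to $Q$, of degree $d_η > μ(V)$. This already shows that $V$ is Harder--Narasimhan unstable on the curve $\what{X}_η$, and that $\Image(s)|_{\what{X}_η}$ is a destabilising line subbundle.

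To identify $\Image(s)|_{\what{X}_η}$ with the \emph{maximal} destabilising subsheaf, let $L ⊂ V$ be any invertible subsheaf. Either the composition $L ↪ V ↠ Q$ vanishes, in which case $L$ factors through the trivial subbundle and $\deg L ≤ 0 < d_η$, or it is an injection of invertibles, in which case $\deg L ≤ \deg Q = d_η$. So $\Image(s)|_{\what{X}_η}$ realises the maximum possible degree, and since $Q$ is invertible, any invertible subsheaf of degree $d_η$ must coincide with $\Image(s)|_{\what{X}_η}$; this is the defining property of the maximal destabilising subsheaf for an unstable rank-two bundle on a curve.

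Finally, uniqueness of $s$ itself is immediate: two splittings $s_1, s_2$ of Sequence~\eqref{eq:sorad} over $\what{X}_η$ would differ by a morphism $Q → (\what{φ}^{◦◦})^* Ω¹_S|_{\what{X}_η}$ between invertible sheaves on $\what{X}_η$ of degrees $d_η > 0$ and $0$, respectively; there are no non-zero such morphisms, so $s_1 = s_2$. The main (but routine) point is the slope bookkeeping, which hinges on $d > 0$; no further obstacle arises.
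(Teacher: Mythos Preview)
Your proof is correct and follows exactly the reasoning the paper intends; the paper states this as an observation without separate proof, and your slope bookkeeping on $\what{X}_η$ (trivial subbundle of degree $0$, quotient of degree $d_η>0$, hence any splitting image realises the maximal degree and is therefore the unique maximal destabilising subsheaf, with uniqueness of $s$ forced by $\Hom(Q,\mathcal{O})=0$) is precisely the unpacking of that observation.
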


\subsection*{Step 1: Embedding the sequence of relative adapted differentials}
\approvals{Arne & --- \\ Jorge & yes \\ Stefan & yes}

By definition, the sheaf of adapted differentials contains
$𝒥_{\what{D}^{◦◦}_{\fract}} ⊗ (c^{◦◦})^* Ω¹_X\bigl(\log ⌈D⌉ \bigr)$ and is
contained in $(c^{◦◦})^* Ω¹_X\bigl(\log ⌈D⌉ \bigr)$.  As a consequence, we find
that the sequence of relative adapted differentials, whose splitting behaviour
needs to be understood, is sandwiched between two sequences that relate to the
splitting behaviour of Sequence~\eqref{eq:log6-4}.

\begin{claim}[Embedding the sequence of relative adapted differentials]\label{claim:A15}
  The sequence of relative adapted differentials fits into a commutative diagram
  of sheaf morphisms on $\what{X}^{◦◦}$ with exact rows, as follows:
  $$
  \small \xymatrix@C=.5cm{ %
    (\what{φ}^{◦◦})^* Ω¹_S \ar@{^(->}[r] & (c^{◦◦})^* Ω¹_{X}(\log ⌈D⌉) \ar@{->>}[r] & (c^{◦◦})^* ω_{X/S}(\log ⌈D⌉) \\
    (\what{φ}^{◦◦})^* Ω¹_S \ar@{^(->}[r]^{\what{α}} \ar@{=}[u] & \what{Ω}¹_{X^{◦◦}}(\log \what{D}^{◦◦}_{\log}) \ar@{->>}[r] \ar@{^(->}[u] & 𝒥_{\what{D}^{◦◦}_{\fract}} ⊗ (c^{◦◦})^* ω_{X/S}(\log ⌈D⌉) \ar@{^(->}[u] \ar@/_5mm/[l]_{\what{s}\text{, splitting}} \\
    𝒥_{\what{D}^{◦◦}_{\fract}} ⊗ (\what{φ}^{◦◦})^* Ω¹_S \ar@{^(->}[r]_(.4){\what{δ}} \ar@{^(->}[u]_{\what{β}} & 𝒥_{\what{D}^{◦◦}_{\fract}} ⊗ (c^{◦◦})^* Ω¹_{X}(\log ⌈D⌉) \ar@{->>}[r] \ar@{^(->}[u]_{\what{γ}} & 𝒥_{\what{D}^{◦◦}_{\fract}} ⊗ (c^{◦◦})^* ω_{X/S}(\log ⌈D⌉) \ar@{^(->}[u].
  }
  $$
  All sheaves that appear in the diagram carry natural structures of
  $\what{G}$-sheaves, and all morphisms are morphisms of $\what{G}$-sheaves.
\end{claim}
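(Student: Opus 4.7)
The plan is to construct the diagram by combining three standard operations and then to verify exactness row by row and $\what{G}$-equivariance at the end. The vertical arrows are essentially forced by Proposition~\ref{prop:soad1} and the very definition of $\what{Ω}¹_{X^{◦◦}}(\log \what{D}^{◦◦}_{\log})$ given in~\eqref{eq:dad}: the inclusion of the middle row into the top row is the map $ι^{◦◦}_1$ from Proposition~\ref{prop:soad1}, while the inclusion of the bottom row into the middle row is precisely the inclusion of the first summand in~\eqref{eq:dad}. On the right column, all three sheaves are quotients of the corresponding middle terms, and the vertical arrows are determined by commutativity. On the left column, the top two arrows are identities (since the left term of the middle row is already $(\what{\varphi}^{◦◦})^*Ω¹_S$, by~\eqref{eq:sorad}), and $\what{\beta}$ is obtained by tensoring with the inclusion $𝒥_{\what{D}^{◦◦}_{\fract}} \hookrightarrow 𝒪_{\what{X}^{◦◦}}$.

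For exactness, the top row is the pull-back of the logarithmic relative cotangent sequence~\eqref{eq:log6-4} along $c^{◦◦}$. Since $\varphi°$ is snc, that sequence is exact with locally free terms, hence remains exact after the flat restriction and the general pull-back by $c^{◦◦}$. The middle row is exact by Proposition~\ref{prop:soad2}. For the bottom row, observe that it is the top row tensored with $𝒥_{\what{D}^{◦◦}_{\fract}}$: exactness of the top row and local freeness of the term $(c^{◦◦})^* Ω¹_{X}(\log ⌈D⌉)$ imply local splittability, so tensoring with any sheaf — in particular the ideal sheaf — preserves exactness. Commutativity of the two upper squares is tautological from the construction of the vertical arrows, and the two lower squares commute because $\what{\gamma}$ is obtained by tensoring the identity on $(c^{◦◦})^*Ω¹_X(\log⌈D⌉)$ with the inclusion $𝒥_{\what{D}^{◦◦}_{\fract}} \hookrightarrow 𝒪_{\what{X}^{◦◦}}$.

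Finally, for $\what{G}$-equivariance it suffices to note the following: the sheaves $(c^{◦◦})^*Ω¹_S$, $(c^{◦◦})^*Ω¹_X(\log⌈D⌉)$, and $(c^{◦◦})^*ω_{X/S}(\log⌈D⌉)$ carry their canonical $\what{G}$-structures coming from the Galois cover $c^{◦◦}$, and the maps in the pulled-back sequence~\eqref{eq:log6-4} are $\what{G}$-equivariant because they descend to maps on $X^{◦◦}$. The divisor $\what{D}^{◦◦}_{\fract}$ is $\what{G}$-invariant because $\what{G}$ permutes the components of $(c^{◦◦})^*D_{\fract}$, hence the ideal sheaf $𝒥_{\what{D}^{◦◦}_{\fract}}$ inherits a natural $\what{G}$-structure and $\what{Ω}¹_{X^{◦◦}}(\log\what{D}^{◦◦}_{\log})$, being a sum of two $\what{G}$-stable subsheaves of $(c^{◦◦})^*Ω¹_X(\log⌈D⌉)$, is itself a $\what{G}$-subsheaf; all morphisms in the diagram are defined in terms of these $\what{G}$-stable subsheaves and $\what{G}$-equivariant maps, so they are automatically $\what{G}$-equivariant.

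There is no real obstacle here — the content of the claim is essentially the formal consequence of the definition~\eqref{eq:dad} together with Proposition~\ref{prop:soad2}. The only point that requires care is checking that exactness is preserved when tensoring the top row by $𝒥_{\what{D}^{◦◦}_{\fract}}$, which we handle by invoking local freeness, and verifying that the $\what{G}$-action on $\what{D}^{◦◦}_{\fract}$ (hence on its ideal sheaf) is indeed the natural one inherited from the Galois cover.
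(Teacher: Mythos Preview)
Your argument handles the construction of the diagram, the exactness of the rows, and the $\what{G}$-equivariance of the structural morphisms correctly, and this part matches the paper's view that these assertions are ``clear from the construction''. However, you have missed precisely the one nontrivial point the paper singles out: the $\what{G}$-equivariance of the splitting morphism $\what{s}$.

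The map $\what{s}$ is not ``defined in terms of $\what{G}$-stable subsheaves and $\what{G}$-equivariant maps''; it is a \emph{chosen} section of the surjection in the middle row, assumed to exist by hypothesis~\ref{il:i2}. A splitting of a $\what{G}$-equivariant short exact sequence need not be $\what{G}$-equivariant in general. The paper resolves this by invoking Observation~\ref{obs:1}: since $\deg_{\what{X}_\eta}(K_X+D)>0$, the image of $\what{s}$ on the generic fibre is the maximal destabilising subsheaf of $\what{Ω}¹_{X^{◦◦}}(\log \what{D}^{◦◦}_{\log})|_{\what{X}_\eta}$, hence $\what{s}$ is the \emph{unique} splitting. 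Uniqueness then forces $g^*\what{s}=\what{s}$ for every $g\in\what{G}$. You should insert this argument; without it the claim that ``all morphisms are morphisms of $\what{G}$-sheaves'' is unjustified for $\what{s}$, and this equivariance is exactly what is needed in the subsequent Step~3 when one takes $\what{G}$-invariant push-forward.
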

\begin{proof}[Proof of Claim~\ref{claim:A15}]
  All assertions are clear from the construction, except perhaps the
  $\what{G}$-equivariance of the splitting morphism $\what{s}$.  The
  equivariance of $\what{s}$ follows from the uniqueness pointed out in
  Observation~\ref{obs:1}.  \qedhere~(Claim~\ref{claim:A15})
\end{proof}

\begin{notation}
  To keep this proof readable, we denote the entries in the
  diagram of Claim~\ref{claim:A15} by
  $$
  \begin{matrix}
    \what{A} & \what{B} & \what{C} \\
    \what{D} & \what{E} & \what{F} \\
    \what{G} & \what{H} & \what{I}.
  \end{matrix}
  $$
\end{notation}

\subsection*{Step 2: Dualisation}
\approvals{Arne & --- \\ Jorge & yes \\ Stefan & yes}

To see how the sequence of relative adapted differentials relates to
Sequence~\eqref{eq:log6-4}, one might be tempted to consider
$\what{G}$-invariant push-forward of the diagram from Claim~\ref{claim:A15} at
this point.  This will, however, not give the sheaves we are interested in.
Instead, we need to dualise first.

\begin{claim}[Dualisation]\label{claim:A15a}
  Dualising the diagram of Claim~\ref{claim:A15}, we obtain a commutative
  diagram with exact rows of $\what{G}$-sheaves as follows:
  $$
  \footnotesize \xymatrix@C=.5cm{ %
    (\what{φ}^{◦◦})^* 𝒯_S & (c^{◦◦})^* 𝒯_{X}(-\log ⌈D⌉) \ar@{->>}[l] \ar@{^(->}[d] & (c^{◦◦})^* 𝒯_{X/S}(-\log ⌈D⌉) \ar@{_(->}[l] \ar@{^(->}[d] \\
    (\what{φ}^{◦◦})^* 𝒯_S \ar@{=}[u] \ar@{^(->}[d] & \what{𝒯}_{X^{◦◦}}(-\log \what{D}^{◦◦}_{\log}) \ar@{->>}[l] \ar@{^(->}[d] \ar@/^5mm/[r]^{\what{s}^*\text{, splitting}} & \Bigl[ (c^{◦◦})^* 𝒯_{X/S}(-\log ⌈D⌉)\Bigr](\what{D}^{◦◦}_{\fract}) \ar@{_(->}[l] \ar@{^(->}[d] \\
    \Bigl[(\what{φ}^{◦◦})^* 𝒯_S \Bigr](\what{D}^{◦◦}_{\fract}) & \Bigl[ (c^{◦◦})^* 𝒯_{X}(-\log ⌈D⌉)\Bigr](\what{D}^{◦◦}_{\fract}) \ar@{->>}[l] & \Bigl[(c^{◦◦})^* 𝒯_{X/S}(-\log ⌈D⌉) \Bigr](\what{D}^{◦◦}_{\fract}) \ar@{_(->}[l].
  }
  $$
  As a subsheaf of
  \[
    \what{H}^* = \Bigl[ (c^{◦◦})^* 𝒯_{X}(-\log
    ⌈D⌉)\Bigr](\what{D}^{◦◦}_{\fract}),
  \]
  the sheaf
  \[
    \what{E}^* = \what{𝒯}_{X^{◦◦}}(-\log \what{D}^{◦◦}_{\log})
  \]
  is described as
  \begin{equation}\label{eq:A15a}
    \what{E}^* = \ker \Bigl(\what{H}^* → \factor{\what{G}^*}{\what{D}^*} \Bigr).
  \end{equation}
\end{claim}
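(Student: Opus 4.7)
The plan is to apply the duality functor $\sHom(-, \sO_{\what{X}^{◦◦}})$ term-by-term to the diagram in Claim~\ref{claim:A15}, and then verify the kernel description~\eqref{eq:A15a} separately. As a preliminary, every entry in Claim~\ref{claim:A15} is locally free of finite rank: for $\what{A}$ and $\what{D}$ this is standard, for $\what{B}$ and $\what{C}$ it follows from the snc hypothesis on $(X, \lceil D \rceil)$, for $\what{E}$ it is the content of Proposition~\ref{prop:soad2}, and for $\what{F}, \what{G}, \what{H}, \what{I}$ it follows because the components of $\what{D}^{◦◦}_{\fract}$ are smooth and pairwise disjoint by Items~\ref{il:6a-1-2} and~\ref{il:6a-1-2a} of Proposition~\ref{prop:B-1}, whence $\mathcal{J}_{\what{D}^{◦◦}_{\fract}}$ is invertible and equal to $\sO_{\what{X}^{◦◦}}(-\what{D}^{◦◦}_{\fract})$. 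Consequently $\sHom(-, \sO_{\what{X}^{◦◦}})$ is exact on the diagram: short exact sequences of locally frees dualise to short exact sequences, and vertical injections between locally frees of equal rank dualise to injections (the kernel of such a dual is $\sHom$ of a torsion quotient into $\sO$, which vanishes). Applying the identity $(\mathcal{J}_{\what{D}^{◦◦}_{\fract}} \otimes \mathcal{L})^{*} \cong \mathcal{L}^{*}(\what{D}^{◦◦}_{\fract})$ entry by entry yields the claimed sheaves in the dualised diagram, the dual of $\what{s}$ splits the dualised middle row, and $\what{G}$-equivariance is inherited automatically from functoriality of $\sHom$.

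The only piece that needs more than bookkeeping is the kernel description~\eqref{eq:A15a}. My plan here is to exhibit $\what{E}$ as the pushout of $\what{H}$ and $\what{D}$ along $\what{G}$ inside $\what{B}$. The identity $\what{E} = \what{H} + \what{D}$ is Definition~\ref{defn:soad} read verbatim. For the identity $\what{H} \cap \what{D} = \what{G}$, I would argue locally: choosing a local equation $t$ for $\what{D}^{◦◦}_{\fract}$, we have $\what{H} = t \cdot \what{B}$, and a local section $t \cdot b \in \what{B}$ lies in $\what{D}$ if and only if $b$ itself lies in $\what{D}$, precisely because the quotient $\what{B}/\what{D} \cong \what{C}$ is locally free and hence torsion-free. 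This gives a short exact sequence
\[
0 \to \what{H} \to \what{E} \to \what{D}/\what{G} \to 0
\]
whose right-hand term is torsion, supported on $\what{D}^{◦◦}_{\fract}$. Dualising and using $\sHom(\what{D}/\what{G}, \sO_{\what{X}^{◦◦}}) = 0$ exhibits $\what{E}^{*}$ as a subsheaf of $\what{H}^{*}$, and the dual of a pushout square of locally free sheaves being a pullback square identifies $\what{E}^{*}$ with the fibre product of $\what{D}^{*} \hookrightarrow \what{G}^{*} \leftarrow \what{H}^{*}$. This fibre product is precisely the kernel of the composed morphism $\what{H}^{*} \to \what{G}^{*} \twoheadrightarrow \what{G}^{*}/\what{D}^{*}$, which is exactly formula~\eqref{eq:A15a}.

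The main obstacle I anticipate is the verification of $\what{H} \cap \what{D} = \what{G}$; it rests crucially on the torsion-freeness of $\what{C}$, which in turn relies on the strong adaptedness of $c^{◦◦}$ provided by Proposition~\ref{prop:B-1}. Once this set-theoretic identification of subsheaves of $\what{B}$ is secured, the rest of the argument is purely formal, characteristic-free, and a transcription of the duality between pushouts and pullbacks in the category of quasi-coherent sheaves on the smooth surface $\what{X}^{◦◦}$.
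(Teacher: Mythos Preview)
Your proof is correct and follows essentially the same route as the paper. Both arguments rest on the observation that $\what{E}$ is the pushout of $\what{D}$ and $\what{H}$ along $\what{G}$ inside $\what{B}$; the paper encodes this as the short exact sequence $0 \to \what{G} \xrightarrow{\what{\beta}\oplus\what{\delta}} \what{D}\oplus\what{H} \to \what{E} \to 0$, dualises it, and reads off~\eqref{eq:A15a} via the snake lemma, whereas you invoke the general principle that duals of pushout squares of locally frees are pullback squares --- these are two phrasings of the same computation.
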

\begin{proof}[Proof of Claim~\ref{claim:A15a}]
  Only the last line of the claim needs to be shown.  To begin, it follows
  directly from the definition of the sheaf of adapted differentials in
  \eqref{eq:dad} that we have an exact sequence of locally free sheaves
  $$
  \xymatrix@C=15mm{ %
    0 \ar[r] & \what{G} \ar[r]^(.4){\what{β} ⊕ \what{δ}} & \what{D}⊕\what{H} \ar[r]^(.6){\what{α} ◦ π_1-\what{γ} ◦ π_2} & \what{E} \ar[r] & 0.
  }
  $$
  Dualising, this give a commutative diagram with exact rows,
  $$
  \xymatrix@C=15mm{ %
    0 \ar[r] & 0 \ar[r] \ar[d] & \what{D}^* \ar[r]^{\Id} \ar[d] & \what{D}^* \ar[r] \ar@{^(->}[d]^{\what{β}^*} & 0 \\
    0 \ar[r] & \what{E}^* \ar[r]_(.4){\what{α}^*⊕(-\what{γ}^*)} & \what{D}^*⊕\what{H}^* \ar[r]_(.6){\what{β}^*π_1-\what{δ}^*π_2} & \what{G}^* \ar[r] & 0.
  }
  $$
  and the assertion then
 follows
  from the snake lemma.  \qedhere~(Claim~\ref{claim:A15a})
\end{proof}

\subsection*{Step 3: $\what{G}$-invariant push-forward}
\approvals{Arne & --- \\ Jorge & yes \\ Stefan & yes}

We consider the $\what{G}$-invariant push-forward of the diagram found above.
In other words, we apply the (left-exact) functor
$(\what{c}^{◦◦})_*(•)^{\what{G}}$ to all sheaves and sheaf morphisms involved.
For convenience of notation, write
$$
\wcheck{𝒯}_{X^{◦◦}}(-\log \what{D}^{◦◦}_{\log}) := (\what{c}^{◦◦})_* \Bigl( \what{𝒯}_{X^{◦◦}}\bigl(-\log \what{D}^{◦◦}_{\log}\bigr) \Bigr)^{\what{G}},
$$

\begin{claim}[$\what{G}$-invariant push-forward]\label{claim:A16}
  The $\what{G}$-invariant push-forward of the diagram in Claim~\ref{claim:A15a}
  is a commutative diagram with exact rows that reads as follows:
  $$
  \footnotesize \xymatrix@C=.5cm{ %
    (\wcheck{φ}^{◦◦})^* 𝒯_S & (\wcheck{c}^{◦◦})^* 𝒯_{X}(-\log ⌈D⌉) \ar@{->>}[l] \ar@{^(->}[d] & (\wcheck{c}^{◦◦})^* 𝒯_{X/S}(-\log ⌈D⌉) \ar@{_(->}[l] \ar@{^(->}[d] \\
    (\wcheck{φ}^{◦◦})^* 𝒯_S \ar@{=}[u] \ar@{^(->}[d] & \wcheck{𝒯}_{X^{◦◦}}(-\log \what{D}^{◦◦}_{\log}) \ar@{->>}[l] \ar@{^(->}[d] \ar@/^5mm/[r]^{\wcheck{s}\text{, splitting}} & \Bigl[ (\wcheck{c}^{◦◦})^* 𝒯_{X/S}(-\log ⌈D⌉)\Bigr](\wcheck{D}^{◦◦}_{\tame}) \ar@{_(->}[l] \ar@{^(->}[d] \\
    \Bigl[(\wcheck{φ}^{◦◦})^* 𝒯_S \Bigr](\wcheck{D}^{◦◦}_{\tame}) & \Bigl[ (\wcheck{c}^{◦◦})^* 𝒯_{X}(-\log ⌈D⌉)\Bigr](\wcheck{D}^{◦◦}_{\tame}) \ar@{->>}[l] & \Bigl[(\wcheck{c}^{◦◦})^* 𝒯_{X/S}(-\log ⌈D⌉) \Bigr](\wcheck{D}^{◦◦}_{\tame}).  \ar@{_(->}[l]
  }
  $$
\end{claim}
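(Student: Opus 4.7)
The plan is to apply the left-exact functor $(\what{c}^{◦◦})_*(\bullet)^{\what{G}}$ term-by-term to the diagram of Claim~\ref{claim:A15a}, identify each resulting sheaf via Lemma~\ref{lem:x2}, and verify that the induced maps assemble into the diagram asserted here.

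\textbf{Identification of the pushforwards.} Every sheaf appearing in Claim~\ref{claim:A15a} is either a pullback of the form $(c^{◦◦})^* ℰ = (\what{c}^{◦◦})^*(\wcheck{c}^{◦◦})^* ℰ$, or such a pullback further twisted by $𝒪_{\what{X}^{◦◦}}(\what{D}^{◦◦}_{\fract})$. For untwisted pullbacks, Lemma~\ref{lem:x2} with $Δ_A = Δ_B = 0$ (equivalently Remark~\ref{rem:xz}) identifies the invariant push-forward with $(\wcheck{c}^{◦◦})^* ℰ$. For the twisted case, I would decompose $\what{D}^{◦◦}_{\fract} = \what{D}^{◦◦}_{\tame} + \what{D}^{◦◦}_{\wild}$ and exploit Proposition~\ref{prop:B-1}: the Artin--Schreier piece $\what{c}^{◦◦}$ is étale outside $\wcheck{D}^{◦◦}_{\wild}$, so $\what{D}^{◦◦}_{\tame} = (\what{c}^{◦◦})^* \wcheck{D}^{◦◦}_{\tame}$, and is totally wildly ramified of degree $p$ along $\wcheck{D}^{◦◦}_{\wild}$, so $(\what{c}^{◦◦})^* \wcheck{D}^{◦◦}_{\wild} = p · \what{D}^{◦◦}_{\wild}$. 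Together these yield
$$
[(c^{◦◦})^* ℰ](\what{D}^{◦◦}_{\fract}) ≅ 𝒪_{\what{X}^{◦◦}}(\what{D}^{◦◦}_{\wild}) ⊗ (\what{c}^{◦◦})^*\bigl[(\wcheck{c}^{◦◦})^* ℰ(\wcheck{D}^{◦◦}_{\tame})\bigr],
$$
to which Lemma~\ref{lem:x2} applies with $Δ_A := \what{D}^{◦◦}_{\wild}$ and $Δ_B := \wcheck{D}^{◦◦}_{\wild}$. Its hypotheses hold because $(\what{c}^{◦◦})^* Δ_B - Δ_A = (p-1)\what{D}^{◦◦}_{\wild}$ is effective with support equal to $\supp \what{D}^{◦◦}_{\wild} = \supp Δ_A$. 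The conclusion is that the invariant push-forward is precisely $[(\wcheck{c}^{◦◦})^* ℰ](\wcheck{D}^{◦◦}_{\tame})$, which accounts for the replacement of $\what{D}^{◦◦}_{\fract}$ by $\wcheck{D}^{◦◦}_{\tame}$ in the conclusion.

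\textbf{Commutativity} of the resulting diagram is automatic from the naturality of every morphism in Claim~\ref{claim:A15a} together with the naturality of the identifications furnished by Lemma~\ref{lem:x2}.

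\textbf{Exactness of the rows --- the main obstacle.} Since $|\what{G}| = p$ coincides with the characteristic, the functor $(-)^{\what{G}}$ is only left-exact a priori, so verifying right-exactness of the rows is the delicate point. For the top and bottom rows I would argue as follows: each is a (twisted) pullback along the flat finite morphism $c^{◦◦}$ of the relative tangent sequence on $X^{◦◦}$, and the identifications of the previous step exhibit the invariant pushforwards as the corresponding (twisted) pullback of that same sequence along the flat morphism $\wcheck{c}^{◦◦}$; flatness then guarantees short-exactness. For the middle row, whose middle term is defined as the invariant push-forward $\wcheck{𝒯}_{X^{◦◦}}(-\log \what{D}^{◦◦}_{\log})$, I would combine the kernel description~\eqref{eq:A15a} of $\what{E}^*$ from Claim~\ref{claim:A15a} with a local check: away from $\wcheck{D}^{◦◦}_{\wild}$ the morphism $\what{c}^{◦◦}$ is étale, so $(-)^{\what{G}}$ is exact there; along $\wcheck{D}^{◦◦}_{\wild}$ an explicit computation in the Artin--Schreier normal form of Construction~\ref{cons:ASCover} shows that the middle sheaf of the pushed-forward row realises exactly the corresponding kernel inside the pushed-forward bottom row, yielding surjectivity of the quotient map and hence exactness.
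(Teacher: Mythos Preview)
Your identification of the invariant push-forwards via Lemma~\ref{lem:x2}, including the decomposition $\what{D}^{◦◦}_{\fract} = \what{D}^{◦◦}_{\tame} + \what{D}^{◦◦}_{\wild}$ and the choice $Δ_A = \what{D}^{◦◦}_{\wild}$, $Δ_B = \wcheck{D}^{◦◦}_{\wild}$, is exactly what the paper does. Your flatness argument for exactness of the top and bottom rows is also fine (the paper phrases it slightly differently, noting that the bottom row is the top row tensored with the invertible sheaf $𝒪_{\wcheck{X}^{◦◦}}(\wcheck{D}^{◦◦}_{\tame})$, but this comes to the same thing).

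Where you diverge is in the exactness of the \emph{middle} row, and here you are working much harder than necessary. The paper does not touch the kernel description~\eqref{eq:A15a} or perform any local Artin--Schreier computation. Instead it observes that the only thing left to check is surjectivity of the map $\wcheck{𝒯}_{X^{◦◦}}(-\log \what{D}^{◦◦}_{\log}) \twoheadrightarrow (\wcheck{φ}^{◦◦})^* 𝒯_S$, and this is immediate from the commutativity of the upper-left square: the top-middle sheaf $(\wcheck{c}^{◦◦})^* 𝒯_X(-\log ⌈D⌉)$ injects into the middle-middle sheaf, the top-left and middle-left entries are literally equal, and the top horizontal map is already known to be surjective. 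So the composition (inclusion into middle-middle, then project to middle-left) equals the surjection in the top row, forcing the middle horizontal map to be surjective as well. Your proposed local computation along the wild locus would presumably work, but it is not needed and you have not actually carried it out; the diagram chase replaces it entirely.
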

\begin{proof}[Proof of Claim~\ref{claim:A16}]
  The identification of the sheaves in the top row is clear by
  Remark~\ref{rem:xz}.  As for the identification of the other sheaves, observe
  that
  $$
  \Bigl[ (c^{◦◦})^* 𝒯_{X}(-\log ⌈D⌉)\Bigr](\what{D}^{◦◦}_{\fract}) = \biggl[ (\what{c}^{◦◦})^*\Bigl[\Bigl[ (\wcheck{c}^{◦◦})^* 𝒯_X(-\log ⌈D⌉) \Bigr](\wcheck{D}^{◦◦}_{\tame})\Bigr] \biggr](\what{D}^{◦◦}_{\wild}).
  $$
  The identification
  $$
  (\what{c}^{◦◦})_* \biggl( \Bigl[ (c^{◦◦})^* 𝒯_{X}(-\log ⌈D⌉)\Bigr](\what{D}^{◦◦}_{\fract}) \biggr)^{\what{G}} = (\what{c}^{◦◦})^*\Bigl[ (\wcheck{c}^{◦◦})^* 𝒯_X(-\log ⌈D⌉) \Bigr](\wcheck{D}^{◦◦}_{\tame})
  $$
  is thus an immediate consequence of Lemma~\ref{lem:x2}; ditto for the
  identifications of all the other sheaves.  It remains to prove surjectivity of
  the horizontal arrows towards the left column.  For the top arrow, this is
  clear.  Since the bottom row equals the top row tensored with the locally free
  $𝒪_{\wcheck{X}}(\wcheck{D}^{◦◦}_{\tame})$, surjectivity is also clear for the
  horizontal arrow in the bottom row.  Surjectivity of the middle arrow follows
  from the commutativity of the upper left square.
  \qedhere~(Claim~\ref{claim:A16})
\end{proof}

\begin{claim}[$\wcheck{G}$-sheaves in the diagram of Claim~\ref{claim:A16}]\label{claim:A17}
  All sheaves that appear in the diagram of Claim~\ref{claim:A16} carry natural
  structures of $\wcheck{G}$-sheaves, and all morphisms except possibly
  $\wcheck{s}$ are morphisms of $\wcheck{G}$-sheaves.
\end{claim}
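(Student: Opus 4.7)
The plan is to reduce the claim to two observations: (i) every sheaf in the diagram \emph{except} the middle entry $\wcheck{𝒯}_{X^{◦◦}}(-\log \what{D}^{◦◦}_{\log})$, together with every morphism not touching this middle entry, is manifestly $\wcheck{G}$-equivariant because all such sheaves are pullbacks under $\wcheck{c}^{◦◦}$ of natural sheaves on $X^{◦◦}$, possibly tensored with $𝒪_{\wcheck{X}^{◦◦}}(\wcheck{D}^{◦◦}_{\tame})$; (ii) the middle sheaf itself can be characterised intrinsically on $\wcheck{X}^{◦◦}$ as the kernel of a $\wcheck{G}$-equivariant morphism between two sheaves of the kind appearing in (i), and therefore carries a canonical $\wcheck{G}$-structure as well.

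For (i), first observe that $\wcheck{D}^{◦◦}_{\tame}$ is by definition the strict transform of $D^{◦◦}_{\tame}$ under the Galois cover $\wcheck{c}^{◦◦}$, hence is $\wcheck{G}$-invariant. Consequently $𝒪_{\wcheck{X}^{◦◦}}(\wcheck{D}^{◦◦}_{\tame})$ acquires a canonical $\wcheck{G}$-structure, and tensoring by this line bundle preserves $\wcheck{G}$-equivariance. Every sheaf in the diagram other than the middle entry is of this form, and every morphism between two such sheaves is the pullback of the analogous morphism on $X^{◦◦}$; both structures are therefore $\wcheck{G}$-equivariant by naturality.

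For (ii), apply the left-exact functor $(\what{c}^{◦◦})_{*}(\cdot)^{\what{G}}$ to the short exact sequence $0 \to \what{E}^{*} \to \what{H}^{*} \to \what{H}^{*}/\what{E}^{*}$ underlying \eqref{eq:A15a}. Using Lemma~\ref{lem:x2} exactly as in the proof of Claim~\ref{claim:A16} to identify the pushforward of each term with a pullback sheaf on $\wcheck{X}^{◦◦}$ of the type considered in (i), one obtains an identification of $\wcheck{𝒯}_{X^{◦◦}}(-\log \what{D}^{◦◦}_{\log})$ with the kernel of a natural $\wcheck{G}$-equivariant morphism between pullback sheaves. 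This endows the middle entry with a canonical $\wcheck{G}$-structure, and the two vertical inclusions incident to it — from $(\wcheck{c}^{◦◦})^{*}𝒯_{X}(-\log ⌈D⌉)$ above and into $[(\wcheck{c}^{◦◦})^{*}𝒯_{X}(-\log ⌈D⌉)](\wcheck{D}^{◦◦}_{\tame})$ below — as well as the horizontal projection onto $(\wcheck{φ}^{◦◦})^{*}𝒯_{S}$, are $\wcheck{G}$-equivariant by construction of the kernel description.

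The main obstacle is purely organisational: one must verify that the isomorphisms furnished by Lemma~\ref{lem:x2} in the proof of Claim~\ref{claim:A16} respect the $\wcheck{G}$-action. This is immediate, because the canonical inclusion $ℰ \hookrightarrow (\what{c}^{◦◦})_{*}\bigl(𝒪_{\what{X}^{◦◦}}(Δ) ⊗ (\what{c}^{◦◦})^{*}ℰ\bigr)^{\what{G}}$ of that lemma is manifestly $\wcheck{G}$-equivariant whenever $ℰ$ is a $\wcheck{G}$-sheaf on $\wcheck{X}^{◦◦}$ and $Δ$ is pulled back from a $\wcheck{G}$-invariant divisor on $\wcheck{X}^{◦◦}$, and both hypotheses hold for every application of the lemma in the proof of Claim~\ref{claim:A16}. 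The splitting morphism $\wcheck{s}$ is excluded from the statement because it descends from the splitting $\what{s}$ of Claim~\ref{claim:A15}, whose $\wcheck{G}$-equivariance is not asserted and plays no role in what follows.
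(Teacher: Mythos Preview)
Your proof is correct and follows essentially the same approach as the paper: both reduce to the observation that all sheaves and morphisms away from the middle entry are manifestly $\wcheck{G}$-equivariant pullbacks, and then use left-exactness of $(\what{c}^{◦◦})_{*}(\cdot)^{\what{G}}$ together with the kernel description \eqref{eq:A15a} to exhibit $\wcheck{𝒯}_{X^{◦◦}}(-\log \what{D}^{◦◦}_{\log})$ as a $\wcheck{G}$-stable subsheaf of $\bigl[(\wcheck{c}^{◦◦})^{*}𝒯_{X}(-\log ⌈D⌉)\bigr](\wcheck{D}^{◦◦}_{\tame})$. Your added remark that the identifications of Lemma~\ref{lem:x2} are themselves $\wcheck{G}$-equivariant is a point the paper leaves implicit.
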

\begin{proof}[Proof of Claim~\ref{claim:A17}]
  Exception for $\wcheck{𝒯}_{X^{◦◦}}(-\log \what{D}^{◦◦}_{\log})$ it is clear
  that all sheaves in the diagram of Claim~\ref{claim:A16} are
  $\wcheck{G}$-sheaves, and exception for the morphisms pointing to/from
  $\wcheck{𝒯}_{X^{◦◦}}(-\log \what{D}^{◦◦}_{\log})$, all morphisms are morphisms
  of $\wcheck{G}$-sheaves.  To prove the assertion, it will therefore suffice to
  show that the sheaf $\wcheck{𝒯}_{X^{◦◦}}(-\log \what{D}^{◦◦}_{\log})$ is
  stable under the action of $\wcheck{G}$, as a subsheaf of the
  $\wcheck{G}$-sheaf
  $\Bigl[ (\wcheck{c}^{◦◦})^* 𝒯_{X}(-\log ⌈D⌉)\Bigr](\wcheck{D}^{◦◦}_{\tame})$.

  However, using that the fact that the $\what{G}$-invariant push-forward
  functor $(\what{c}^{◦◦})_*(•)^{\what{G}}$ is left-exact, we know from
  Equation~\eqref{eq:A15a} of Claim~\ref{claim:A15a} that
  $$
  (\what{c}^{◦◦})_*(\what{E}^*)^{\what{G}} = \ker \Bigl((\what{c}^{◦◦})_*(\what{H}^*)^{\what{G}} → \factor{(\what{c}^{◦◦})_*(\what{G}^*)^{\what{G}}}{(\what{c}^{◦◦})_*(\what{D}^*)^{\what{G}}} \Bigr)
  $$
  The desired stability under the action of $\wcheck{G}$ follows from the
  observation that all morphisms to the right of the equality sign are
  morphisms of $\what{G}$-sheaves.  \qedhere~(Claim~\ref{claim:A17})
\end{proof}

\subsection*{Step 4: $\wcheck{G}$-invariant push-forward}
\approvals{Arne & yes\\ Jorge & yes \\ Stefan & yes}

In our discussion of the $\what{G}$-invariant push-forward we used the fact that
the splitting $\what{s}$ in the diagram of Claim~\ref{claim:A15} was unique for
numerical reasons, hence $\what{G}$-invariant.  These arguments do not
necessarily apply to the splitting $\wcheck{s}$, and we do not see why it should
be $\wcheck{G}$-invariant in general.  Using the special situation at hand, we
can however always find another $\wcheck{G}$-invariant splitting.

\begin{claim}[$\wcheck{G}$-invariant splitting]\label{claim:A18}
  There exists a $\wcheck{G}$-invariant morphism
  $$
  \wcheck{s}' : \wcheck{𝒯}_{X^{◦◦}}(-\log \what{D}^{◦◦}_{\log}) → \Bigl[ (\wcheck{c}^{◦◦})^* 𝒯_{X/S}(-\log ⌈D⌉)\Bigr](\wcheck{D}^{◦◦}_{\tame})
  $$
  that splits the middle row in the diagram of Claim~\ref{claim:A16}.
\end{claim}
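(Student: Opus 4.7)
The plan is to obtain $\wcheck{s}'$ by averaging the (a priori not equivariant) splitting $\wcheck{s}$ of Claim~\ref{claim:A16} over the finite group $\wcheck{G}$. This averaging trick is available because Proposition~\ref{prop:B-1}\ref{il:6a-1-1a} guarantees that $|\wcheck{G}| = \deg \wcheck{c}$ is either equal to $1$ (in characteristic zero) or coprime to $p$, so $|\wcheck{G}|$ is invertible on $\wcheck{X}^{◦◦}$. In particular, the formula
$$
\wcheck{s}' := \frac{1}{|\wcheck{G}|} \sum_{g ∈ \wcheck{G}} g_R ◦ \wcheck{s} ◦ g_M^{-1}
$$
makes sense as a morphism of sheaves, where $g_M$ denotes the action of $g$ on the middle sheaf $M := \wcheck{𝒯}_{X^{◦◦}}(-\log \what{D}^{◦◦}_{\log})$ and $g_R$ denotes the action of $g$ on the right sheaf $R := \bigl[ (\wcheck{c}^{◦◦})^* 𝒯_{X/S}(-\log ⌈D⌉)\bigr](\wcheck{D}^{◦◦}_{\tame})$.

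The verification will then proceed in two routine steps. First, I will show that $\wcheck{s}'$ is indeed a retraction of the inclusion $\iota : R \hookrightarrow M$, using the fact — established in Claim~\ref{claim:A17} — that $\iota$ is a morphism of $\wcheck{G}$-sheaves, so that $g_M^{-1} ◦ \iota = \iota ◦ g_R^{-1}$. A direct computation then gives $\wcheck{s}' ◦ \iota = \frac{1}{|\wcheck{G}|} \sum_{g} g_R ◦ (\wcheck{s} ◦ \iota) ◦ g_R^{-1} = \frac{1}{|\wcheck{G}|} \sum_g \Id_R = \Id_R$. Second, I will verify $\wcheck{G}$-equivariance: for any $h ∈ \wcheck{G}$, the reindexing $g \mapsto h^{-1} g$ in the defining sum yields $h_R ◦ \wcheck{s}' ◦ h_M^{-1} = \wcheck{s}'$.

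There is no real obstacle here: the argument is essentially the Maschke averaging trick, and the only delicate point — that one can divide by $|\wcheck{G}|$ — has already been arranged by the two-step construction of the cover in Proposition~\ref{prop:B-1}, where the wild part of the ramification was absorbed into the Galois cover $\what{c}$ of degree $p$ and the tame, potentially non-equivariant part into $\wcheck{c}$. This is precisely what makes the separation into the Kummer-type cover $\wcheck{c}$ and the Artin--Schreier cover $\what{c}$ useful at this stage of the proof.
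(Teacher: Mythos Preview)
Your proposal is correct and follows essentially the same approach as the paper: the paper also averages $\wcheck{s}$ over $\wcheck{G}$, writing $\wcheck{s}' := \frac{1}{\#\wcheck{G}}\sum_{g\in\wcheck{G}} g^*\wcheck{s}$ and noting that $\#\wcheck{G}=[\wcheck{X}^{◦◦}:X^{◦◦}]$ is coprime to the characteristic. Your version simply spells out the conjugation $g^*\wcheck{s}=g_R\circ\wcheck{s}\circ g_M^{-1}$ and the verification of the retraction and equivariance properties more explicitly.
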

\begin{proof}[Proof of Claim~\ref{claim:A18}]
  The order of the group $\wcheck{G}$ equals the degree
  $[\wcheck{X}^{◦◦} : X^{◦◦}]$ and is therefore coprime to the characteristic.
  Setting
  $\wcheck{s}' := \frac{1}{\#\wcheck{G}} · \sum_{g ∈ \wcheck{G}} g^*
  \wcheck{s}$ therefore yields the desired $\wcheck{G}$-invariant splitting.  \qedhere~(Claim~\ref{claim:A18})
\end{proof}

\begin{notation}
  Again we abuse notation slightly and assume without loss of generality that
  $\wcheck{s} = \wcheck{s}'$, so that all morphisms in Diagram~\ref{claim:A16}
  are in fact morphisms of $\wcheck{G}$-sheaves.
\end{notation}

\begin{claim}[$\wcheck{G}$-invariant splitting]\label{claim:A20}
  The $\wcheck{G}$-invariant push-forward of the diagram in
  Claim~\ref{claim:A16} is a commutative diagram with exact rows that reads as
  follows:
  $$
  \xymatrix@C=.5cm{ %
    𝒯_{S^{◦◦}} & 𝒯_{X^{◦◦}}(-\log ⌈D⌉) \ar@{->>}[l] \ar@{^(->}[d] & 𝒯_{X^{◦◦}/S^{◦◦}}(-\log ⌈D⌉) \ar@{_(->}[l] \ar@{^(->}[d] \\
    𝒯_{S^{◦◦}} \ar@{=}[u] \ar@{^(->}[d] & (\wcheck{c}^{◦◦})_* \Bigl( \what{𝒯}_{X^{◦◦}}(-\log \what{D}^{◦◦}_{\log})\Bigr)^{\wcheck{G}} \ar@{->>}[l] \ar@{^(->}[d] \ar@/_5mm/[r]_{s\text{, splitting}}& 𝒯_{X^{◦◦}/S^{◦◦}}(-\log ⌈D⌉) \ar@{_(->}[l] \ar@{^(->}[d] \\
    𝒯_{S^{◦◦}} & 𝒯_{X^{◦◦}}(-\log ⌈D⌉) \ar@{->>}[l] & 𝒯_{X^{◦◦}/S^{◦◦}}(-\log ⌈D⌉).  \ar@{_(->}[l]
  }
  $$
\end{claim}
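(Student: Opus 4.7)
The plan is to apply the left-exact functor $(\wcheck{c}^{◦◦})_*(\cdot)^{\wcheck{G}}$ to the diagram of $\wcheck{G}$-sheaves furnished by Claims~\ref{claim:A16}, \ref{claim:A17} and \ref{claim:A18}, and then to identify every sheaf in the resulting diagram by means of Lemma~\ref{lem:x2}. Note that this is meaningful because, by Claim~\ref{claim:A17} together with the convention fixed after Claim~\ref{claim:A18}, every morphism in the A16 diagram is $\wcheck{G}$-equivariant.

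First I would identify the nine sheaves in the push-forward. For the three sheaves in the top row of the A16 diagram, which are all pulled back from $X^{◦◦}$ via $\wcheck{c}^{◦◦}$, Remark~\ref{rem:xz} gives the desired identifications as sheaves on $X^{◦◦}$ carrying no twist. For each of the three sheaves in the bottom row of A16, which carry the twist by $\wcheck{D}^{◦◦}_{\tame}$, I would invoke Lemma~\ref{lem:x2} with $\lambda := \wcheck{c}^{◦◦}$, $\Delta_B := D^{◦◦}_{\tame}$ and $\Delta_A := \wcheck{D}^{◦◦}_{\tame}$, applied to the locally free sheaves $\mathcal{E}$ obtained by restricting $\mathcal{T}_S$, $\mathcal{T}_X(-\log \lceil D \rceil)$ and $\mathcal{T}_{X/S}(-\log \lceil D \rceil)$ to $X^{◦◦}$. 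The divisorial hypotheses of the lemma are met because Property~\ref{il:6a-1-3} of Proposition~\ref{prop:B-1} gives $(\wcheck{c}^{◦◦})^* D_i = m_i \cdot \wcheck{D}^{◦◦}_i$ with $m_i \geq 2$ for every $i \in \tame$, so $0 \leq \wcheck{D}^{◦◦}_{\tame} < (\wcheck{c}^{◦◦})^* D^{◦◦}_{\tame}$ everywhere along the common support. The middle sheaf is by definition $(\wcheck{c}^{◦◦})_*(\what{\mathcal{T}}_{X^{◦◦}}(-\log \what{D}^{◦◦}_{\log}))^{\wcheck{G}}$, which is exactly the notation used in the statement of Claim~\ref{claim:A20}.

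Next I would address the splitting $s$. The splitting $\wcheck{s}$ that sits in the middle row of A16 is $\wcheck{G}$-equivariant by Claim~\ref{claim:A18} and our subsequent convention; it therefore descends through the invariant push-forward, and combined with the identifications above it yields the splitting $s$ asserted in the claim. As for exactness: left-exactness of each row is automatic, since $(\wcheck{c}^{◦◦})_*(\cdot)^{\wcheck{G}}$ is a left-exact functor. Surjectivity of the horizontal arrow in the top row is clear because that row is, after identification, nothing but the restriction to $X^{◦◦}$ of the standard sequence of logarithmic tangents. Surjectivity of the horizontal arrow in the bottom row follows by the same token, since once the twists have been removed by Lemma~\ref{lem:x2} the bottom row coincides with the top row. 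Surjectivity of the horizontal arrow in the middle row then follows from commutativity of the upper-left square, exactly as in the proof of Claim~\ref{claim:A16}.

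The main obstacle, as I see it, is the careful bookkeeping in the application of Lemma~\ref{lem:x2}: one must ensure that the divisorial inequality $\wcheck{D}^{◦◦}_{\tame} < (\wcheck{c}^{◦◦})^* D^{◦◦}_{\tame}$ holds strictly along every component of the support, a point which ultimately rests on Properties~\ref{il:6a-1-3} and \ref{il:6a-1-2a} of the strongly adapted cover. Once these identifications are in hand, both the diagram and the splitting fall out of the machinery already assembled in Claims~\ref{claim:A15}--\ref{claim:A18}.
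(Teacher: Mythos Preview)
Your proposal is correct and follows essentially the same route as the paper, which merely records that the sheaf identifications follow from Lemma~\ref{lem:x2} and that surjectivity of the leftmost arrows is obtained as in the proof of Claim~\ref{claim:A16}. One small point: Property~\ref{il:6a-1-3} is stated for $c^{◦◦}$ rather than $\wcheck{c}^{◦◦}$, so to justify $(\wcheck{c}^{◦◦})^* D_i = m_i \cdot \wcheck{D}^{◦◦}_i$ for $i \in \tame$ you should appeal directly to the construction of $\wcheck{c}$ in Step~3 of the proof of Proposition~\ref{prop:B-1}, where the Case~1 root cover is taken with respect to the prime-to-$p$ parts $m'_i$, which equal $m_i$ for tame indices.
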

\begin{proof}[Proof of Claim~\ref{claim:A20}]
  The identifications of the sheaves follow again from Lemma~\ref{lem:x2}.
  Surjectivity of the leftmost horizontal arrows follows as in the proof of
  Claim~\ref{claim:A16}.  \qedhere~(Claim~\ref{claim:A20})
\end{proof}

\subsection*{Step 5: End of proof}
\approvals{Arne & yes \\ Jorge & yes \\ Stefan & yes}

Observing that the top and bottom rows in the diagram of
Claim~\ref{claim:A20} agree with Sequence~\eqref{eq:log6-4}, we found the
desired splitting.  Proposition~\ref{prop:soapSplit} is thus shown.  \qed

%
% Do not edit the following line.  The text is automatically updated by
% subversion.
%
\svnid{$Id: 07-height-bounds1.tex 599 2021-08-23 12:06:26Z kebekus $}

\section{Geometric height bounds --- proof of Theorem~\ref*{thm:height1b}}
\subversionInfo
\label{sec:pf45}
\approvals{Arne & yes \\ Jorge & yes \\ Stefan & yes}

We work in the setting of Theorem~\ref{thm:height1b} and assume that a number
$ε ∈ ℚ^+$ is given.  We apply Proposition~\ref{prop:B-1} and use the notation
introduced there as well as in Sections~\ref{ssec:covNot}, \ref{ssec:adapted1}
and \ref{ssec:adapted2}.  In particular, we consider the sequence of relative
adapted differentials
\begin{equation}\label{eq:6-4a}
  0 %
  → (\what{φ}^{◦◦})^* Ω¹_S %
  → \what{Ω}¹_{X^{◦◦}}(\log \what{D}^{◦◦}_{\log}) %
  → 𝒥_{\what{D}^{◦◦}_{\fract}} ⊗ (c^{◦◦})^* ω_{X/S} \bigl(\log ⌈D⌉ \bigr) %
  → 0.
\end{equation}

\begin{obs}[Splitting of Sequence~\ref{eq:6-4a}]\label{obs:6-4a}
  Using Assumptions~\ref{il:3-6-1} and \ref{il:3-6-2},
  Proposition~\ref{prop:soapSplit} implies that Sequence~\eqref{eq:6-4a} does
  not split when restricted to $\what{X}_η$.
\end{obs}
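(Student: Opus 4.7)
The plan is to prove Observation~\ref{obs:6-4a} simply as the contrapositive of Proposition~\ref{prop:soapSplit}. The first step would be to verify that Sequence~\eqref{eq:6-4a} reproduced in the statement of the observation is literally the sequence~\eqref{eq:sorad} of relative adapted differentials constructed in Proposition~\ref{prop:soad2}; this is immediate by inspection of the two displays, so no identification or comparison argument is required.

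Next, I would recall the exact content of Proposition~\ref{prop:soapSplit}: under the assumptions that (i) $d = \deg_{X_\eta}(K_X + D) > 0$ and (ii) the sequence of relative adapted differentials splits when restricted to the generic fibre $\what{X}_\eta$, one obtains that Sequence~\eqref{eq:log6-4} splits when restricted to $X_\eta$. Rewriting this as a contrapositive: under hypothesis (i), the failure of \eqref{eq:log6-4} to split on $X_\eta$ forces the failure of \eqref{eq:sorad} to split on $\what{X}_\eta$.

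To finish, I would match hypotheses: (i) coincides with Assumption~\ref{il:3-6-1} of Theorem~\ref{thm:height1b}, and the non-splitting of \eqref{eq:log6-4} on $X_\eta$ is precisely Assumption~\ref{il:3-6-2}. Combining these with the contrapositive yields exactly the conclusion of the observation, namely that Sequence~\eqref{eq:6-4a} does not split when restricted to $\what{X}_\eta$.

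There is no genuine obstacle in this proof: the observation is a one-line application of Proposition~\ref{prop:soapSplit}, and the only thing to check is that the displayed sequences \eqref{eq:6-4a} and \eqref{eq:sorad} coincide. All of the real work is already contained in the proof of Proposition~\ref{prop:soapSplit} itself, which was carried out in Sections~\ref{ssec:popsoapSplitprep} and~\ref{ssec:popsoapSplit}.
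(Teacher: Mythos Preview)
Your proposal is correct and matches the paper's approach exactly: the observation is stated in the paper without a separate proof precisely because it is the immediate contrapositive of Proposition~\ref{prop:soapSplit}, and your identification of Sequence~\eqref{eq:6-4a} with Sequence~\eqref{eq:sorad} and matching of hypotheses is all that is needed.
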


\subsection*{Step 1, extension of sheaves to $\what{X}$}
\approvals{Arne & --- \\ Jorge & yes \\ Stefan & yes}

We aim to apply Theorem~\ref{thm:kimsquare} to the surface $\what{X}$.  To this
end, we need to extend $\what{Ω}¹_{X^{◦◦}}(\log \what{D}^{◦◦}_{\log})$ to a
sheaf $𝒜$ that is defined on all of $\what{X}$.

\begin{construction}\label{cons:ssgg}
  Recalling from Proposition~\ref{prop:soad1} that there exists a natural
  diagram of inclusions,
  $$
  \xymatrix{ %
    \what{Ω}¹_{X^{◦◦}}(\log \what{D}^{◦◦}_{\log}) \ar[r]^(.45){ι_1^{◦◦}} \ar@/_5mm/[rrd] & (c^{◦◦})^* Ω¹_X\bigl(\log ⌈D⌉ \bigr) \ar[r]^{ι_2^{◦◦}} & Ω¹_{\what{X}^{◦◦}} \bigl(\log (c^*D)_{\red} \bigr) \\
    & & Ω¹_{\what{X}^{◦◦}}\bigl(\log \what{D}^{◦◦}_{\log} \bigr), \ar[u]_{ι_3^{◦◦}} %
  }
  $$
  we consider the extended morphisms
  $$
  \begin{matrix}
    ι_2 : & c^* Ω¹_X\bigl(\log ⌈D⌉ \bigr) & → & Ω¹_{\what{X}} \bigl(\log (c^*D)_{\red} \bigr) \\
    ι_3 : & Ω¹_{\what{X}}\bigl(\log \what{D}_{\log} \bigr) & → & Ω¹_{\what{X}} \bigl(\log (c^*D)_{\red} \bigr)
  \end{matrix}
  $$
  and let
  $$
  𝒜 ⊆ \Image(ι_2) ∩ \Image(ι_3) ⊆ Ω¹_{\what{X}} \bigl(\log (c^*D)_{\red} \bigr)
  $$
  be the largest subsheaf whose restriction to $\what{X}^{◦◦}$ agrees with
  $\Image(ι_2^{◦◦} ◦ ι_1^{◦◦})$.
\end{construction}

\begin{claim}\label{claim:6-2}
  The sheaf $𝒜$ is locally free and contains $\what{φ}^* Ω¹_S$.  Writing
  $ℬ := \bigl( \factor{𝒜}{\what{φ}^* Ω¹_S} \bigr)^{**}$, we have
  \begin{equation}\label{eq:b1}
    \deg_{\what{X}_η} ℬ = [\what{X}:X]·\deg_{X_η} \bigl(K_X+D\bigr).
  \end{equation}
\end{claim}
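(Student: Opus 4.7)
The plan is to verify the three assertions---local freeness of $𝒜$, the containment $\what{φ}^* Ω¹_S \subseteq 𝒜$, and the degree identity~\eqref{eq:b1}---by exploiting the construction of $𝒜$ together with the sequence of relative adapted differentials~\eqref{eq:sorad}. On the dense open $\what{X}^{◦◦}$, the composite $ι_2^{◦◦} ◦ ι_1^{◦◦}$ identifies $𝒜|_{\what{X}^{◦◦}}$ with $\what{Ω}¹_{X^{◦◦}}(\log \what{D}^{◦◦}_{\log})$, which is locally free of rank two by Proposition~\ref{prop:soad2}. Hence $𝒜$ is torsion-free of generic rank two. For local freeness on all of $\what{X}$, the ``largest subsheaf'' clause in Construction~\ref{cons:ssgg} forces $𝒜$ to be reflexive, hence locally free since $\what{X}$ is a smooth surface: any proper enlargement $𝒜 ⊊ 𝒜'$ into a sheaf agreeing with $𝒜$ on the complement of a codimension-two locus would preserve both the restriction condition on $\what{X}^{◦◦}$ and the containment in $\Image(ι_2) \cap \Image(ι_3)$, contradicting maximality.

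For the inclusion $\what{φ}^* Ω¹_S \subseteq 𝒜$, the natural composition $\what{φ}^* Ω¹_S → c^* Ω¹_X(\log ⌈D⌉) → Ω¹_{\what{X}}(\log (c^*D)_{\red})$ factors through $\Image(ι_2)$, and the factorisation $\what{φ}^* Ω¹_S → Ω¹_{\what{X}} → Ω¹_{\what{X}}(\log \what{D}_{\log}) → Ω¹_{\what{X}}(\log (c^*D)_{\red})$ presents the same map as factoring through $\Image(ι_3)$. On $\what{X}^{◦◦}$ the image equals $(\what{φ}^{◦◦})^* Ω¹_S$, which embeds into $\what{Ω}¹_{X^{◦◦}}(\log \what{D}^{◦◦}_{\log})$ as the kernel in~\eqref{eq:sorad}. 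Maximality of $𝒜$ then yields the containment.

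For the degree identity, observe that the generic fibre $\what{X}_η$ lies entirely in $\what{X}^{◦◦}$, so~\eqref{eq:sorad} identifies $(𝒜/\what{φ}^* Ω¹_S)|_{\what{X}_η}$ with $\bigl(𝒥_{\what{D}^{◦◦}_{\fract}} \otimes (c^{◦◦})^* ω_{X/S}(\log ⌈D⌉)\bigr)|_{\what{X}_η}$. Since $\what{D}^{◦◦}_{\fract}$ is Cartier on the smooth $\what{X}^{◦◦}$, the quotient $𝒜/\what{φ}^* Ω¹_S$ is already invertible on $\what{X}^{◦◦}$, so no reflexivisation correction is needed there and $ℬ|_{\what{X}_η}$ coincides with this invertible sheaf. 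Combining with Remark~\ref{rem:soad} and the equality $[\what{X}^{◦◦}:X^{◦◦}] = [\what{X}:X]$ yields $\deg_{\what{X}_η} ℬ = [\what{X}:X] \cdot \deg_{X_η}(K_X+D)$, as required.

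The main obstacle is the local-freeness verification, because the ``largest subsheaf'' characterisation is only useful once one knows that reflexive hulls taken in the ambient $Ω¹_{\what{X}}(\log (c^*D)_{\red})$ remain inside $\Image(ι_2) \cap \Image(ι_3)$; as the bad locus $\what{X} \setminus \what{X}^{◦◦}$ is a divisor rather than a codimension-two subset, some explicit local bookkeeping near its irreducible components is needed to confirm that the maximal extension produced by the construction is automatically $S_2$. The remaining two assertions, by contrast, are essentially formal consequences of~\eqref{eq:sorad} once local freeness is in hand.
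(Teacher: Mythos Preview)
Your argument follows the same route as the paper's: derive reflexivity of $𝒜$ from the ``largest subsheaf'' characterisation and then invoke the fact that reflexive sheaves on smooth surfaces are locally free; deduce the containment $\what{φ}^* Ω¹_S \subseteq 𝒜$ from maximality together with the inclusion $(\what{φ}^{◦◦})^* Ω¹_S \subseteq \what{Ω}¹_{X^{◦◦}}(\log \what{D}^{◦◦}_{\log})$; and read off the degree identity from Proposition~\ref{prop:soad2} and Remark~\ref{rem:soad}.

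The worry you raise at the end, however, is unfounded, and no local bookkeeping is needed. The point you are missing is that both $ι_2$ and $ι_3$ are \emph{injective} on all of $\what{X}$: the map $ι_3$ is the standard inclusion between sheaves of logarithmic differentials with nested pole divisors, and $ι_2$ is injective because $c$ is separable, so $dc$ is generically an isomorphism and the kernel of $ι_2$, being a torsion-free subsheaf of a locally free sheaf that vanishes generically, must vanish everywhere. Hence $\Image(ι_2)$ and $\Image(ι_3)$ are each isomorphic to a locally free sheaf of rank two and are therefore reflexive. Functoriality of the double dual then gives $𝒜^{**} \subseteq \Image(ι_j)^{**} = \Image(ι_j)$ for $j = 2,3$, so $𝒜^{**} \subseteq \Image(ι_2) \cap \Image(ι_3)$. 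Since restriction to an open commutes with the double dual and $𝒜|_{\what{X}^{◦◦}}$ is already locally free, one also has $𝒜^{**}|_{\what{X}^{◦◦}} = 𝒜|_{\what{X}^{◦◦}}$. Maximality now forces $𝒜^{**} = 𝒜$. The divisorial nature of $\what{X} \setminus \what{X}^{◦◦}$ is irrelevant here; what matters is that the ambient sheaves $\Image(ι_j)$ are themselves $S_2$.
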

\begin{proof}[Proof of Claim~\ref{claim:6-2}]
  Both claims about $𝒜$ are consequences of its definition as ``the largest
  subsheaf …''.  First, it follows that $𝒜$ is reflexive, and hence locally free
  since $\what{X}$ is smooth of dimension two, \cite[Cor.~1.4]{MR597077}.
  Second, the inclusion
  $(\what{φ}^{◦◦})^* Ω¹_S ⊆ \what{Ω}¹_{X^{◦◦}}(\log \what{D}^{◦◦}_{\log})$
  implies that $𝒜$ must contain $\what{φ}^* Ω¹_S$.  The description of $ℬ$
  follows from Proposition~\ref{prop:soad2} and
  Remark~\ref{rem:soad}.  \qedhere~(Claim~\ref{claim:6-2})
\end{proof}

Claim~\ref{claim:6-2} yields a complex of sheaves, $\what{φ}^* Ω¹_S ↪ 𝒜 → ℬ$
that agrees over $\what{X}^{◦◦}$ with Sequence~\eqref{eq:6-4a}, the sequence of
relative adapted differentials.  The description of $ℬ$ given in
Claim~\ref{claim:6-2} has two immediate consequences which we note for future
reference.

\begin{consequence}\label{cons:59x}
  We have inequalities of height functions for $\what{X}/S$, applicable to all
  $\cC$-integral points $γ: \what{T} → \what{X}$, namely
  $$
  h_ℬ(•) - \const ≤ h_{c^* (K_{X/S}+D)}(•) ≤ h_ℬ(•) + \const.
  $$
  In other words, $h_{c^* (K_{X/S}+D)} = h_ℬ + O(1)$.
\end{consequence}
\begin{proof}[Proof of Consequence~\ref{cons:59x}]
  Properties~\ref{il:6a-1-3} and \ref{il:6a-1-4} from Proposition~\ref{prop:B-1}
  imply the equality of divisors
  $$
  (c^{◦◦})^* ⌈D⌉ - \what{D}^{◦◦}_{\fract} = (c^{◦◦})^* D.
  $$
  The restriction of $ℬ$ to $\what{X}^{◦◦}$ is therefore isomorphic to
  $𝒪_{\what{X}^{◦◦}} \bigl( c^* (K_{X/S}+D) \bigr)$.  Standard arguments,
  cf.~\cite[Sect.~2]{MR1436743}, will then give the relation between the two
  height functions.
\end{proof}

\begin{consequence}\label{cons:59y}
  We can express the number $d'$, which appears in the formulation of
  Theorem~\ref{thm:height1b} as
  $$
  d' \overset{\ref{il:6a-1-1}}{=} [\what{X}:X]·\deg_{X_η} (K_{X/S}+D) =
  \deg_{\what{X}_η} ℬ > 0.  \eqno\qed
  $$
\end{consequence}

The positivity of $\deg_{\what{X}_η} ℬ$ and the non-splitting of the sequence of
relative adapted differentials pointed out in Observation~\ref{obs:6-4a} allow
us to apply Theorem~\ref{thm:kimsquare} to the pair
$(\what{X}, \what{D}_{\log})$ over $S$.  In a nutshell, the following height
inequality holds for all $\what{γ} : \what{T} → \what{X}$ that are
$\cC$-integral points for the pair $(\what{X}, \what{D}_{\log})$,\CounterStep
\begin{equation}\label{eq:myhI2}
  h_ℬ (\what{γ}) \overset{\text{Thm.~\ref{thm:kimsquare}}}{≤} \max \bigl\{ \underbrace{\deg_{\what{X}_η} ℬ}_{\mathclap{=d' \text{ by Consequence~\ref{cons:59y}}}},\, 2+ε \bigr\}·\frac{\deg_{\what{T}} \Image d\,\what{γ}_𝒜}{[\what{T}:S]} + O \left( \textstyle{\sqrt{h_ℬ(\what{γ})}}\right),
\end{equation}
where, as before, $d\,\what{γ}_𝒜$ is the composed map
$$
\what{γ}^* 𝒜 %
→ \what{γ}^* Ω¹_{\what{X}} \bigl(\log \what{D}_{\log}\bigr) %
\overset{d\,\what{γ}}{→} ω_{\what{T}} \bigl(\log (\what{γ}^* \,
\what{D}_{\log})_{\red} \bigr).
$$

\subsection*{Step 2, orbifold integral points on $X$ and algebraic points on $\what{X}/S$}
\approvals{Arne & --- \\ Jorge & yes \\ Stefan & yes}

Given a $\cC$-integral point on $X$, we aim to bound its height using
\eqref{eq:myhI2}, by considering its preimage in $\what{X}$.  The following
notation will be used.

\begin{s-and-d}\label{setting:6-4}
  Given a $\cC$-integral point $γ : T → Σ_T ⊂ X$ of the pair $(X,D)$, we
  consider the preimage $c^{-1}Σ_T ⊆ \what{X}$ and choose a component
  $Σ_{\what{T}} ⊂ c^{-1}Σ_T$ that dominates $Σ_T$.  Denoting its normalisation
  by $\what{γ} : \what{T} → Σ_{\what{T}}$, we obtain a commutative diagram as
  follows:
  \begin{equation}\label{eq:jfgh}
    \begin{gathered}
      \xymatrix{ %
        \what{T} \ar[r]^{\what{γ}} \ar[d]_{α} & \what{X} \ar@/^3mm/[dr]^{\what{φ}} \ar[d]_{c} \\
        T \ar[r]_{γ} & X \ar[r]_{{φ}} & S.  %
      }
    \end{gathered}
  \end{equation}
  Since $Σ_{\what{T}}$ is not contained in $\supp \what{D}$ by assumption, we
  may consider the map $d\,\what{γ}_𝒜$ defined above.
\end{s-and-d}

\begin{rem}[The morphism $α$ is nearly always separable]\label{rem:mainas}
  Recall the assumption that the morphism $γ$ of Setting~\ref{setting:6-4} is
  birational onto its image and hence generically étale onto its image.  By base
  change, \cite[Prop.~3.3.c]{Milne80}, the same will hold for $\what{γ}$.  The
  morphism $c$ is separable and generically étale as well.  Unless $\img γ$ is
  contained in the closed set of $X$ over which $c$ fails to be étale, the
  composition $c◦\what{γ}$ thus is generically étale onto its image,
  \cite[Prop.~3.3.b]{Milne80}, and then so is $α$, \cite[Cor.~3.6]{Milne80}.  In
  particular, there are (up to reparametrisation) at most finitely many
  $\cC$-integral points $γ$ whose associated morphisms $α : \what{T} → T$ are
  inseparable.
\end{rem}

We aim to understand the image of the map $dα: α^* ω_T → ω_{\what{T}}$ and
to compare it to other sheaves of differentials on $\what{T}$.  The following
computation will be key.

\begin{computation}[Local computation]\label{comp:lcomp}
  In Setting~\ref{setting:6-4}, assume that the morphism $α$ is separable and
  let $p_{\what{T}} ∈ \what{T}$ be any given closed point, with image
  $p_T := α(p_{\what{T}}) ∈ T$.  Choose uniformising parameters
  $\what{t} ∈ 𝒪_{\what{T}, p_{\what{T}}}$ and $t ∈ 𝒪_{T, p_{T}}$ and write
  $a := \ord_{p_{\what{T}}} α^* t$, so that $α^* t = \what{t}^a·\what{u}$, where
  $\what{u} ∈ 𝒪_{\what{T}, p_{\what{T}}}$ is a unit.  Then
  \begin{align*}
    dα(dt) = a·\what{u}·\what{t}^{a-1}d\what{t}+\what{t}^ad\what{u} \quad\text{and}\quad %
    dα(d \log t) = a·d\log\what{t}+d\log\what{u}.
  \end{align*}
  Since $α$ is separable by assumption, both $dα(dt)$ and $d α (\log dt)$ are
  non-zero.  In particular, either $p$ does not divide $a$ or $d \what{u} ≠ 0$.
  In any case, observing that $d\what{u}$ and $d\log\what{u}$ either vanish
  simultaneously or differ only by a unit, we find that
  $\what{t}^a · dα(d \log t)$ is a section in $\Image(dα)$, and more generally
  that
  \begin{multline}\label{eq:lcomp}
      𝒥^a_{p_{\what{T}}} · \Image \bigl(dα_{\log}: α^* ω_T(\log p_T) → ω_{\what{T}}(\log p_{\what{T}})\bigr) \\
      ⊆ \Image(dα: α^* ω_T → ω_{\what{T}}).
  \end{multline}
  Observe that the number $a$ is bounded from above by the degree of $c$, that
  is, $a ≤ [\what{X}:X]$.
\end{computation}

\begin{claim}\label{claim:6-5}
  In Setting~\ref{setting:6-4}, assume that $α$ is separable.  Then, the
  following sequence of inclusions holds over $S^{◦◦}$,
  \begin{equation}\label{eq:6-5}
    \Image d\,\what{γ}_𝒜 ⊆ \Image \bigl( dα: α^* ω_T → ω_{\what{T}} \bigr) ⊆
    ω_{\what{T}}.
  \end{equation}
\end{claim}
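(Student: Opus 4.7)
The plan is to construct a local factorisation
\[
\what{γ}^* 𝒜 \xrightarrow{\phi} α^* ω_T \xrightarrow{dα} ω_{\what{T}}
\]
whose composition agrees with $d\what{γ}_𝒜$; this yields both inclusions in \eqref{eq:6-5} at once. The starting point is the identity $c ◦ \what{γ} = γ ◦ α$ from Diagram~\eqref{eq:jfgh} together with the functoriality of logarithmic pullbacks of Kähler differentials, which on the common sheaf $\what{γ}^* c^* Ω¹_X(\log ⌈D⌉) = α^* γ^* Ω¹_X(\log ⌈D⌉)$ give the identity
\[
d\what{γ}_{\log} \;=\; dα_{\log} ◦ α^* dγ_{\log}.
\]
I would then test the factorisation locally at an arbitrary closed point $p_{\what{T}} ∈ \what{T}$ with image $p_T = α(p_{\what{T}})$ on $T$, local parameters $\what{t}$ and $t$, and ramification index $a := \ord_{p_{\what{T}}}(α^* t)$.

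By Definition~\ref{defn:soad}, local sections of $𝒜|_{X^{◦◦}}$ are sums of sections in $(\what{φ}^{◦◦})^* Ω¹_{\what{S}^{◦◦}}$ and sections locally of the form $g · (c^{◦◦})^* ω$ with $g$ a local generator of the invertible ideal $𝒥_{\what{D}^{◦◦}_{\fract}}$ and $ω$ a local section of $Ω¹_X(\log ⌈D⌉)$. Sections of the first kind are handled immediately by $\what{φ} ◦ \what{γ} = (φ ◦ γ) ◦ α$: their image under $d\what{γ}$ tautologically factors through $α^* ω_T$. So the real content lies in the second kind, which I would handle by a case analysis on the location of $\what{γ}(p_{\what{T}})$.

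If $\what{γ}(p_{\what{T}})$ lies outside $(c^*D)_{\red}$, then no log poles arise and $α^* dγ_{\log}(γ^* ω)$ already sits in $α^* ω_T$ at $p_{\what{T}}$, so the factorisation is automatic. If $\what{γ}(p_{\what{T}})$ lies on a component of $\what{D}^{◦◦}_{\log}$ coming from an index $i$ with $m_i = ∞$, the $\cC$-curve condition of Definition~\ref{def:ccurve} forces $γ(p_T) ∉ D_i$, contradicting $c ◦ \what{γ} = γ ◦ α$; hence this case never occurs. The substantial case is $\what{γ}(p_{\what{T}}) ∈ \what{D}^{◦◦}_{\fract}$, lying on the reduction of $(c^{◦◦})^* D_i$ for some $i$ with $m_i < ∞$. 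Writing $μ ≥ m_i$ for the local intersection multiplicity of $γ$ with $D_i$ at $p_T$ and $\what{x}_1$ for a local equation of the relevant component of $\what{D}^{◦◦}_{\fract}$, Item~\ref{il:6a-1-3} of Proposition~\ref{prop:B-1} yields $c^* x_1 = \what{x}_1^{m_i} · (\mathrm{unit})$. Combined with $\what{γ}^* c^* x_1 = α^* γ^* x_1 = \what{t}^{aμ} · (\mathrm{unit})$, this forces the crucial multiplicity identity
\[
\ord_{p_{\what{T}}}(\what{γ}^* g) \;=\; \frac{aμ}{m_i} \;\geq\; a,
\]
and Inequality~\eqref{eq:lcomp} from Computation~\ref{comp:lcomp} then says exactly this much vanishing is what is needed to pass the logarithmic image through $dα_{\log}$ back into $\Image(dα)$. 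I expect the main technical point to be the bookkeeping in this local multiplicity calculation: it is the precise cancellation of the $\cC$-multiplicity $m_i$ against the $m_i$-fold ramification of the strongly adapted cover that produces the vanishing order $a$ demanded by Computation~\ref{comp:lcomp}.
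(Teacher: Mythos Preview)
Your proposal is correct and follows essentially the same route as the paper. The paper works with explicit local generators $c^*dx$ and $\what{y}\cdot c^*d\log y$ of $𝒜$ near $p_{\what{X}}$, whereas you use the summand decomposition from Definition~\ref{defn:soad}; but the substantive content is identical: the first generator (resp.\ your first summand) is dispatched by the chain rule for $c\circ\what{γ}=γ\circ α$ on ordinary differentials, and the second generator (resp.\ your second summand) by the multiplicity computation $\ord_{p_{\what{T}}}\what{γ}^*\what{y}=m_i^{-1}\cdot a\cdot\ord_{p_T}γ^*y\geq a$ combined with Inequality~\eqref{eq:lcomp} from Computation~\ref{comp:lcomp}. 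Your identification of the ``precise cancellation of the $\cC$-multiplicity $m_i$ against the $m_i$-fold ramification of the strongly adapted cover'' is exactly the mechanism the paper highlights.
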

\begin{proof}[Proof of Claim~\ref{claim:6-5}]
  We will prove Inequality~\eqref{eq:6-5} locally, in the neighbourhood of any
  given closed point $p_{\what{T}} ∈ \what{T}$ lying over $S^{◦◦}$.  We use the
  notation introduced in Computation~\ref{comp:lcomp} and write
  $$
  p_{\what{X}} := \what{γ}(p_{\what{T}}), \quad p_X := γ(p_T).
  $$
  The assumption that $γ$ is a $\cC$-integral point immediately implies that
  $p_{\what{X}}$ is not contained in the support of $\what{D}_{\log}$.  In
  particular, we see that $\Image d\,\what{γ}_𝒜 ⊆ ω_{\what{T}}$ near
  $p_{\what{T}}$.  If $p_X$ is not contained in $\supp D$,
  Inclusion~\eqref{eq:6-5} follows easily: by definition of
  $\what{Ω}¹_{X^{◦◦}}(\log \what{D}^{◦◦}_{\log})$, one sees that
  $𝒜 = c^* Ω¹_X ⊆ Ω¹_{\what{X}}$ near $p_{\what{X}}$.  Inclusion~\eqref{eq:6-5}
  will then follow immediately from the chain rule for taking derivatives.

  For the remainder of the proof, we consider the case where $p_X$ \emph{is}
  contained in $\supp D$.  The assumption that $φ^{◦◦}$ is snc for the pair
  $(X^{◦◦}, D^{◦◦})$ implies that $p_X$ is contained in a unique component of
  $D$, say $D_i ⊆ D$; the coefficient $m_i$ is then finite.  Choose local
  systems of parameters as follows.
  \begin{itemize}
  \item Choose parameters $x, y ∈ 𝒪_{X, p_X}$ so that $D_i = \{ y = 0\}$.

  \item Choose parameters $\what{x}, \what{y} ∈ 𝒪_{\what{X}, p_{\what{X}}}$ so
    that $\what{x} = c^*(x)$ and $\what{y}^{m_i} = (\unit)·c^*(y)$.
  \end{itemize}
  Near $p_{\what{T}}$, the sheaf $𝒜$ equals the sheaf
  $\what{Ω}¹_{X^{◦◦}}(\log \what{D}^{◦◦}_{\log})$ of adapted differentials.
  Recall from Definition~\ref{defn:soad} that $𝒜$ is a subsheaf of
  $c^* Ω¹_X\bigl(\log ⌈D⌉ \bigr)$, and can be generated as follows,
  $$
  𝒜_{p_{\what{T}}} = \bigl\langle c^* dx,\: \what{y}·c^* d\log y \bigr\rangle_{p_{\what{T}}} ⊆ \Bigl(c^* Ω¹_X\bigl(\log ⌈D⌉ \bigr) \Bigr)_{p_{\what{T}}}.
  $$
  To prove Inclusion~\eqref{eq:6-5}, it will therefore suffice to show that the
  following two logarithmic forms, which are \emph{a priori} sections in
  $ω_{\what{T}}\bigl(\log ((γ◦ α)^* ⌈D⌉)_{\red} \bigr)$, are in fact contained
  in the smaller sheaf $\Image(dα)$:
  \begin{equation}\label{eq:xfgh}
    \begin{aligned}
      (d\what{γ}◦dc)(dx) & = dα(dγ^*x) &&\text{and} \\
      d\what{γ}\bigl(\what{y}·dc(d\log y)\bigr) & = \what{γ}^*\what{y}·dα(d\log γ^*y).
    \end{aligned}
  \end{equation}
  Since there is nothing to show for $dα(dγ^*x)$, we concentrate on the second
  form.  There, the desired inclusion will follow from \eqref{eq:lcomp} once we
  show that $\ord_{p_{\what{T}}} \what{γ}^*\what{y} ≥ a$.  But then,
  \begin{align*}
    \ord_{p_{\what{T}}} \what{γ}^*\what{y} & = m_i^{-1}·\ord_{p_{\what{T}}} \what{γ}^*c^* y && \text{Choice of }\what{y}\\
                                           & = m_i^{-1}·\ord_{p_{\what{T}}} α^* γ^* y && \text{Diagram~\eqref{eq:jfgh}}\\
                                           & = m_i^{-1}·a·\ord_{p_T} γ^* y
  \end{align*}
  and the claim follows once we recall that $γ$ is a $\cC$-integral point, which
  implies in particular that $\ord_{p_T} γ^* y ≥ m_i$.  In summary, we have seen
  that the rational forms mentioned in \eqref{eq:xfgh} are both contained in
  $\Image(dα)$, which finishes the proof.  \qedhere~(Claim~\ref{claim:6-5})
\end{proof}

\begin{claim}\label{claim:6-6}
  In Setting~\ref{setting:6-4}, there exists a number $\const ∈ ℕ$ such that the
  following inequality holds for every $\cC$-integral point $γ : T → Σ_T$ and
  for every choice of a preimage component $Σ_{\what{T}} ⊂ c^{-1}Σ_T$ that
  dominates $Σ_T$:
  \begin{equation}\label{eq:5-15-1}
    \frac{\deg_{\what{T}} \Image d\,\what{γ}_𝒜}{[\what{T}:S]} %
    ≤ \frac{\deg_{\what{T}} \Image dα}{[\what{T}:S]} + \const %
    = δ(γ) + \const.
  \end{equation}
\end{claim}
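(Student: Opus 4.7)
The plan is to prove the claim in two parts: the trailing equality $\deg_{\what T} \Image d\alpha / [\what T:S] = \delta(\gamma)$, and the main inequality. By Remark~\ref{rem:mainas}, only finitely many $\cC$-integral points $\gamma$ give rise to an inseparable $\alpha$; for these both sides are uniformly bounded and we absorb them into $\const$. Assume henceforth that $\alpha$ is separable.

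For the equality, $d\alpha : \alpha^* \omega_T \to \omega_{\what T}$ is a nonzero morphism of invertible sheaves on the smooth curve $\what T$, hence injective. Therefore $\Image d\alpha \cong \alpha^* \omega_T$ as abstract sheaves, so $\deg_{\what T} \Image d\alpha = [\what T : T] \cdot \deg_T \omega_T$. Dividing by $[\what T : S] = [\what T : T] \cdot [T : S]$ yields $\delta(\gamma)$.

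For the inequality, view both $\Image d\what\gamma_\cA$ and $\Image d\alpha$ as invertible subsheaves of $\omega_{\what T}(\log D')$, where $D' := (\what\gamma^* \what D_{\log})_{\red}$. Note $D'$ is supported above $\Delta = S \setminus S^{\circ\circ}$: the $\cC$-curve condition of Definition~\ref{def:ccurve} forces $\gamma^* D_i = 0$ for all $i \in \log$ over $S^{\circ\circ}$, so $\what\gamma^* \what D_{\log}$ vanishes on $\what T^{\circ\circ}$. By Claim~\ref{claim:6-5}, $\Image d\what\gamma_\cA \subseteq \Image d\alpha$ on $\what T^{\circ\circ}$. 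Forming the sum $\cN := \Image d\what\gamma_\cA + \Image d\alpha$ inside $\omega_{\what T}(\log D')$, the quotient $\cN / \Image d\alpha$ is therefore supported above $\Delta$. A local computation at each point $p$ above $\Delta$, in the spirit of Computation~\ref{comp:lcomp}, bounds the local length of $\cN / \Image d\alpha$ at $p$ by $e_p - 1 + \mathbf{1}_{p \in \supp D'}$, where $e_p$ denotes the ramification index of $\alpha$ at $p$. Since the map $\what\varphi \circ \what\gamma : \what T \to S$ has degree $[\what T : S]$, the number of points of $\what T$ above $\Delta$ is at most $[\what T : S] \cdot \#\Delta$; and because $\alpha$ factors through the normalisation of a component of $T \times_X \what X$, the indices $e_p$ are uniformly bounded by $\deg c = [\what X : X]$. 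Summing gives $\operatorname{length}(\cN / \Image d\alpha) \leq [\what T : S] \cdot \#\Delta \cdot [\what X : X]$, and combining with $\Image d\what\gamma_\cA \subseteq \cN$ yields $\deg_{\what T} \Image d\what\gamma_\cA \leq \deg_{\what T} \Image d\alpha + [\what T : S] \cdot \const'$. Dividing by $[\what T : S]$ produces the inequality.

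The main obstacle is the uniform local control of the discrepancy between $\Image d\what\gamma_\cA$ and $\Image d\alpha$ at points above $\Delta$, where Claim~\ref{claim:6-5} no longer applies. The saving grace is twofold: the ramification of $\alpha$ at such points is uniformly bounded by the fixed degree of the cover $c$, and the log divisor $D'$ is reduced, so its contribution at each point is at most one. These two observations together furnish the linear-in-$[\what T : S]$ correction needed to close the argument, and yield a constant $\const$ depending only on the fixed base data $(X, D, c, \Delta)$.
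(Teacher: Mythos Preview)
Your overall architecture matches the paper's exactly: dispose of inseparable $\alpha$ via Remark~\ref{rem:mainas}, obtain the equality from $\Image d\alpha \cong \alpha^*\omega_T$, and bound the defect of the inclusion $\Image d\,\what\gamma_{\sA} \subseteq \Image d\alpha$ at the finitely many points lying over $S \setminus S^{◦◦}$.

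There is, however, a genuine gap in your local bound. You claim that the length of $\cN/\Image d\alpha$ at a bad point $p$ is at most $e_p - 1 + \mathbf{1}_{p \in D'}$. The most natural reading of your argument is that you are bounding $\cN$ by the ambient sheaf $\omega_{\what T}(\log D')$ and then computing the colength of $\Image d\alpha$ inside it. But that colength is $\mathfrak d_p + \mathbf{1}_{p \in D'}$, where $\mathfrak d_p$ is the \emph{different} of $\alpha$ at $p$, not $e_p - 1$. These agree only when $\alpha$ is tamely ramified at $p$; in positive characteristic wild ramification can occur, and then $\mathfrak d_p \ge e_p$ with no a~priori upper bound in terms of $\deg c$ alone. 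Since $\gamma$ (and hence $\alpha$) varies, you cannot simply absorb this into the constant.

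The paper circumvents this by using Computation~\ref{comp:lcomp} more precisely than your phrase ``in the spirit of'' suggests. One does not merely bound $\Image d\,\what\gamma_{\sA}$ by $\omega_{\what T}(\log D')$; rather, one uses that $\sA \subseteq \Image(\iota_2)$ lies in the image of $c^*\Omega^1_X(\log\lceil D\rceil)$, so that $\Image d\,\what\gamma_{\sA}$ is contained in the image $\sf A$ of the composed \emph{logarithmic} pullback through $\alpha$. Then \eqref{eq:lcomp} gives $\sJ^{e_p}_p \cdot {\sf A} \subseteq \Image d\alpha$, yielding the local bound $e_p$ regardless of tame or wild ramification. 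This is the step that makes $e_p \le [\what X:X]$ the relevant uniform bound rather than the (possibly unbounded) different $\mathfrak d_p$. With this correction your constant becomes $[\what X:X]\cdot\#(S\setminus S^{◦◦})$, matching the paper.
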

\begin{proof}[Proof of Claim~\ref{claim:6-6}]
  Recalling from Remark~\ref{rem:mainas} that the morphism $α$ is nearly always
  separable, it suffices to consider the separable case only.  The equality in
  \eqref{eq:5-15-1} follows by a simple computation, using that $α$ is
  separable, so $\deg_{\what{T}} \Image dα = \deg_{\what{T}} α^* ω_T$.

  The inequality in \eqref{eq:5-15-1} would clearly follow if the
  Inclusion~\eqref{eq:6-5} would hold for all points of $\what{T}$.  This may,
  however, not always be the case.  If $p_{\what{T}} ∈ \what{T}$ is a closed
  point where the Inclusion~\eqref{eq:6-5} fails, then we know from
  Claim~\ref{claim:6-5} that $p_{\what{T}}$ cannot lie over $S^{◦◦}$.  In
  particular, there are at most $[\what{T}:S]·\#(S ∖ S^{◦◦})$ such points
  in $\what{T}$.  How badly can Inequality~\eqref{eq:6-5} fail at the point
  $p_{\what{T}}$?  By definition, we know that
  $𝒜 ⊆ Ω¹_{\what{X}} \bigl(\log (c^*D)_{\red} \bigr)$, which gives us an
  inclusion
  $$
  \Image d\what{γ}_𝒜 ⊆ \underbrace{\Image \Bigl(d(γ◦ α) : c^* Ω¹_X(\log ⌈ D ⌉) → ω_{\what{T}}\bigl(\log ((γ◦ α)^* ⌈ D ⌉)_{\red} \bigr) \Bigr)}_{=: \sf A}.
  $$
  Locally near $p_{\what{T}}$, Computation~\ref{comp:lcomp} shows
  $$
  𝒥^a_{p_{\what{t}}}·\Image d\what{γ}_𝒜 ⊆ 𝒥^a_{p_{\what{t}}}·{\sf A}
  \overset{\eqref{eq:lcomp}}{⊆} \Image(dα: α^* ω_T → ω_{\what{T}}).
  $$
  Recalling that $a ≤ [\what{X}:X]$, we can therefore finish the proof by
  setting
  $$
  \const := [\what{X}:X]·\#(S ∖ S^{◦◦}).  \eqno
  \qedhere~\text{(Claim~\ref{claim:6-6})}
  $$
\end{proof}

\subsection*{Step 3, end of proof}
\approvals{Arne & yes \\ Jorge & yes \\ Stefan & yes}

Combining the results obtained so far, we find that the following sequence of
inequalities holds for all orbifold integral points $γ$ in $X$, and all choices
of preimage components.  As before, we use the notation introduced in
\ref{setting:6-4} above.
\begin{align*}
  h(γ) & = h_{c^* (K_{X/S}+D)}(\what{γ}) ≤ h_{ℬ}(\what{γ}) + \const && \text{Consequence~\ref{cons:59x}}\\
       & ≤ \max \{ d', 2+ε \}·\frac{\deg_{\what{T}} \Image d\,\what{γ}_𝒜}{[\what{T}:S]} + O \left( \textstyle{\sqrt{h_ℬ(\what{γ})}}\right) && \text{Inequality~\eqref{eq:myhI2}} \\
       & ≤ \max \{ d', 2+ε \}·δ(γ) + O \left( \textstyle{\sqrt{h_ℬ(\what{γ})}}\right) && \text{Claim~\ref{claim:6-6}} \\
       & = \max \{ d', 2+ε \}·δ(γ) + O \left( \textstyle{\sqrt{h(γ)}}\right) && \text{Consequence~\ref{cons:59x}}
\end{align*}
This ends the proof of Theorem~\ref{thm:height1b}.  \qed

\subsection{Notes on the proof}

---

\subsubsection{Improved height bounds in characteristic zero}
\approvals{Arne & yes \\ Jorge & yes \\ Stefan & yes}
\label{sssec:dfg}

If $\operatorname{char}(k) = 0$, recall from Section~\ref{ssec:notksq} that the
height bound of Theorem~\ref{thm:kimsquare} can be improved.  In fact, replacing
\eqref{eq:myhI2} by the improved bound \eqref{eq:ccjk} when we apply
Theorem~\ref{thm:kimsquare} in Step~4 of our proof, we obtain the improved
result claimed in Section~\ref{ssec:impbhcz} above.

\subsubsection{Sharpness of Theorem~\ref*{thm:height1b} and the Kodaira-Spencer map}
\approvals{Arne & yes \\ Jorge & yes \\ Stefan & yes}
\label{ssec:doBetter}

As promised on Page~\pageref{sssec:B} at the end of Section~\ref{sssec:B}, we
add a few words concerning the sharpness of the bound given by
Theorem~\ref{thm:height1b}.  The bounds obtained in \cite[Thm.~2]{MR1436743} and
\cite[Claim~2.2]{MR1815399}, rely on a control of the degree of the restriction
of the foliation $𝒢$ to $X_η$, appearing in the proof of Proposition
\ref{prop:hbfol}.  More specifically, better bounds for $\deg_{X_η} 𝒢$ allow us
to extract improved height bounds from Inequality~\eqref{eq:keyinequality}.  We
do not know if one can replace the number
$$
d'= \deg_{X_η} (K_X+D)· \lcm\, \{ m_i \,|\, m_i \ne ∞ \} = \deg_{X_η} (K_X+D)·
\deg(c)
$$
by the smaller number $d = \deg_{X_η} (K_X+D)$.  For that, one would need to
show that $\deg_{X_η} 𝒢 ≥ [\what{X}:X]$ when $𝒢$ is the foliation on $\what{X}$
tangent to infinitely many degenerate $\what{X}/S$ algebraic points.  We could
not find a way to prove this stronger inequality, without imposing further
conditions on the logarithmic Kodaira-Spencer map of the pair $(X, ⌈D⌉)$.

%
% Do not edit the following line.  The text is automatically updated by
% subversion.
%
\svnid{$Id: 07-bounds-orbi.tex 410 2019-04-26 19:23:16Z kebekus $}

\section{Geometric height bounds --- proof of Theorem~\ref*{thm:height1a}}
\subversionInfo
\label{sec:pf45b}
\approvals{Arne & --- \\ Jorge & yes \\ Stefan & yes}

We aim to apply Theorem~\ref{thm:height1b} to a pair $(X, D')$, where the
divisor $D'$ is obtained from $D$ by lowering the coefficients.  To be precise,
we set
\[
  D' := \sum_{i ∈ \fract} \frac{m'_i-1}{m'_i}·D_i + D_{\log},
\]
where
$$
m'_i :=
\begin{cases}
  m_i & \text{if } p² \nmid m_i \\
  3 & \text{if } p=2 \text{ and } p² | m_i \\
  \text{largest prime factor of $m_i$} & \text{otherwise.}
\end{cases}
$$

We assume in Item~\ref{il:3-6-2} of Theorem~\ref{thm:height1a} that
Sequence~\eqref{eq:log6-4} does not split when restricted to the generic fibre
$X_η$.  Recalling that the sequence always splits if $X_η = ℙ¹$ and $D$ contains
less than four points, the assumptions of Theorem~\ref{thm:height1a} imply, by
means of elementary computations, that the pair $(X, D')$ satisfies each of the
assertions below.
\begin{enumerate}
\item\label{il:A1} The pair $(X,D')$ is a $\cC$-pair.  We have $D' ≤ D$.

\item\label{il:A2} None of the numbers $(m_i)_{i∈ \fract}$ is a multiple of
  $p²$.

\item\label{il:A3} We have $\deg_{X_η} (K_X+D') ≥ \frac{1}{6}$.

\item\label{il:A4} We have $\supp D' = \supp D$.  The analogue of
  Sequence~\eqref{eq:log6-4} for the pair $(X, D')$ does not split when
  restricted to the generic fibre $X_η$.

\item\label{il:A5} We have
  $\lcm\{ m'_i \:|\: i ∈ \fract\} ≤ 3·\lcm\{ m_i \:|\: i ∈ \fract\}$.
\end{enumerate}

Item~\ref{il:A1} guarantees that any $\cC$-integral point of the pair $(X, D)$
is automatically $\cC$-integral for the pair $(X, D')$.  Items~\ref{il:A2} ---
\ref{il:A4} guarantee that the pair $(X, D')$ satisfies all assumptions made in
Theorem~\ref{thm:height1b}.  Now, given any number $ε > 1$ and choosing
\begin{align*}
  d & := \deg_{X_η} (K_X+D) & d_{K_X+D'} & := \deg_{X_η} (K_X+D') \\
  d' & := d·\lcm\{ m_i \:|\: i ∈ \fract\} & d'_{K_X+D'} & := d_{K_X+D'}·\lcm\{ m'_i \:|\: i ∈ \fract\},
\end{align*}
and applying Néron's theorem, \cite[Thm.~2.11]{MR1002324}, we obtain the
following
\begin{small}
  \begin{align*}
    d'_{K_X+D'} & ≤ d·\lcm\{ m'_i \:|\: i ∈ \fract\} ≤ 3·d' && \text{\ref{il:A1} and \ref{il:A5}}\\
    \intertext{and}
    h_{K_X+D}(γ) & ≤ \frac{d}{d_{K_X+D'}}·h_{K_X+D'}(γ) + O\left( \textstyle{\sqrt{h_{K_X+D}(γ)}} \right) && \text{Néron's theorem} \\
                & ≤ 6d·h_{K_X+D'}(γ) + O\left( \textstyle{\sqrt{h_{K_X+D}(γ)}} \right) && \text{Item~\ref{il:A3}} \\
                & ≤ 6d·\max\{d'_{K_X+D'}, 2+ε\}·δ(γ) + O\left( \textstyle{\sqrt{h_{K_X+D}(γ)}} \right) && \text{Theorem~\ref{thm:height1b} for $(X, D')$} \\
                & ≤ 6d·\max\{3d', 2+ε\}·δ(γ) + O\left( \textstyle{\sqrt{h_{K_X+D}(γ)}} \right)
  \end{align*}
\end{small}
Theorem~\ref{thm:height1a} is thus shown.  \qed

%
% Do not edit the following line.  The text is automatically updated by
% subversion.
%
\svnid{$Id: 09-rigidity.tex 599 2021-08-23 12:06:26Z kebekus $}

\section{Rigidity theorem for $\cC$-integral points --- proof of Theorem~\ref*{thm:rigidity}}
\subversionInfo
\label{sec:orbirigidity}
\approvals{Arne & yes \\ Jorge & yes \\ Stefan & yes}

Our proof of Theorem~\ref{thm:rigidity} depends on the characteristic.  In
contrast to the proofs of Theorems~\ref{thm:height1a} and \ref{thm:height1b},
where inseparability is the main source of difficulties, here inseparability
greatly simplifies the argument.  No matter what the characteristic, we argue by
contradiction and assume that there exists a smooth, projective $k$-curve $T$
over $S$, a smooth, quasi-projective $k$-curve $H'$ and a non-constant morphism
$γ' : T⨯H' → X$ such that the induced morphisms $γ'_h : T → X$ are $S$-morphisms
and $\cC$-integral points over $S$, for all $h ∈ H'(k)$.

\begin{rem}
  Since $X$ is a surface and since the morphisms $γ'_h$ are $S$-morphisms, the
  assumption ``non-constant'' immediately implies that $γ'$ is dominant.
\end{rem}

\begin{notation}\label{n:diagram}
  Take $H$ to be the unique compactification of $H'$ to a smooth, projective
  $k$-curve.  The morphism $γ'$ extends to a rational map
  $γ : T⨯H \dasharrow X$, whose set of fundamental points is finite.  The
  following diagrams summarise the situation.
  \begin{equation}\label{eq:dt}
    \begin{aligned}
      \xymatrix{
        H & T⨯H \ar[l]_(.52){π_2} \ar@{-->>}[r]^(.55){γ} \ar[d]_{π_1} & X \ar[d]^{φ} \\
        & T \ar[r]_{ζ} & S }
    \end{aligned}
  \end{equation}
\end{notation}

\begin{rem}
  Since its indeterminacy locus is finite, $γ$ restricts to rational maps
  $γ_h : T \dasharrow X$, for every $h ∈ H(k)$.  But since $T$ is a $k$-smooth
  curve, these maps are in fact morphisms.
\end{rem}

\begin{asswlog}\label{ass:shrink}
  Shrinking $S°$, we may assume that $γ$ is well-defined over $S°$ and that for
  every $h ∈ H(k)$, the morphism $γ_h : T → X$ is either a $\cC$-integral point,
  or that its image is completely contained in the support of $D$.
\end{asswlog}

\subsection{Proof of Theorem~\ref*{thm:rigidity} in characteristic zero}
\approvals{Arne & --- \\ Jorge & yes \\ Stefan & yes}

In this section, we prove Theorem~\ref*{thm:rigidity} under the assumption that
the algebraically closed field $k$ has characteristic zero.  In particular, the
morphism $ζ$ is separable and generically étale.  Our goal is to construct a
splitting of Sequence~\eqref{eq:log6-4} over the generic fibre $X_η$ of $φ$,
contradicting Assumption~\ref{il:3-6-2a}.

\begin{asswlog}
  Shrinking $S°$, we assume that $ζ$ is étale over $S°$.
\end{asswlog}

\subsection*{Step 1.  Splitting of $γ^* Ω¹_X$}
\approvals{Arne & --- \\ Jorge & yes \\ Stefan & yes}

We begin with a discussion of $γ^* Ω¹_X$.  Over $S°$, where $γ$ is well-defined
and $ζ$ is étale, we show that the pull-back of the sequence of relative
differentials for the smooth morphism $φ$ splits.  In fact, a splitting over
$S°$,
\begin{equation}\label{eq:vbnv}
  \xymatrix{ %
    0 \ar[r] & %
    γ^* φ^* Ω¹_S \ar[r]_{γ^*dφ} & %
    γ^* Ω¹_X \ar[r]_{c} \ar@/_3mm/[l]_{a} & %
    γ^* ω_{X/S} \ar[r] \ar@/_3mm/[l]_{b} & %
    0, %
  }
\end{equation}
is easily constructed by taking $a$ to be the composition of the following
morphisms,
$$
\xymatrixcolsep{1.3cm}\xymatrix{ %
  γ^* Ω¹_X \ar[r]^{dγ} & %
  Ω¹_{T⨯H} \ar[r]^(.4){≅} & %
  π^*_1 Ω¹_T ⊕ π^*_2 Ω¹_H \ar[r]^(.6){\text{projection}} & %
  π^*_1 Ω¹_T \ar[r]^{≅ \text{ over } S°} & %
  γ^* φ^* Ω¹_S.  %
}
$$
The assumption that $ζ$ is étale guarantees that the morphisms $γ_h$ are
immersive over $S°$, because they remain immersive when composed with $φ$.  This
guarantees that the concatenation of the first three arrows,
$γ^* Ω¹_X → π^*_1 Ω¹_T$ is surjective.  It follows that $a$ is surjective.  We
can then take $b$ as the inverse of the restriction of $c$ to $\ker(a)$.  To
make use of this splitting, we need to relate it to the
Sequence~\eqref{eq:log6-4}, which involves log differentials rather than
differentials.  The following claim will turn out to be key.

\begin{claim}\label{claim:6-3}
  Over $S°$, we have
  $\Image(b) ⊆ \ker \bigl( γ^* Ω¹_X → γ^* Ω¹_{\supp D} \bigr)$.
\end{claim}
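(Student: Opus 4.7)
My plan is to verify the inclusion pointwise at generic points of $\gamma^{-1}(\supp D)$ using the geometric interpretation of $\ker(a)$, and then propagate it to a sheaf-theoretic statement. The central observation is elementary: a $\cC$-integral point must be \emph{tangent} to every boundary component $D_i$ it meets, because the $\cC$-condition forces contact of order at least $m_i \geq 2$.

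First I would unpack what $\Image(b) = \ker(a)$ means geometrically. By construction, $a$ sends a section $\alpha \in \gamma^* \Omega^1_X$ to the $\pi_1^* \Omega^1_T$-component of $d\gamma(\alpha) \in \Omega^1_{T \times H} \cong \pi_1^*\Omega^1_T \oplus \pi_2^*\Omega^1_H$, identified with $\gamma^*\varphi^*\Omega^1_S$ via the étale morphism $\zeta$. Thus $\alpha \in \ker(a)$ at a point $(p,h)$ with $\gamma(p,h) = q$ is equivalent to the condition that the covector $\alpha_q \in T^*_q X$ annihilates the tangent vector $d(\gamma_h)_p(\partial_t) \in T_q X$ to the fibre curve $\gamma_h$.

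Next, since $\gamma$ is dominant onto the surface $X$, the locus $H'' := \{h \in H \mid \gamma_h(T) \subseteq \supp D\}$ is a proper closed subset of $H$. For each component $D_i$ of $\supp D$ meeting the image of $\gamma$ and each irreducible component $W$ of $\gamma^{-1}(D_i)$ dominating $D_i$, a generic point $(p,h) \in W$ lying over $S°$ satisfies $h \notin H''$, so that $\gamma_h$ is a genuine $\cC$-integral point by Assumption~\ref{ass:shrink}. Item~(1) of Definition~\ref{def:ccurve} forces $m_i < \infty$ here, and item~(2) gives $\ord_p(\gamma_h^* y_i) \geq m_i \geq 2$ for a local defining equation $y_i$ of $D_i$ near $q$. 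Differentiating, this is precisely the statement
$$
d(\gamma_h)_p(T_p T) \;\subseteq\; \ker(dy_i) \;=\; T_q D_i.
$$
Combining the two, $\alpha_q$ annihilates a nonzero element of the one-dimensional subspace $T_q D_i \subset T_q X$, so $\alpha_q \in \operatorname{Ann}(T_q D_i) = N^*_{D_i/X, q}$, which is exactly the kernel of the canonical surjection $\Omega^1_X|_{D_i} \twoheadrightarrow \Omega^1_{D_i}$ at $q$. Therefore the composed morphism $\gamma^* \omega_{X/S} \xrightarrow{b} \gamma^*\Omega^1_X \to \gamma^*\Omega^1_{\supp D}$ vanishes at the generic point of every component of $\gamma^{-1}(\supp D)$ lying over $S°$.

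Finally, to upgrade this generic vanishing to the claimed sheaf inclusion, I would appeal to the snc hypothesis over $S°$: each component $D_i^\circ$ is smooth, so $\Omega^1_{D_i^\circ}$ is locally free, and hence $\gamma^*\Omega^1_{\supp D}$ is torsion-free along each component of $\gamma^{-1}(\supp D)$. Vanishing at the generic point then forces vanishing on the whole component, which is what is needed. The only genuine obstacle I anticipate is bookkeeping around the sheaf $\gamma^*\Omega^1_{\supp D}$ and the possibility that several components $D_i$ pass through $q$; but snc-ness of $(X°, D°)$ ensures at most one component meets a generic point of any component of $\gamma^{-1}(\supp D)$, so the argument goes through one component at a time.
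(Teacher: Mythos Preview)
Your approach is sound and rests on the same geometric observation as the paper's proof: the $\cC$-condition forces each $\gamma_h$ to be tangent to $D_i$ wherever it meets it, so that $\ker(a)$, which annihilates the tangent direction of $\gamma_h$, automatically lies in the conormal of $D_i$. The paper packages this as a local coordinate computation with the generators $df$ and $d\varphi(ds)$ of $\Omega^1_X$, while you argue at generic points and then invoke the absence of embedded components in the Cartier divisor $\gamma^{-1}(\supp D)$; both routes are legitimate.

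There is, however, one gap. You treat only those components $W$ of $\gamma^{-1}(D_i)$ that \emph{dominate} $D_i$, and then assert vanishing at the generic point of \emph{every} component. You are missing the ``vertical'' components $W = T \times \{h_0\}$ with $h_0 \in H''$, i.e.\ those $h_0$ for which $\gamma_{h_0}(T) \subseteq D_i$. These do occur and must be handled for the torsion-freeness step to conclude. The fix is immediate and uses the same idea: since $\gamma_{h_0}$ factors through $D_i$, one has $d(\gamma_{h_0})_p(\partial_t) \in T_q D_i$ automatically; and this vector is nonzero over $S°$ because $\varphi \circ \gamma_{h_0} = \zeta$ is étale there, so $d(\gamma_{h_0})_p$ cannot vanish. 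The paper covers this case in one breath (``intersects all horizontal curves $T \times \{h\}$ with multiplicity at least $2$, or else contains these curves in its support''), and you should add the analogous sentence.
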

\begin{proof}[Proof of Claim~\ref{claim:6-3}]
  Since $c$ is surjective, we may as well prove that
  $$
  \Image \bigl( b◦c : γ^* Ω¹_X → γ^* Ω¹_X \bigr) ⊆ %
  \ker \bigl(γ^* Ω¹_X → γ^* Ω¹_{\supp D} \bigr).
  $$
  This can be shown locally, near any given point
  $\vec{p} ∈ (\supp γ^* D) ∩ T°⨯ H$.  In fact, given $\vec{p}$, write
  $\vec{x} = γ(\vec{p})$ and choose a local equation $f ∈ 𝒪_{X,\vec{x}}$ for
  $\supp D$, and a uniformising parameter $s ∈ 𝒪_{S,φ(\vec{x})}$.  Recall from
  Setting~\ref{setting:CKim} that $(X,D)$ is relatively snc over $S°$.  Near
  $\vec{x}$, the sheaf $Ω¹_X$ is therefore generated by the differential forms
  $df$ and $dφ(ds)$.  Near $\vec{p}$, the pull-back sheaf $γ^* Ω¹_X$ is then
  generated by the pull-back sections $γ^* df$ and $γ^* dφ(ds)$.  To prove the
  claim, it will then suffice to show that
  \begin{align}
    \label{eq:k1} (b◦c)\bigl(γ^* df\bigr) & ∈ \ker\bigl(γ^* Ω¹_X → γ^*
                                            Ω¹_{\supp D}\bigr) & \text{and}\\
    \label{eq:k2} (b◦c)\bigl(γ^* dφ(ds)\bigr) & ∈ \ker\bigl(γ^* Ω¹_X → γ^*
                                                Ω¹_{\supp D}\bigr).
  \end{align}
  Inclusion~\eqref{eq:k2} follows because $γ^* dφ(ds) ∈ \ker(c)$.
  Inclusion~\eqref{eq:k1} requires a little more thought.  To begin, observe
  that $df|_{\supp D} = 0 ∈ Ω¹_{\supp D}$, which implies that
  \begin{equation}\label{eq:p1a}
    γ^* df ∈ \ker \bigl(γ^* Ω¹_X → γ^* Ω¹_{\supp D} \bigr).
  \end{equation}
  Secondly, recall from Assumption~\ref{ass:shrink} that the divisor $γ^*(⌈D⌉)$
  intersects all horizontal curves $T ⨯ \{h\}$ with multiplicity at least $2$, or else
  contains these curves in its support.  Either way, it then follows from the
  construction of $a$ that near $\vec{p}$, the section $a(γ^* df)$ vanishes
  along the support of $γ^*D$.  By construction of $b$, this means that near
  $\vec{p}$, the sections $γ^* df$ and $(b◦c)\bigl(γ^* df\bigr)$ agree along
  $\supp γ^*D$.  Inclusion~\eqref{eq:p1a} will then show that the section of
  $γ^* Ω¹_{\supp D}$ that is induced by $(b◦c)\bigl(γ^* df\bigr)$, vanishes
  there, as desired.  \qedhere~(Claim~\ref{claim:6-3})
\end{proof}

\subsection*{Step 2.  Splitting of $γ^* Ω¹_X\bigl(\log ⌈D⌉\bigr)$}
\approvals{Arne & --- \\ Jorge & yes \\ Stefan & yes}

As a next step, we claim that Sequence~\eqref{eq:log6-4} splits over $S°$, once
one pulls it back via $γ$.  The following commutative diagram with exact rows
and columns summarises the situation, and relates Sequence~\eqref{eq:log6-4} to
Sequence~\eqref{eq:vbnv} discussed in the previous step.
$$
\small \xymatrix{ %
  γ^* \Bigl( 𝒥_{⌈D⌉} ⊗ φ^* Ω¹_S \Bigr) \ar@{^(->}[r] \ar@{^(->}[d] & %
  γ^* \Bigl( 𝒥_{⌈D⌉} ⊗ Ω¹_X\bigl(\log ⌈D⌉\bigr) \Bigr) \ar@{->>}[r] \ar@{^(->}[d]^{α} & %
  γ^* \Bigl( 𝒥_{⌈D⌉} ⊗ ω_{X/S}\bigl(\log ⌈D⌉\bigr) \Bigr) \ar@{^(->}[d]^{\text{isomorphic}} \\ %
  γ^* φ^* Ω¹_S \ar@{^(->}[r]^{γ^*dφ} \ar@{->>}[d] & %
  γ^* Ω¹_X \ar@{->>}[r]^{c} \ar@/^3mm/[l]^{a} \ar@{->>}[d]^{β} & %
  γ^* ω_{X/S} \ar@/^3mm/[l]^{b} \ar@{->>}[d] \\ %
  γ^* Ω¹_{\supp D} \ar@{^(->}_{\text{isomorphic}}[r] & %
  γ^* Ω¹_{\supp D} \ar@{->>}[r]& %
  0
}
$$

\begin{explanation}
  The first row in the diagram equals Sequence~\eqref{eq:log6-4} twisted with
  the ideal sheaf $𝒥_{⌈D⌉}$ and pulled back via $γ$.  The second row equals
  Sequence~\eqref{eq:vbnv}.  The middle column is a standard description of
  logarithmic differentials, as discussed for instance in \cite[Prop.~2.3(c) on
  p.~13]{EV92}.
\end{explanation}

We have seen in Claim~\ref{claim:6-3} that the composed map
$β ◦ b : γ^* ω_{X/S} → γ^* Ω¹_{\supp D}$ vanishes.  As a consequence, we obtain
a morphism $γ^* ω_{X/S} → γ^* \bigl( 𝒥_{⌈D⌉} ⊗ Ω¹_X\bigl(\log ⌈D⌉\bigr) \bigr)$,
and hence a splitting of the top row.  Since a sequence of coherent sheaves
splits if and only if it splits after tensoring with a locally free sheaf, we
can summarise our results so far as follows.

\begin{claim}\label{claim:gjkfhkj}
  The $γ$-pull-back of Sequence~\eqref{eq:log6-4},
  \begin{equation}\label{eq:hjkgh}
    0 → γ^* φ^* Ω¹_S → γ^* Ω¹_X\bigl(\log ⌈D⌉\bigr) → γ^* ω_{X/S}\bigl(\log
    ⌈D⌉\bigr) → 0
  \end{equation}
  splits over $T°$.  \qed
\end{claim}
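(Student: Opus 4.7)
The plan is to extract the splitting of the top row of the commutative diagram by exploiting the key vanishing established in Claim~\ref{claim:6-3}, and then to untwist by the invertible ideal sheaf $γ^* 𝒥_{⌈D⌉}$. By exactness of the middle column of the diagram, the morphism $α$ identifies $γ^*\bigl(𝒥_{⌈D⌉} ⊗ Ω¹_X(\log ⌈D⌉)\bigr)$ with the kernel of $β : γ^*Ω¹_X → γ^*Ω¹_{\supp D}$. Claim~\ref{claim:6-3} asserts that over $S°$ the composition $β ◦ b$ vanishes, so the image of the splitting $b$ lies in $\ker(β)$. This yields a unique lift
$$
\tilde{b} : γ^* ω_{X/S} → γ^*\bigl(𝒥_{⌈D⌉} ⊗ Ω¹_X(\log ⌈D⌉)\bigr)
$$
with $α ◦ \tilde{b} = b$.

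Next, I would verify that $\tilde{b}$ is a section of the surjection in the top row. This is a direct diagram chase: composing $\tilde{b}$ with the top-row surjection and then with the rightmost vertical isomorphism $γ^*(𝒥_{⌈D⌉} ⊗ ω_{X/S}(\log ⌈D⌉)) \xrightarrow{\sim} γ^*ω_{X/S}$ gives the same map as $c ◦ α ◦ \tilde{b} = c ◦ b$, which is the identity on $γ^* ω_{X/S}$ by construction of $b$. Commutativity of the right column then forces $\tilde{b}$ itself to split the top row.

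Finally, since $X$ is smooth and $⌈D⌉$ is a Cartier divisor, $𝒥_{⌈D⌉} \cong 𝒪_X(-⌈D⌉)$ is invertible, and hence so is its pullback $γ^* 𝒥_{⌈D⌉}$. The top row of the diagram is precisely the $γ$-pull-back of Sequence~\eqref{eq:log6-4} tensored with this invertible sheaf; since tensoring with an invertible sheaf preserves splittings (untwist by the inverse), the splitting of the top row yields the desired splitting of Sequence~\eqref{eq:hjkgh} over the preimage of $S°$ in $T ⨯ H$. No step is truly an obstacle here: all the conceptual work went into Claim~\ref{claim:6-3}, and what remains is a formal manipulation within the established diagram.
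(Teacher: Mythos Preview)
Your proposal is correct and follows exactly the same approach as the paper: use Claim~\ref{claim:6-3} to factor $b$ through $\ker(β) = \Image(α)$, observe that the resulting lift splits the top row, and then untwist by the invertible sheaf $γ^* 𝒥_{⌈D⌉}$. The paper states these steps more tersely (omitting the explicit diagram chase you spell out), but the argument is identical.
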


\subsection*{Step 3.  Splitting of Sequence~\eqref{eq:log6-4}}
\approvals{Arne & --- \\ Jorge & yes \\ Stefan & yes}

We will now show that Sequence~\eqref{eq:log6-4} itself splits over the generic
fibre $X_η$ of $φ$.  Recall that $η$ denotes the generic point of $S$, and
denote the generic point of $T$ by $η_T$.  To begin, observe
that
\begin{align*}
  \deg_{η_T ⨯ H} γ^* φ^* Ω¹_S & = 0 \\
  \deg_{η_T ⨯ H} γ^* ω_{X/S}\bigl(\log ⌈D⌉\bigr) & > 0 && \text{Assumption~\ref{il:3-6-1a}.}
\end{align*}
It follows that the sheaf $γ^* Ω¹_X\bigl(\log ⌈D⌉\bigr)|_{η_T ⨯ H}$ is unstable.
Recalling that the pull-back of a semistable sheaf under separable morphisms of
curves remains semistable, \cite[Prop.~3.2]{Miyaoka87}, we infer that
$Ω¹_X\bigl(\log ⌈D⌉\bigr)|_{X_η}$ is likewise unstable.  Its maximal
destabilising subsheaf is then a saturated, invertible subsheaf
$𝒜 ⊂ Ω¹_X\bigl(\log ⌈D⌉\bigr)|_{X_η}$, of degree
\begin{equation}\label{eq:ghfg}
  \begin{aligned}
    \frac{1}{2}·\deg_{X_η} ω_{X/S} \bigl(\log ⌈D⌉\bigr) & < \deg_{X_η} 𝒜 && \text{unstable} \\
    & ≤ \deg_{X_η} ω_{X/S} \bigl(\log ⌈D⌉\bigr).  && \text{Sequence~\eqref{eq:hjkgh}}
  \end{aligned}
\end{equation}

\begin{claim}\label{cl:x5}
  The second inequality in \eqref{eq:ghfg} is indeed an equality.
\end{claim}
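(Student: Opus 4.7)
The plan is to pull back the inclusion $𝒜 ↪ Ω¹_X(\log ⌈D⌉)|_{X_η}$ to the surface $T ⨯ H$ via $γ$ and compare degrees on the generic fibre of $ζ ◦ π_1 : T ⨯ H → S$, exploiting the splitting from Claim~\ref{claim:gjkfhkj}. Since $T$ is a $k$-curve and thus irreducible, this generic fibre is a single smooth projective curve $C$ over $K := k(η)$, and after shrinking $S°$ as permitted by Assumption~\ref{ass:shrink}, the rational map $γ$ restricts to a morphism $γ_C : C → X_η$. This morphism is dominant — being the restriction of the dominant rational map $γ$ to the generic fibre — hence finite between $1$-dimensional objects, separable (we are in characteristic zero), and therefore flat.

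Next I would restrict the splitting of Claim~\ref{claim:gjkfhkj} to $C$. This exhibits $γ^* Ω¹_X(\log ⌈D⌉)|_C$ as a direct sum $L_1 ⊕ L_2$, where $L_1 := γ^* φ^* Ω¹_S|_C$ has degree $0$ (pulled back from a single point of $S$) and $L_2 := γ^* ω_{X/S}(\log ⌈D⌉)|_C$ has strictly positive degree. The pull-back $γ^* 𝒜|_C$ is a saturated line subsheaf of $L_1 ⊕ L_2$, because $𝒜$ is saturated in $Ω¹_X(\log ⌈D⌉)|_{X_η}$ and flat pull-back preserves torsion-freeness of the quotient. The projection $γ^* 𝒜|_C → L_1$ must then vanish, for otherwise it would be an injection of line bundles forcing $\deg_C γ^* 𝒜|_C ≤ 0$, in contradiction with the positivity $\deg_{X_η} 𝒜 > 0$ recorded on the left of \eqref{eq:ghfg}. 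Hence $γ^* 𝒜|_C ⊆ L_2$, and saturation inside $L_1 ⊕ L_2$ entails saturation inside the summand $L_2$ (the quotient $L_2 / γ^* 𝒜|_C$ is a direct summand of the torsion-free cokernel of $γ^* 𝒜|_C$ in $L_1 ⊕ L_2$). Since a saturated line subsheaf of an invertible sheaf on a smooth curve equals the whole sheaf, $γ^* 𝒜|_C = L_2$, and a comparison of degrees delivers the required equality $\deg_{X_η} 𝒜 = \deg_{X_η} ω_{X/S}(\log ⌈D⌉)$.

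The step I expect to require the most care is ensuring that saturation of $𝒜$ in $Ω¹_X(\log ⌈D⌉)|_{X_η}$ transfers to saturation of $γ^* 𝒜|_C$ inside the split pull-back — this hinges on flatness of $γ_C$ — and that the resulting saturated line subsheaf of $L_2$ genuinely coincides with $L_2$ rather than a proper subsheaf of lower degree. Beyond that, the argument is essentially a bookkeeping exercise on degrees, and notably it avoids constructing a splitting of \eqref{eq:log6-4} over $X_η$ by hand: the splitting instead drops out of the degree equality via the inclusion $𝒜 ⊆ Ω¹_X(\log ⌈D⌉)|_{X_η}$, which is precisely what will provide the contradiction with Assumption~\ref{il:3-6-2a} in the next step of the proof.
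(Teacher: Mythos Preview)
Your argument is correct and works on the same curve $C = η_T ⨯ H$ as the paper's, using the same ingredients (the splitting from Claim~\ref{claim:gjkfhkj} and a degree comparison of line bundles on $C$), so the approaches are essentially equivalent. The only stylistic difference is that you argue directly—showing $γ_C^* 𝒜$ has trivial projection to $L_1$, hence equals $L_2$ by saturation—whereas the paper argues by contradiction via the dual picture: it assumes strict inequality, so that both $\deg_{X_η} 𝒜$ and $\deg_{X_η} ℬ$ lie strictly below $\deg_{X_η} ω_{X/S}(\log ⌈D⌉)$, and then observes that the splitting map $L_2 → γ_C^* Ω¹_X(\log ⌈D⌉)$ can have no nonzero component in either $γ_C^* 𝒜$ or $γ_C^* ℬ$, forcing it to vanish. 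This avoids invoking flatness of $γ_C$ and preservation of saturation under pull-back, which you need for your step; on the other hand, your version makes the identification $γ_C^* 𝒜 = L_2$ explicit and thus makes the subsequent splitting of \eqref{eq:log6-4} over $X_η$ more transparent.
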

\begin{proof}[Proof of Claim~\ref{cl:x5}]
  Writing $ℬ$ for the quotient of $Ω¹_X\bigl(\log ⌈D⌉\bigr)|_{X_η}$ by $𝒜$, we
  obtain an exact sequence of locally free sheaves on $X_η$,
  $$
  0 → 𝒜 → Ω¹_X\bigl(\log ⌈D⌉\bigr)|_{X_η} → ℬ → 0.
  $$
  Assuming that we have strict inequalities, we obtain degree bounds
  \begin{align*}
    0 < \deg_{X_η} 𝒜 < \deg_{X_η} ω_{X/S} \bigl(\log ⌈D⌉\bigr) \\
    0 < \deg_{X_η} ℬ < \deg_{X_η} ω_{X/S} \bigl(\log ⌈D⌉\bigr)
  \end{align*}
  In particular, any morphism from $γ^* ω_{X/S}\bigl(\log ⌈D⌉\bigr)|_{η_T ⨯ H}$
  to the $γ$-pull-back of either $𝒜$ or $ℬ$ vanishes for degree reasons,
  contradicting Claim~\ref{claim:gjkfhkj}.  \qedhere~(Claim~\ref{cl:x5})
\end{proof}

Claim~\ref{cl:x5} implies in particular that the composed morphism
$$
𝒜 → Ω¹_X\bigl(\log ⌈D⌉\bigr)|_{X_η} → ω_{X/S}\bigl(\log ⌈D⌉\bigr)|_{X_η}
$$
is an isomorphism and that Sequence~\eqref{eq:log6-4} splits over $X_η$.  This
contradicts Assumption~\ref{il:3-6-2a} and therefore ends the proof of
Theorem~\ref*{thm:rigidity} in characteristic zero.  \qed

\subsection{Proof of Theorem~\ref*{thm:rigidity} over fields of positive characteristic}
\approvals{Arne & yes \\ Jorge & yes \\ Stefan & yes}

In this section, we prove Theorem~\ref{thm:rigidity} under the assumption that
$k$ is an algebraically closed field of positive characteristic.  The strategy
consists in deducing from Diagram~\eqref{eq:dt} the existence of an infinite
sequence of $\cC$-integral points over $S$ with bounded discriminant and
unbounded height, contradicting the geometric height inequalities for
$\cC$-integral points provided by Theorem~\ref{thm:height1a}.

\subsection*{Step 1.  Simplification}
\approvals{Arne & --- \\ Jorge & yes\\ Stefan & yes}

Given any number $m ∈ ℕ^{≥ 2}$, set
$$
D^m := D - {\textstyle\frac{1}{m}}·D_{\log}.
$$
Observe that $\supp D = \supp D^m$, that $(X, D^m)$ a $\cC$-pair and that all
$\cC$-integral points of $(X, D)$ are automatically $\cC$-integral points of
$(X, D^m)$.  Choosing $m$ sufficiently large, Assumption~\ref{il:3-6-1a} of
Theorem~\ref{thm:rigidity} is still satisfied for the pair $(X, D^m)$.
Replacing $D$ by $D^m$, we are thus safe to make the following assumption.

\begin{asswlog}\label{ass:no log}
  The divisor $D$ has no component with $\cC$-multiplicity equal to $∞$.  In
  other words, $D_{\log}= 0$.
\end{asswlog}

\subsection*{Step 2.  Iterated Frobenius morphisms}
\approvals{Arne & yes \\ Jorge & yes \\ Stefan & yes}

Given any number $n ∈ ℕ$, let $F_n : H_n → H$ be the $n$th iterated $k$-linear
Frobenius morphism, as discussed in \cite[IV, Rem.~2.4.1]{Ha77} or
\cite[\href{https://stacks.math.columbia.edu/tag/0CC9}{Tag
  0CC9}]{stacks-project}.  The $H_n$ are then $k$-algebraic curves, with genus
$g(H_n) = g(H)$, cf.~\cite[IV, Prop.~2.5]{Ha77}.

Choose divisors $A_H ∈ \Div(H)$ and $A_T ∈ \Div(T)$ with degrees
$\deg_• A_• ≥ 2g(•)+1$.  For every number $n$, set
$A_{H_n} := p^{-n}·F_n^* A_H ∈ \Div(H_n)$, which is a divisor of degree
$\deg_{H_n} A_{H_n} = \deg_H A_H$.  By \cite[IV, Cor.~3.2.b]{Ha77}, this
assumption implies that the divisors $A_H$, $A_T$ and $A_{H_n}$ are all very
ample, and then by \cite[Prop.~4.4.10.iv]{EGA4-2} so are the divisors
$$
A_{T⨯ H_n} := π^*_1 A_T ⊗ π^*_2 A_{H_n} ∈ \Div(T ⨯ H_n), \quad
\text{for every } n.
$$
For every number $n$, let $C_n ∈ |A_{T⨯ H_n}|$ be a general section.  We
summarise some of its main properties.
\begin{enumerate}
\item Following \cite[Cor.~12]{MR0360616} or \cite[Satz~5.2]{MR0460317}, the
  scheme $C_n$ is a regular curve, hence smooth over the perfect field $k$.

\item The curve $C_n$ avoids the (finite) indeterminacy locus of the composed
  morphism $γ◦(\Id_T⨯F_n) : T⨯H_n \dasharrow X$.

\item\label{il:g3} The natural morphism of curves $C_n → T$ is separable, since
  the very ample linear system $|A_{T⨯ H_n}|$ separates tangents and the
  projection map $T⨯H_n → T$ is smooth of relative dimension one.

\item\label{il:g4} By construction, the degree of $C_n$ over $S$ equals
  $$
  [C_n:S] = [C_n:T]·[T:S] = \deg_H A_H·[T:S]
  $$
  and is therefore independent of $n$.
\end{enumerate}

\begin{notation}
  The following diagram extends \eqref{eq:dt}, summarises the situation, and
  fixes the notation used to denote the relevant morphisms.
  \begin{equation*}%\label{eq:dt2}
    \begin{aligned}
      \xymatrix@C=2.5cm{
        C_n \ar[r]^{ι_n}_{\text{inclusion}} \ar@/^8mm/[rrr]^{γ_n} \ar@/_8mm/[rr]_{ψ_n} & T⨯H_n \ar[r]^{η_n := \Id_T ⨯ F_n} & T⨯H \ar@{-->>}[r]_{γ} \ar[d]_{π_1} & X \ar[d]^{φ} \\
        && T \ar[r]_{ζ} & S }
    \end{aligned}
  \end{equation*}
  By minor abuse of notation, we denote the projections from $T⨯H_n$ by
  $\overline{π}_{•}$, as it will always be clear from the context what number
  $n$ is meant.
\end{notation}

\subsection*{Step 3.  The curves $γ_n$ as $\cC$-integral points}
\approvals{Arne & --- \\ Jorge & yes \\ Stefan & yes}

We will show that the curves $γ_n$ are $\cC$-integral points for the family
$φ : X → S$.  For the reader's convenience, we prove birationality first.

\begin{claim}\label{claim:8-2a}
  Given any number $n ∈ ℕ$, the morphism $γ_n$ is generically injective.
\end{claim}
\begin{proof}[Proof of Claim~\ref{claim:8-2a}]
  The very ample linear system $|A_{T ⨯ H_n}|$ separates points.  In particular,
  if $p_1 ∈ T ⨯ H_n$ is a general point with set-theoretic fibre
  \[
    (γ◦ η_n)^{-1}(γ◦ η_n)(p_1) = \{ p_1, …, p_{ℓ} \} ⊂ T ⨯ H_n,
  \]
  then there are sections
  $σ_2, …, σ_{ℓ} ∈ H⁰\bigl(T ⨯ H_n,\, 𝒪_{T ⨯ H_n}(A_{T ⨯ H_n}) \bigr)$ where
  $σ_i(p_1) = 0$ and $σ_i(p_i) ≠ 0$ for every $2 ≤ i ≤ ℓ$.  Since the
  algebraically closed base field $k$ is infinite, there exists a $k$-linear
  combination $σ ∈ H⁰\bigl(T ⨯ H_n,\, 𝒪_{T ⨯ H_n}(A_{T ⨯ H_n}) \bigr)$ that
  vanishes at $p_1$, but not at any of the remaining points $p_2, …, p_{ℓ}$.  If
  $C ∈ |A_{T ⨯ H_n}|$ is general among the elements of the linear system that
  contain $p_1$, then $C$ will not contain any of the points $p_2, …, p_{ℓ}$,
  and the $γ_n$ maps the curve $C$ generically injectively onto its image.  The
  claim follows.  \qedhere~(Claim~\ref{claim:8-2a})
\end{proof}

\begin{claim}\label{claim:8-2}
  Given any number $n ∈ ℕ$, the morphism $γ_n$ maps the curve $C_n$ birationally
  onto its image.
\end{claim}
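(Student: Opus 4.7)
The plan is to prove Claim~\ref{claim:8-2} by establishing two properties of $γ_n : C_n → Σ_n$ (where $Σ_n := γ_n(C_n) ⊂ X$): (i) generic injectivity on $k$-points, and (ii) separability as a finite morphism of curves. Together these force $[k(C_n) : k(Σ_n)] = 1$, giving the desired birationality onto the image.

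For (i), I would split into two cases based on a pair of distinct points $p_i = (t_i, h_{n,i}) ∈ C_n$ with $γ_n(p_1) = γ_n(p_2)$, equivalently $γ_{F_n(h_{n,1})}(t_1) = γ_{F_n(h_{n,2})}(t_2)$. In the case $h_{n,1} = h_{n,2}$, both points lie in a common fibre of $\overline{π}_{H_n}|_{C_n}$. By Assumption~\ref{ass:shrink}, the $\cC$-integral point $γ_{F_n(h_{n,1})}$ is birational onto $Σ_{T, F_n(h_{n,1})}$, so injective outside a proper closed subset $B_h ⊊ T$. The locus $\{(t, h_n) ∈ T ⨯ H_n : t ∈ B_{F_n(h_n)}\}$ has dimension at most one, so its intersection with $C_n$ is finite, giving distinct images in all but finitely many fibres. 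In the case $h_{n,1} ≠ h_{n,2}$, one has $F_n(h_{n,1}) ≠ F_n(h_{n,2})$ (Frobenius is bijective on $k$-points), and the common image lies in $Σ_{T,F_n(h_{n,1})} ∩ Σ_{T,F_n(h_{n,2})}$. Because $γ$ is dominant onto the surface $X$, the family $\{Σ_{T, h}\}_{h ∈ H}$ varies genuinely with $h$, and for generic distinct $h, h'$ the curves $Σ_{T, h}$ and $Σ_{T, h'}$ intersect in finitely many points. Therefore the incidence locus
$$
I := \{((t_1, h_{n,1}), (t_2, h_{n,2})) ∈ (T⨯H_n)^2 : h_{n,1} ≠ h_{n,2},\, γ_{F_n(h_{n,1})}(t_1) = γ_{F_n(h_{n,2})}(t_2)\}
$$
has dimension at most two in the four-dimensional ambient $(T⨯H_n)^2$. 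A standard incidence-variety argument in the very ample linear system $|A_{T⨯H_n}|$ then shows that for a general member $C_n$, the intersection $(C_n⨯C_n) ∩ I$ is zero-dimensional, which is the required generic injectivity.

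For (ii), I would compute the differential $dγ_n$ at a generic point $p = (t_0, h_{n,0}) ∈ C_n$ using the chain rule $dγ_n(p) = dγ(η_n(p)) ∘ dη_n(p) ∘ dι_n(p)$. Since $η_n = \Id_T ⨯ F_n$, its differential acts as the identity on the $T$-component and vanishes on the $H_n$-component. By item~\ref{il:g3}, the morphism $\overline{π}_T ∘ ι_n : C_n → T$ is separable, so $dι_n(p)$ has nonzero $T$-component for generic $p$; combined with the generic non-vanishing of $dγ_h$ (which follows from each $γ_h$ being birational onto its image, hence separable there), this yields $dγ_n(p) ≠ 0$ generically, establishing separability of $γ_n : C_n → Σ_n$.

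The main obstacle is the dimension count in the case $h_{n,1} ≠ h_{n,2}$ of step (i): one must verify that the incidence variety $I$ has dimension at most two and that a general $C_n$ in the very ample system $|A_{T⨯H_n}|$ achieves the expected transverse intersection with it. This ultimately rests on the curves $Σ_{T,h}$ varying effectively with $h$, itself a consequence of dominance of $γ$ onto the two-dimensional variety $X$.
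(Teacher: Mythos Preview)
Your proposal is correct and follows the same two-step strategy as the paper: establish generic injectivity of $γ_n$, then establish separability via the differential. Your separability argument in part (ii) is essentially identical to the paper's, which likewise factors $dγ_n$ through $dη_n$ (killing the $H_n$-direction) and then invokes separability of the projection $C_n → T$ together with separability of the individual $γ_h$.

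For part (i), the paper is considerably more terse: it simply notes that since $|A_{T⨯H_n}|$ is very ample and hence separates points, a general member $C_n$ is mapped generically injectively by $γ_n = (γ◦η_n)|_{C_n}$. This hides exactly the incidence-variety dimension count you spell out, so your argument is not different in substance, only in level of detail. Your case split according to whether $h_{n,1} = h_{n,2}$ is unnecessary, however: the map $γ◦η_n : T⨯H_n \dashrightarrow X$ is generically finite onto the surface $X$, so the off-diagonal part of $(T⨯H_n)⨯_X(T⨯H_n)$ already has dimension at most two without any case distinction, and the standard count (passing through a codimension-two condition in the linear system) gives zero-dimensional intersection with $C_n⨯C_n$ for general $C_n$ in one stroke. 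Dropping the case split would streamline your write-up to match the paper's.
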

\begin{proof}[Proof of Claim~\ref{claim:8-2}]
  In view of Claim~\ref{claim:8-2a}, it remains to show that the morphism
  $C_n → γ_n(C_n)$ is separable, or equivalently, that the pull-back map of
  differential forms, $d γ_n : γ_n^* Ω¹_X → Ω¹_{C_n}$, does not vanish
  identically.  The proof relies on two observations.

  \begin{itemize}
  \item First, recalling that the morphisms $γ'_h : T → X$ are assumed to be
    $\cC$-integral points over $S$ for all closed points $h ∈ H'$, the following
    composed morphism between invertible sheaves on
    $T⨯H ∖ (\text{indet.~locus of $γ$})$ does not vanish identically:
    $$
    \xymatrix@C=1.5cm{
      γ^* Ω¹_X \ar[r]^{dγ} & Ω¹_{T ⨯ H} \ar[r]^(.4){≅} & π_1^* Ω¹_T ⊕ π_2^* Ω¹_H \ar[r]^(.6){\text{projection}} & π_1^* Ω¹_T.
    }
    $$
    By general choice, the pull-back of this map to $C_n$, denoted by
    $α : γ_n^* Ω¹_X → ψ_n^* π_1^* Ω¹_T$, will then likewise not vanish.

  \item Second, recall from \ref{il:g3} that the map
    $\overline{π}_1◦ι_n : C_n → T$ is separable.  The composed morphism between
    invertibles on $C_n$,
    $$
    \xymatrix@C=1.5cm{ %
      ι_n^* \overline{π}_1^* Ω¹_T \ar[r]_(.4){\Id ⊕ 0} \ar@/^5mm/[rrr]^{β} &
      ι_n^* \overline{π}_1^* Ω¹_T ⊕ ι_n^* \overline{π}_2^* Ω¹_{H_n}
      \ar[r]_(.6){≅} & ι_n^* Ω¹_{T ⨯ H_n} \ar[r]_{dι_n} & Ω¹_{C_n} }
    $$
    will thus again not vanish identically.
  \end{itemize}
  With these two observations in place, the map $d γ_n$ can now be rewritten as
  $$
  \small \xymatrix@C=1.5cm{
    γ_n^* Ω¹_X \ar[r]_{ψ_n^* dγ} \ar@/^5mm/[rrr]^{dγ_n} \ar@{=}[d] & ψ_n^* Ω¹_{T ⨯ H} \ar[r]_{ι_n^* d η_n} \ar[d]^{≅} & ι_n^* Ω¹_{T⨯ H_n} \ar[r]_{dι_n} \ar[d]^{≅} & Ω¹_{C_n} \ar@{=}[d] \\
    γ_n^* Ω¹_X \ar[r]_(.37){α ⊕ \text{(other)}} & ψ_n^* π_1^* Ω¹_T ⊕ ψ_n^* π_2^*
    Ω¹_H \ar[r]_{\text{(Isom.)} ⊕ 0} & ι_n^* \overline{π}_1^* Ω¹_T ⊕ ι_n^*
    \overline{π}_2^* Ω¹_{H_n} \ar[r]_(.62){β + \text{(other)}} & Ω¹_{C_n}.  }
  $$
  Recalling that $ψ_n^* π_1^* Ω¹_T$, $ι_n^* π_1^* Ω¹_T$ and $Ω¹_{C_n}$ are
  invertible, so that any composition of non-vanishing morphisms is itself
  non-vanishing, a look at the bottom row will convince the reader.
  \qedhere~(Claim~\ref{claim:8-2})
\end{proof}

\begin{claim}\label{claim:8-3}
  For every sufficiently large number $n$, the curve $γ_n$ is a $\cC$-integral
  point for the family $φ : X → S$, in the standard sense of
  Definition~\ref{def:orbicurve}.
\end{claim}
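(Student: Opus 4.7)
The plan is to verify the three conditions of Definition~\ref{def:orbicurve} for $γ_n : C_n → X$. Condition~\ref{il:3-3-3} (birationality onto image) is the content of Claim~\ref{claim:8-2}. Condition~\ref{il:3-3-1} (domination of $S$) is immediate from the construction: since $A_{T⨯ H_n}$ is very ample, the curve $C_n$ meets every fibre of the projection $\overline{π}_1 : T⨯ H_n → T$, so $\overline{π}_1 ◦ ι_n$ is surjective, and composition with the dominant $ζ$ dominates $S$. We therefore focus on Condition~\ref{il:3-3-2}, namely that $γ_n°$ is a $\cC$-curve for $(X°, D°)$. By Assumption~\ref{ass:no log} every $\cC$-multiplicity $m_i$ is finite, so Definition~\ref{def:ccurve} asks only for $\Image γ_n ⊄ \supp D$ (clear from the generality of $C_n$) and for the multiplicity bound $γ_n^* D_i ≥ m_i·\supp γ_n^*D_i$ above $S°$, for every $i$.

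To control this multiplicity, extend $γ$ to a rational map on $T⨯ H$ and, away from its finite indeterminacy locus, write $B_i := γ^* D_i = \sum_j n_{ij} B_{ij}$ for distinct irreducible components $B_{ij}$ with positive multiplicities $n_{ij}$. The components fall into two kinds: those that dominate $H$ under $π_2$, and the ``vertical'' ones of the form $B_{ij} = T⨯\{h_0\}$. A vertical component forces $γ_{h_0}(T) ⊂ D_i ⊂ \supp D$, so by Assumption~\ref{ass:shrink} the corresponding $h_0$ lies in the finite set $H ∖ H'$. For a horizontal component $B_{ij}$, pick a general $h ∈ H'$: then $B_{ij}$ meets $T⨯\{h\}$ transversely at smooth points disjoint from all other components of $B_i$, and the multiplicity of $γ_h^* D_i$ at each such intersection point equals $n_{ij}$. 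Since $γ_h$ is a $\cC$-integral point, this forces $n_{ij} ≥ m_i$.

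It remains to compute $γ_n^* D_i = ι_n^* η_n^* B_i$ and exploit the Frobenius twist. For a vertical component $B_{ij} = T⨯\{h_0\}$, the purely inseparable morphism $F_n$ of degree $p^n$ satisfies $F_n^* \{h_0\} = p^n · \{h_0^{(n)}\}$, whence $η_n^* B_{ij} = p^n·(T⨯\{h_0^{(n)}\})$. For a horizontal $B_{ij}$, the universal homeomorphism $η_n$ has irreducible reduced preimage $B'_{ij} := η_n^{-1}(B_{ij})_{\red}$ and $η_n^* B_{ij} = m_{ij}^{(n)}·B'_{ij}$ for some integer $m_{ij}^{(n)} ≥ 1$. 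Using very-ampleness of $A_{T⨯ H_n}$, choose $C_n$ so that it meets each of the finitely many curves $B'_{ij}$ and $T⨯\{h_0^{(n)}\}$ transversely at smooth points, disjoint from all their mutual intersections. Then at any point of $\supp γ_n^*D_i$ above $S°$, the multiplicity of $γ_n^*D_i$ is either $n_{ij}·m_{ij}^{(n)} ≥ n_{ij} ≥ m_i$ (horizontal contribution) or $n_{ij}·p^n ≥ p^n$ (vertical contribution); the second bound is $≥ m_i$ as soon as $p^n ≥ \max_i m_i$. The claim therefore holds for all sufficiently large $n$. The main obstacle is the horizontal analysis: one must extract $n_{ij} ≥ m_i$ from the $\cC$-integrality of the generic family member before the Frobenius twist can take care of the finitely many vertical components, whose multiplicities get boosted by a factor of $p^n$.
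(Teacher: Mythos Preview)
Your decomposition into horizontal and vertical components is a reasonable route, but there is a real gap in positive characteristic. You claim that a horizontal $B_{ij}$ meets $T ⨯ \{h\}$ transversely for general $h$ and deduce $n_{ij} ≥ m_i$. Transversality here amounts to generic smoothness of the projection $B_{ij} → H$, which can fail: nothing prevents $B_{ij} → H$ from being inseparable of some degree $p^{e_{ij}} > 1$, in which case the local intersection multiplicity at each point of $B_{ij} ∩ (T ⨯ \{h\})$ is $p^{e_{ij}}$ rather than $1$, and $\cC$-integrality of $γ_h$ only yields $n_{ij} · p^{e_{ij}} ≥ m_i$. (Toy model in characteristic $2$: with a single boundary component of $\cC$-multiplicity $2$ and $γ^* D_1 = \{t² = h\}$ reduced, one gets $n_{11} = 1 < 2 = m_1$.) Your final chain $n_{ij}·m_{ij}^{(n)} ≥ n_{ij} ≥ m_i$ then breaks at the second inequality. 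The repair stays within your framework: that same inseparability forces $m_{ij}^{(n)} = p^{\min(e_{ij}, n)}$, so once $n ≥ \max_{i,j} e_{ij}$ one recovers $n_{ij}·m_{ij}^{(n)} = n_{ij}·p^{e_{ij}} ≥ m_i$.

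The paper sidesteps the issue with a purely local argument and no component bookkeeping. At $x ∈ C°_n$ sitting over $(a, b) ∈ T ⨯ H$ via $η_n \circ ι_n$, with uniformisers $t, h$ at $a$ and $b$, it checks $𝒥_{γ^* D} ⊆ (h, t^m)$ in the local ring at $(a, b)$; equivalently, the restriction of $γ^* D$ to $T ⨯ \{b\}$ has multiplicity $≥ m$ at $a$, which follows from $\cC$-integrality of $γ_b$ when that holds, and from Assumption~\ref{ass:shrink} (giving $T ⨯ \{b\} ⊆ \supp γ^* D$) otherwise. Then $(η_n \circ ι_n)^* h$ has order $≥ p^n$ at $x$ and $(η_n \circ ι_n)^* t^m$ has order $≥ m$, so $p^n ≥ \max_i m_i$ suffices. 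This is shorter and never invokes transversality; your approach is more geometric and, once patched as above, reaches the same conclusion.
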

\begin{proof}[Proof of Claim~\ref{claim:8-3}]
  Recall from Assumption~\ref{ass:no log} that no component of the divisor $D$
  has $\cC$-multiplicity equal to $∞$.  We will show the morphisms $γ_n$ are
  $\cC$-integral points whenever
  \begin{equation}\label{eq:izi}
    p^n ≥ \max \{ m_i\:|\: 1 ≤ i ≤ d \}.
  \end{equation}
  Assuming that one such $n$ is given, we need to show that $γ_n$ satisfies the
  conditions spelled out in Definition~\vref{def:orbicurve}.
  Condition~\ref{il:3-3-1} (``the curve $C_n$ dominates $S$'') is clear by
  construction, and Condition~\ref{il:3-3-3} (``the morphism $γ_n$ is birational
  onto is image'') has been verified in Claim~\ref{claim:8-2} above.  It remains
  to verify Condition~\ref{il:3-3-2}: the obvious restriction of $γ$ to the
  preimages of $S°$, which we write as $γ°_n : C°_n → X°$, is a $\cC$-curve for
  the pair $(X°, D°)$.

  To this end, let $x ∈ C°_n$ be any closed point such that $γ_n(x)$ belongs to
  the support of $D°$.  Using the assumption that $D°$ is relatively snc over
  $S°$, observe that the point $γ_n(x)$ is then contained in exactly one
  component of $D$.  Write $m$ for the $\cC$-multiplicity of that component and
  recall from Assumption~\ref{ass:no log} that $m < ∞$.  To show
  Condition~\ref{il:3-3-2}, we need to show that the multiplicity of $D$ at the
  point $x$ is at least $m$, that is, $\mult_x γ_n^*D ≥ m$.  This will be done by
  means of a local computation.  Write $(a, b) := (η_n◦ι_n)(x) ∈ T⨯H$ and choose
  uniformising parameters $t ∈ 𝒪_{T, a}$ and $h ∈ 𝒪_{H, b}$.  Abusing notation,
  we use the symbols $t$ and $h$ to also denote the associated elements of the
  local ring $𝒪_{T ⨯ H, (a, b)}$.  We claim that the following inclusion of
  ideals holds true:
  \begin{equation}\label{eq:gf}
    𝒥_{γ^* D} ⊂ 𝒥_{γ^* D \: ∩ \: (T⨯\{b\})} \overset{!}{⊆} (h, t^m) \quad \text{in } 𝒪_{T ⨯ H, (a, b)}.
  \end{equation}
  In fact, if $b ∈ H'$ and if $γ_n$ is not $\cC$-integral, then
  Inclusion~\eqref{eq:gf} follows from the assumption that $γ_b: T → X$ is a
  $\cC$-integral point.  If on the other hand $b ∉ H'$, then
  Inclusion~\eqref{eq:gf} follows from Assumption~\ref{ass:shrink}, which
  asserts that $(T⨯\{b\}) ⊆ \supp γ^* D$.  But now we are done: observing that
  $$
  \ord_x (η_n◦ι_n)^* h \:≥\: p^n \overset{\text{\eqref{eq:izi}}}{≥} m \qquad
  \text{and} \qquad \ord_x (η_n◦ι_n)^* t^m \:≥\: m,
  $$
  the claim follows directly from \eqref{eq:gf}.
  \qedhere~(Claim~\ref{claim:8-3})
\end{proof}

\subsection*{Step 4.  Conclusion}
\approvals{Arne & yes \\ Jorge & yes \\ Stefan & yes}

We conclude by showing that the family $γ_n$ of $\cC$-integral points has
bounded discriminant but unbounded height, contradicting
Theorem~\ref{thm:height1a}.

\begin{claim}\label{claim:8-1}
  The discriminants of the $(γ_n)_{n ∈ ℕ}$ are constant.
\end{claim}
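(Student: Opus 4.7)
The plan is to compute $\delta(\gamma_n) = \deg \omega_{C_n}/[C_n:S]$ directly and observe that every ingredient is independent of $n$. The denominator has already been dealt with in Step~2, Item~\ref{il:g4}, where we noted that $[C_n:S] = \deg_H A_H \cdot [T:S]$; so the entire task reduces to showing that $\deg \omega_{C_n}$ does not depend on $n$.

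First I would invoke adjunction on the smooth projective surface $T \times H_n$ to write
\[
\deg \omega_{C_n} \;=\; \bigl( K_{T\times H_n} + C_n\bigr)\cdot C_n,
\]
and then expand $K_{T\times H_n} = \overline{\pi}_1^* K_T + \overline{\pi}_2^* K_{H_n}$ and $C_n \sim \overline{\pi}_1^* A_T + \overline{\pi}_2^* A_{H_n}$. Since the intersection theory on a product of curves is governed entirely by fibre degrees, the resulting number depends only on $\deg_T K_T$, $\deg_T A_T$, $\deg_{H_n} K_{H_n}$ and $\deg_{H_n} A_{H_n}$.

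Next I would observe that each of these four degrees is independent of~$n$. For $\deg_T K_T$ and $\deg_T A_T$ this is tautological. For $\deg_{H_n} A_{H_n}$ it was verified in Step~2: the definition $A_{H_n} := p^{-n}F_n^* A_H$ was chosen precisely so that $\deg_{H_n} A_{H_n} = \deg_H A_H$. Finally, the $n$-th iterated $k$-linear Frobenius $F_n : H_n \to H$ is a purely inseparable $k$-morphism of smooth projective curves with $g(H_n)=g(H)$ (cf.~\cite[IV, Prop.~2.5]{Ha77}), so $\deg_{H_n} K_{H_n} = 2g(H_n)-2 = 2g(H)-2$, again independent of $n$.

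Combining these observations gives a numerical identity of the form
\[
\deg \omega_{C_n} \;=\; \deg_T A_T \cdot \deg_H K_H \;+\; \deg_H A_H \cdot \deg_T K_T \;+\; \deg_T A_T \cdot \deg_H A_H \cdot (\text{const}),
\]
whose right-hand side is visibly independent of~$n$. Dividing by the constant $[C_n:S]$ yields a constant value of $\delta(\gamma_n)$, which is exactly the content of the claim. There is no real obstacle here: the only delicate point is the normalisation $A_{H_n} = p^{-n}F_n^* A_H$, which was built into the construction precisely so that this computation goes through.
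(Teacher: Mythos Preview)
Your proof is correct and follows essentially the same approach as the paper: both compute $\deg \omega_{C_n}$ via adjunction on $T\times H_n$, express $[C_n]$ and $[K_{T\times H_n}]$ in terms of the fibre classes, and use $g(H_n)=g(H)$ together with $\deg_{H_n}A_{H_n}=\deg_H A_H$ to see that all ingredients are independent of~$n$, then invoke Item~\ref{il:g4} for the denominator. Your write-up is in fact a touch more explicit than the paper's, but the argument is the same.
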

\begin{proof}[Proof of Claim~\ref{claim:8-1}]
  Given $n ∈ ℕ$, choose closed points $t ∈ T(k)$ and $h_n ∈ H_n(k)$, with fibres
  $F_t$ and $F_{h_n}$ in $T⨯H_n$.  We have equalities of numerical classes,
  \begin{align*}
    [C_n] & \equiv_{\num} (\deg_T A_T)·[F_t] + \deg_H A_H·[F_{h_n}] \\
    [K_{T⨯H_n}] & \equiv_{\num} (2·g(T)-2)·[F_t] + (2·g(H)-2)·[F_{h_n}] \\
    \intertext{and therefore}
    \deg_{C_n} ω_{C_n} & = [C_n]·\bigl[C_n + K_{T ⨯ H_n} \bigr] = \const^+ ∈ ℕ^+.
  \end{align*}
  Item~\ref{il:g4} will thus conclude the proof.
  \qedhere~(Claim~\ref{claim:8-1})
\end{proof}

\begin{claim}\label{claim:8-4}
  The heights of the $(γ_n)_{n ∈ ℕ}$ are unbounded.  More precisely, we
  have
  $$
  \lim_{n → ∞} h(γ_n) = \lim_{n → ∞} \frac{\deg_{C_n} γ_n^*
    (K_{X/S}+D)}{[C_n:S]} = ∞.
  $$
\end{claim}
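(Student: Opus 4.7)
The strategy is to decompose $L := \gamma^*(K_{X/S}+D)$ numerically on the surface $T\times H$ and track how the iterated Frobenius $\eta_n = \Id_T \times F_n$ multiplies its ``horizontal'' component by $p^n$.  Since the indeterminacy locus of the rational map $\gamma$ is zero-dimensional in the surface $T\times H$, it contributes nothing to intersection numbers, so we may regard $L$ as a $\bQ$-divisor with a well-defined numerical class
$$
[L] \;\equiv_{\num}\; \alpha \cdot [\{t\}\times H] \;+\; \beta \cdot [T\times\{h\}],
\qquad \alpha,\beta \in \bQ.
$$

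The crux of the argument is to show $\beta > 0$.  By definition, $\beta = L\cdot [\{t\}\times H]$ is the $H$-degree of $\gamma_t^*(K_{X/S}+D)$ for generic $t\in T(k)$, where $\gamma_t\colon H \to X$ is the restriction of $\gamma$ to $\{t\}\times H$.  Because every $\gamma'_h$ is an $S$-morphism, the image of $\gamma_t$ is forced to lie in the single fibre $X_{\zeta(t)}$, which for generic $t$ is a smooth, irreducible curve (Setting~\ref{setting:CKim}).  The dominance of $\gamma$ (cf.\ Notation~\ref{n:diagram}) rules out $\gamma_t$ being constant, so it factors as a finite cover $\tilde\gamma_t\colon H \to X_{\zeta(t)}$.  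Invoking Assumption~\ref{il:3-6-1a} we therefore obtain
$$
\beta \;=\; \deg(\tilde\gamma_t)\cdot \deg_{X_{\zeta(t)}}\!\bigl(K_{X_{\zeta(t)}} + D_{\zeta(t)}\bigr) \;>\; 0.
$$

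The remainder is bookkeeping.  Since $F_n$ is purely inseparable of degree $p^n$ and $\eta_n$ is the identity in the $T$-direction, $\eta_n^*[\{t\}\times H] = [\{t\}\times H_n]$ while $\eta_n^*[T\times\{h\}] = p^n \cdot [T\times\{h_n\}]$.  Combining this with the decomposition $[C_n] = (\deg_T A_T)\cdot[\{t\}\times H_n] + (\deg_H A_H)\cdot[T\times\{h_n\}]$ recorded in the proof of Claim~\ref{claim:8-1}, the projection formula yields
$$
\deg_{C_n} \gamma_n^*(K_{X/S}+D) \;=\; [C_n]\cdot \eta_n^*[L] \;=\; \alpha\cdot \deg_H A_H \;+\; \beta\, p^n \cdot \deg_T A_T.
$$
Dividing by the constant $[C_n:S] = (\deg_H A_H)\cdot [T:S]$ from item~\ref{il:g4} exhibits $h(\gamma_n)$ as an affine function of $p^n$ with strictly positive leading coefficient $\beta\cdot \deg_T A_T / \bigl((\deg_H A_H)\cdot [T:S]\bigr)$, which tends to $+\infty$ as $n \to \infty$.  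The only subtle step is the positivity of $\beta$, which ultimately rests on dominance of $\gamma$ preventing the restrictions $\gamma_t$ from collapsing to points inside the fibres of $\varphi$; everything else is formal intersection theory on a product surface, and the (finitely many) indeterminacy points of $\gamma$ never enter the calculation.
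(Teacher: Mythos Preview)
Your proof is correct and follows the same approach as the paper's: decompose $\gamma^*(K_{X/S}+D)$ numerically on $T\times H$, observe that Frobenius scales the horizontal component by $p^n$, and intersect with $[C_n]$. The paper states the positivity of $b$ (your $\beta$) in a single line by citing Assumption~\ref{il:3-6-1a}; your unpacking of this step via the restriction $\gamma_t\colon H \to X_{\zeta(t)}$ and the dominance of $\gamma$ is the intended argument, just made explicit.
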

\begin{proof}[Proof of Claim~\ref{claim:8-4}]
  As before, choose $t ∈ T(k)$ and $h ∈ H(k)$, with fibres $F_t$ and $F_h$ in
  $T⨯H$ and write
  $$
  \bigl[γ^* (K_{X/S}+D)\bigr] \equiv_{\num} a·[F_t] + b·[F_h]
  $$
  where $a$ and $b$ are rational numbers.  Assumption~\ref{il:3-6-1a} guarantees
  that $b > 0$.  But then,
  \begin{align*}
    \deg_{C_n} γ_n^* (K_{X/S}+D) & = [A_{T ⨯ H_n}]·\bigl[η_n^*γ^*(K_{X/S}+D)\bigr] \\
    & = (\deg_T A_T)·p^n·b + (\deg_H A_H)·a
  \end{align*}
  As before, Item~\ref{il:g4} concludes the proof.
  \qedhere~(Claim~\ref{claim:8-4})
\end{proof}

To sum up, assuming that there exists a positive family of $\cC$-integral
points, we found a sequence of $\cC$-integral points of bounded discriminant but
unbounded height.  This contradicts the height bound found in
Theorem~\ref{thm:height1a} above and therefore ends the proof of
Theorem~\ref{thm:rigidity} in the last remaining case, when the characteristic
of the base field is positive.  \qed

%
% Do not edit the following line.  The text is automatically updated by
% subversion.
%
\svnid{$Id: 10-mordell.tex 599 2021-08-23 12:06:26Z kebekus $}

\section{The Mordell conjecture for $\cC$-integral points --- proof of Theorem~\ref*{thm:Mordell}}
\subversionInfo
\label{sec:orbimordell}
\approvals{Arne & yes \\ Jorge & yes \\ Stefan & yes}

With Theorems~\ref{thm:height1a} (``Height bound'') and \ref{thm:rigidity}
(``Rigidity'') at our disposal, Theorem~\ref{thm:Mordell} follows quickly,
adapting standard arguments to our setting.  We argue by contradiction:
maintaining the setting and assumptions of Theorem~\ref{thm:Mordell}, we assume
that there exists a smooth $k$-curve $T$ over $S$ and an infinite number of
$\cC$-integral points $γ ∈ \Hom_S(T, X)(k)$.

\subsection{Step 1: the set of $\cC$-integral points}
\approvals{Arne & --- \\ Jorge & yes \\ Stefan & yes}

The following claim asserts that the set of $\cC$-integral points,
$$
\Hom_S(T, X, \cC) := \bigl\{ γ ∈ \Hom_S(T, X)(k) \:|\: γ \text{ is
  $\cC$-integral} \bigr\},
$$
is locally closed.  The proof uses little but the definition of
``$\cC$-integral'' and the standard fact that effective Cartier divisors on the
smooth curve $T$ are parameterised by the Hilbert scheme, as discussed in
\cite[\href{https://stacks.math.columbia.edu/tag/0B9C}{Sect.~0B9C}]{stacks-project}
or \cite[I.~Thm.~1.13]{K96}.

\begin{claim}\label{claim:9-1}
  The set $\Hom_S(T, X, \cC)$ is a locally closed subset of
  $\Hom_S(T, X)(k)$.
\end{claim}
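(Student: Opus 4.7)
The strategy is to write each of the three conditions in Definition~\ref{def:orbicurve} as a locally closed (or open, or closed) condition on $\Hom_S(T,X)$ and intersect. Throughout, I would work with the universal morphism $\Gamma : T ⨯_k \Hom_S(T,X) → X$. A useful preliminary observation is that, by the assumption that $T$ is a curve over $S$ and that all morphisms under consideration are $S$-morphisms, the open subscheme $T° := ζ^{-1}(S°) ⊂ T$ does not depend on the individual $\cC$-integral point $γ$; in particular, Condition~\ref{il:3-3-1} of Definition~\ref{def:orbicurve} is automatic on $\Hom_S(T,X)$.

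Next, Condition~\ref{il:3-3-3} (``$γ$ is birational onto its image'') is locally closed. Since $γ$ is an $S$-morphism over the dominant $ζ$, the image $γ(T) ⊂ X$ is automatically a curve. Birationality onto image is then equivalent to generic injectivity \emph{and} separability of $γ$. Separability, i.e.\ $dγ \ne 0$, is an open condition on $\Hom_S(T,X)$. Generic injectivity is locally closed, as can be seen from semicontinuity of fibre dimension applied to the projection to $\Hom_S(T,X)$ of the closed incidence scheme $(Γ ⨯_X Γ) ⊂ T ⨯ T ⨯ \Hom_S(T,X)$.

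The main content lies in Condition~\ref{il:3-3-2}. First, for each $i$, the locus where $γ(T) ⊆ D_i$ is closed, so we may restrict to the open subset of $\Hom_S(T,X)$ where $γ$ does not factor through any component of $\supp D$. Over this open subset, each pull-back divisor $γ^*D_i$ is a well-defined effective Cartier divisor on $T$ whose degree is locally constant, and by standard flatness arguments $γ^*D_i$ defines a morphism from each stratum of constant degree $d_i$ to the symmetric product $T^{(d_i)}$; see \cite[\href{https://stacks.math.columbia.edu/tag/0B9C}{Sect.~0B9C}]{stacks-project} or \cite[I.~Thm.~1.13]{K96}. The $\cC$-curve condition then translates, separately for each $i$, into a condition on $T^{(d_i)}$:
\begin{itemize}
\item for $i ∈ \log$, the condition is that $γ^*D_i$ be supported on the \emph{fixed} finite set $T ∖ T°$, which defines a closed subscheme of $T^{(d_i)}$;
\item for $i ∈ \fract$, the condition is that at every point of $T°$, the multiplicity of $γ^*D_i$ lie in $\{0\} ∪ [m_i,∞)$.
\end{itemize}
For the second item, I would verify that the subset of $T^{(d_i)}$ parametrising divisors all of whose multiplicities at points of $T°$ avoid the forbidden range $\{1,\dots,m_i-1\}$ is locally closed. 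This follows by applying upper semicontinuity of multiplicity to the universal divisor inside $T ⨯ T^{(d_i)}$: the incidence locus where some point of $T°$ has multiplicity in the forbidden range is constructible, and after stratifying by the total multiplicity profile (there are only finitely many possibilities) each stratum is locally closed, with the desired subset realised as a finite union of such strata.

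Intersecting the finitely many locally closed conditions obtained above within each stratum of constant degrees $(d_i)$, and then taking the union over the (at most countably many, but for fixed $T$ only finitely many relevant) degree vectors, produces the desired locally closed description of $\Hom_S(T,X,\cC)$. The main obstacle is verifying that the multiplicity condition in $T^{(d_i)}$ is locally closed rather than merely constructible; this is the point where one must combine the semicontinuity of multiplicity with a careful stratification of the symmetric product.
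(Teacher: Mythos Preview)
Your overall approach matches the paper's: reduce the $\cC$-condition to a condition on divisors via the classifying map to $\Hilb^{d_i}_{T/k}$ (equivalently, the symmetric product). The paper treats only Condition~\ref{il:3-3-2}, tacitly regarding \ref{il:3-3-1} and \ref{il:3-3-3} as standard; your extra care there is harmless.

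The substantive difference is in how you handle the multiplicity constraint in the fractional case, which you correctly flag as the delicate point. Your semicontinuity/stratification argument only yields constructibility a priori, and you would still owe an argument that the resulting set is genuinely locally closed rather than merely constructible. The paper sidesteps this entirely by proving something stronger: the $\cC$-locus is \emph{closed} inside the open set $H° = \{γ : \Image(γ) \not\subset \supp D\}$. The trick is to parametrise the admissible divisors explicitly. Write $T \setminus T° = \{t_1,\dots,t_a\}$; then an effective divisor of degree $b$ on $T$ satisfies the $\cC$-condition for $D_i$ iff it is of the form $\sum_{j=1}^{a+b} n_j·p_j$ with $p_1 = t_1,\dots,p_a = t_a$ fixed, $p_{a+1},\dots$ arbitrary, and each $n_j$ with $j>a$ either zero or at least $m_i$. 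For each of the finitely many admissible tuples $(n_j)$ summing to $b$, this defines a proper morphism $T^{a+b} → \Hilb^b_{T/k}$, and the union of the (closed) images is exactly the set of admissible divisors. Pulling back along $ν : H° → \Hilb^b_{T/k}$ gives closedness in $H°$ directly, with no appeal to stratification or semicontinuity.
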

\begin{proof}[Proof of Claim~\ref{claim:9-1}]
  Considering one component of $D$ at a time, we may assume without loss of
  generality that the divisor $D ∈ \Div(X)$ is irreducible, so
  $D = \frac{m_1-1}{m_1}·D_1$ with $m_1 ∈ ℕ^+∪ \{∞\}$.  Write
  $T := T° ∪ \{t_1, …, t_a \}$.  Now, given any component
  $H ⊆ \Hom_S(T, X)_{\red}$, we need to show that the set
  $$
  H_{\cC} := \{ γ ∈ H(k) \:|\: γ \text{ is $\cC$-integral} \}
  $$
  is locally closed in $H(k)$.  To begin, remark that
  $$
  H_{\cC} ⊆ \{ γ ∈ H \:|\: \Image(γ) ⊄ \supp D \} := H°,
  $$
  where $H° ⊆ H$ is open.  We will show that $H_{\cC}$ is a closed subset of
  $H°(k)$.

  Next, choose any morphism $γ ∈ H(k)$ and set $b_H := \deg_T γ^* D_1$.  This
  number is independent of the choice of $γ$.  Denoting the universal morphism
  by $u° : H°⨯T → X$, the pull-back $(u°)^* D_1$ is a relative effective Cartier
  divisor for the family $H°⨯T → T$; we refer the reader to
  \cite[\href{https://stacks.math.columbia.edu/tag/056P}{Section
    056P}]{stacks-project} for the definition of ``relative effective Cartier''
  and to
  \cite[\href{https://stacks.math.columbia.edu/tag/062Y}{Lem.~062Y}]{stacks-project}
  for the criterion used here.  By
  \cite[\href{https://stacks.math.columbia.edu/tag/0B9C}{Section
    0B9C}]{stacks-project}, this divisor yields a morphism
  $ν : H° → \Hilb^{b_H}_{T/k}$.  We aim to describe $H_{\cC}$ in terms of this
  morphism.  To this end, we consider sequences of numbers as follows,
  \begin{multline*}
    M := \Bigl\{ (n_1, …, n_{a+b_H}) ∈ ℕ^{a+b_H} \:\Bigl|\: \sum_j n_j = b_H \text{ and for all $i>a$, } \\
    \text{we have either $n_i = 0$ or $n_i > m_1$ } \Bigr\}.
  \end{multline*}
  Here, the inequality $n_i > m_1$ is understood to be never satisfied when
  $m_1 = ∞$.  If any sequence $\vec{n} = (n_1, …, n_{a+b_H}) ∈ M$ is given, we
  consider the relative effective Cartier divisor on
  $π_2 ⨯ ⋯ ⨯ π_{a+b_H+1} : T^{⨯ (a+b_H+1)} → T^{⨯ (a+b_H)}$ given as
  $$
  D_{\vec{n}} := \sum_{i=1}^{a+b_H} n_i·\Bigl[\bigl\{ (t_0, …, t_{a+b_H}) ∈ T^{⨯
    (a+b_H+1)} \:|\: t_0 = t_i \bigr\}\Bigr].
  $$
  As before, $D_{\vec{n}}$ defines a morphism
  $u_{\vec{n}} : T^{⨯ (a+b_H)} → \Hilb^{b_H}_{T/k}$.  The morphism $u_{\vec{n}}$
  is proper; its image is therefore closed and then so is the union
  $$
  \cC\Hilb^{b_H}_{T/k} := \bigcup_{\vec{n} ∈ M} \Image(u_{\vec{n}}).
  $$
  Returning to the original problem of describing $H_{\cC}$, it follows
  immediately from the construction that a point $γ ∈ H(k)$ is in $H_{\cC}$ if
  and only if $ν(γ) ∈ \cC\Hilb^{b_H}_{T/k}$.  The set $H_{\cC}$ is therefore
  closed in $H°(k)$.~\qedhere~(Claim~\ref{claim:9-1})
\end{proof}

\subsection{Step 2: boundedness, end of proof}
\approvals{Arne & yes \\ Jorge & yes \\ Stefan & yes}

Theorem~\ref{thm:height1a} implies that the height of all $\cC$-integral points
$γ : T → X$ is bounded:
$$
∃ \const^+ : \forall γ ∈ \Hom_S(T, X, \cC) : \deg_T γ^*(K_{X/S}+D) <
\const^+.
$$
Using that $K_{X/S}+D$ is relatively ample and that $\Hom_S(T, X)$ is an open
subscheme of $\Hilb_{T⨯_SX/S}$, we find finitely many irreducible components of
$\Hom_S(T, X)$ that contain all of $\Hom_S(T, X, \cC)$.  In particular, there
exists one irreducible component of $\Hom_S(T, X, \cC)$ that contains infinitely
many points and must therefore be of positive dimension.  Eventually, this
allows us to find a quasi-projective curve $C° ⊆ \Hom_S(T, X)$ and an $S$-morphism
$γ° : C° ⨯ T → X$ such that the morphisms $γ°_c : T → X$ are $\cC$-integral, for
all $c ∈ C°(k)$.  Theorem~\ref{thm:rigidity}, however, asserts that no such
morphism can possibly exist.  We obtain a contradiction, which ends the proof of
Theorem~\ref{thm:Mordell}.  \qed

\part{Insufficiency of the Brauer--Manin obstruction for a simply connected fourfold}
%
% Do not edit the following line.  The text is automatically updated by
% subversion.
%
\svnid{$Id: 11-fibrations.tex 599 2021-08-23 12:06:26Z kebekus $}

\section{Fibrations of general type --- proof of Theorem~\ref*{maintheorem2}}
\label{section:fibrations}
\subversionInfo
\approvals{Arne & yes \\ Jorge & yes\\ Stefan & yes}

The goal of this section is to prove Theorem~\ref{maintheorem2}.  The case of
number fields has been treated by Campana in \cite[§5]{Cam05} conditionally on
the $abc$ conjecture.  Our construction proceeds along similar lines -- we use
our orbifold Mordell-type theorem in positive characteristic as a substitute for
the $abc$ conjecture, together with a delicate construction of genus-two
fibrations on certain simply connected surfaces with orbifold base of general
type, due to Stoppino \cite{Stoppino}.  Her construction is simpler than the one
used by Campana in \cite[§5]{Cam05}, and therefore has the advantage of being
more easily transportable to positive characteristic.

\subsection{The orbifold base}
\approvals{Arne & yes \\ Jorge & yes \\ Stefan & yes}

For fibrations between algebraic varieties over the complex numbers, Campana
defines the notion of \emph{orbifold base} in \cite[§1.1.4, §1.2.1]{Cam04}.  Let
us recall the definition.

\begin{defn}[Orbifold base]\label{definition:orbifoldbase}
  Let $k$ be an arbitrary field and let $π: Y → X$ be a surjective morphism of
  smooth, quasi-projective $k$-varieties.  Assume that $π$ is smooth over an
  open subset of $X$.  For each $P$ in $X^{(1)}$, the set of points of
  codimension $1$ on $X$, we define $m_P$ as the \emph{minimum} of the set of
  multiplicities of the irreducible components of the fibre $π^{-1}(P)$.  We
  then set
  $$
  Δ_π := \sum_{P ∈ X^{(1)}} \Bigl( 1-\frac{1}{m_P}\Bigr)·\overline{P}.
  $$
  where $\overline{P}$ denotes the Zariski closure of $P$.  We refer to
  $(X,Δ_π)$ as the \emph{orbifold base of $π$}.
\end{defn}

\begin{rem}
  The support of $Δ_π$ in Definition~\ref{definition:orbifoldbase} need not be SNC.
\end{rem}

\begin{rem}
  Definition~\ref{definition:orbifoldbase} considers only codimension-one points
  of $X$ and ignores the behaviour of $f$ over points of higher codimension.
  While more elaborate definitions of ``orbifold base'' have been suggested, the
  simple version described above is good enough for our purposes.
\end{rem}

\begin{lem}[Orbifold base and $\cC$-curves]\label{lem:orbibase}
  In the setting of Definition~\ref{definition:orbifoldbase}, assume that $k$ is
  algebraically closed.  Let $D ≤ Δ_π$ be any $ℚ$-divisor on $X$, such that
  $(X, D)$ forms an snc $\cC$-pair.  If $T$ is a smooth, quasi-projective
  $k$-curve equipped with a morphism $γ : T → Y$, then either
  $\Image (π◦γ) ⊂ \supp D$, or $π◦γ : T → X$ is a $\cC$-curve for the $\cC$-pair
  $(X,D)$.
\end{lem}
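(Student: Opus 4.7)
The plan is to unpack the definition of the orbifold base and verify directly that $\pi \circ \gamma : T \to X$ satisfies the two conditions of Definition~\ref{def:ccurve}, under the standing assumption $\Image(\pi \circ \gamma) \not\subset \supp D$. Write $D = \sum_i \delta_i D_i$ with $\delta_i = 1 - 1/m_i'$ and $D_i = \overline{P_i}$ a codimension-one component of $\supp D$. The hypothesis $D \leq \Delta_\pi$ translates literally into the inequality $m_i' \leq m_{P_i}$ for every $i$, where $m_{P_i}$ denotes the minimum multiplicity of an irreducible component of $\pi^{-1}(P_i)$.

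The core intermediate step is a divisorial inequality on $Y$:
\[
\pi^* D_i \geq m_{P_i} \cdot \supp \pi^* D_i,
\]
with the convention that $\pi^* D_i = 0$ whenever $m_{P_i} = \infty$. This follows from the standard bijection between irreducible components of the generic fibre $\pi^{-1}(P_i)$ and prime divisors $E_{i,j} \subset Y$ dominating $D_i$: under this bijection, the multiplicity $e_{i,j}$ of $E_{i,j}$ in the Cartier pullback $\pi^* D_i = \sum_j e_{i,j} E_{i,j}$ equals the multiplicity of the corresponding component of $\pi^{-1}(P_i)$, so every $e_{i,j} \geq m_{P_i}$. When $m_{P_i} = \infty$, no such $E_{i,j}$ exists, $D_i$ is not dominated in codimension one, and the Cartier pullback vanishes.

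To conclude, pull this inequality back along $\gamma$. The hypothesis $\Image(\pi \circ \gamma) \not\subset \supp D \supset D_i$ ensures that $(\pi \circ \gamma)^* D_i = \gamma^* \pi^* D_i$ is a well-defined effective divisor on $T$ for each $i$. A multiplicity computation at each closed point $t \in T$, using that $\mult_t \gamma^* E_{i,j} \geq 1$ whenever $\gamma(t) \in E_{i,j}$, yields
\[
(\pi \circ \gamma)^* D_i \geq m_{P_i} \cdot \supp (\pi \circ \gamma)^* D_i \geq m_i' \cdot \supp (\pi \circ \gamma)^* D_i,
\]
which is condition~(2) of Definition~\ref{def:ccurve}. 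Condition~(1) follows analogously: if $m_i' = \infty$ then $\delta_i = 1$ forces $m_{P_i} = \infty$, whence $(\pi \circ \gamma)^* D_i = 0$ by the convention above.

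Since the statement is essentially a formal unpacking of the definitions of orbifold base and $\cC$-curve, I foresee no serious obstacle. The only subtlety worth isolating is the reading of the case $m_{P_i} = \infty$: it must be interpreted as ``no prime divisor of $Y$ dominates $D_i$'', which makes the Cartier pullback $\pi^* D_i$ automatically zero and handles the logarithmic part of the $\cC$-pair uniformly with the fractional part.
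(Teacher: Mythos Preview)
Your approach is the same as the paper's: reduce to a single component $D_i$ and bound $\mult_t\,\gamma^*(\pi^* D_i)$ from below via the multiplicities occurring in $\pi^* D_i$. The paper does this in one line; you spell out the intermediate inequality $\pi^* D_i \geq m_{P_i}\cdot\supp(\pi^* D_i)$ on $Y$ before pulling back along~$\gamma$.

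That intermediate inequality is where a subtlety hides, and it affects both arguments. The number $m_{P_i}$ is defined via the components of the \emph{generic fibre} $\pi^{-1}(P_i)$, that is, the prime divisors in $\pi^* D_i$ dominating $D_i$. When $\dim X \geq 2$, the Cartier divisor $\pi^* D_i$ may also have ``vertical'' components contracted to a proper closed subset of $D_i$, and their multiplicities are not constrained by $m_{P_i}$. For instance, take $X = \bP^2$, $D_i$ a line, and $Y$ the minimal resolution of the double cover branched along $D_i$ plus a transversal cubic: then $m_{P_i} = 2$, yet each exceptional $(-2)$-curve over a node appears in $\pi^* D_i$ with multiplicity~$1$, and a curve $\gamma$ crossing one of them transversally gives a $\pi\circ\gamma$ which is \emph{not} a $\cC$-curve for $D = \tfrac{1}{2}D_i$. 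So your divisorial inequality, and the paper's ``immediate from the definition of orbifold base'', both break here. The only case the paper actually uses is $\dim X = 1$ (Step~2c of the proof of Theorem~\ref{maintheorem2}), where every component of $\pi^* D_i$ trivially dominates the closed point $D_i$ and the argument is complete. Separately, your handling of $m_{P_i} = \infty$ is off target: by definition $m_P$ is the minimum of a non-empty set of positive integers, hence always finite, so $D \leq \Delta_\pi$ forces every $m'_i < \infty$ and the first condition of Definition~\ref{def:ccurve} is vacuous here.
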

\begin{proof}
  If suffices to consider the case where the support of $D$ is irreducible.  Let
  us assume that $\Image(π◦γ) ⊄ \supp D$, and let $t ∈ T(k)$ be such that
  $(π◦γ)(t) ∈ \supp D$.  To show that $π◦γ$ is a $\cC$-curve, we need to check
  that
  $$
  \mult_t (π◦γ)^*⌈D⌉ ≥ \min(\text{multiplicities of irreducible components of
  }π^*⌈D⌉).
  $$
  This is however immediate from the definition of orbifold base once we observe
  that
  $$
  \mult_t (π◦γ)^*⌈D⌉ = \mult_t γ^*(π^* ⌈D⌉).  \eqno\qedhere
  $$
\end{proof}

\subsection{Multiple fibres}
\approvals{Arne & yes \\ Jorge & yes \\ Stefan & yes}

In order to guarantee that the variety constructed in Section~\ref{ssec:pf14} is
geometrically simply connected, we use an elementary criterion which generalises
\cite[Lem.~5.8]{Cam05}.  The following definition will be used.

\begin{defn}[Multiple fibres]\label{definition:multfibr}
  Let $k$ be an algebraically closed field.  Let $π : X → C$ be a projective,
  surjective $k$-morphism from a normal $k$-variety to a smooth $k$-curve.
  Assume that $π$ has connected fibres.  Given $P ∈ C(k)$, say that \emph{$π$
    has a \emph{multiple fibre} over $P$} if there exist an integer $m ≥ 2$ and
  a Weil divisor $D$ on $X$ such that $π^* P = m·D$.
\end{defn}

\begin{lem}[Multiple fibres and simple connectedness]\label{lem:fundamentalgroups} %
  In the setting of Definition~\ref{definition:multfibr}, fix a base point
  $x ∈ X(k)$.  Assume that $π$ does not have any multiple fibres and that at
  least one fibre of $π$ is simply connected.  Then, the natural
  homomorphism
  $$
  π_1^{\emph{ét}}(X,x) → π_1^{\emph{ét}}\bigl(C,π(x) \bigr)
  $$
  is an isomorphism.
\end{lem}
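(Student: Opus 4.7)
The plan is to split the assertion into surjectivity and injectivity of the natural map $\phi : \pi_1^{\etale}(X,x) \to \pi_1^{\etale}(C,\pi(x))$. Surjectivity uses only the connected-fibres assumption already built into the setting: for any connected finite étale cover $g \colon C' \to C$, the base-change $X \times_C C' \to C'$ has the (connected) fibres of $\pi$ as its own fibres, so $X \times_C C'$ is connected, which is exactly what surjectivity of $\phi$ requires. For injectivity I must show that every connected finite étale cover $f \colon Y \to X$ is itself the pullback of some connected finite étale cover of $C$. To produce the candidate, I apply the Stein factorisation to $\pi \circ f$, obtaining $Y \xrightarrow{h} C' \xrightarrow{g} C$ with $g$ finite and $h$ having geometrically connected fibres; the universal property of the fibre product then yields a finite morphism $\psi \colon Y \to X \times_C C'$ over $X$. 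Injectivity reduces to (a) showing that $g$ is étale, and (b) showing that $\psi$ is an isomorphism.

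For (a), which is where the no-multiple-fibres hypothesis enters, fix $P \in C$ and $P' \in g^{-1}(P)$. The connected components of the topological space $|Y_P|$ are indexed by $g^{-1}(P)$; let $Y_{P'}$ be the component attached to $P'$. Since $Y_P^{\text{red}} = f^{-1}(X_P^{\text{red}})$ is étale over the connected scheme $X_P^{\text{red}}$, the restriction $Y_{P'}^{\text{red}} \to X_P^{\text{red}}$ is a connected finite étale cover, hence surjective, so $Y_{P'}$ dominates every irreducible component of $X_P$. The no-multiple-fibres hypothesis supplies a component $D \subset X_P$ occurring with multiplicity one, and I can pick a point $y \in Y_{P'}$ over the generic point $\eta_D$; then $y$ is a codimension-one point of $Y$ and $\mathcal{O}_{Y,y}$ is a DVR. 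A double valuation computation at $y$ — once through $\mathcal{O}_{C,P} \to \mathcal{O}_{X,\eta_D} \to \mathcal{O}_{Y,y}$, where multiplicity one plus étaleness of $f$ gives combined ramification index $1$, and once through $\mathcal{O}_{C,P} \to \mathcal{O}_{C',P'} \to \mathcal{O}_{Y,y}$, where the combined ramification index is a positive integer multiple of the ramification index $e$ of $g$ at $P'$ — forces $e = 1$.

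For (b), where the simply connected fibre $F_0$ is used, observe that $\pi_1^{\etale}$ sees only the reduced structure and that étale covers lift uniquely through nilpotent thickenings; hence $f^{-1}(F_0)$ is a disjoint union of $d := \deg(f)$ copies of $F_0$. Similarly, now that (a) guarantees $g$ is étale, $(X \times_C C')|_{F_0}$ is a disjoint union of $d' := \deg(g)$ copies of $F_0$. The morphism $\psi|_{F_0}$ commutes with the projection to $C'$, so it sends the component of $f^{-1}(F_0)$ attached to $P' \in g^{-1}(\pi(F_0))$ to the sheet of $(X \times_C C')|_{F_0}$ attached to the same $P'$; this bijection of indexing sets forces $d = d'$, and therefore $\psi$ has generic degree one. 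As $Y$ and $X \times_C C'$ are both normal, connected, and of the same dimension, the finite birational morphism $\psi$ is an isomorphism, which completes the injectivity argument.

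I expect step (a) to be the main obstacle. The valuation calculation itself is clean once the right codimension-one point $y$ is in hand, but establishing that $Y_{P'}^{\text{red}}$ dominates every irreducible component of $X_P$ requires careful handling of non-reduced fibre structure, and it is precisely this step that explains why a multiplicity-one condition on some component — rather than a cruder reducedness assumption on the whole fibre — is exactly what the argument needs.
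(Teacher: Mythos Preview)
Your overall strategy is correct and matches the paper's: take the Stein factorisation $Y \xrightarrow{h} C' \xrightarrow{g} C$, show that $g$ is étale using the no-multiple-fibres hypothesis, and then show that $Y \to X \times_C C'$ is an isomorphism. Your treatment of surjectivity and of step~(b) is in fact more explicit than the paper's, which simply asserts that the last step ``is now easy to see''.

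There is, however, a genuine gap in step~(a). You write that ``the no-multiple-fibres hypothesis supplies a component $D \subset X_P$ occurring with multiplicity one'', and your final paragraph reinforces this reading. That is not what the hypothesis says. By Definition~\ref{definition:multfibr}, the fibre over $P$ is \emph{multiple} when $\pi^*P = mD$ for some $m \ge 2$ and some Weil divisor $D$; its negation is that the multiplicities $m_i$ of the irreducible components of $\pi^*P$ have $\gcd$ equal to $1$, not that some $m_i$ equals $1$. A fibre of the shape $2D_1 + 3D_2$ is not multiple in this sense, yet has no reduced component, so your single-component valuation argument does not apply.

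The repair is immediate and is exactly what the paper does. You already proved that $Y_{P'}^{\mathrm{red}} \to X_P^{\mathrm{red}}$ is surjective, so for \emph{every} component $D_i$ of $X_P$ you can choose a codimension-one point $y_i \in Y_{P'}$ lying over $\eta_{D_i}$. Running your double valuation computation at $y_i$ gives $e \mid m_i$ (not $e = 1$). Doing this for all $i$ and using $\gcd_i m_i = 1$ then forces $e = 1$. Equivalently, in the paper's phrasing: the coefficients appearing in the connected component $B' := Y_{P'}$ of $f^*\pi^*P$ are coprime (because $f$ is étale and $B'$ surjects onto $X_P^{\mathrm{red}}$), while on the other hand they are all multiples of $e = \mathrm{mult}_{P'}\, g^*P$; hence $e = 1$.
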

\begin{proof}
  Let $f: X' → X$ be a finite, étale cover.  We have to show that $f$ is the
  base change along $π$ of a finite étale cover of the base curve $C$.  Taking
  the Stein factorisation of $π◦f$, we obtain the commutative diagram,
  $$
  \xymatrix{%
    X' \ar[rr]^{f\text{, étale}} \ar[d]_{\txt{\scriptsize $π'$, connected\\\scriptsize fibres}} && X \ar[d]^{π} \\
    C' \ar[rr]_{g\text{, finite}} && C.
  }
  $$
  Since $π$ has a simply connected fibre, we see that $g$ must be étale above at
  least one point of $C$, and hence also generically.  We claim that $g$ is
  étale everywhere.  If fact, if $c' ∈ C'(k)$ is any closed point with image
  $c ∈ C$, then the coefficients of $π^* c ∈ \Div(X)$ are coprime; since $f$ is
  étale, so are the coefficients found in any connected component of
  $f^* π^* c$.  On the other hand, if $B'$ is any connected component of
  $(π')^*g^* c = f^*π^* c$ that maps to $c'$, then all coefficients in $B'$ are
  multiples of $\mult_{c'} g^* c$.  Therefore, $\mult_{c'} g^* c = 1$, so $g$ is
  étale at $c'$.  It is now easy to see that the natural morphism
  $X' → X ⨯_C C'$ is an isomorphism, as required.
\end{proof}

\subsection{Proof of Theorem~\ref*{maintheorem2}}
\label{ssec:pf14}
\CounterStep
\approvals{Arne & yes \\ Jorge & yes \\ Stefan & yes}

We begin directly with the construction of the example.  A second step will show
that the example does indeed satisfy all required properties.  Throughout, we
view $ℙ¹$ as an extension of $𝔸¹$ and write $t$ instead of $[t:1]$ and $∞$
instead of $[1:0]$.

\subsection*{Step 1.  Construction}
\approvals{Arne & --- \\ Jorge & yes \\ Stefan & yes}

We start by constructing a smooth, projective $ℚ$-surface $F_ℚ$ and a
$ℚ$-morphism $g_ℚ: F_ℚ → ℙ¹_ℚ$ with the following properties.
\begin{enumerate}
\item\label{il:x1a} The $ℚ$-surface $F_ℚ$ is smooth, of general type, and $g_ℚ$
  is smooth over an open subset of $ℙ¹_ℚ$.

\item\label{il:x2a} The natural morphism $𝒪_{ℙ¹_ℚ} → (g_ℚ)_* 𝒪_{F_ℚ}$ is an
  isomorphism.

\item\label{il:x3a} No geometric fibre of $g_ℚ$ is multiple.

\item\label{il:x4a} The fibre $g_ℚ^{-1}\bigl( [1:1] \bigr)$ has a $ℚ$-rational point $x_ℚ$.

\item\label{il:x5a} The geometric fibres of $g_ℚ$ over $[0:1]$ and $[1:0]$ are
  supported on trees of smooth rational curves with transverse intersections,
  and each of the irreducible components has multiplicity at least two.
\end{enumerate}

The construction outlined below is due to Stoppino.  We refer the reader to
Stoppino's paper \cite{Stoppino}, in particular to \cite[Fig.~2 as well as
Thm.~3.1, Rem.~3.6]{Stoppino} for full details, proofs and instructive sample
computations.

\begin{construction}[Special case of Stoppino's construction with ``fibres of type $1$'']
  Fix the rational number $α := -\frac{793}{216}$ and consider the following
  curve in $ℙ¹_ℚ⨯ℙ¹_ℚ$,
  \[
    B := \Bigl\{ \bigl([x:y],[t:s]\bigr) ∈ ℙ¹_ℚ⨯ℙ¹_ℚ \::\: st·\bigl(t²·x⁶ +
    α·st·x³y³ + s²·y⁶\bigr) = 0 \Bigr\}.
  \]
  Let $Y_ℚ$ be the canonical desingularisation of the double covering of
  $ℙ¹_ℚ⨯ℙ¹_ℚ$ ramified over $B$.  As shown in \cite[Section 2 and proof of
  Thm.~3.1]{Stoppino}, the relative minimal model of the composition
  \[
    Y_ℚ → ℙ¹_ℚ⨯ℙ¹_ℚ \xrightarrow{2^{\text{nd}} \text{ projection}} ℙ¹_ℚ
  \]
  is a smooth surface $Z_ℚ$, and the natural morphism $f_ℚ : Z_ℚ → ℙ¹_ℚ$ is a
  fibration satisfying Properties \ref{il:x2a}, \ref{il:x3a}, and \ref{il:x5a}.
\end{construction}

The specific numerical value of $α$ implies the following properties for the
$f_ℚ$-fibres over $[± 1:1]$.\CounterStep

\begin{enumerate}
\item\label{il:11.8.1} The point $\bigl( [2:3], [1:1] \bigr) ∈ ℙ¹_ℚ⨯ℙ¹_ℚ$ lies
  on the curve $B$.  This implies in particular that the $f_ℚ$-fibre over
  $[1:1]$ contains a $ℚ$-rational point.  In other words, the fibration
  $f_ℚ : Z_ℚ →ℙ¹_ℚ$ satisfies Property~\ref{il:x4a} as well.

\item\label{il:11.8.2} The curves $B$ and $ℙ¹_ℚ ⨯ \bigl\{[± 1:1]\bigr\}$
  intersect transversely $ℙ¹_ℚ⨯ℙ¹_ℚ$.  In particular, we find that the
  $f_ℚ$-fibres over $[± 1:1]$ are smooth, so that $f_ℚ$ is smooth over
  neighbourhoods of the points $[± 1:1]$.
\end{enumerate}

To construct a surface $F_ℚ$ and a $ℚ$-morphism $g_ℚ: F_ℚ → ℙ¹_ℚ$ which also
satisfy Property~\ref{il:x1a}, consider the $ℚ$-morphism
\[
  γ: ℙ¹_ℚ → ℙ¹_ℚ, \quad [t:s] ↦ \bigl[(t+s)³ + (t-s)³ : (t+s)³ -
  (t-s)³\bigr],
\]
and perform the base change
\[
  \xymatrixcolsep{2.5cm}\xymatrix{ %
    F_ℚ \ar[r] \ar[d]_{g_ℚ} & Z_ℚ \ar[d]^{f_ℚ} \\
    ℙ¹_ℚ \ar[r]_{γ} & ℙ¹_ℚ.  %
  }
\]
We check that $g_ℚ : F_ℚ → ℙ¹_ℚ$ satisfies all required properties.
\begin{description}
\item[Property~\ref{il:x1a}] The morphism $γ$ is totally ramified over the
  points $[± 1:1]$ and étale elsewhere.  Smoothness of $Z_ℚ$ and
  Item~\ref{il:11.8.2} will therefore imply that $F_ℚ$ is smooth, and that $f_ℚ$
  is smooth near $γ^{-1}([± 1:1])$.  We refer the reader to
  \cite[Prop.~3.2]{Stoppino} or \cite[Prop.~1.7]{Cam05} for the fact that $F_ℚ$
  is of general type.

\item[Property~\ref{il:x2a}] This follows from flatness-and-base change, since
  $γ$ is flat and since $𝒪_{ℙ¹_ℚ} → (f_ℚ)_* 𝒪_{Z_ℚ}$ is isomorphic.

\item[Property~\ref{il:x3a}] This holds because the geometric fibres of $f_ℚ$
  and $g_ℚ$ agree.

\item[Property~\ref{il:x4a}] Follows from \ref{il:11.8.1}, given that $γ$ is
  totally ramified over $[1:1]$ and that $γ\bigl( [1:1]\bigr) = [1:1]$.

\item[Property~\ref{il:x5a}] This property holds because it holds for $g_ℚ$, and
  because
  \[
    γ\bigl( [0:1] \bigr) = [0:1] \quad\text{and}\quad γ\bigl( [1:0] \bigr) =
    [1:0].
  \]
\end{description}

\subsubsection*{Step 1a.  Reduction mod $p$}
\CounterStep
\approvals{Arne & yes \\ Jorge & yes \\ Stefan & yes}

The fibration $g_ℚ: F_ℚ → ℙ¹_ℚ$ described above can be defined over a suitable
localisation of the ring of integers.  Let us therefore choose a proper model
$\wtilde{g}: ℱ → ℙ¹_{ℤ_T}$, where $T$ is a finite set of primes containing $2$
and $3$.  We now choose $p ∉ T$ and write $F := ℱ ⊗_{ℤ_T} 𝔽_p$.  In analogy, we
consider the morphism $g : F → ℙ¹_{𝔽_p}$ and the $𝔽_p$-point $x ∈ g^{-1}(1)$.
The following properties will then hold if $p$ is sufficiently large:
\begin{enumerate}
\item\label{il:x2b} The natural morphism $𝒪_{ℙ¹_{𝔽_p}} → g_* 𝒪_F$ is an
  isomorphism; this follows from \ref{il:x2a} and \cite[Prop.~9.4.4]{EGA4-3}.
  In particular, $g$ has geometrically connected fibres, by \cite[Thm.~12.69 on
  p.~348]{MR2675155}.

\item\label{il:x3b} No geometric fibre of $g$ is multiple: this follows from
  \ref{il:x3a} and the fact that multiplicities ``spread out well over an open
  on the base'', as follows for example from the arguments in
  \cite[Lem.~3.12]{MR4157111} and \cite[Lem.~3.9]{MR4216591}.

\item\label{il:x5b} The geometric fibres of $g$ over $0$ and $∞$ are simply
  connected and each irreducible component of these fibres has multiplicity at
  least two: this is a consequence of \ref{il:x5a} and the classical
  specialisation theorem for the étale fundamental group, see \cite[Exp.~X,
  Cor~2.4]{SGA1}.
\end{enumerate}
By \cite[Prop.~17.7.11]{EGA4-4}, we may choose a dense open set
$1 ∈ S° ⊆ ℙ¹_{𝔽_p} ∖ \{ 0, ∞ \}$ over which the morphism $g$ is smooth.

\subsubsection*{Step 1b.  Products and covers}
\approvals{Arne & yes \\ Jorge & yes \\ Stefan & yes}

Spreading out and using a suitable version of the cyclic covering trick -- which
features already in the proof of \cite[Thm.~3.1]{Stoppino}, and in Campana's
construction \cite[§5]{Cam05} -- we can now construct a commutative diagram of
morphisms between $𝔽_p$-varieties as follows,
$$
\xymatrixcolsep{2.5cm}\xymatrix{ %
  Y° \ar[r]_{π°} \ar[d]_{f} & X° \ar[r]_{φ°} \ar[d]^{h} & S° \ar@{=}[d] \ar@/_5mm/[ll]_{σ°}\\
  F ⨯ S° \ar[r]_{g ⨯ \Id} & ℙ¹_{𝔽_p} ⨯ S° \ar[r]_{pr_2} & S°.  %
}
$$
Set $X° := ℙ¹_{𝔽_p} ⨯ S°$ and let $φ°$ be the projection to the second factor.
Writing
$$
B_t := \{t\} ⨯ S° \quad\text{and}\quad %
B_Δ := \bigl\{ (t,s) ∈ ℙ¹_{𝔽_p} ⨯ S° \:|\: t = s \bigr\},
$$
the $𝔽_p$-morphism $h$ is the (separable!) triple cover which ramifies totally
over the curves $B_1$ and $B_Δ$,
\begin{align*}
  h : X° & → ℙ¹_{𝔽_p} ⨯ S°, & h (x, t) & := \Bigl( {\textstyle \frac{(x-t)³+t·(x-1)³}{(x-t)³+\hphantom{t·}(x-1)³}}, t\Bigr).\\
  \intertext{Take $Y°$ as the fibre product, and $σ°$ as the fibre product of the following morphisms,}
  α : S° & → X° = ℙ¹_{𝔽_p} ⨯ S°, & α(t) & := \bigl(1, t \bigr) \\
  β : S° & → F ⨯ S°, & β(t) & := (t, t).
\end{align*}
Finally, set $D° := \frac{1}{2}·h^* \bigl( B_0 + B_∞ \bigr)$.

\subsubsection*{Step 1c.  Summary}
\CounterStep
\approvals{Arne & yes \\ Jorge & yes \\ Stefan & yes}

The main properties of this construction are summarised as follows.
\begin{enumerate}
\item\label{il:l0} Since $F$ is smooth over $𝔽_p$, \cite[Prop.~17.7.11]{EGA4-4},
  we find that $φ°◦ π° : Y° → S°$ is smooth.  The geometric fibres of $π°$ are
  those of $g$, and therefore geometrically connected by \ref{il:x2b}.  The
  morphism $π°$ is smooth over an open subset of $X°$.

\item\label{il:l1} The choice $S°$ guarantees that $h$ is étale along $B_0$ and
  $B_∞$.  The pair $\bigl(X°, D°\bigr)$ is therefore relatively snc over $S°$.
  The Sequence~\eqref{eq:log6-4} does not split when restricted to the generic
  fibre $X_η$ of $φ°$.  Indeed, the existence of such splitting is equivalent to
  the existence of a vector field of the form
  \[
    v = \frac{∂}{∂ t} + \left( a(t) + b(t)x + c(t)x² \right) \frac{∂}{∂ x}
  \]
  which is tangent to $\supp D°$.  In other words, the derivation of $𝔽_p(t)[x]$
  determined by $v$ must preserve the ideal generated by
  $n(x,t)=(x-t)³+t·(x-1)³$, which is the numerator of the first component of
  $h(x,t)$, as well as the ideal generated by $d(x,t)=(x-t)³+ (x-1)³$, which is
  the denominator of the first component of $h(x,t)$.  An elementary computation
  shows that the ideal generated by $d(x,t)$ is preserved by $v$ if, and only
  if,
  \[
    (a(t), b(t), c(t)) = \left( \frac{-1}{t-1}, \frac{1}{t-1}, 0 \right) \, .
  \]
  Likewise, the ideal generated by $n(x,t)$ is preserved by $v$ if, and only if,
  \[
    (a(t), b(t), c(t)) = \left( \frac{-2}{3(t-1)},\frac{2t-1}{3t(t-1)}, \frac{1}{3t(t-1)} \right) \, .
  \]
  Therefore, no vector field $v$ of the form above is tangent to $\supp D°$.  We
  conclude that Sequence~\eqref{eq:log6-4} does not split, as claimed.

\item\label{il:l2} Using that $X°_η$ is rational and that $\supp D° → S°$ is
  six-to-one, we find that the degree of $K_{X°}+D°$ on the generic fibre $X_η$
  of $φ°$ equals one.

\item\label{il:l4} We claim that the support of $(π°)^* D°$ is isomorphic to the
  product of $\supp D°$ with a geometrically simply connected curve, and that
  every component $(π°)^* ⌈D°⌉$ has multiplicity at least $2$.  This follows
  from \ref{il:x5b} since
  \begin{align*}
    (π°)^* ⌈D⌉ & = (π°)^* h^* \bigl(B_0 + B_∞ \bigr) = f^* (g ⨯ \Id)^* \bigl(B_0 + B_∞ \bigr) \\
               & = f^* \Bigl( g^*\bigl(\{0\} + \{∞\} \bigr) ⨯ S° \Bigr).
  \end{align*}

\item\label{il:l5} If $E° ⊂ X°$ is any prime divisor which dominates $S°$, then
  the coefficients of $(π°)^* E°$ are coprime.  This is again a consequence of
  \ref{il:x3b}, using the fact that the fibres of $π°$ are those of $g$.
\end{enumerate}

\subsection*{Step 2.  Verification of properties}
\approvals{Arne & yes \\ Jorge & yes \\ Stefan & yes}

Writing $η$ for the generic point of $S°$, with residue field $K := 𝔽_p(t)$, we
view $Y_η$ is a smooth projective surface over $K$, equipped with a surjection
$π°_η : Y_η → X°_η = ℙ¹_K$.  It remains to show that the surface $Y_η$ is of
general type, geometrically simply connected, and that $π°_η\bigl( Y_η(K)\bigr)$
is finite and not empty.

\subsubsection*{Step 2a.  General type}
\approvals{Arne & yes \\ Jorge & yes \\ Stefan & yes}

Since $F_ℚ$ is of general type over $ℚ$, the surface $F_{𝔽_p}$ is of general
type over $𝔽_p$, and $(F⨯S°)_η$ is of general type over $K$.  Since $h$ is
separable, so is the induced morphism of $K$-varieties, $f_η : Y_η → (F⨯S°)_η$.
As a separable cover of a surface of general type, $Y_η$ is then itself of
general type.

\subsubsection*{Step 2b.  Simple connectedness}
\approvals{Arne & yes \\ Jorge & yes \\ Stefan & yes}

Writing $\overline{K}$ for an algebraic closure of $K$, we need to show that
$Y_{\overline{K}}$ is simply connected.  Item~\ref{il:l0} equips us with a
proper map $π_{\overline{K}} : Y_{\overline{K}} → ℙ¹_{\overline{K}}$ with
connected fibres.  Better still, Item~\ref{il:l4} asserts that
$π_{\overline{K}}$ admits at least one simply connected fibre, while
Item~\ref{il:l5} asserts that $π_{\overline{K}}$ has no multiple fibre.
Lemma~\ref{lem:fundamentalgroups} therefore applies to show the simple
connectedness.

\subsubsection*{Step 2c.  Rational points}
\approvals{Arne & yes \\ Jorge & yes \\ Stefan & yes}

The existence of $σ°$ shows that $Y°(K)$ is not empty.  To prove finiteness, we
pass to the algebraic closure, $k := \overline{𝔽_p}$, consider the sequence of
morphisms over $k$,
$$
\xymatrixcolsep{2.5cm}\xymatrix{ %
  Y°_k \ar[r]^{π°_k\text{, dominant}} & X°_k \ar[r]^(.4){φ°_k\text{, dominant}} & S°_k ⊆ ℙ¹_k, %
}
$$
and use the Mordell-type theorem for $\cC$-integral points,
Theorem~\ref{thm:Mordell}, to show the stronger statement that
$π°_k\Bigl(Y°_k\bigl(k(t)\bigr) \Bigr) ⊆ X°_k\bigl(k(t)\bigr)$ is finite.  To
this end, choose compactifications,
$$
\xymatrixcolsep{2.5cm}\xymatrix{ %
  Y_k \ar[r]^{π_k\text{, dominant}} & X_k \ar[r]^(.4){φ_k\text{, dominant}} & S_k = ℙ¹_k, %
}
$$
such that $X_k$ is $k$-smooth and $(X_k, D_k)$ is snc, where $D_k$ is the
Zariski-closure of the divisor $D°_k$.  Recalling that smoothness and
non-vanishing of the Kodaira-Spencer map remain invariant when passing from
$𝔽_p$ to $k$, we find that the morphism $φ_k : X_k → S_k$ and the pair
$(X_k, D_k)$ satisfy all assumptions made in Theorem~\ref{thm:Mordell}.  In
particular, there are at most finitely many $\cC$-integral points $S_k → X_k$.
To conclude, we need to check that given any $k(t)$-valued point
$γ ∈ Y°_k\bigl(k(t)\bigr)$, or equivalently any section $γ : S_k → Y_k$, then
either $(π◦γ)$ is $\cC$-integral, or $\Image(π◦γ) ⊆ \supp D$.  Recalling from
\ref{il:l0} that $π°_k$ is smooth over an open of $X°_k$, this follows from
Lemma~\ref{lem:orbibase}; hence we are done.  \qed

%
% Do not edit the following line.  The text is automatically updated by
% subversion.
%
\svnid{$Id: 12-construction.tex 584 2021-08-09 10:08:58Z kebekus $}

\section{Construction of the example --- proof of Theorem~\ref*{maintheorem1}}
\label{sec:construction}
\subversionInfo
\approvals{Arne & yes \\ Jorge & yes \\ Stefan & yes}

The goal of this paragraph is to prove Theorem~\ref{maintheorem1}.  We will
explain how to construct examples of simply connected fourfolds over global
fields for which the failure of the local-global principle is \emph{not}
explained by a Brauer--Manin obstruction.  We will mimic the construction
presented in \cite[Prop.~3.2]{Smeets}.  While most of the arguments given there
carry over to our setting, the construction in \cite{Smeets} is done over number
fields.  A few adjustments are required.

\subsection{Comparison of Brauer groups in conic bundles}
\approvals{Arne & yes \\ Jorge & yes \\ Stefan & yes}

The following preliminary lemma extends \cite[Prop.~2.2.(i)]{CTPS} to positive
characteristic.  For the sake of simplicity, we restrict ourselves to the case
where the characteristic is odd.  We do expect, however, that the result should
remain true even in characteristic two.

\begin{lem}[Comparison of Brauer groups in conic bundles]\label{lem:Brauersurjective}
  Let $B$ be a smooth, projective, geometrically integral variety over a field
  $K$ of characteristic different from two.  Let $f: W → B$ be a conic bundle,
  i.e., a surjective, flat morphism from a smooth, projective, geometrically
  integral $K$-variety $W$ to $B$, the generic fibre of which is a conic.
  Assume that there exists a codimension-one point $P$ on $B$ such that for any
  other codimension-one point $Q ≠ P$ on $B$, the fibre $f^{-1}(Q)$ is a smooth
  conic.  Then, the induced morphism $f^*: \Brauer(B) → \Brauer(W)$ is
  surjective.
\end{lem}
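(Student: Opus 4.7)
The argument is a residue-theoretic diagram chase, based on the Bloch--Ogus/Gabber purity exact sequence
\[
0 \to \Brauer(Y) \to \Brauer(K(Y)) \xrightarrow{\oplus \partial_Q} \bigoplus_{Q \in Y^{(1)}} H^1 \bigl(K(Q), \mathbb{Q}/\mathbb{Z} \bigr),
\]
valid for smooth $K$-varieties $Y$, applied both to $Y = B$ and $Y = W$. Some care is needed with $p$-primary torsion when $\operatorname{char}(K) = p > 0$, but analogous Kato residues are available; and the principal case is $2$-torsion, since the Brauer class of a conic is $2$-torsion and $\operatorname{char}(K) \neq 2$ by hypothesis.

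First, I would show that $f^*:\Brauer(K(B)) \to \Brauer(K(W))$ is surjective. The generic fibre $W_\eta$ is a smooth conic over $K(B)$, i.e., a Severi--Brauer variety of dimension one, so Witt's theorem (or, more generally, Lichtenbaum's theorem) gives surjectivity on Brauer groups of function fields, with kernel generated by the Brauer class $[W_\eta]$. Hence any $\alpha \in \Brauer(W)$ admits a lift $\beta \in \Brauer(K(B))$ with $f^*\beta = \alpha$, unique modulo integer multiples of $[W_\eta]$.

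Next, for every codimension-one point $Q \neq P$ of $B$, the fibre $f^{-1}(Q)$ is by hypothesis a smooth --- hence geometrically integral --- conic over $K(Q)$, whose generic point $R$ is a codimension-one point of $W$ at which $f$ is unramified. The standard compatibility of residues with pullback,
\[
\partial_R(f^*\beta) \:=\: \operatorname{res}_{K(R)/K(Q)} \partial_Q \beta,
\]
together with injectivity of the restriction $\operatorname{res}_{K(R)/K(Q)}$ on $H^1$ (which follows from $K(Q)$ being algebraically closed in $K(R)$ for geometrically integral fibres), forces $\partial_Q \beta = 0$, since $\alpha = f^*\beta$ is unramified at $R$.

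The remaining task, and the main obstacle, is to cancel the residue $\partial_P \beta$ at the exceptional point $P$ by subtracting a suitable multiple of $[W_\eta]$. Applying the residue-compatibility formula at the codimension-one points of $W$ lying over $P$ --- where $f$ may be ramified, and the fibre reducible or non-reduced --- forces $\partial_P \beta$ into a small cyclic subgroup of $H^1(K(P), \mathbb{Q}/\mathbb{Z})$. Running the same analysis on the class $[W_\eta]$ itself, whose pullback to $\Brauer(W)$ is zero so that its residues at all $Q \neq P$ vanish, one verifies that $\partial_P[W_\eta]$ generates exactly this cyclic subgroup, determined by the combinatorial type of the degenerate conic at $P$. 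Modifying $\beta$ by an appropriate integer multiple of $[W_\eta]$ then yields an element of $\Brauer(K(B))$ whose residue vanishes at every codimension-one point of $B$, i.e., a lift in $\Brauer(B)$; this completes the proof.
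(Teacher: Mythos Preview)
Your overall strategy---lift $\alpha$ to $\beta \in \Brauer\bigl(K(B)\bigr)$ via the Brauer group of the generic conic, then use purity and residue compatibility to push $\beta$ into $\Brauer(B)$, adjusting by $[W_\eta]$ at the bad point---coincides with the paper's treatment of the prime-to-$p$ part, where it simply invokes \cite[Prop.~2.2.(i)]{CTPS}. Two issues deserve comment.

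First, a slip of formulation: the map $f^*\colon \Brauer\bigl(K(B)\bigr) \to \Brauer\bigl(K(W)\bigr)$ is not surjective (already for $W_\eta \cong \mathbb{P}^1$ this would assert $\Brauer(F) \twoheadrightarrow \Brauer\bigl(F(t)\bigr)$). What Witt's theorem actually yields is surjectivity of $\Brauer\bigl(K(B)\bigr) \to \Brauer(W_\eta)$, and since the image of $\alpha$ in $\Brauer\bigl(K(W)\bigr)$ factors through $\Brauer(W_\eta)$, this suffices; the paper makes this explicit via the commutative square involving $\Brauer(\eta) \to \Brauer(W_\eta)$.

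The substantive gap is in your treatment of $p$-primary torsion. The exact sequence you invoke, with residues in $H^1\bigl(K(Q), \mathbb{Q}/\mathbb{Z}\bigr)$, is not available for $p$-torsion in characteristic~$p$: Brauer classes can be \emph{wildly} ramified along a divisor, and for a DVR $R$ with residue field $\kappa$ of characteristic~$p$ the quotient $\Brauer(K)\{p\}/\Brauer(R)\{p\}$ does not embed into $H^1\bigl(\kappa, \mathbb{Q}_p/\mathbb{Z}_p\bigr)$. The phrase ``analogous Kato residues are available'' thus hides the main difficulty rather than resolving it. The paper bypasses residues entirely for $\ell = p$. Using Gabber's purity only in the form $\Brauer(B) = \bigcap_Q \Brauer(\mathcal{O}_{B,Q})$, it checks $\beta_\eta \in \Brauer(R)$ for each $R = \mathcal{O}_{B,Q}$ directly: the conic admits a diagonal model $x^2 - ay^2 - bz^2 = 0$ over $R$ with $a \in R^\times$; over the \'etale quadratic extension $R' = R[\sqrt{a}]$ it acquires an $R'$-point; evaluating $\alpha$ there shows $\beta_{\eta'} \in \Brauer(R')$; and corestriction gives $2\beta_\eta \in \Brauer(R)$. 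Since $p$ is odd and $\beta_\eta$ is $p$-power torsion, $\beta_\eta \in \Brauer(R)$. No residue calculus, no adjustment by $[W_\eta]$, no case split at~$P$.
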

\begin{proof}
  The argument is based on the proof of \cite[Prop.~2.2.(i)]{CTPS}, with some
  modifications needed to deal with the typical subtleties in positive
  characteristic.  Let $η = \Spec K(B)$ be the generic point of $B$, the generic
  fibre $W_η$ is then a smooth conic over $K(B)$.  The following diagram
  summarises the pull-back morphisms between the Brauer groups that will be
  relevant for us,
  $$
  \xymatrix{ %
    \Brauer(B) \ar@{^(->}[r]^{ι_η^*} \ar[d]_{f^*} & \Brauer(η) \ar@{->>}[d]^{\txt{\scriptsize $f_η^*$, surjective\\\scriptsize since conic}} \\
    \Brauer(W) \ar@{^(->}[r]_{ι_{W_η}^*} & \Brauer(W_η).
  }
  $$

  Fix a prime $ℓ$, possibly equal to the characteristic $p$ of $K$, and let
  $α ∈ \Brauer(W)[ℓ^∞]$.  We need to show that there exists a class
  $β ∈ \Brauer(B)$ with $f^*(β) = α$.  If $ℓ$ is prime to $p$ (in particular, if
  $ℓ = 2$) then the argument from \cite[Prop.~2.2.(i)]{CTPS}, which uses
  residues, works verbatim.  Let us therefore assume that $ℓ = p$.  Set
  $α_η := ι_{W_η}^*α$.  Since $f_η^*$ is surjective, there exists an element
  $β_η ∈ \Brauer(η)$ which maps to $α_η$.  We claim that $β_η ∈ \Image ι_η^*$.
  A well-known purity result for the Brauer group, \cite[2.5]{Gabber93}, implies
  that it suffices to prove the following: if $Q ∈ B$ is a point of codimension
  $1$ on the base, inducing a discrete valuation on $K(B)$ with valuation ring
  $R = 𝒪_{B,Q}$, then $β_η ∈ \Brauer(R)$.

  Assume that one such $Q$ is given, and write $v$ for the associated valuation
  on $K(B)$.  The discussion in \cite[§3.5]{CTCetraro} implies that the smooth
  $K(B)$-conic $W_η$ admits a diagonal model over $R$ given by an equation of
  the form $x² - ay² - bz² = 0$, where $a ∈ R^⨯$, and $v(b) ∈ \{0,1\}$.  In
  fact, if $Q$ is not the point $P$ from the statement of the lemma, then both
  $a$ and $b$ can be chosen in $R_Q^⨯$.  In any case, the ring
  $R' = R[\sqrt{a}]$ is a quadratic unramified extension of $R$, and if $η'$
  denotes the generic point of $\Spec R'$, then $W_η$ acquires an $η'$-point --
  equivalently, $W_R$ acquires an $R'$-point.  Since $β_η$ maps to $α_η$, the
  restriction $β_{η'} ∈ \Brauer(η')$ maps to $α_{η'} ∈ \Brauer(W_η')$.  But
  $α_{η'} ∈ \Brauer(W_{R'})$ since $α ∈ \Brauer(W)$; evaluating on the
  $R'$-point mentioned above shows that $β_{η'} ∈ \Brauer(R')$.  Corestricting
  from $R'$ to $R$ (see \cite[\S 3.8]{CTSko}) then shows that $2β ∈ \Brauer(R)$.
  Since $p ≠ 2$, we conclude that $β ∈ \Brauer(R)$, as required.
\end{proof}

\subsection{Families of Châtelet surfaces}
\approvals{Arne & yes \\ Jorge & yes \\ Stefan & yes}

The second main ingredient in our proof of Theorem~\ref{maintheorem1} is the
following explicit construction of families of Châtelet surfaces.  The idea of
using Châtelet surfaces to address the problem at hand goes back to Poonen.

\begin{prop}[Families of Châtelet surfaces]\label{prop:Chateletconstruction}
  Let $K$ be a global field of odd characteristic.  Let $Y$ be a smooth,
  projective, geometrically integral $K$-variety.  Assume there exists a
  dominant, proper morphism $f: Y → ℙ¹_K$ with geometrically integral generic
  fibre, such that $f\bigl(Y(K)\bigr)$ is finite.  Then, there exists a family
  of Châtelet surfaces $g: S → ℙ¹_K$ such that the fibre product
  $Z := Y ⨯_{ℙ¹_K} S$ is a smooth, projective and geometrically integral
  $K$-variety for which $Z(K) = ∅$, even though $Z(𝔸_K)^{\emph{ét,Br}} ≠ ∅$.
\end{prop}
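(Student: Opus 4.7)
The plan is to adapt the Poonen--Smeets construction \cite{Poonen0, Smeets} to the function field setting in odd characteristic.  The first step is to construct a family of Châtelet surfaces $g\colon S → ℙ¹_K$, given by the standard smooth projective model of $y²-az²=P(t,x)$ for a suitable non-square $a ∈ K^*$ and a separable polynomial $P(t,x) ∈ K(t)[x]$ of degree four in $x$, fibred over $ℙ¹_K$ via the $t$-coordinate.  Since $f\bigl(Y(K)\bigr)$ is finite, I would prescribe local conditions at finitely many places of $K$ so that $S$ is smooth, projective and geometrically integral, every fibre of $g$ is geometrically integral, and for each $t_0 ∈ f\bigl(Y(K)\bigr)$ the Châtelet fibre $S_{t_0}$ carries an adelic point but no $K$-rational point.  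The existence of such $(a,P)$ is the function-field analogue of Poonen's lemma, and ultimately rests on the Colliot-Thélène--Sansuc--Swinnerton-Dyer theorem for Châtelet surfaces, which is classical over global function fields of odd characteristic.

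Set $Z := Y ⨯_{ℙ¹_K} S$.  Smoothness, projectivity and geometric integrality follow from those of $Y$ and $S$ and the smoothness of $g$ in a neighbourhood of $f\bigl(Y(K)\bigr)$.  Any $z ∈ Z(K)$ would project to $y ∈ Y(K)$ and $s ∈ S(K)$ with $g(s) = f(y) ∈ f\bigl(Y(K)\bigr)$, contradicting the construction; hence $Z(K) = ∅$.

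To produce an element of $Z(𝔸_K)^{\emph{ét,Br}}$, fix $y ∈ Y(K)$, pick an adelic point $(s_v)_v ∈ S_{f(y)}(𝔸_K)$ supplied by the construction, and form $(z_v)_v := \bigl((y, s_v)\bigr)_v ∈ Z(𝔸_K)$.  The natural projection $π \colon Z → Y$ is a conic bundle, obtained as the pullback of $g$ along $f$.  Using Lemma~\ref{lem:Brauersurjective}, applied iteratively after restricting to opens of $Y$ over which only one codimension-one component of the discriminant of $g$ survives, every $α ∈ \Brauer(Z)$ decomposes as $π^*β$ modulo residue classes whose contribution to the Brauer--Manin pairing on the special adelic point $(z_v)_v$ vanishes by a direct local calculation on $S_{f(y)}$.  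For the pullback part,
$$
\sum_v \mathrm{inv}_v \bigl(π^*β(z_v)\bigr) = \sum_v \mathrm{inv}_v \bigl(β(y)\bigr) = 0
$$
by global reciprocity at the $K$-point $y$.  Since Châtelet surfaces are geometrically rational, hence geometrically simply connected, every finite connected étale cover of $Z$ is the base change of a finite étale cover $Y' → Y$, and applying the same argument to the family $Z ⨯_Y Y' → Y'$ yields the étale-Brauer statement.

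The main obstacle is carrying out Step~1 rigorously in the function field setting: the detailed version of Poonen's lemma, and the Colliot-Thélène--Sansuc--Swinnerton-Dyer theorem on which it relies, must be transcribed to odd characteristic with careful attention to $p$-primary Brauer classes, and the hypothesis on the discriminant in Lemma~\ref{lem:Brauersurjective} must be verified (or circumvented by an appropriate patching argument) for the specific family of Châtelet surfaces constructed.  The odd characteristic restriction in Lemma~\ref{lem:Brauersurjective} reflects exactly the same phenomenon and explains the corresponding hypothesis in the proposition.
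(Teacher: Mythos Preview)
Your plan follows the Poonen--Smeets template and identifies the right ingredients, but the paper's execution differs from yours at two places, and at one of them your sketch has a genuine gap.

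The paper does not arrange for each fibre $S_{t_0}$, $t_0 \in f\bigl(Y(K)\bigr)$, to be separately pointless-but-locally-soluble. Instead it first composes $f$ with a morphism $\gamma\colon \bP^1_K \to \bP^1_K$ collapsing the finite set $f\bigl(Y(K)\bigr)$ to the single point $\infty$, so that only \emph{one} Ch\^atelet surface $\wtilde{S}_\infty$ needs to satisfy $\wtilde{S}_\infty(K) = \emptyset$ and $\wtilde{S}_\infty(\bA_K) \neq \emptyset$; its existence over function fields of odd characteristic is quoted from \cite[\S11]{Poonen} rather than rebuilt from the Colliot-Th\'el\`ene--Sansuc--Swinnerton-Dyer theory. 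The family is then realised as a conic bundle $\wtilde{S} \to \bP^1_K \times_K \bP^1_K$ whose discriminant is a single smooth geometrically integral curve $C$ (condition~\ref{il:s1}). This irreducibility is precisely what makes Lemma~\ref{lem:Brauersurjective} applicable, following the argument of \cite[Prop.~3.2]{Smeets}. Your proposed ``iterative'' application of that lemma over opens of $Y$ would compute Brauer groups of those opens, not of $Z$; the discrepancy is governed by residues along the removed divisors, and these cannot in general be killed by a local calculation on a single Ch\^atelet fibre, so this step does not go through as written.

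Your smoothness claim for $Z$ is also incomplete: smoothness of $g$ near $f\bigl(Y(K)\bigr)$ is not the relevant condition, since $Z$ can acquire singularities wherever the non-smooth fibres of $f$ and of the conic bundle meet. The paper handles this by the transversality condition~\ref{il:s2} (the projection $C \to \bP^1_K$ is \'etale over the non-smooth locus of $\gamma \circ f$), arranged by solving an interpolation problem for the section $s$ cutting out $C$. Your treatment of the \'etale covers of $Z$ via pullback from $Y$ is correct and matches the paper's Claim~\ref{claim:torsors}.
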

\begin{proof}
  Let us recall the construction from \cite[Prop.~3.2]{Smeets}.  Let
  $γ: ℙ¹_K → ℙ¹_K$ be a morphism which maps $f(Y(K))$ to $\{∞\}$.  We can then
  find an integer $n ≫ 0$ and a section
  $s ∈ Γ\bigl(ℙ¹_K ⨯_K ℙ¹_K,\, 𝒪(n,2)^{⊗ 2}\bigr)$, such that the following
  conditions are all satisfied.
  \begin{enumerate}
  \item\label{il:s1} The vanishing locus of $s$ is a smooth, proper,
    geometrically integral $K$-curve $C$.

  \item\label{il:s2} The composition $C ↪ ℙ¹_K ⨯_K ℙ¹_K \xrightarrow{π_1} ℙ¹_K$
    is étale at points where the composed morphism $h := γ◦f$ is not smooth.

  \item\label{il:s3} For suitable $a ∈ K$, the equation $y² - az² = s$ defines a
    conic bundle $\wtilde{S} → ℙ¹_K ⨯_K ℙ¹_K$ such that its restriction
    $\wtilde{S}_∞ := \wtilde{S}|_{\{∞\} ⨯ ℙ¹_K}$ is a Châtelet surface with
    $\wtilde{S}_∞(K) = ∅$, even though $\wtilde{S}_∞(𝔸_K) ≠ ∅$.
  \end{enumerate}
  Over number fields, the existence of a Châtelet surface $\wtilde{S}_∞$ as in
  Item~\ref{il:s3} follows from \cite[Prop.~5.1]{Poonen}.  Over global function
  fields of odd characteristic, the existence of such an $\wtilde{S}_∞$ is
  proven in \cite[§11]{Poonen}.  Once such a surface $\wtilde{S}_∞$ has been
  chosen, the proof of \cite[Prop.  3.2]{Smeets} (heavily based on \cite[§~6 and
  Lem.~7.1]{Poonen}) yields the existence of $s$ as above, and the composition
  $$
  \wtilde{S} → ℙ¹_K ⨯_K ℙ¹_K \xrightarrow{π_1}ℙ¹_K
  $$
  is a family of Châtelet surfaces.  Let $g: S → ℙ¹_K$ be the pullback of
  this family along $γ$; we claim that this yields the desired family for the
  statement of Proposition~\ref{prop:Chateletconstruction}.

  \begin{claim}\label{claim:torsors}
    Writing $Z := Y ⨯_{ℙ¹_K} S$, the morphism $Z → Y$ induces an equivalence of
    between the categories of finite étale covers, $\Etale(Y) → \Etale(Z)$.  If,
    moreover, $G$ is a finite étale group scheme over $K$ and if $Z' → Z$
    denotes any right $G$-torsor, there exists a right $G$-torsor $Y' → Y$ such
    that $Z' → Z$ and $Z ⨯_Y Y' → Z$ are isomorphic as $G$-torsors.
  \end{claim}
  \begin{proof}[Proof of Claim~\ref{claim:torsors}]
    Both statements follow from the fact that all geometric fibres of $Z → Y$
    are reduced and have trivial étale fundamental group.  For more details, we
    refer to the proofs of \cite[Lem.~8.1]{Poonen0} and \cite[Prop.~2.3]{CTPS};
    the arguments given there are purely algebraic and work in any
    characteristic.  \qedhere~(Claim~\ref{claim:torsors})
  \end{proof}

  The arguments in the proof of \cite[Prop.~3.2]{Smeets} now work verbatim to
  show that the family of surfaces $g: S → ℙ¹_K$ constructed above satisfies all
  requirements.  To be precise, the geometric properties of $Z$ are a
  consequence of \ref{il:s1} and \ref{il:s2}.  Next, $Z(K) = ∅$ follows from the
  fact all fibres of $Z → Y$ over $K$-rational points of $Y$ are isomorphic to
  $\wtilde{S}_∞$, which satisfies $\wtilde{S}_∞(K) = ∅$ by \ref{il:s3}.
  Finally, $Z(𝔸_K)^{\textrm{ét,Br}} ≠ ∅$ is a consequence of
  Lemma~\ref{lem:Brauersurjective} and the fact that $\wtilde{S}_∞(𝔸_K) ≠ ∅$,
  again by \ref{il:s3}.  \end{proof}

\begin{rem}\label{remark:pi1}
  As in \cite[Rem.~3.3]{Smeets}, one sees that
  $π_1^{\text{ét}}(Z) ≅ π_1^{\text{ét}}(Y)$ in the situation of
  Proposition~\ref{prop:Chateletconstruction}: this follows from the arguments
  given in \cite{Smeets} since conics, whether they are smooth or singular, are
  geometrically simply connected in any characteristic.
\end{rem}

\begin{rem}
  The proof of Proposition~\ref{prop:Chateletconstruction} starts by
  constructing a section $s$ of the bundle $𝒪(n,2)^{⊗ 2}$.  If the
  characteristic of $K$ is equal to two, one can still find $s$ such that the
  surface defined by the modified equation $y² + yz + az² = s$ satisfies
  Properties~\ref{il:s1}--\ref{il:s3}.  The existence of a suitable Châtelet
  surface $\wtilde{S}_∞$ in characteristic two follows from a result of Viray,
  \cite[Thm.~1.1]{Viray}.  We chose to restrict ourselves to odd characteristic
  because our previous result, Lemma~\ref{lem:Brauersurjective}, has been stated
  only in this setup.
\end{rem}

\subsection{Proof of Theorem~\ref*{maintheorem1}}
\approvals{Arne & yes \\ Jorge & yes \\ Stefan & yes}

Theorem~\ref{maintheorem2} yields the existence of a global field $K$ and a
smooth, projective, geometrically simply connected $K$-surface $S$, which comes
equipped with a dominant, proper morphism $f: S → ℙ¹_K$, such that
$f\bigl(S(K)\bigr)$ is finite and non-empty.
Proposition~\ref{prop:Chateletconstruction} then yields a smooth, projective
fourfold over $K$ such that $Z(K) = ∅$, whereas $Z(𝔸_K)^{\text{ét,Br}} ≠ ∅$.  We
have seen in Remark~\ref{remark:pi1} that $Z$ is geometrically simply connected,
so that $Z(𝔸_K)^{\text{ét,Br}} = Z(𝔸_K)^{\text{Br}}$.
Theorem~\ref{maintheorem1} is therefore shown.  \qed


\begin{thebibliography}{GKKP11}

\bibitem[Abr09]{Abramovich}
Dan Abramovich.
\newblock Birational geometry for number theorists.
\newblock In {\em Arithmetic geometry}, volume~8 of {\em Clay Math. Proc.},
  pages 335--373. Amer. Math. Soc., Providence, RI, 2009.

\bibitem[Bos18]{MR3887555}
Siegfried Bosch.
\newblock {\em Algebra---from the viewpoint of {G}alois theory}.
\newblock Birkhäuser Advanced Texts: Basler Lehrbücher. [Birkhäuser Advanced
  Texts: Basel Textbooks]. Birkhäuser/Springer, Cham, german edition, 2018.
\newblock
  \href{https://doi.org/10.1007/978-3-319-95177-5}{DOI:10.1007/978-3-319-95177-5}.

\bibitem[Cam04]{Cam04}
Frédéric Campana.
\newblock Orbifolds, special varieties and classification theory.
\newblock {\em Ann. Inst. Fourier (Grenoble)}, 54(3):499--630, 2004.
\newblock \href{https://doi.org/10.5802/aif.2027}{DOI:10.5802/aif.2027}.

\bibitem[Cam05]{Cam05}
Frédéric Campana.
\newblock Fibres multiples sur les surfaces: aspects geométriques,
  hyperboliques et arithmétiques.
\newblock {\em Manuscripta Math.}, 117(4):429--461, 2005.
\newblock
  \href{https://doi.org/10.1007/s00229-005-0570-5}{DOI:10.1007/s00229-005-0570-5}.
  Preprint \href{https://arxiv.org/abs/math/0410469}{arXiv:0410469}.

\bibitem[Cam11]{MR2831280}
Frédéric Campana.
\newblock Orbifoldes géométriques spéciales et classification biméromorphe
  des variétés kählériennes compactes.
\newblock {\em J. Inst. Math. Jussieu}, 10(4):809--934, 2011.
\newblock
  \href{https://doi.org/10.1017/S1474748010000101}{DOI:10.1017/S1474748010000101}.
  Preprint \href{http://arxiv.org/abs/0705.0737v5}{arXiv:0705.0737v5}.

\bibitem[CJS20]{MR4167468}
Vincent Cossart, Uwe Jannsen, and Shuji Saito.
\newblock {\em Desingularization: invariants and strategy---application to
  dimension 2}, volume 2270 of {\em Lecture Notes in Mathematics}.
\newblock Springer, Cham, [2020] \copyright 2020.
\newblock With contributions by Bernd Schober.
  \href{https://doi.org/10.1007/978-3-030-52640-5}{DOI:10.1007/978-3-030-52640-5}.

\bibitem[CKT16]{CKT16}
Benoît Claudon, Stefan Kebekus, and Behrouz Taji.
\newblock Generic positivity and applications to hyperbolicity of moduli
  spaces.
\newblock Preprint \href{http://arxiv.org/abs/1610.09832}{arXiv:1610.09832}. To
  appear in Panoramas \& Synthèses 56, 2021, p. 173--211, October 2016.

\bibitem[Col90]{MR1096426}
Robert~F. Coleman.
\newblock Manin's proof of the {M}ordell conjecture over function fields.
\newblock {\em Enseign. Math. (2)}, 36(3-4):393--427, 1990.
\newblock \href{https://doi.org/10.5169/seals-57915}{DOI:10.5169/seals-57915}.

\bibitem[CP15]{CP15}
Frédéric Campana and Mihai Păun.
\newblock Orbifold generic semi-positivity: an application to families of
  canonically polarized manifolds.
\newblock {\em Ann. Inst. Fourier (Grenoble)}, 65(2):835--861, 2015.
\newblock \href{https://doi.org/10.5802/aif.2945}{DOI:10.5802/aif.2945}.
  Preprint \href{http://arxiv.org/abs/1303.3169}{arXiv:1303.3169}.

\bibitem[CT11]{CTCetraro}
Jean-Louis Colliot-Thélène.
\newblock Variétés presque rationnelles, leurs points rationnels et leurs
  dégénérescences.
\newblock In {\em Arithmetic geometry}, volume 2009 of {\em Lecture Notes in
  Math.}, pages 1--44. Springer, Berlin, 2011.
\newblock
  \href{https://doi.org/10.1007/978-3-642-15945-9_1}{DOI:10.1007/978-3-642-15945-9$\_$1}.

\bibitem[CTPS16]{CTPS}
Jean-Louis Colliot-Thélène, Ambrus Pál, and Alexei~N. Skorobogatov.
\newblock Pathologies of the {B}rauer-{M}anin obstruction.
\newblock {\em Math. Z.}, 282(3-4):799--817, 2016.
\newblock
  \href{https://doi.org/10.1007/s00209-015-1565-x}{DOI:10.1007/s00209-015-1565-x}.

\bibitem[CTS21]{CTSko}
Jean-Louis Colliot-Thélène and Alexei~N. Skorobogatov.
\newblock The {B}rauer--{G}rothendieck group.
\newblock Book draft. Available on the internet at
  \href{https://wwwf.imperial.ac.uk/~anskor/brauer.pdf}{\sf
  https://wwwf.imperial.ac.uk/$\sim$anskor/brauer.pdf}, January 2021.

\bibitem[EV92]{EV92}
Hélène Esnault and Eckart Viehweg.
\newblock {\em Lectures on vanishing theorems}, volume~20 of {\em DMV Seminar}.
\newblock Birkhäuser Verlag, Basel, 1992.
\newblock
  \href{https://doi.org/10.1007/978-3-0348-8600-0}{DOI:10.1007/978-3-0348-8600-0}.

\bibitem[Fle77]{MR0460317}
Hubert Flenner.
\newblock Die {S}ätze von {B}ertini für lokale {R}inge.
\newblock {\em Math. Ann.}, 229(2):97--111, 1977.
\newblock \href{https://doi.org/10.1007/BF01351596}{DOI:10.1007/BF01351596}.

\bibitem[Gab93]{Gabber93}
Ofer Gabber.
\newblock An injectivity property for étale cohomology.
\newblock {\em Compositio Math.}, 86(1):1--14, 1993.
\newblock
  \href{http://www.numdam.org/item/CM_1993__86_1_1_0}{numdam.CM-1993-86-1-1-0}.

\bibitem[Gas09]{MR2605324}
Carlo Gasbarri.
\newblock The strong {$abc$} conjecture over function fields (after
  {M}c{Q}uillan and {Y}amanoi).
\newblock {\em Astérisque}, (326):Exp. No. 989, viii, 219--256 (2010), 2009.
\newblock Séminaire Bourbaki. Vol. 2007/2008. Preprint
  \href{https://arxiv.org/abs/0811.3153}{arXiv:0811.3153}.

\bibitem[Ghy00]{Ghys00}
Étienne Ghys.
\newblock À propos d'un théorème de {J}.-{P}. {J}ouanolou concernant les
  feuilles fermées des feuilletages holomorphes.
\newblock {\em Rend. Circ. Mat. Palermo (2)}, 49(1):175--180, 2000.
\newblock \href{https://doi.org/10.1007/BF02904228}{DOI:10.1007/BF02904228}.

\bibitem[GKKP11]{GKKP11}
Daniel Greb, Stefan Kebekus, Sándor~J. Kovács, and Thomas Peternell.
\newblock Differential forms on log canonical spaces.
\newblock {\em Inst. {H}autes {É}tudes Sci.~{P}ubl.~{M}ath.}, 114(1):87--169,
  November 2011.
\newblock
  \href{https://doi.org/10.1007/s10240-011-0036-0}{DOI:10.1007/s10240-011-0036-0}.
  An extended version with additional graphics is available as
  \href{http://arxiv.org/abs/1003.2913}{arXiv:1003.2913}.

\bibitem[Gra65]{MR0222087}
Hans Grauert.
\newblock Mordells {V}ermutung über rationale {P}unkte auf algebraischen
  {K}urven und {F}unktionenkörper.
\newblock {\em Inst. Hautes Études Sci. Publ. Math.}, (25):131--149, 1965.
\newblock
  \href{http://www.numdam.org/item?id=PMIHES_1965__25__131_0}{numdam.PMIHES-1965-25-131-0}.

\bibitem[Gro57]{MR0102537}
Alexandre Grothendieck.
\newblock Sur quelques points d'algèbre homologique.
\newblock {\em Tôhoku Math. J. (2)}, 9:119--221, 1957.
\newblock
  \href{https://doi.org/10.2748/tmj/1178244839}{DOI:10.2748/tmj/1178244839}.

\bibitem[Gro65]{EGA4-2}
Alexandre Grothendieck.
\newblock Éléments de géométrie algébrique. {IV}. Étude locale des
  schémas et des morphismes de schémas. {II}.
\newblock {\em Inst. Hautes Études Sci. Publ. Math.}, (24):231, 1965.
\newblock Revised in collaboration with Jean Dieudonné.
  \href{http://www.numdam.org/item?id=PMIHES_1965__24__231_0}{numdam.PMIHES-1965-24-231-0}.

\bibitem[Gro66]{EGA4-3}
Alexandre Grothendieck.
\newblock Éléments de géométrie algébrique. {IV}. Étude locale des
  schémas et des morphismes de schémas {III}.
\newblock {\em Inst. Hautes Études Sci. Publ. Math.}, (28):255, 1966.
\newblock Revised in collaboration with Jean Dieudonné.
  \href{http://www.numdam.org/item?id=PMIHES_1966__28__5_0}{numdam.PMIHES-1966-28-5-0}.

\bibitem[Gro67]{EGA4-4}
Alexandre Grothendieck.
\newblock Éléments de géométrie algébrique. {IV}. Étude locale des
  schémas et des morphismes de schémas {IV}.
\newblock {\em Inst. Hautes Études Sci. Publ. Math.}, (32):361, 1967.
\newblock Revised in collaboration with Jean Dieudonné.
  \href{http://www.numdam.org/item?id=PMIHES_1967__32__5_0}{numdam.PMIHES-1967-32-5-0}.

\bibitem[Gro71]{SGA1}
Alexandre Grothendieck.
\newblock {\em Revêtements étales et groupe fondamental (SGA 1)}.
\newblock Springer-Verlag, Berlin, 1971.
\newblock Séminaire de Géométrie Algébrique du Bois Marie 1960--1961,
  Dirigé par Alexandre Grothendieck. Augmenté de deux exposés de Michèle
  Raynaud, Lecture Notes in Mathematics, Vol. 224.
  \href{https://doi.org/10.1007/BFb0058656}{DOI:10.1007/BFb0058656}, Also
  available as \href{http://arxiv.org/abs/math/0206203}{arXiv:math/0206203}.

\bibitem[Gvi20]{MR4216591}
Damián Gvirtz.
\newblock Arithmetic surjectivity for zero-cycles.
\newblock {\em Math. Res. Lett.}, 27(5):1367--1392, 2020.
\newblock
  \href{https://doi.org/10.4310/MRL.2020.v27.n5.a5}{DOI:10.4310/MRL.2020.v27.n5.a5}.
  Preprint \href{https://arxiv.org/abs/1901.07117}{arXiv:1901.07117}.

\bibitem[GW10]{MR2675155}
Ulrich Görtz and Torsten Wedhorn.
\newblock {\em Algebraic geometry {I}, Schemes, With Examples and Exercises}.
\newblock Advanced Lectures in Mathematics. Vieweg + Teubner, Wiesbaden, 2010.
\newblock
  \href{https://doi.org/10.1007/978-3-8348-9722-0}{DOI:10.1007/978-3-8348-9722-0}.

\bibitem[Har77]{Ha77}
Robin Hartshorne.
\newblock {\em Algebraic geometry}.
\newblock Springer-Verlag, New York, 1977.
\newblock Graduate Texts in Mathematics, No. 52.
  \href{https://doi.org/10.1007/978-1-4757-3849-0}{DOI:10.1007/978-1-4757-3849-0}.

\bibitem[Har80]{MR597077}
Robin Hartshorne.
\newblock Stable reflexive sheaves.
\newblock {\em Math. Ann.}, 254(2):121--176, 1980.
\newblock \href{https://doi.org/10.1007/BF01467074}{DOI:10.1007/BF01467074}.

\bibitem[HL10]{MR2665168}
Daniel Huybrechts and Manfred Lehn.
\newblock {\em The geometry of moduli spaces of sheaves}.
\newblock Cambridge Mathematical Library. Cambridge University Press,
  Cambridge, second edition, 2010.
\newblock
  \href{https://doi.org/10.1017/CBO9780511711985}{DOI:10.1017/CBO9780511711985}.

\bibitem[HS14]{HS}
Yonatan Harpaz and Alexei~N. Skorobogatov.
\newblock Singular curves and the étale {B}rauer-{M}anin obstruction for
  surfaces.
\newblock {\em Ann. Sci. Éc. Norm. Supér. (4)}, 47(4):765--778, 2014.
\newblock \href{https://doi.org/10.24033/asens.2226}{DOI:10.24033/asens.2226}.

\bibitem[JK11]{MR2976311}
Kelly Jabbusch and Stefan Kebekus.
\newblock Positive sheaves of differentials coming from coarse moduli spaces.
\newblock {\em Ann. Inst. Fourier (Grenoble)}, 61(6):2277--2290 (2012), 2011.
\newblock \href{https://doi.org/10.5802/aif.2673}{DOI:10.5802/aif.2673}.
  Preprint \href{https://arxiv.org/abs/0904.2445}{arXiv:0904.2445}.

\bibitem[Jou78]{Jouanolou}
Jean-Pierre Jouanolou.
\newblock Hypersurfaces solutions d'une équation de {P}faff analytique.
\newblock {\em Math. Ann.}, 232(3):239--245, 1978.
\newblock \href{https://doi.org/10.1007/BF01351428}{DOI:10.1007/BF01351428}.

\bibitem[Keb13]{MR3084424}
Stefan Kebekus.
\newblock Pull-back morphisms for reflexive differential forms.
\newblock {\em Adv. Math.}, 245:78--112, 2013.
\newblock
  \href{https://doi.org/10.1016/j.aim.2013.06.013}{DOI:10.1016/j.aim.2013.06.013}.
  Preprint \href{http://arxiv.org/abs/1210.3255}{arXiv:1210.3255}.

\bibitem[Kim97]{MR1436743}
Minhyong Kim.
\newblock Geometric height inequalities and the {K}odaira-{S}pencer map.
\newblock {\em Compositio Math.}, 105(1):43--54, 1997.
\newblock \href{https://doi.org/10.1023/A:1017995416618}{DOI:A:1017995416618}.

\bibitem[Kim00]{MR1757883}
Minhyong Kim.
\newblock Erratum for: ``{G}eometric height inequalities and the
  {K}odaira-{S}pencer map'' [{C}ompositio {M}ath. {\bf 105} (1997), no. 1,
  43--54; {MR}1436743 (98j:14029)].
\newblock {\em Compositio Math.}, 121(2):219, 2000.
\newblock \href{https://doi.org/10.1023/A:1001882724260}{DOI:A:1001882724260}.

\bibitem[Kle74]{MR0360616}
Steven~L. Kleiman.
\newblock The transversality of a general translate.
\newblock {\em Compositio Math.}, 28:287--297, 1974.
\newblock
  \href{http://http://www.numdam.org/item?id=CM_1974__28_3_287_0}{numdam.CM-1974-28-3-287-0}.

\bibitem[Kol96]{K96}
János Kollár.
\newblock {\em Rational curves on algebraic varieties}, volume~32 of {\em
  Ergebnisse der Mathematik und ihrer Grenzgebiete. 3. Folge. A Series of
  Modern Surveys in Mathematics [Results in Mathematics and Related Areas. 3rd
  Series. A Series of Modern Surveys in Mathematics]}.
\newblock Springer-Verlag, Berlin, 1996.
\newblock
  \href{https://doi.org/10.1007/978-3-662-03276-3}{DOI:10.1007/978-3-662-03276-3}.

\bibitem[KTV00]{MR1815399}
Minhyong Kim, Dinesh~S. Thakur, and José~F. Voloch.
\newblock Diophantine approximation and deformation.
\newblock {\em Bull. Soc. Math. France}, 128(4):585--598, 2000.
\newblock \href{https://doi.org/10.24033/bsmf.2383}{DOI:10.24033/bsmf.2383}.

\bibitem[Lan02]{MR1878556}
Serge Lang.
\newblock {\em Algebra}, volume 211 of {\em Graduate Texts in Mathematics}.
\newblock Springer-Verlag, New York, third edition, 2002.
\newblock
  \href{https://doi.org/10.1007/978-1-4613-0041-0}{DOI:10.1007/978-1-4613-0041-0}.

\bibitem[Lip78]{MR0491722}
Joseph Lipman.
\newblock Desingularization of two-dimensional schemes.
\newblock {\em Ann. Math. (2)}, 107(2):151--207, 1978.
\newblock \href{https://doi.org/10.2307/1971141}{DOI:10.2307/1971141}.

\bibitem[Liu02]{MR1917232}
Qing Liu.
\newblock {\em Algebraic geometry and arithmetic curves}, volume~6 of {\em
  Oxford Graduate Texts in Mathematics}.
\newblock Oxford University Press, Oxford, 2002.
\newblock Translated from the French by Reinie Erné, Oxford Science
  Publications.

\bibitem[LSS20]{MR4157111}
Daniel Loughran, Alexei~N. Skorobogatov, and Arne Smeets.
\newblock Pseudo-split fibers and arithmetic surjectivity.
\newblock {\em Ann. Sci. éc. Norm. Supér. (4)}, 53(4):1037--1070, 2020.
\newblock \href{https://doi.org/10.24033/asens.243}{DOI:10.24033/asens.243}.
  Preprint \href{https://arxiv.org/abs/1705.10740}{arXiv:1705.10740}.

\bibitem[Man63]{MR0157971}
Juri~I. Manin.
\newblock Rational points on algebraic curves over function fields.
\newblock {\em Izv. Akad. Nauk SSSR Ser. Mat.}, 27:1395--1440, 1963.
\newblock
  \href{https://doi.org/10.1142/9789812830517_0002}{DOI:10.1142/9789812830517\_0002}.

\bibitem[McQ13]{MR3095099}
Michael McQuillan.
\newblock Dérivation relative.
\newblock {\em C. R. Math. Acad. Sci. Paris}, 351(13-14):523--526, 2013.
\newblock
  \href{https://doi.org/10.1016/j.crma.2013.05.003}{DOI:10.1016/j.crma.2013.05.003}.

\bibitem[Mil80]{Milne80}
James~S. Milne.
\newblock {\em {É}tale cohomology}, volume~33 of {\em Princeton Mathematical
  Series}.
\newblock Princeton University Press, Princeton, N.J., 1980.

\bibitem[Miy87]{Miyaoka87}
Yoichi Miyaoka.
\newblock The {C}hern classes and {K}odaira dimension of a minimal variety.
\newblock In {\em Algebraic geometry, Sendai, 1985}, volume~10 of {\em Adv.
  Stud. Pure Math.}, pages 449--476. North-Holland, Amsterdam, 1987.
\newblock
  \href{https://doi.org/10.2969/aspm/01010449}{DOI:10.2969/aspm/01010449}.

\bibitem[Par68]{Parshin68}
Aleksey~N. Parshin.
\newblock Algebraic curves over function fields. {I}.
\newblock {\em Izv. Akad. Nauk SSSR Ser. Mat.}, 32:1191--1219, 1968.

\bibitem[Poo01]{Poonenbis}
Bjorn Poonen.
\newblock The {H}asse principle for complete intersections in projective space.
\newblock In {\em Rational points on algebraic varieties}, volume 199 of {\em
  Progr. Math.}, pages 307--311. Birkhäuser, Basel, 2001.
\newblock
  \href{https://doi.org/10.1007/978-3-0348-8368-9_11}{DOI:10.1007/978-3-0348-8368-9$\_$11}.

\bibitem[Poo09]{Poonen}
Bjorn Poonen.
\newblock Existence of rational points on smooth projective varieties.
\newblock {\em J. Eur. Math. Soc. (JEMS)}, 11(3):529--543, 2009.
\newblock \href{https://doi.org/10.4171/JEMS/159}{DOI:10.4171/JEMS/159}.

\bibitem[Poo10a]{Poonen0}
Bjorn Poonen.
\newblock Insufficiency of the {B}rauer-{M}anin obstruction applied to étale
  covers.
\newblock {\em Ann. of Math. (2)}, 171(3):2157--2169, 2010.
\newblock
  \href{https://doi.org/10.4007/annals.2010.171.2157}{DOI:10.4007/annals.2010.171.2157}.

\bibitem[Poo10b]{Poonen3}
Bjorn Poonen.
\newblock Multivariable polynomial injections on rational numbers.
\newblock {\em Acta Arith.}, 145(2):123--127, 2010.
\newblock \href{https://doi.org/10.4064/aa145-2-2}{DOI:10.4064/aa145-2-2}.

\bibitem[Sam66a]{MR0204430}
Pierre Samuel.
\newblock Compléments à un article de {H}ans {G}rauert sur la conjecture de
  {M}ordell.
\newblock {\em Inst. Hautes Études Sci. Publ. Math.}, (29):55--62, 1966.
\newblock
  \href{http://www.numdam.org/item?id=PMIHES_1966__29__55_0}{numdam.PMIHES-1966-29-55-0}.

\bibitem[Sam66b]{MR0222088}
Pierre Samuel.
\newblock {\em Lectures on old and new results on algebraic curves}.
\newblock Notes by S. Anantharaman. Tata Institute of Fundamental Research
  Lectures on Mathematics, No. 36. Tata Institute of Fundamental Research,
  Bombay, 1966.
\newblock Available on the internet at
  \href{http://www.math.tifr.res.in/~publ/ln/index.html}{www.math.tifr.res.in/$\sim$publ/ln/index.html}.

\bibitem[Ser89]{MR1002324}
Jean-Pierre Serre.
\newblock {\em Lectures on the {M}ordell-{W}eil theorem}.
\newblock Aspects of Mathematics, E15. Friedr. Vieweg \& Sohn, Braunschweig,
  1989.
\newblock Translated from the French and edited by Martin Brown from notes by
  Michel Waldschmidt.
  \href{https://doi.org/10.1007/978-3-663-14060-3}{DOI:10.1007/978-3-663-14060-3}.

\bibitem[Sko99]{Skorobogatov}
Alexei~N. Skorobogatov.
\newblock Beyond the {M}anin obstruction.
\newblock {\em Invent. Math.}, 135(2):399--424, 1999.
\newblock
  \href{https://doi.org/10.1007/s002220050291}{DOI:10.1007/s002220050291}.

\bibitem[Sme17]{Smeets}
Arne Smeets.
\newblock Insufficiency of the étale {B}rauer-{M}anin obstruction: towards a
  simply connected example.
\newblock {\em Amer. J. Math.}, 139(2):417--431, 2017.
\newblock
  \href{https://doi.org/10.1353/ajm.2017.0010}{DOI:10.1353/ajm.2017.0010}.
  Preprint \href{https://arxiv.org/abs/1409.6706}{arXiv:1409.6706}.

\bibitem[{Sta}21]{stacks-project}
The {Stacks Project Authors}.
\newblock Stacks project.
\newblock Available on the internet at
  \href{https://stacks.math.columbia.edu}{stacks.math.columbia.edu}, 2021.

\bibitem[Sto11]{Stoppino}
Lidia Stoppino.
\newblock Fibrations of {C}ampana general type on surfaces.
\newblock {\em Geom. Dedicata}, 155:69--80, 2011.
\newblock
  \href{https://doi.org/10.1007/s10711-011-9578-z}{DOI:10.1007/s10711-011-9578-z}.

\bibitem[SW95]{SW}
Peter Sarnak and Lan Wang.
\newblock Some hypersurfaces in {${\bf P}^4$} and the {H}asse-principle.
\newblock {\em C. R. Acad. Sci. Paris Sér. I Math.}, 321(3):319--322, 1995.

\bibitem[Szp81]{Szpiro81}
Lucien Szpiro.
\newblock Propriétés numériques du faisceau dualisant relatif.
\newblock {\em Astérisque}, (86):44--78, 1981.
\newblock Seminar on Pencils of Curves of Genus at Least Two.

\bibitem[Vie95]{Viehweg95}
Eckart Viehweg.
\newblock {\em Quasi-projective moduli for polarized manifolds}, volume~30 of
  {\em Ergebnisse der Mathematik und ihrer Grenzgebiete (3) [Results in
  Mathematics and Related Areas (3)]}.
\newblock Springer-Verlag, Berlin, 1995.
\newblock
  \href{https://doi.org/10.1007/978-3-642-79745-3}{DOI:10.1007/978-3-642-79745-3}.

\bibitem[Vir10]{Viray}
Bianca Viray.
\newblock A family of varieties with exactly one pointless rational fiber.
\newblock {\em J. Théor. Nombres Bordeaux}, 22(3):741--745, 2010.
\newblock Available on the internet at
  \href{http://jtnb.cedram.org/item?id=JTNB_2010__22_3_741_0}{jtnb.cedram.org/item?id=JTNB$\_$2010$\_\_$22$\_$3$\_$741$\_$0}.

\bibitem[Voj91]{MR1096125}
Paul Vojta.
\newblock On algebraic points on curves.
\newblock {\em Compositio Math.}, 78(1):29--36, 1991.
\newblock
  \href{http://www.numdam.org/item/CM_1991__78_1_29_0}{numdam.CM-1991-78-1-29-0}.

\bibitem[Vol91]{MR1106753}
José~F. Voloch.
\newblock On the conjectures of {M}ordell and {L}ang in positive
  characteristics.
\newblock {\em Invent. Math.}, 104(3):643--646, 1991.
\newblock \href{https://doi.org/10.1007/BF01245094}{DOI:10.1007/BF01245094}.

\bibitem[VZ02]{VZ02}
Eckart Viehweg and Kang Zuo.
\newblock Base spaces of non-isotrivial families of smooth minimal models.
\newblock In {\em Complex geometry (Göttingen, 2000)}, pages 279--328.
  Springer, Berlin, 2002.
\newblock
  \href{https://doi.org/10.1007/978-3-642-56202-0_16}{DOI:10.1007/978-3-642-56202-0\_16}.
  Preprint \href{https://arxiv.org/abs/math/0103122}{arXiv:math/0103122}.

\bibitem[Yam04]{MR2096455}
Katsutoshi Yamanoi.
\newblock The second main theorem for small functions and related problems.
\newblock {\em Acta Math.}, 192(2):225--294, 2004.
\newblock \href{https://doi.org/10.1007/BF02392741}{DOI:10.1007/BF02392741}.

\end{thebibliography}
\end{document}